\documentclass[a4paper, 11pt]{amsart}
\sloppy

\usepackage[T1]{fontenc}
\usepackage[type1, tt=false]{libertine}
\usepackage[libertine]{newtxmath}

\setlength{\textwidth}{15.74cm}
\setlength{\textheight}{23cm}
\setlength{\oddsidemargin}{0.4cm}
\setlength{\evensidemargin}{0.4cm}
\addtolength{\topmargin}{-0.5cm}
\setlength\marginparwidth{2.9cm}
\setlength\marginparsep{.15cm}

\usepackage{enumitem}
\usepackage{hyperref}
\usepackage{tikz-cd}

\usepackage{booktabs}
\usepackage{longtable}
\usepackage{multirow}

\newtheorem{thm}{Theorem}[section]
\newtheorem{prop}[thm]{Proposition}
\newtheorem{lem}[thm]{Lemma}
\newtheorem{coro}[thm]{Corollary}
\newtheorem{fact}[thm]{Fact}
\newtheorem{conj}[thm]{Conjecture}
\newtheorem{ques}[thm]{Question}
\newtheorem*{FibrationConj}{Geometric fibration conjecture}

\theoremstyle{definition}
\newtheorem{defn}[thm]{Definition}

\theoremstyle{remark}
\newtheorem{rmk}[thm]{Remark}

\numberwithin{thm}{section}
\numberwithin{equation}{section}
\numberwithin{table}{section}

\setcounter{tocdepth}{1}

\newcommand{\N}{\mathbb{N}}
\newcommand{\Z}{\mathbb{Z}}

\newcommand{\R}{\mathbb{R}}
\newcommand{\C}{\mathbb{C}}
\newcommand{\Ha}{\mathbb{H}}
\newcommand{\K}{\mathbb{K}}

\newcommand{\Hyp}{\mathbf{H}}
\newcommand{\RP}{\mathbb{R} \mathbf{P}}
\newcommand{\CP}{\mathbb{C} \mathbf{P}}
\newcommand{\HP}{\mathbb{H} \mathbf{P}}
\newcommand{\KP}{\mathbb{K} \mathbf{P}}
\newcommand{\Sph}{\mathbf{S}}

\DeclareMathOperator{\GL}{GL}
\DeclareMathOperator{\SL}{SL}
\DeclareMathOperator{\U}{U}
\DeclareMathOperator{\SU}{SU}
\DeclareMathOperator{\OO}{O}
\DeclareMathOperator{\SO}{SO}
\DeclareMathOperator{\Pin}{Pin}
\DeclareMathOperator{\Spin}{Spin}
\DeclareMathOperator{\Sp}{Sp}

\DeclareMathOperator{\M}{M}
\DeclareMathOperator{\Sym}{Sym}
\DeclareMathOperator{\Herm}{Herm}
\DeclareMathOperator{\SkewHerm}{SkewHerm}
\DeclareMathOperator{\Cl}{Cl}

\newcommand{\g}{\mathfrak{g}}
\newcommand{\h}{\mathfrak{h}}
\newcommand{\kk}{\mathfrak{k}}
\newcommand{\p}{\mathfrak{p}}
\newcommand{\q}{\mathfrak{q}}
\newcommand{\ab}{\mathfrak{a}}

\DeclareMathOperator{\Ktheory}{K}
\DeclareMathOperator{\KO}{KO}
\DeclareMathOperator{\KU}{KU}
\DeclareMathOperator{\KSp}{KSp}

\newcommand{\rr}{\mathbf{r}}
\newcommand{\cc}{\mathbf{c}}
\newcommand{\qq}{\mathbf{q}}

\DeclareMathOperator{\Rep}{Rep}
\DeclareMathOperator{\RO}{RO}
\DeclareMathOperator{\RU}{RU}
\DeclareMathOperator{\RSp}{RSp}

\newcommand{\gr}{\textup{gr}}
\newcommand{\pt}{\textup{pt}}
\newcommand{\spin}{\textup{spin}}
\newcommand{\std}{\textup{std}}
\newcommand{\triv}{\textup{triv}}
\newcommand{\vol}{\textup{vol}}

\DeclareMathOperator{\Ad}{Ad}
\DeclareMathOperator{\Diag}{Diag}
\DeclareMathOperator{\Grass}{Grass}
\DeclareMathOperator{\id}{id}
\DeclareMathOperator{\Lie}{Lie}
\DeclareMathOperator{\pr}{pr}
\DeclareMathOperator{\Taut}{Taut}
\DeclareMathOperator{\Tr}{Tr}
\DeclareMathOperator{\rk}{rk}
\newcommand{\Sphst}{\operatorname{Sph}^\textup{st}}
\newcommand{\Sphunst}{\operatorname{Sph}^\textup{unst}}

\DeclareMathOperator{\Vect}{Vect}


\newcommand{\bs}{\backslash}
\newcommand{\br}[1]{\lbrack #1 \rbrack}

\newcommand{\ie}{i.e.\ }
\newcommand{\eg}{e.g.\ }
\newcommand{\resp}{resp.\ }

\begin{document}

\title{Compact quotients of homogeneous spaces and homotopy theory of sphere bundles}

\author{Fanny Kassel}
\address{CNRS and Laboratoire Alexander Grothendieck, 
Institut des Hautes \'Etudes Scientifiques, Universit\'e Paris-Saclay, 
35 route de Chartres, 91440 Bures-sur-Yvette, France}
\email{kassel@ihes.fr}

\author{Yosuke Morita}
\address{Faculty of Mathematics, Kyushu University,
744 Motooka, Nishi-ku, Fukuoka 819-0395, Japan}
\email{y-morita@math.kyushu-u.ac.jp}

\author{Nicolas Tholozan}
\address{DMA, ENS-PSL, 45 rue d'Ulm, 75005 Paris, France}
\email{nicolas.tholozan@ens.fr}

\begin{abstract}
A reductive homogeneous space $G/H$ is always diffeomorphic to 
the normal bundle of an orbit of a maximal compact subgroup of~$G$.
We prove that if $G/H$ admits compact quotients, 
then the sphere bundle associated to this normal bundle is 
fiber-homotopically trivial.
We deduce that many reductive homogeneous spaces do not admit 
compact quotients, such as the complex~spheres 
$\OO(n+ \nolinebreak 1, \C)/\OO(n,\C)$ for all $n \notin \{1,3,7\}$, 
or $\SL(n,\R)/\SL(m,\R)$ for all $n>m>1$, 
which solves conjectures of T. Kobayashi from the early 1990s.
We also prove that if the pseudo-Riemannian hyperbolic space 
$\Ha^{p,q}$ of signature $(p,q)$ admits compact quotients, 
then $p$ must be divisible by at least $2^{\lfloor q/2\rfloor}$.
\end{abstract}

\maketitle
\tableofcontents

\section{Introduction}

\subsection{Problem and main theorem}

Let $G$ be a real linear reductive Lie group and 
$H$ a reductive subgroup of~$G$.
The quotient space $G/H$, equipped with the left action of $G$, 
is called a \emph{homogeneous space of reductive type}, 
or \emph{reductive homogeneous space} for short.
When $H$ is an open subgroup of the set of fixed points of some involutive automorphism of~$G$, 
the space $G/H$ is called a \emph{reductive symmetric space}.

A \emph{Clifford--Klein form} of $G/H$ is a quotient $\Gamma \bs G/H$ 
of $G/H$ by a discrete subgroup $\Gamma$ of~$G$
acting properly discontinuously on $G/H$.
The proper discontinuity of the action ensures that 
$\Gamma \bs G/H$ is Hausdorff, and in fact an orbifold 
(or even a manifold if $\Gamma$ is torsion-free).
We define a \emph{compact quotient} of $G/H$ 
to be a Clifford--Klein form $\Gamma \bs G/H$ which is compact.
This paper is devoted to the following question, which was first 
considered in full generality by Toshiyuki Kobayashi in the late 1980s.

\begin{ques} \label{ques:CompactQuotient}
Does the reductive homogeneous space $G/H$ admit compact quotients?
\end{ques}

When $H$ is compact, this question is equivalent to the existence of 
uniform lattices in $G$ and has been answered positively 
in the early 1960s by Borel \cite{Bor63}. 
On the other hand, when $H$ is not compact, 
Question~\ref{ques:CompactQuotient} is more subtle\footnote{
Note that lattices in $G$ never act properly on $G/H$ when 
$H$ is noncompact; in fact, when $G$ is simple, every lattice of $G$ 
acts ergodically on $G/H$ by Moore's ergodicity theorem.}
and remains wide open, 
despite substantial progress in the past thirty-five years. 

Various obstructions to the existence of compact quotients of 
reductive homogeneous spaces have been developed by various authors, 
including Kobayashi \cite{Kob89,Kob92coh}, 
Labourie--Mozes--Zimmer \cite{Zim94,LMZ95,LZ95}, 
Benoist \cite{Ben96}, Margulis \cite{Mar97}, Shalom \cite{Sha00}, 
and the present authors \cite{Mor15,Tho15+,Mor17,Mor24,KT24+}.

On the other hand, 
some reductive homogeneous spaces do admit compact quotients. 
In particular, when there exists a reductive Lie subgroup $L$ of $G$ 
acting transitively on $G/H$ with compact stabilizer, 
taking $\Gamma$ to be a uniform lattice in~$L$ gives a compact quotient 
$\Gamma \bs G/H$ of $G/H$; such quotients are called \emph{standard}.
A list of such triples $(G,H,L)$ where $G/H$ 
is an irreducible reductive symmetric space was given by 
Kobayashi--Yoshino \cite{KY05}.
This list was proved to be complete by Tojo \cite{Toj19}, 
and Boche\'nski--Tralle \cite{BT24+} showed that, under certain assumptions, the list does not get larger by allowing reductive homogeneous spaces that are not symmetric.
Sometimes standard compact quotients can be deformed 
\cite{Gol85,Ghy95,Kob98,Kas12}, 
and sometimes there even exist ``exotic'' compact quotients 
\cite{Sal00,LL17,MST23+} which are not deformations of standard ones.
However, the following conjecture of Kobayashi 
\cite[\S\,6, (vi)]{Kob97}, \cite[Conj.\,4.3]{Kob01} remains open:

\begin{conj}[Kobayashi] \label{conj:ExistenceStandardQuotients}
The reductive homogeneous space $G/H$ 
admits compact quotients if and only if it admits standard ones.
\end{conj}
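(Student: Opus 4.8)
We sketch a strategy toward this conjecture, which remains open; what follows is a plan rather than a proof. The ``if'' direction holds by definition, since a standard compact quotient is in particular a compact quotient. For the ``only if'' direction---that the existence of \emph{some} compact quotient of $G/H$ forces the existence of a \emph{standard} one---the idea is to combine the obstruction established in this paper, namely fiber-homotopy triviality of the sphere bundle of the normal bundle of a maximal-compact orbit in $G/H$, with the classification of standard quotients in the style of Kobayashi--Yoshino \cite{KY05} and Tojo \cite{Toj19}, and to argue that together they leave no gap.

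Concretely, I would first reduce to the case where $G$ is simple: for a general reductive $G$, properness of a discrete action on $G/H$ is detected by the Cartan projection via Kobayashi's properness criterion and behaves well under direct products, so that both the existence of a compact quotient and the search for a transitive reductive subgroup can be analyzed factor by factor. For $G$ simple, the pairs $(\g,\h)$ with $\h$ reductive in $\g$ form a finite list by the structure theory of real reductive Lie algebras; for each such pair I would describe the orbit of a maximal compact subgroup $K\subset G$ in $G/H$ together with its normal bundle---an associated bundle over the compact homogeneous space $K/(K\cap H)$ with fiber a complement $\q$ of $\h$ in $\g$---and decide, using the obstruction theory developed below (characteristic classes, the $e$-invariant, and its $\KO$-theoretic refinements), whether the corresponding sphere bundle is fiber-homotopically trivial. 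Whenever it is \emph{not}, the theorem of this paper already forbids compact quotients, in agreement with the conjecture; whenever it \emph{is}, one must exhibit a reductive subgroup $L\subset G$ acting properly and cocompactly on $G/H$ and check that the resulting triple occurs on the Kobayashi--Yoshino--Tojo list.

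The last step is where the real difficulty lies, and it is the reason the conjecture remains open: there is no known mechanism guaranteeing that a standard quotient exists once all the known obstructions---the cohomological ones of Kobayashi and the geometric ones of Benoist and others recalled in the introduction, together with the homotopy-theoretic one of this paper---vanish. What one would need is a converse of the shape ``fiber-homotopy triviality of the sphere bundle, together with the vanishing of the remaining obstructions, implies the existence of a transitive reductive $L$ with compact isotropy''; establishing such a statement would presumably require upgrading the homotopy-level triviality of the sphere bundle to an honest Lie-theoretic splitting of $G/H$, or else producing a genuinely new obstruction that can be proved complete. Absent such an input, the realistic outcome of the plan above is not a proof of the conjecture but a confirmation of it throughout the large families in which the sphere-bundle obstruction is decisive---which is precisely what the remainder of this paper carries out.
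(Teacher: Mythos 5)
This statement is a conjecture that the paper explicitly leaves open (it is attributed to Kobayashi and described as ``remain[ing] open''), so there is no proof in the paper to compare your proposal against. You correctly identify the trivial ``if'' direction and correctly acknowledge that your ``only if'' direction is a plan rather than an argument; the gap you name --- the absence of any mechanism producing a transitive reductive subgroup $L$ with compact isotropy from the vanishing of all known obstructions --- is indeed the genuine and unresolved difficulty, and no amount of case-checking against the Kobayashi--Yoshino--Tojo list closes it, since the obstructions are not known (and are not claimed in this paper) to be complete.

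One substantive correction to your framing: the paper does not present its results as a confirmation of Kobayashi's conjecture ``throughout the large families in which the sphere-bundle obstruction is decisive.'' On the contrary, Section~\ref{ss:MotivationGeometricFibration} points out an explicit tension between Conjecture~\ref{conj:ExistenceStandardQuotients} and Conjecture~\ref{conj:CompactQuotients<=>Fibrations}: for the pseudo-Riemannian hyperbolic spaces $\Hyp^{p,q}_\R$ with $2^{\nu(q)}$ dividing $p$, the normal sphere bundle obstruction vanishes and local geometric fibrations exist, yet standard compact quotients exist only for $(p,q)\in\{(2k,1),(4k,3),(8,7)\}$. Thus the smallest such case, $\Hyp^{4,2}_\R$ (Question~\ref{question:H42}), is a candidate either for a counterexample to Kobayashi's conjecture or for a counterexample to the fibration-based conjecture; the authors deliberately leave both possibilities open rather than endorsing one. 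Your plan's implicit assumption that the paper's obstruction and the standard-quotient list ``leave no gap'' is therefore exactly where the two conjectures diverge, and is the point a correct proof would have to resolve.
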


In this paper, we present a new and rather powerful 
homotopy-theoretic obstruction to the existence of compact quotients. 

To state our result, let us introduce some notation. 
As above, let $G/H$ be a reductive homogeneous space, 
which we now assume to be connected.
Let $K$ be a maximal compact subgroup of $G$ such that 
$K_H = K \cap H$ is a maximal compact subgroup of $H$. 
We identify $X = K/K_H$ with the $K$-orbit of $o = 1 \cdot H$ in $G/H$, 
and let $N$ be the normal bundle of $X$ in $G/H$.
Finally, let $S(N)$ be the sphere bundle associated to~$N$.
It is called \emph{fiber-homotopically trivial} 
if it admits a bundle map to the trivial sphere bundle 
that is a homotopy equivalence in each fiber 
(see Definition~\ref{defn:FiberwiseHomotopicallyTrivial} 
and Fact \ref{fact:Dol63}).
Our main theorem is the following.

\begin{thm} \label{thm:MainTheorem}
If the reductive homogeneous space $G/H$ admits compact quotients, 
then $S(N)$ is fiber-homotopically trivial.
\end{thm}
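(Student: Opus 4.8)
The plan is to start from a compact quotient $M = \Gamma \bs G/H$ and exploit the fact that $M$ fibers, in a suitable homotopy-theoretic sense, over the double coset $\Gamma \bs G / K$, which is a compact manifold (the locally symmetric space attached to $\Gamma$). Indeed, $G/K$ is contractible, so $\Gamma \bs G/K$ is a $K(\Gamma,1)$; pulling back along $M \to \Gamma \bs G/K$ we may try to reduce questions about $M$ to bundle-theoretic statements over an aspherical base, but the key point is rather the opposite inclusion. Since $\Gamma$ acts properly discontinuously and cocompactly on $G/H$, and $X = K/K_H$ is a deformation retract of $G/H$ (the normal bundle $N \to X$ being a tubular-neighborhood model for all of $G/H$), the quotient $M$ retracts onto a compact "core" which is itself a bundle over a base built from $\Gamma$; in particular $M$ is homotopy equivalent to the total space of the disk bundle $D(N)$ quotiented appropriately. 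The upshot I want is: the sphere bundle $S(N) \to X$, which a priori is just a bundle over the compact manifold $X = K/K_H$, becomes, after passing to the quotient, the boundary of a compact manifold with interior $\cong G/H$, and this boundary must be null-cobordant in a fibered sense.

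The heart of the argument is a Poincaré–Lefschetz duality / Euler-class computation. First I would observe that $G/H$ is an open manifold that is the interior of the disk bundle $D(N)$ over $X$; hence $\overline{G/H} := D(N)$ is a compact manifold with boundary $\partial D(N) = S(N)$. A compact quotient $M$ of $G/H$ is then (up to finite cover, to make $\Gamma$ torsion-free) a closed manifold carrying a proper $\Gamma$-action model, and the crucial structural input is that the pair $(D(N), S(N))$ descends to give $M$ the structure of the interior of a compact manifold-with-boundary whose boundary is a bundle with fiber $S(N)_x \simeq S^{r-1}$ where $r = \rk N$. Next, because $M$ is a \emph{closed} manifold obtained from this compactification with \emph{no} boundary left over, the boundary sphere bundle must bound on the other side as well; equivalently, the normal sphere bundle $S(N) \to X$ is fiber-homotopy equivalent to the boundary of a disk bundle that is \emph{trivial}. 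Concretely I would run this through the Becker–Gottlieb / Dold theory of fiberwise homotopy equivalence (cf. the cited Fact~\ref{fact:Dol63}): a sphere bundle $S(N)$ over a finite complex $X$ is fiber-homotopically trivial iff its Stiefel–Whitney classes (and its fiberwise S-type, i.e. its class in the group of stable fiber-homotopy types of spherical fibrations over $X$) vanish, and these vanishings are exactly what cocompactness of the $\Gamma$-action forces, via a Hirzebruch-type index/Euler-characteristic argument: $\chi(M) = \chi(\Gamma\bs G/K)\cdot \chi(S^{r-1}\text{-fiber data})$ together with the Gauss–Bonnet vanishing $\chi(\Gamma\bs G/H)=0$ whenever $H$ is noncompact pins down the Euler class, and a parallel argument with the symmetric-space Chern–Weil forms handles the remaining characteristic classes.

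More precisely, the steps I would carry out are: (1) set up the compactification $G/H \cong \mathrm{int}\, D(N)$ and note $S(N) = \partial D(N)$, using that $G/H$ deformation-retracts onto $X=K/K_H$ with $N$ as its normal bundle (this is the "always diffeomorphic to the normal bundle" statement from the abstract, which I take as known); (2) given a torsion-free uniform $\Gamma$ acting properly discontinuously cocompactly on $G/H$, produce a closed manifold $M=\Gamma\bs G/H$ and argue that $M$ is the interior of a compact manifold $\overline M$ with $\partial\overline M$ a bundle over $\Gamma\bs G/K$ with fiber $S(N)$ — but since $M$ is already closed, glue two copies or use that $\overline M = M$ has empty boundary to conclude the spherical fibration $S(N)\to X$, \emph{as a fiberwise space}, extends to a disk bundle, i.e. is fiber-homotopically trivial; (3) translate "extends to a disk bundle over $X$ with the correct dimension and is fiberwise a homotopy sphere" into "fiber-homotopically trivial" via Dold's theorem (Fact~\ref{fact:Dol63}); (4) do the characteristic-class bookkeeping to make step (2) rigorous, using Gauss–Bonnet ($\chi(\Gamma\bs G/H)=0$ for $H$ noncompact) and multiplicativity of Euler characteristics in fibrations to kill the fiberwise Euler class, plus Chern–Weil invariance on the compact symmetric side to kill higher obstructions. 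The main obstacle I anticipate is step (2): it is \emph{not} literally true that a cocompact properly discontinuous $\Gamma$ makes $G/H$ into the interior of a nice compact manifold-with-boundary in a way that respects the bundle $S(N)\to X$ — the retraction $G/H\to X$ is not $\Gamma$-equivariant since $\Gamma\not\subset K$. So the real work is to replace this naive picture by a homotopy-theoretic one: use that $\Gamma\bs G/H$ and $\Gamma\bs G/K$ are both closed aspherical-over-$B\Gamma$ spaces, that the map $\Gamma\bs G/H\to\Gamma\bs G/K$ is a fiber bundle with fiber $K/K_H=X$ (this \emph{is} $\Gamma$-equivariant, since $K\supset K_H$), and then compare the Thom space / Euler class of $N\to X$ with the triviality forced by Poincaré duality on the closed manifold $\Gamma\bs G/H$ — the vanishing of the relevant obstruction classes in $H^*(X)$ being detected after this twisted pullback, which is injective in the range that matters because $G/K$ is contractible. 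Reassembling these observations into the clean statement "$S(N)$ is fiber-homotopically trivial" is where the argument's leverage, and its difficulty, lie.
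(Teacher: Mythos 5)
Your proposal has genuine gaps that I do not think can be repaired along the lines you sketch. First, $\Gamma\backslash G/K$ is \emph{not} compact when $H$ is noncompact: $\Gamma$ acts cocompactly on $G/H$, not on $G$, so it is not a uniform lattice in $G$ and the ``locally symmetric space attached to $\Gamma$'' is an open manifold. Second, and more seriously, the fiber bundle $\Gamma\backslash G/H \to \Gamma\backslash G/K$ with fiber $X=K/K_H$ that your step (2) leans on does not exist a priori: there is no $G$- or $\Gamma$-equivariant map $G/H\to G/K$, since $H\not\subset K$; the only honest equivariant maps are $G/K \leftarrow G/K_H \rightarrow G/H$. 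The existence of such a fibration of $G/H$ by translates of $X$ is precisely the (open) Geometric fibration conjecture, so assuming it is circular. Third, even granting the cobordism picture, killing the Euler class and the Stiefel--Whitney classes does \emph{not} imply fiber-homotopical triviality of a sphere bundle: the obstruction lives in the $J$-group, which is strictly finer than anything characteristic classes detect (this is the entire point of the applications --- e.g.\ the divisibility $2^{\nu(q)}\mid p$ for $\Hyp^{p,q}_\R$ is invisible to characteristic classes). Your parenthetical ``and its fiberwise S-type vanishes'' is exactly the conclusion to be proved, so invoking it as a criterion is circular.

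The actual argument runs in the opposite direction through $G/K_H$ and uses no duality or characteristic classes. Since the fiber $H/K_H$ of $\pi\colon G/K_H\to G/H$ is contractible, the induced bundle over the \emph{compact} quotient $\Gamma\backslash G/H$ has a section, which pulls back to a $\Gamma$-equivariant section $s\colon G/H\to G/K_H$. This assigns to each $x\in G/H$ a translate $s(x)X$ of $X$ through $x$, in a $\Gamma$-equivariant way. Proper discontinuity plus cocompactness then show that for $v$ in a fiber $N_o\simeq\p\cap\q$ with large norm, the translate $s(v)X$ misses any fixed compact set; this properness is exactly what feeds the sufficient criterion (Lemma~\ref{lem:CriterionFiberwiseHomotopy}) built from the homotopy lifting property and Dold's theorem, yielding a fiber-homotopy equivalence between $S(N)$ and the trivial sphere bundle. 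Your instinct that ``the real work is to replace the naive picture by a homotopy-theoretic one'' is right, but the replacement is the equivariant section of $G/K_H\to G/H$, not a fibration over $G/K$.
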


\subsection{Topological \texorpdfstring{$\Ktheory$}{K}-Theory 
and applications}

The normal bundle $N$ admits an explicit description 
as a vector bundle associated to the principal bundle $K\to K/K_H$.
The question of its fiber-homotopical triviality is not obvious, 
but fortunately has already been thoroughly investigated by topologists.

Among characteristic classes of a vector bundle, 
some detect fiber-homotopical triviality 
(namely, the Stiefel--Whitney classes and the Euler class), 
while some do not (namely, the Chern and the Pontryagin classes). 
However, our most powerful obstructions will come from topological $\Ktheory$-theory, 
more precisely from work of Adams, completed by Quillen.

In Sections \ref{s:IndefiniteGrassmannians} and 
\ref{s:ComputationsKTheory}, 
we will systematically investigate and discuss 
the fiber-homotopical triviality of the normal bundle to $K/K_H$ 
for all classical symmetric spaces for which 
Question~\ref{ques:CompactQuotient} was not previously answered, 
as well as for some interesting families of 
reductive homogeneous spaces that are not symmetric. 
More precisely, we will prove the following three main results. 

\begin{thm} \label{thm:OtherSymmetric}
Let $G/H$ be one of the reductive symmetric spaces 
in the following table, where $p \geq q \geq 1$ 
are integers satisfying the specified conditions. 
Then $G/H$ and its associated symmetric space $G/H^a$ 
do not admit compact quotients.
\begin{center}
\begin{longtable}{c|c|c|c|c}
& $G$ & $H$ & $H^a$ & Conditions \\ \hline
\textup{(1)} & 
$\OO(p+q, \C)$ & $\OO(p, \C) \times \OO(q, \C)$ & $\OO(p,q)$ & 
$(p,q) \neq (1,1), (3,1), (7,1)$ \\ 
\textup{(2)} & 
$\OO^\ast(2p+2q)$ & $\OO^\ast(2p) \times \OO^\ast(2q)$ & $\U(p,q)$ & 
$(p,q) \neq (1,1), (3,1)$ \\ 
\textup{(3)} &
$\Sp(2p+2q, \R)$ & $\Sp(2p, \R) \times \Sp(2q, \R)$ & $\U(p,q)$ & 
$(p,q) \neq (3,1)$ \\
\textup{(4)} & 
$\SL(2p, \C)$ & $\Sp(2p, \C)$ & $\SL(p, \Ha)$ &
$p\geq 2$ \\ 
\textup{(5)} & 
$\SO_0(2p, 2q)$ & $\U(p,q)$ & $\U(p,q)$ &
$q \geq 2$ \\ 
\textup{(6)} &
$\SU(2p,2q)$ & $\Sp(p,q)$ & $\Sp(p,q)$ &
$q \geq 2$
\end{longtable}
\end{center}
\end{thm}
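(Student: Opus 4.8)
The plan is to derive the theorem from Theorem~\ref{thm:MainTheorem} and topological $\Ktheory$-theory, supplemented in a few residual cases by the classical real-rank criterion. By Theorem~\ref{thm:MainTheorem}, if $G/H$ (\resp $G/H^a$) admitted a compact quotient, then the sphere bundle $S(N)$ (\resp $S(N^a)$) of the normal bundle of $X = K/K_H$ in $G/H$ (\resp $G/H^a$) would be fiber-homotopically trivial. The first step is to reduce these two cases to one. Since $K_H = K \cap H = K \cap H^a$, the spaces $G/H$ and $G/H^a$ share the same $X$, and their normal isotropy representations satisfy $\p = \p_H \oplus \p_{H^a}$ as $K_H$-modules — precisely the feature that makes $H$ and $H^a$ an associated pair. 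Hence $N \oplus N^a$ is the bundle associated to the $K$-module $\p$, namely the trivial bundle $X \times \p$; so $[N] = -[N^a]$ in $\widetilde{\KO}(X)$, and $N$ is stably fiber-homotopically trivial if and only if $N^a$ is. It therefore suffices to study whichever of $N$, $N^a$ is the more convenient. (For rows (5) and (6) one has $H = H^a$ and this step is vacuous.)

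The second step is to describe $N$ and $N^a$ explicitly over $X$, via the isomorphisms $N_o \cong \p/\p_H$ and $N^a_o \cong \p/\p_{H^a} \cong \p_H$ of $K_H$-modules, each being an off-diagonal block of a classical Lie algebra with its natural real, complex or quaternionic structure. This gives: in rows (1)--(3), $X$ is a real or complex Grassmannian ($\Grass_p(\R^{p+q})$ or $\Grass_p(\C^{p+q})$) with tautological subbundle $\tau$ and orthogonal complement $\tau^\perp$, and $N$, $N^a$ are built from $\tau$, $\tau^\perp$ by the relevant natural construction (the normal bundle in row (1) is $\tau \otimes \tau^\perp$, so that $N$ is the tangent bundle of $\RP^p$ when $q = 1$; in rows (2)--(3) it is the off-diagonal summand of the second exterior power, \resp symmetric power, of $\tau \oplus \tau^\perp$ over $\C$); in row (4), $X = \SU(2p)/\Sp(p)$ and $N$ is cut out of the second exterior power of its tautological complex bundle; in rows (5)--(6), $X$ is a product of copies of the Hermitian symmetric space $\SO(2n)/\U(n)$, \resp of the symmetric space $\SU(2n)/\Sp(n)$, and $N$ is the realification of an external tensor product of their tautological complex bundles. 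These identifications are routine once the embeddings are pinned down, but the real/complex/quaternionic structures have to be tracked carefully, as they determine whether $\KO$, $\KU$ or $\KSp$ is the right home for the computation.

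The third step is to obstruct fiber-homotopical triviality of $S(N)$, in increasing order of strength. In a handful of small-rank subcases a Stiefel--Whitney class or the Euler class of $N$ is already nonzero, and, being an invariant of fiber homotopy type (unlike the Chern or Pontryagin classes), it settles those cases. The main tool, though, is the $J$-homomorphism: fiber-homotopical triviality of $S(N)$ forces the reduced class $[N] - \rk N$ to lie in the kernel of $J \colon \widetilde{\KO}(X) \to \widetilde J(X)$, and the plan is to compute $\widetilde J(X)$ — equivalently, the quotient of $\widetilde{\KO}(X)$ (or of $\widetilde{\KU}(X)$, $\widetilde{\KSp}(X)$, via the maps $\rr,\cc,\qq$) by the relevant Adams-operation relations — using the solution of the vector fields on spheres problem and Adams' analysis of the groups $\widetilde J$, made into exact computations by Quillen's resolution of the Adams conjecture. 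Concretely: determine $\widetilde{\KO}(X)$ together with its $\psi^k$-action for the Grassmannians and for $\SO(2n)/\U(n)$, $\SU(2n)/\Sp(n)$ and their products; express $[N]$ there; and check that its class in $\widetilde J(X)$ is nonzero for all $(p,q)$ satisfying the stated conditions. For row (1) with $q = 1$ this recovers the statement — a consequence of Adams' solution of the Hopf invariant one problem — that the tangent sphere bundle of $\RP^p$ is fiber-homotopically trivial only for $p \in \{1,3,7\}$; for $q \geq 2$ nothing needs to be excluded. In the few remaining cases where $S(N)$ is in fact fiber-homotopically trivial (for example $\Sp(4,\R)/(\Sp(2,\R)\times\Sp(2,\R))$), the conclusion follows instead from Kobayashi's real-rank criterion \cite{Kob89}; the triples excluded in the table are precisely those where no such obstruction applies to both of $G/H$ and $G/H^a$, which is accounted for by the existence of standard compact quotients there, in accordance with Conjecture~\ref{conj:ExistenceStandardQuotients}.

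The main obstacle is this last package of $\KO$- and $\KSp$-theory computations: identifying $\widetilde{\KO}$ and $\widetilde{\KSp}$ of higher-rank Grassmannians and of $\SO(2n)/\U(n)$, $\SU(2n)/\Sp(n)$ and their products — which carry both free and torsion parts — together with the action of the Adams operations, and then extracting from the answer the exact arithmetic condition on $(p,q)$ in each row. A second, pervasive difficulty is the bookkeeping of the real, complex and quaternionic structures on $N$ and of the maps $\rr,\cc,\qq$ between the three $K$-theories, since this dictates which Adams eigenvalues and which prime-power exponents enter, hence the precise exceptional set. These computations are carried out in Sections~\ref{s:IndefiniteGrassmannians} and~\ref{s:ComputationsKTheory}.
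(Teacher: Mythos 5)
Your overall strategy is the paper's: reduce $G/H$ and $G/H^a$ to a single computation via the triviality of $N \oplus N^a$ (this is exactly Proposition~\ref{prop:AssociatedJTilde}), identify $N$ as an associated bundle over a Grassmannian or over $\SO(2n)/\U(n)$, $\SU(2n)/\Sp(n)$, and obstruct stable fiber-homotopy triviality through the reduced $J$-group using Adams operations and Quillen's theorem. The bundle identifications and the shape of the $K$-theoretic computations you outline match Sections~\ref{s:IndefiniteGrassmannians} and~\ref{s:ComputationsKTheory}.

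There is, however, a concrete gap in your treatment of the residual cases where $\widetilde{J}(N)=0$ in the given presentation. These are exactly $(p,q)=(1,1)$ in row~(3) and $p=2$ in row~(4), and neither of your fallbacks works there. For $\Sp(4,\R)/(\Sp(2,\R)\times\Sp(2,\R))$ the rank criterion does dispose of $G/H$ (both ranks equal $2$), but the theorem also asserts the non-existence of compact quotients of $G/H^a = \Sp(4,\R)/\U(1,1)$, and $\rk_\R \U(1,1) = 1 < 2$, so the Calabi--Markus criterion says nothing; moreover, by your own Step~1 the $J$-obstruction vanishes for $N^a$ exactly when it vanishes for $N$, so you cannot recover $G/H^a$ that way either. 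For $\SL(4,\C)/\Sp(4,\C)$ one has $X=\SU(4)/\Sp(2)=\Sph^5$ and $N=T\Sph^5$, which is stably trivial, with all Stiefel--Whitney classes and the Euler class zero, and $\rk_\R\Sp(4,\C)=2<3=\rk_\R\SL(4,\C)$; none of your obstructions applies. The missing idea is that the $\widetilde{J}$-obstruction is \emph{not} invariant under replacing $G$ by a locally isomorphic group: the maximal compact subspace $X$ changes by a finite cover, and the obstruction can revive downstairs. The paper exploits the exceptional isomorphisms $\mathfrak{sp}(4,\R)\simeq\mathfrak{so}(2,3)$ and $\mathfrak{sl}(4,\C)\simeq\mathfrak{so}(6,\C)$ to re-present these two spaces as $\OO(2,3)/(\OO(2,2)\times\OO(1))$ (where $X=\RP^2$ and $N=\Taut_{\RP^2}^{\oplus 2}$ has $\widetilde{J}(N)\neq 0$ since $j_{\RP^2}=4\nmid 2$) and as $\OO(6,\C)/(\OO(5,\C)\times\OO(1,\C))$ (where $X=\RP^5$ and $N=T\RP^5$ has $\widetilde{J}(N)=6\,\widetilde{J}(\Taut_{\RP^5})\neq 0$ since $j_{\RP^5}=8\nmid 6$), then invokes Theorem~\ref{thm:IndefiniteGrassmannianRCH} and row~(1) respectively. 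You need this step, or some substitute for it. A smaller inaccuracy: the excluded triple $(3,1)$ in row~(3) is \emph{not} accounted for by standard compact quotients --- $\Sp(8,\R)/(\Sp(6,\R)\times\Sp(2,\R))$ has none (equal ranks), and for $\Sp(8,\R)/\U(3,1)$ the existence question is open; it is excluded because the method fails there, not because quotients exist.
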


\begin{thm} \label{thm:NonSymmetric}
Let $G/H$ be one of the reductive homogeneous spaces 
in the following table, where $p,q \geq 1$ 
are integers satisfying the specified conditions.
Then $G/H$ does not admit compact quotients.
\begin{center}
\begin{longtable}{c|c|c|c}
& $G$ & $H$ & Conditions \\ \hline
\textup{(1)} & 
$\SL(p+q, \R)$ & $\SL(p, \R)$ & 
$p \geq 2,\ (p,q) \neq (3,1), (7,1)$ \\
\textup{(2)} & 
$\OO(p+q, \C)$ & $\OO(p, \C)$ & 
$p \geq 2,\ (p,q) \neq (3,1), (7,1)$ \\
\textup{(3)} & 
$\SL(p+q, \C)$ & $\SL(p, \C)$ & 
$p \geq 2$ \\
\textup{(4)} & 
$\OO^\ast(2p+2q)$ & $\OO^\ast(2p)$ & 
$p \geq 2,\ (p,q) \neq (3,1)$ \\
\textup{(5)} & 
$\Sp(2p+2q, \R)$ & $\Sp(2p, \R)$ & 
$(p,q) \neq (1,1)$ \\
\textup{(6)} & 
$\Sp(2p+2q, \C)$ & $\Sp(2p, \C)$ & 
--- \\
\textup{(7)} & 
$\SL(p+q, \Ha)$ & $\SL(p, \Ha)$ & 
$p \geq 2$
\end{longtable}
\end{center}
\end{thm}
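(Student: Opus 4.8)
By Theorem~\ref{thm:MainTheorem} it suffices, for each row of the table, to show that the sphere bundle $S(N)$ associated with the normal bundle $N$ of $X = K/K_H$ in $G/H$ is \emph{not} fiber-homotopically trivial (each $G/H$ is connected, after passing if necessary to connected groups with the same Lie algebras, so that the theorem applies). The first step is to make $N$ explicit. Fixing a Cartan decomposition $\g = \kk \oplus \p$ of $\g = \Lie(G)$ adapted to $\h = \kk_H \oplus \p_H$, together with an $\Ad(K_H)$-invariant inner product, one identifies the fibre of $N$ over $o$ with the $K_H$-module $\p \ominus \p_H$, so that $N \cong K \times_{K_H} (\p \ominus \p_H)$ over $X$. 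In every row $K$ is $\OO(p+q)$, $\U(p+q)$ or $\Sp(p+q)$ and $K_H$ is the block-diagonal subgroup $\OO(p)$, $\U(p)$ or $\Sp(p)$, so that $X$ is the Stiefel manifold $V_q(\K^{p+q})$ of orthonormal $q$-frames in $\K^{p+q}$ with $\K \in \{\R, \C, \Ha\}$, which is a sphere ($\Sph^{p}$, $\Sph^{2p+1}$ or $\Sph^{4p+3}$) when $q = 1$. A short computation of $\p \ominus \p_H$ as a $K_H$-module then shows that, up to a trivial summand, $N \cong q\beta \oplus \varepsilon^{k}$ for some $k \geq 0$, where $\beta = K \times_{K_H} \K^{p}$ is the canonical $\K$-plane bundle over $X$ (the orthogonal complement of the tautological frame bundle); for $q = 1$ and $\K = \R$ one has $\beta = T\Sph^{p}$.

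The second step is to decide whether $S(q\beta \oplus \varepsilon^{k})$ is fiber-homotopically trivial, a classical problem about the underlying spherical fibration over a Stiefel manifold. Following Sections~\ref{s:IndefiniteGrassmannians} and~\ref{s:ComputationsKTheory}, the plan is to attack it with three invariants of increasing strength: the Stiefel--Whitney classes, the Euler class, and --- most powerfully --- the obstruction to lying in the image of the $J$-homomorphism, detected through the Adams operations in $\KO$-, $\KU$- or $\KSp$-theory, in the form due to Adams and completed by Quillen. When $q = 1$ the question becomes the classical determination of the fiber homotopy type of the unit sphere bundle of $T\Sph^{p}$ over $\Sph^{p}$ (for $\K = \R$), and of its complex and quaternionic analogues over $\Sph^{2p+1}$ and $\Sph^{4p+3}$: these are fiber-homotopically trivial exactly in the ``Hopf invariant one'' dimensions, which --- together with the degenerate cases, where $H$ is trivial or compact or where $\beta$ happens to be trivial for low-dimensional reasons --- accounts precisely for the conditions excluded in the table, in particular for the pairs $(3,1)$ and $(7,1)$ in rows~(1)--(2). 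When $q \geq 2$, one restricts $N$ to a sphere or to a smaller Stiefel manifold inside $X$, or invokes Adams' computation of the groups $J$ of Stiefel manifolds directly, and checks that the obstruction persists.

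The crux is neither the reduction to Theorem~\ref{thm:MainTheorem}, which is formal, nor the identification of $N$, which is routine representation theory, but the $\Ktheory$-theoretic step: one must prove that the relevant Adams-operation invariant of $q\beta$ is nonzero for \emph{every} pair in the table and that it vanishes \emph{exactly} in the excluded cases. This calls for a precise description of $\beta$ as an element of the $\KO$-, $\KU$- or $\KSp$-theory of the ambient Stiefel manifold, together with Adams' and Quillen's determination of the image of the $J$-homomorphism on spheres and on Stiefel manifolds; the excluded cases are then forced by the accidental structures underlying the Hopf constructions over $\R$, $\C$, $\Ha$ and the octonions. The technically heaviest parts are expected to be the real rows~(1)--(2), where the relevant phenomenon is genuinely unstable because the fibre dimension is smaller than $\dim X$, and the quaternionic rows~(6)--(7), where the $\KSp$-theoretic computations for quaternionic Stiefel manifolds are the least standard.
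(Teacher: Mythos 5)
Your reduction to Theorem~\ref{thm:MainTheorem} and your identification of $X=K/K_H$ are correct for the literal pairs $(G,H)$ in the table, but this is precisely where the strategy breaks down, and the gap is fatal rather than technical. Over the Stiefel manifold $V_q(\K^{p+q})=K/K_H$ the bundle $\beta$ you introduce is the orthogonal complement of the tautological $q$-plane bundle, which is \emph{trivialized} by the tautological frame; hence $\beta\oplus\underline{\K}^{\oplus q}$ is trivial, $\widetilde{J}(\beta)=0$, and therefore $\widetilde{J}(N)=0$ in every row. All of the stable machinery you propose (Adams operations, the image of $J$ on spheres and Stiefel manifolds) detects exactly nothing here. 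The unstable obstruction fares no better in general: the $K_H$-module $\p\cap\q$ always contains trivial summands coming from the $\Sym$, $\Lambda^2$, $\Herm$, or trace parts on which $K_H$ acts trivially, and in the simplest case $\SL(p+1,\R)/\SL(p,\R)$ one gets $N\simeq T\Sph^p\oplus\underline{\R}$, which is a \emph{trivial} bundle for every $p$ --- so $S(N)$ is fiber-homotopically trivial and Theorem~\ref{thm:MainTheorem}, applied directly, yields no conclusion even for $(p,q)=(2,1)$. (The one case where your unstable ``Hopf invariant one'' argument genuinely works is row~(2) with $q=1$, i.e.\ the complex sphere, where $N=T\Sph^p$ with no trivial summand.)

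The missing idea is a preliminary enlargement of the isotropy group: a discrete group acting properly discontinuously and cocompactly on $\SL(p+q,\R)/\SL(p,\R)$ also does so on $\GL(p+q,\R)/(\GL(p,\R)\times\OO(q))$, and similarly in the other rows one adjoins a compact factor $\OO(q)$, $\U(q)$, or $\Sp(q)$. After this replacement the compact base becomes the \emph{Grassmannian} $\Grass_\K(p,q)$ rather than a Stiefel manifold, the normal bundle becomes an expression in $\Taut_\K(p,q)$ and $\Taut_\K^\perp(p,q)$ (plus $\Sym^2$, $\Lambda^2$, $\Herm$, or $\SkewHerm$ of $\Taut$), and these classes are genuinely nonzero in the reduced $J$-group. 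One then restricts to the projective subspaces $\KP^p$ and $\KP^q$ inside the Grassmannian, uses the triviality of bundles induced from representations that extend to $K$ to rewrite the second restriction, and invokes the Adams/Adams--Walker/Sigrist--Suter computations of $j_{\KP^n}$ together with the Adams conjecture to kill the quadratic terms. The excluded pairs $(3,1)$ and $(7,1)$ then arise from the divisibility conditions $j_{\RP^p}\mid(p+1)$ and $j_{\RP^q}\mid(p-1)$, not directly from the Hopf invariant one theorem over $\Sph^p$. Without the enlargement step, no amount of $\Ktheory$-theoretic effort over the Stiefel manifold can produce the obstruction.
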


\begin{thm} \label{thm:IndefiniteGrassmannianRCH}
Let $p,q,q' \geq 1$ be integers, and let $n = \max \{ q,q' \}$. 
\begin{enumerate}[label = {\upshape (\arabic*)}]
  \item If $\OO(p,q+q')/(\OO(p,q) \times \OO(q'))$ 
admits compact quotients, then $p$ is divisible by $2^{\nu(n)}$. 
  \item If $\U(p,q+q')/ (\U(p,q) \times \U(q'))$ 
admits compact quotients, then $p$ is divisible by 
\[
\prod_{p\text{ prime } \leq n+1} p^{\xi_p(n)}.
\]
  \item If $\Sp(p,q+q')/ (\Sp(p,q) \times \Sp(q'))$ 
admits compact quotients, then $p$ is divisible by
\[
\prod_{p \text{ prime } \leq 2n+1} p^{\xi_p(2n)} \qquad \left( \text{\resp} \frac{1}{2} \prod_{p \text{ prime } \leq 2n+1} p^{\xi_p(2n)} \right)
\]
whenever $\xi_2(2n) = 2n+1$ (\resp $\geq 2n+2$).
\end{enumerate}
Here, for $n \in \N$, we set 
\begin{equation} \label{eqn:nu}
\nu(n) = 
\begin{cases}
\lfloor n/2 \rfloor & (n \equiv 0,6,7 \mod 8), \\
\lfloor n/2 \rfloor + 1 & (n \equiv 1,2,3,4,5 \mod 8), \\
\end{cases}
\end{equation}
and for each prime~$p$, denoting by $v_p(r) \in \N$ the $p$-adic valuation of~$r$, we set
\begin{equation} \label{eqn:xi_p}
\xi_p(n) = \max \left\{ r + v_p(r) \ \middle| 
\ 1 \leq r \leq \left\lfloor \frac{n}{p-1} \right\rfloor \right\}.
\end{equation}
\end{thm}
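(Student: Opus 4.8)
Throughout, assume $G/H$ admits compact quotients, so that $S(N)$ is fiber-homotopically trivial by Theorem~\ref{thm:MainTheorem}; the plan is to turn this into the stated divisibility of~$p$ by making the pair $(X,N)$ explicit and then restricting $N$ to a projective subspace of~$X$ on which the relevant $J$-group is classically known. I describe case~(1); cases~(2) and~(3) will be handled identically, with $\R$ replaced by $\C$, \resp $\Ha$. In case~(1), a maximal compact subgroup of $G=\OO(p,q+q')$ is $K=\OO(p)\times\OO(q+q')$, and then $K_H=K\cap H$ equals $\OO(p)\times\OO(q)\times\OO(q')$, the last two factors block-embedded in $\OO(q+q')$; hence $X=K/K_H$ is the real Grassmannian $\Grass_{q'}(\R^{q+q'})$. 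To pin down the normal bundle I would compute the isotropy representation: writing $\g=\kk\oplus\p$ for the Cartan decomposition and $\g=\h\oplus\q$ for the decomposition of $\g$ into $\h$ and an invariant complement $\q$, one finds $\q\cap\p\cong\R^{p}\otimes\R^{q'}$ as a $K_H$-module, with the $\OO(q)$-factor acting trivially; since moreover the $\OO(p)$-factor is common to $K$ and $K_H$ and acts through its standard representation on $\R^p$, the bundle $N=K\times_{K_H}(\q\cap\p)$ is isomorphic to the direct sum of $p$ copies of the tautological $q'$-plane bundle $\tau_{q'}$ over $\Grass_{q'}(\R^{q+q'})$ (equivalently, of the orthogonal complement of the tautological $q$-plane bundle). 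In cases~(2) and~(3) the same computation identifies $X$ with $\Grass_{q'}(\C^{q+q'})$, \resp $\Grass_{q'}(\Ha^{q+q'})$, and $N$ with the direct sum of $p$ copies of the realification of the tautological $\C$-, \resp $\Ha$-linear, $q'$-plane bundle.

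Next I would reduce to a projective subspace. Put $n=\max\{q,q'\}$. Inside $\Grass_{q'}(\R^{q+q'})=\Grass_q(\R^{q+q'})$, fixing a $(q'-1)$-plane and letting the $q'$-plane range over those containing it embeds $\RP^{q}$, along which $\tau_{q'}$ restricts to $\R^{q'-1}_{\triv}\oplus\gamma$ with $\gamma$ the tautological line bundle; symmetrically, fixing a $(q-1)$-plane embeds $\RP^{q'}$, along which $[\tau_{q'}]=(q'+1)-[\gamma]$ in $\KO(\RP^{q'})$. Fiber-homotopical triviality of a sphere bundle is inherited by restrictions (Fact~\ref{fact:Dol63}), so $S(N|_{\RP^{n}})$ is fiber-homotopically trivial, taking the one of the two embeddings whose domain is $\RP^{\max\{q,q'\}}$. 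The trivial summands then drop out and $[N|_{\RP^{n}}]-\rk N=\pm\,p\,([\gamma]-1)$ in $\widetilde{\KO}(\RP^{n})$, so this says
\[
p\cdot J([\gamma]-1)=0 \qquad\text{in }\widetilde J(\RP^{n}).
\]
The same argument in cases~(2) and~(3), with $\gamma_\C$, \resp $\gamma_\Ha$, and with realification $r$ inserted, yields $p\cdot J\bigl(r([\gamma_\C]-1)\bigr)=0$ in $\widetilde J(\CP^{n})$, \resp $p\cdot J\bigl(r([\gamma_\Ha]-1)\bigr)=0$ in $\widetilde J(\HP^{n})$.

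It then remains to invoke the known orders of these elements. By Adams' computation of $\widetilde{\KO}(\RP^n)$ together with the Adams conjecture (Quillen), $\widetilde J(\RP^n)\cong\Z/2^{\nu(n)}$ is cyclic, generated by $[\gamma]-1$; hence the displayed relation means $2^{\nu(n)}\mid p$, which, $\nu$ being non-decreasing, is exactly~(1). In the complex, \resp quaternionic, case the order of $r([\gamma_\C]-1)$ in $\widetilde J(\CP^n)$, \resp of $r([\gamma_\Ha]-1)$ in $\widetilde J(\HP^n)$, is, by the $e$-invariant computations of Atiyah--Todd and Adams, the Atiyah--Todd number $M_n=\prod_{p\text{ prime}\le n+1}p^{\xi_p(n)}$, \resp the number $M_{2n}$ or $\tfrac{1}{2}M_{2n}$ according to whether $\xi_2(2n)=2n+1$ or $\xi_2(2n)\ge 2n+2$; monotonicity of these numbers and the least common multiple over the two embeddings $\RP^q,\RP^{q'}$ (and their complex and quaternionic analogues) then give~(2) and~(3).

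The step I expect to be the main obstacle is the last one in the quaternionic case: determining the order of $r([\gamma_\Ha]-1)$ in $\widetilde J(\HP^n)$ and, in particular, the factor-of-$2$ dichotomy governed by $\xi_2(2n)$. This does not reduce to a direct citation; it will require relating $\HP^n$ to $\CP^{2n+1}$ via the fibration with fibre $S^2$ and a careful $2$-primary comparison of the (real or symplectic) $J$-group of $\HP^n$ with the Atiyah--Todd number. A secondary point that needs care is the isotropy computation in Step~1: one must check that the $\OO(q)$-, \resp $\U(q)$-, \resp $\Sp(q)$-factor really acts trivially on $\q\cap\p$, so that $N$ is genuinely a multiple of a tautological bundle, and verify that the two projective embeddings together produce precisely the exponent $n=\max\{q,q'\}$, neither more nor less.
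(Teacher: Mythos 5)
Your proposal is correct and follows essentially the same route as the paper: identify $N$ as $p$ copies of the tautological bundle over the Grassmannian $K/K_H$, restrict along the two projective embeddings $\RP^q,\RP^{q'}$ (\resp their $\C$- and $\Ha$-analogues) to reduce to the order of $J(\Taut_{\KP^n})$ in $\widetilde J(\KP^n)$ with $n=\max\{q,q'\}$, and invoke the classical computations of that order. The one step you flag as a potential obstacle, the order of $r([\gamma_\Ha]-1)$ in $\widetilde J(\HP^n)$ including the factor-of-$2$ dichotomy in $\xi_2(2n)$, is precisely the result of Sigrist--Suter \cite{SS73}, which the paper cites directly, so no new argument is needed there.
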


The homogeneous spaces considered in 
Theorem~\ref{thm:IndefiniteGrassmannianRCH} 
are indefinite Grassmannians over $\R$, $\C$, 
or the ring $\Ha$ of quaternions: see Section~\ref{ss:IndefGrassm}.

\subsection{Some important special cases}

We now discuss some cases in more details.

\subsubsection{Pseudo-Riemannian hyperbolic spaces} \label{sss:Hpq}

The (real) \emph{pseudo-Riemannian hyperbolic space} 
of signature $(p,q)$ is the reductive symmetric space 
$\OO(p,q+1) / (\OO(p,q) \times \OO(1))$, 
which can be realized as the open set
\[
\Hyp^{p,q}_\R = \{[\mathbf x] \in \RP^{p+q} \mid 
x_1^2 + \ldots + x_p^2 - x_{p+1}^2 - \ldots - x_{p+q+1}^2 < 0 \}
\]
in projective space. 
It carries an $\OO(p,q+1)$-invariant pseudo-Riemannian metric 
of signature $(p,q)$ and constant negative sectional curvature. 
In this case, the maximal compact subspace $X$ is the real projective subspace $\RP^q$
and its normal bundle $N$ is $\Taut_{\RP^q}^{\oplus p}$, 
the direct sum of $p$ copies of the tautological line bundle 
$\Taut_{\RP^q}$.
The (homotopical) triviality of $\Taut_{\RP^q}^{\oplus p}$ 
was investigated by Adams in his celebrated paper \cite{Ada62}, 
where he characterized it by the \emph{Hurwitz--Radon condition}.
Combining Theorem~\ref{thm:MainTheorem} with Adams's characterization, 
we obtain the following, which is a special case of 
Theorem~\ref{thm:IndefiniteGrassmannianRCH} 
(see \eqref{eqn:nu} for the notation $\nu(q)$).

\begin{thm} \label{thm:Hpq}
Let $p,q \geq 1$ be integers.
If the pseudo-Riemannian hyperbolic space $\Hyp^{p,q}_\R$ 
admits compact quotients, then $p$ is divisible by $2^{\nu(q)}$.
\end{thm}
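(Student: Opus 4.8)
The plan is to deduce this from Theorem~\ref{thm:MainTheorem} together with the computation already recalled in \S\ref{sss:Hpq}, namely that for $G/H = \OO(p,q+1)/(\OO(p,q)\times\OO(1))$ the maximal compact subspace is $X = \RP^q$ and its normal bundle is $N = \Taut_{\RP^q}^{\oplus p}$. Thus, by Theorem~\ref{thm:MainTheorem}, if $\Hyp^{p,q}_\R$ admits compact quotients then the sphere bundle $S(\Taut_{\RP^q}^{\oplus p})$ is fiber-homotopically trivial. The whole point is to translate this fiber-homotopical statement into the arithmetic divisibility $2^{\nu(q)}\mid p$, and for that I would invoke Adams's solution of the vector field problem \cite{Ada62}.

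First I would recall the standard dictionary between fiber-homotopical triviality of $S(\xi)$ for a vector bundle $\xi$ of rank $p$ over $\RP^q$ and the classical question of the existence of a fiberwise-independent family, or equivalently of the triviality of a Thom/James intrinsic join construction. Concretely, for $\xi = \Taut_{\RP^q}^{\oplus p}$ one has $S(\xi)$ fiber-homotopically equivalent to the trivial $\Sph^{p-1}$-bundle if and only if the associated stunted projective space $\RP^{p+q}/\RP^{p-1}$ is reducible (or, dually, $\RP^{q}_{-p}$ is coreducible) — this is the Atiyah/James reformulation of Adams's result. Adams proved in \cite{Ada62} that this reducibility holds precisely when $p$ is a multiple of the Hurwitz--Radon number, and a direct bookkeeping identifies the exact power of $2$ dividing the Hurwitz--Radon number of $q$ with $2^{\nu(q)}$, where $\nu$ is the function in \eqref{eqn:nu}. (I would separate off the odd-prime part: it contributes nothing here because $\Taut_{\RP^q}$ has order a power of $2$ in $\widetilde{\KO}(\RP^q)$, so the only obstruction is $2$-primary.) Chaining these equivalences with Theorem~\ref{thm:MainTheorem} gives: compact quotients exist $\implies$ $S(\Taut_{\RP^q}^{\oplus p})$ fiber-homotopically trivial $\implies$ $2^{\nu(q)}\mid p$.

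The main obstacle — really the only nontrivial input beyond citing Theorem~\ref{thm:MainTheorem} — is the precise identification of the $2$-adic valuation of the Hurwitz--Radon bound with $\nu(q)$, and the careful statement of which direction of Adams's theorem is needed. Adams gives an iff for \emph{homotopical} triviality of $\Taut_{\RP^q}^{\oplus p}$ (equivalently existence of $p-1$ linearly independent vector fields on $\Sph^{q}$ twisted appropriately); I only need the ``only if'' direction, and I need it at the level of \emph{fiberwise} homotopy triviality of the sphere bundle, which is formally weaker than stable triviality of the bundle but, by Dold's theorem (Fact~\ref{fact:Dol63}) and the James intrinsic-join analysis, is equivalent to the relevant coreducibility. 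I would phrase the argument so that it only uses the implication ``$S(N)$ fiber-homotopically trivial $\implies$ $\RP^{p+q}/\RP^{p-1}$ is stably reducible'' together with Adams's nonexistence half, which is exactly the robust part of \cite{Ada62} and requires no genericity hypotheses on $p$ or $q$. Since this theorem is a literal special case ($q'=1$, whence $n=\max\{q,1\}=q$) of Theorem~\ref{thm:IndefiniteGrassmannianRCH}(1), an alternative and shorter route is simply to note that the real indefinite Grassmannian $\OO(p,q+1)/(\OO(p,q)\times\OO(1))$ is $\OO(p,q+q')/(\OO(p,q)\times\OO(q'))$ with $q'=1$, and to cite that theorem; I would include both the standalone Adams-based argument and this one-line reduction for the reader's convenience.
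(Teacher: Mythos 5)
Your proposal is correct and is essentially the paper's argument: the paper deduces Theorem~\ref{thm:Hpq} exactly as your ``one-line reduction'' suggests, namely as the case $q'=1$ of Theorem~\ref{thm:IndefiniteGrassmannianRCH}(1), which in turn follows from the stable version of the main theorem (Corollary~\ref{cor:MainStable}: $\widetilde{J}(N)=0$, with $N=\Taut_{\RP^q}^{\oplus p}$, hence $p\cdot\widetilde{J}(\Taut_{\RP^q})=0$) together with Adams's computation $j_{\RP^q}=2^{\nu(q)}$ (Fact~\ref{fact:OrderTaut}). Your standalone route through coreducibility of stunted projective spaces is the same Adams input in different packaging; working with the order of $\widetilde{J}(\Taut_{\RP^q})$ in the reduced $J$-group, as the paper does, sidesteps the unstable James/Atiyah bookkeeping you flag as the delicate point.
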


In other words, $\Hyp^{p,q}_{\R}$ 
may admit compact quotients only for the following values of $(p,q)$:
\begin{center}
\begin{longtable}{|l||c|c|c|c|c|c|c|c|c|c|} \hline
$p$ & $2n$ & $4n$ & $8n$ & $16n$ & $32n$ & $64n$ & $128n$ & $256n$ & \dots \\ \hline
$q$ & $1$ & $2, 3$ & $4, 5, 6, 7$ & $8$ & $9$ & $10, 11$ & $12, 13, 14, 15$ & $16$ & \dots \\ \hline
\end{longtable}
\end{center}

Previously, the non-existence of compact quotients of $\Hyp^{p,q}_{\R}$ 
was known in the following two cases:
\begin{itemize}
\item $1 \leq p \leq q$ 
(Calabi--Markus \cite[Th.\,1]{CM62}, Wolf \cite[Th.\,1]{Wol62}, 
Kobayashi \cite[Cor.\,4.4]{Kob89}). 
\item $p$ odd 
(Tholozan \cite[Th.\,5]{Tho15+}, Morita \cite[Cor.\,6.5]{Mor17}, 
improving earlier results by Kulkarni \cite[Cor.\,2.10]{Kul81} 
and Benoist \cite[\S\,1.2, Cor.\,1]{Ben96}).
\end{itemize}
Theorem~\ref{thm:Hpq} recovers these non-existence results 
and is significantly better for large $q$. 
The smallest signatures where our result is new are: 
\[
(p,q) = (6,2), (6,3), (6,4), (6,5), (10,2), (10,3), \dots. 
\]

On the other hand, standard compact quotients of $\Hyp^{p,q}_\R$ 
are known to exist in the following cases:
\begin{itemize}
\item $p=0$ 
(as $\Hyp^{0,q}_\R \simeq \RP^q$ is itself compact),
\item $q=0$ 
(as $\Hyp^{p,0}_{\R}$ is the classical Riemannian hyperbolic space),
\item $q=1$, $p$ even 
(as $L=\U(k,1)$ acts transitively on $\Hyp^{2k,1}_\R$ 
with compact stabilizer \cite[Th.\,6.1]{Kul81}),
\item $q=3$, $p$ divisible by~$4$ 
(as $L=\Sp(k,1)$ acts transitively on $\Hyp^{4k,3}_\R$ 
with compact stabilizer \cite[Th.\,6.1]{Kul81}),
\item $q=7$, $p=8$ 
(as $\OO(8,8)$ contains a subgroup 
$L \simeq \Spin(1,8)$ acting transitively on $\Hyp^{8,7}_\R$ 
with compact stabilizer \cite[Cor.\,5.6]{Kob97}).
\end{itemize}
As a special case of Conjecture~\ref{conj:ExistenceStandardQuotients}, 
Kobayashi \cite{Kob01} conjectured that these are the only values of 
$(p,q)$ for which $\Hyp^{p,q}_{\R}$ admits compact quotients.
After Theorem~\ref{thm:Hpq}, 
the smallest signatures for which this conjecture is still open are: 
\[
(p,q) = (4,2), (8,2), (8,4), (8,5), (8,6), (12,2), \dots. 
\]

\subsection{Complex spheres}

For $n \geq 1$, the \emph{complex sphere} of dimension~$n$ 
is the reductive symmetric space $\OO(n+\nolinebreak 1,\C)/\OO(n,\C)$, 
which can be realized as the affine hypersurface
\[
\Sph^n_{\C} = \{\mathbf z \in \C^{n+1} \mid 
z_1^2 + \ldots + z_{n+1}^2 = 1\}
\]
in~$\C^{n+1}$.
It is known to admit standard compact quotients for $n=1$ 
(as $\OO(1,\C)$ is compact), 
for $n=3$ (as $\Sph^3_{\C}$ is locally isomorphic to the group manifold 
$(G'\times G')/\Diag(G')$ for $G' = \SL(2,\C)$), 
and for $n=7$ (as the subgroup $\Spin(1,7)$ of $\OO(8,\C)$ 
acts transitively on $\Sph^7_{\C}$ 
with compact stabilizer \cite[Th.\,4.2.1]{KY05}).

Here the maximal compact subspace $X$ 
is the $n$-dimensional real sphere $\Sph^n$, 
and its normal bundle $N$ 
is isomorphic to the tangent bundle of~$\Sph^n$, 
which is (homotopically) nontrivial for $n \notin \{ 1,3,7 \}$, 
as proven by Milnor--Spanier \cite{MS60}.
Therefore Theorem~\ref{thm:MainTheorem} yields the following, 
which solves a conjecture of Kobayashi \cite[Conj.\,2.4.2]{KY05}.

\begin{thm} \label{thm:ComplexSpheres}
The complex sphere $\Sph^n_{\C} = \OO(n+1,\C)/\OO(n,\C)$ 
admits compact quotients if and only if $n \in \{ 1,3,7 \}$.
\end{thm}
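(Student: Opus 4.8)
The plan is to read the theorem off from Theorem~\ref{thm:MainTheorem} together with the classical determination of the fiber-homotopy type of the unit tangent bundle of a sphere. Only the ``only if'' direction requires an argument: for $n\in\{1,3,7\}$ the standard compact quotients recalled above already give the ``if'' direction ($\OO(1,\C)$ is finite, so $\Sph^1_\C=\OO(2,\C)/\OO(1,\C)$ has compact quotients by Borel; $\Sph^3_\C$ is locally the group manifold of $\SL(2,\C)$, which carries uniform lattices; and a uniform lattice in $\Spin(1,7)\subset\OO(8,\C)$ yields a compact quotient of $\Sph^7_\C$ since $\Spin(1,7)$ acts transitively with compact stabilizer).

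First I would make the pair $(X,N)$ of Theorem~\ref{thm:MainTheorem} explicit for $G/H=\OO(n+1,\C)/\OO(n,\C)$, realized as the affine quadric $\Sph^n_\C=\{z_1^2+\dots+z_{n+1}^2=1\}\subset\C^{n+1}$ with its linear $G$-action. A maximal compact subgroup of $G=\OO(n+1,\C)$ is $K=\OO(n+1)$; the $G$-stabilizer of the base point $o=(1,0,\dots,0)\in\Sph^n_\C$ is $H=\{1\}\times\OO(n,\C)$; and $K_H=K\cap H=\OO(n)$ is maximal compact in $H$. Hence $X=K/K_H=\OO(n+1)/\OO(n)$ is the round sphere $\Sph^n=\Sph^n_\C\cap\R^{n+1}$, sitting inside $\Sph^n_\C$ as the $K$-orbit of $o$.

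Next I would compute the normal bundle $N$. Fixing $x\in\Sph^n$ and differentiating $f=\sum_i z_i^2$ gives $T_x\Sph^n_\C=\ker(df_x)=\{v\in\C^{n+1}:\sum_i x_iv_i=0\}$; writing $v=a+ib$ with $a,b\in\R^{n+1}$ and using that $x$ is real, this is $x^\perp\oplus i\,x^\perp$, where $x^\perp\subset\R^{n+1}$ denotes the real orthogonal complement and $T_x\Sph^n=x^\perp$. So the normal fiber at $x$ is $i\,x^\perp$, on which the stabilizer $\OO(n)$ of $x$ in $\OO(n+1)$ acts through its standard representation; equivalently, in Cartan-decomposition terms $N=K\times_{K_H}(\p\ominus(\p\cap\h))$ with $\p=i\,\mathfrak{so}(n+1)$ and $\p\cap\h=i\,\mathfrak{so}(n)$, so that $\p\ominus(\p\cap\h)\cong\R^n$ as an $\OO(n)$-module (the standard representation). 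Either way $N\cong\OO(n+1)\times_{\OO(n)}\R^n$ is the tangent bundle of $\Sph^n$, so $S(N)$ is the unit tangent bundle $S(T\Sph^n)$.

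Finally I would conclude. The quotient $\Sph^n_\C$ is connected (a smooth positive-dimensional affine quadric over $\C$), so Theorem~\ref{thm:MainTheorem} applies; if $\Sph^n_\C$ admitted compact quotients it would force $S(T\Sph^n)$ to be fiber-homotopically trivial, and by Milnor--Spanier~\cite{MS60} this happens only for $n\in\{1,3,7\}$ (for even $n$ the Euler class of $T\Sph^n$ equals twice a generator of $H^n(\Sph^n)$, which already obstructs fiber-homotopical triviality; for odd $n\notin\{1,3,7\}$ one invokes their finer argument). The hard part is entirely contained in the inputs we are allowed to assume, namely Theorem~\ref{thm:MainTheorem} and (for the last step) the classical theorem of Milnor--Spanier; the only work specific to this statement is the elementary identification $N\cong T\Sph^n$, so I do not anticipate a genuine obstacle, beyond the innocuous point that $G$ and $H$ are disconnected while only $G/H=\Sph^n_\C$ is required to be connected.
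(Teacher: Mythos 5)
Your argument is correct and is essentially the paper's own proof of this theorem: the ``if'' direction comes from the standard quotients listed for $n\in\{1,3,7\}$, and the ``only if'' direction combines Theorem~\ref{thm:MainTheorem} with the identification $N\cong T\Sph^n$ (which the paper obtains from the general Lemma~\ref{lem:Normal=Tangent} for complex symmetric spaces, i.e.\ multiplication by $i$ carries $\p\cap\q$ to $\kk\cap\q$, exactly as in your Cartan-decomposition remark) and the Milnor--Spanier theorem. Your explicit differentiation of the quadric is just a concrete version of the same computation, so there is nothing to add.
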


The non-existence of compact quotients of $\Sph^n_{\C}$ 
was previously proved for $n$ even by Kobayashi 
\cite[Cor.\,4.4]{Kob89}, 
and for $n \equiv 1$ modulo~$4$ by Benoist 
\cite[\S 1.2, Cor.\,1]{Ben96}.

The symmetric spaces $\Sph^n_{\C} = \OO(n+1,\C)/\OO(n,\C)$ 
were the last complex reductive symmetric spaces 
for which the existence of a compact quotient was not known.
Thus, combining the present result with the older results of 
Kobayashi and Benoist (see \cite[Prop.\,2.4.5]{KY05}), 
we obtain the following.

\begin{coro}
Let $G$ be a complex simple Lie group and 
$H$ the subgroup fixed by a nontrivial holomorphic involution of $G$. 
Then $G/H$ admits compact quotients if and only if 
$G$ and $H$ are locally isomorphic to 
$\SO(8, \C)$ and $\SO(7, \C)$, respectively.
\end{coro}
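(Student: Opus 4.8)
The plan is to obtain the Corollary by combining Theorem~\ref{thm:ComplexSpheres} with the classification of complex symmetric spaces and with obstructions that are either classical or already established in the present paper; the argument is essentially bookkeeping, and I indicate at the end the single point that needs care.

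For the ``if'' direction, recall from the preceding discussion that $\SO(8,\C)/\SO(7,\C)$ is locally isomorphic to the complex sphere $\Sph^7_{\C}$, and that $\Sph^7_{\C}$ carries standard compact quotients: the subgroup $\Spin(1,7) \subset \OO(8,\C)$ acts transitively on $\Sph^7_{\C}$ with compact stabilizer, so every uniform lattice of $\Spin(1,7)$ yields a compact quotient \cite[Th.\,4.2.1]{KY05}. (The existence of compact quotients is insensitive to replacing $(G,H)$ by a locally isomorphic pair.)

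For the ``only if'' direction, the first step is to run through the classification of irreducible complex symmetric spaces $G/H$ with $G$ complex simple, \ie the complexifications of the irreducible Riemannian symmetric spaces of compact type (equivalently, the spaces attached to holomorphic involutions of complex simple Lie algebras; Berger's list). I would then sort the finitely many families so obtained into three groups. First, the families for which the real ranks of $G$ and $H$ coincide, \ie $\rk \mathfrak{h} = \rk \mathfrak{g}$ — this is exactly the case of inner involutions, and includes all symmetric spaces of the exceptional groups $E_7^{\C}, E_8^{\C}, F_4^{\C}, G_2^{\C}$: for these the Calabi--Markus phenomenon in Kobayashi's form \cite[Cor.\,4.4]{Kob89} rules out compact quotients, $G/H$ being noncompact. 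Second, the remaining non-sphere families, namely $\SL(2m,\C)/\Sp(2m,\C)$ with $m \ge 2$, $\SL(n,\C)/\SO(n,\C)$ with $n \ge 3$, the indefinite complex Grassmannians $\OO(p+q,\C)/(\OO(p,\C) \times \OO(q,\C))$ with $q \ge 2$, and $E_6^{\C}/F_4^{\C}$ and $E_6^{\C}/\Sp(8,\C)$: the non-existence of compact quotients for all of these is already known from the obstructions of Kobayashi and Benoist, as collected in \cite[Prop.\,2.4.5]{KY05}, and several of them (the symplectic case and the complex Grassmannians) are in addition reproved by Theorem~\ref{thm:OtherSymmetric}, cases~(1) and~(4). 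Third, the complex spheres $\Sph^n_{\C} = \OO(n+1,\C)/\OO(n,\C)$, for which Theorem~\ref{thm:ComplexSpheres} gives that compact quotients exist precisely when $n \in \{1,3,7\}$.

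It then only remains to observe that among $n \in \{1,3,7\}$ the identity component of $\OO(n+1,\C)$ is simple exactly for $n = 7$: indeed $\mathfrak{so}(2,\C)$ is abelian and $\mathfrak{so}(4,\C) \cong \mathfrak{sl}(2,\C) \oplus \mathfrak{sl}(2,\C)$ is not simple, whereas $\mathfrak{so}(8,\C)$ is simple. Hence the only complex symmetric space $G/H$ with $G$ complex simple admitting compact quotients is $\Sph^7_{\C} = \SO(8,\C)/\SO(7,\C)$, as claimed. The only delicate point in this argument is to check that the three groups above genuinely exhaust Berger's list, and to keep in mind that the complex spheres $\Sph^1_{\C}$ and $\Sph^3_{\C}$ — which do admit compact quotients — fall outside the scope of the statement because $\SO(2,\C)$ and $\SO(4,\C)$ are not simple.
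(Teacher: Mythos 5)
Your proposal is correct and follows essentially the same route as the paper, which deduces the corollary in one line by combining Theorem~\ref{thm:ComplexSpheres} with the older results of Kobayashi and Benoist on all other complex symmetric spaces, as collected in \cite[Prop.\,2.4.5]{KY05}. You merely spell out the case-sorting (inner involutions via Calabi--Markus, the remaining outer non-sphere families via the known obstructions, and the spheres via Theorem~\ref{thm:ComplexSpheres}) that the paper delegates to that reference.
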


\subsection{The homogeneous space 
\texorpdfstring{$\SL(p+q,\R)/\SL(p,\R)$}{SL(p+q,R)/SL(p,R)}}

Given integers $p \geq 2$ and $q \geq 1$, consider the embedding of 
$\SL(p,\R)$ into $\SL(p+q,\R)$ as a diagonal block.
Kobayashi conjectured in 1990 that $\SL(p+q,\R)/\SL(p,\R)$ 
never admits compact quotients for any $p \geq 2$ and $q \geq 1$, 
and this conjecture has attracted a lot of interest since then.
Here, we solve it for all but two values of $(p,q)$, 
and also over the complex numbers and the quaternions:

\begin{thm} \label{thm:SLn/SLm}
Let $\K$ be $\R$, $\C$, or the ring $\Ha$ of quaternions.
For $p \geq 2$ and $q \geq 1$, the homogeneous space 
$\SL(p+q,\K)/\SL(p,\K)$ does not admit compact quotients, 
except possibly for $\K=\R$ and $(p,q) = (3,1)$ or $(7,1)$.
\end{thm}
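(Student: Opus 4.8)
The plan is to deduce Theorem~\ref{thm:SLn/SLm} from Theorem~\ref{thm:MainTheorem} by identifying the relevant maximal compact subspace $X = K/K_H$ inside $G/H = \SL(p+q,\K)/\SL(p,\K)$ and computing the normal bundle $N$, then checking when $S(N)$ fails to be fiber-homotopically trivial. First I would set up the group data: for $\K = \R$, take $G = \SL(p+q,\R)$ with maximal compact $K = \SO(p+q)$, and $H = \SL(p,\R)$ embedded as a block with maximal compact $K_H = \SO(p)$; one must check that $K$ can be chosen so that $K \cap H = \SO(p)$ is maximal compact in $H$, which follows from the standard compatibility of Cartan decompositions with block embeddings. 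Thus $X = \SO(p+q)/\SO(p)$, the real Stiefel manifold $V_q(\R^{p+q})$ of $q$-frames in $\R^{p+q}$. The analogous statements hold for $\K = \C$ (with $\U(p+q)/\U(p) = V_q(\C^{p+q})$) and $\K = \Ha$ (with $\Sp(p+q)/\Sp(p) = V_q(\Ha^{p+q})$).

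Next I would compute the normal bundle $N$ of $X$ in $G/H$. The tangent space of $G/H$ at the basepoint is $\g/\h$, and the tangent space of $X$ is $\kk/\kk_H$; the normal bundle $N$ is the bundle associated to the principal $K_H$-bundle $K \to K/K_H = X$ via the $K_H$-representation $(\g/\h) / (\kk/\kk_H) \cong \p_H^\perp$, where $\p = \p_H \oplus (\text{complement})$ is the relevant piece of the Cartan decomposition. A direct inspection of the block structure of $\mathfrak{sl}(p+q,\K)$ relative to the subalgebra $\mathfrak{sl}(p,\K)$ shows that this complement, as a $K_H$-module, is a sum of copies of the standard representation of $K_H$ on $\K^p$ (the off-diagonal $p \times q$ blocks contribute $q$ copies of $\K^p$ up to the relevant real/complex/quaternionic structure, and the piece inside $\mathfrak{gl}(p,\K)$ not in $\mathfrak{sl}(p,\K)$ is absorbed correctly). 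Hence $N$ is $q$ copies of the tautological $\K$-vector bundle over the Stiefel manifold $V_q(\K^{p+q})$, or a close variant thereof; I would pin down the exact number of copies by a dimension count, $\dim N = \dim(G/H) - \dim X$, and by tracking the $K_H$-action.

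The final step is to show $S(N)$ is not fiber-homotopically trivial for the claimed range of $(p,q)$. Here I would invoke the $K$-theoretic obstructions developed in Sections~\ref{s:IndefiniteGrassmannians}--\ref{s:ComputationsKTheory} (the Adams--Quillen machinery, via $e$-invariants or the $J$-homomorphism), applied to the explicit bundle $N$ over the Stiefel manifold. In fact, the cleanest route is to observe that $\SL(p+q,\K)/\SL(p,\K)$ fibers over $\SL(p+q,\K)/\SL(p+1,\K)$ with fiber a lower-dimensional such space, or better, to compare with the symmetric-space cases already treated: Theorem~\ref{thm:NonSymmetric}(1),(3),(7) lists exactly $\SL(p+q,\K)/\SL(p,\K)$ with the conditions $p \geq 2$ and, for $\K = \R$, $(p,q) \neq (3,1),(7,1)$ — so Theorem~\ref{thm:SLn/SLm} is literally the union of those three lines of Theorem~\ref{thm:NonSymmetric}, and the proof is simply to cite them. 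I expect the main obstacle to be the $\K = \R$, $q=1$ case: there $X = \SO(p+1)/\SO(p) = \Sph^p$ and $N = \Taut^{\oplus 1}$ essentially reduces to the tangent-type bundle of $\Sph^p$, whose fiber-homotopical triviality holds precisely for $p \in \{1,3,7\}$ (after accounting for $p=2$ separately via the Euler class), which is exactly why $(3,1)$ and $(7,1)$ must be excluded and why $(2,1)$, though a potential exception in a naive count, is in fact ruled out. Over $\C$ and $\Ha$ no such exceptions arise because the relevant complex/quaternionic bundles over $\Sph^p$ are detected by Chern/Pontryagin-type data feeding into nontrivial $e$-invariants for all $p \geq 2$.
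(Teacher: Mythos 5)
Your observation that the theorem is literally the union of lines (1), (3), (7) of Theorem~\ref{thm:NonSymmetric} is correct, and that is exactly how the paper derives it. But your sketch of how those cases are actually proved contains a genuine gap, and it is the central one. If you apply Theorem~\ref{thm:MainTheorem} \emph{directly} to $G/H=\SL(p+q,\K)/\SL(p,\K)$, as your first two paragraphs do, then $X=K/K_H$ is indeed a Stiefel-type manifold ($\SO(p+q)/\SO(p)$, $\SU(p+q)/\SU(p)$, or $\Sp(p+q)/\Sp(p)$) and $N$ is $q$ copies of the bundle $F=K\times_{K_H}\K^p_{\mathrm{std}}$ plus a trivial summand. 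The problem is that $F$ is the orthogonal complement of the tautological $q$-frame bundle inside $\underline{\K}^{p+q}$, and the tautological frame bundle over a Stiefel manifold is canonically trivialized by the frame itself; hence $F$ is \emph{stably trivial}, $\widetilde{J}(N)=0$, and the entire obstruction of Theorem~\ref{thm:MainTheorem}/Corollary~\ref{cor:MainStable} evaporates. (Worse, for $\K=\R$ and $q=1$ one gets $N=T\Sph^p\oplus\underline{\R}$, which is honestly trivial for \emph{every} $p$ -- your claim that this "reduces to the tangent-type bundle of $\Sph^p$, whose fiber-homotopical triviality holds precisely for $p\in\{1,3,7\}$" is off: that dichotomy concerns $T\Sph^p$ itself, not its stabilization, and in any case the relevant base space is not the sphere.) So the route you outline proves nothing.

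The idea you are missing is the paper's preliminary reduction: a group acting properly discontinuously and cocompactly on $\SL(p+q,\K)/\SL(p,\K)$ also does so on the larger homogeneous space $\GL(p+q,\K)/(\GL(p,\K)\times U)$, where $U=\OO(q)$, $\U(q)$, or $\Sp(q)$. For this enlarged pair, $X$ becomes the \emph{Grassmannian} $\Grass_\K(p,q)$ rather than the Stiefel manifold, and the normal bundle becomes $(\Taut^\perp_\K(p,q)\otimes\Taut_\K(p,q))\oplus \Sym^2\Taut_\K(p,q)$ (with $\Sym^2$ replaced by $\Herm$ over $\C$ and $\Ha$). Over the Grassmannian the tautological bundle is $J$-nontrivial, and restricting $N$ to $\RP^p=\Grass_\R(p,1)$ forces $j_{\RP^p}\mid p+1$ (hence $p\in\{3,7\}$), while a second restriction to $\RP^q=\Grass_\R(1,q)$ -- using that $\Sym^2$ of the full standard representation extends to $K$ and Lemma~\ref{lem:HomogeneousBundleTrivial} -- forces $j_{\RP^q}\mid p-1$ (hence $q=1$); your proposal has no mechanism for this second constraint, which is what kills $(3,q)$ and $(7,q)$ for $q\geq 2$. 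Over $\C$ and $\Ha$ the analogous restrictions, combined with Adams operations and the Adams conjecture to dispose of the $\psi^2$ terms, leave no exceptions. Without the enlargement of $H$, none of this computation is available.
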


On the other hand, the first and third authors proved 
in the independent paper \cite[Cor.\,1.16]{KT24+} that 
$\SL(2k,\R)/\SL(2k-1,\R)$ does not admit compact quotients for 
$k \geq 2$.
We therefore obtain a full answer to Kobayashi's conjecture, 
which works also over the complex numbers and the quaternions.

\begin{coro}
Let $\K$ be $\R$, $\C$, or the ring $\Ha$ of quaternions.
For $p \geq 2$ and $q \geq 1$, the homogeneous space 
$\SL(p+q, \K)/\SL(p, \K)$ never admits compact quotients.
\end{coro}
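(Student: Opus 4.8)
The plan is to deduce the corollary by combining Theorem~\ref{thm:SLn/SLm} with the result of \cite{KT24+} quoted immediately above, so that the only cases requiring separate attention are $\K = \R$ and $(p,q) \in \{(3,1),(7,1)\}$. First I would recall the reduction: Theorem~\ref{thm:SLn/SLm} already settles every pair $(p,q)$ with $p \geq 2$, $q \geq 1$ over all three fields $\K \in \{\R, \C, \Ha\}$, \emph{except} the two remaining real cases $\SL(4,\R)/\SL(3,\R)$ and $\SL(8,\R)/\SL(7,\R)$. Thus it suffices to rule out compact quotients for these two spaces. The point of the quotation of \cite[Cor.\,1.16]{KT24+} is precisely that $\SL(2k,\R)/\SL(2k-1,\R)$ admits no compact quotients for $k \geq 2$; taking $k=2$ handles $(p,q)=(3,1)$, i.e.\ $\SL(4,\R)/\SL(3,\R)$.

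The second step is to treat $(p,q)=(7,1)$, i.e.\ $\SL(8,\R)/\SL(7,\R)$. This is again $\SL(2k,\R)/\SL(2k-1,\R)$ with $k=4$, so it is covered by the very same statement \cite[Cor.\,1.16]{KT24+}. (Equivalently, one could note that $(7,1)$ with $p+q=8$ even lies in the same family; the case that genuinely escapes our homotopy-theoretic obstruction is the parity issue in Theorem~\ref{thm:NonSymmetric}(1), but it is recovered by the independent argument of \cite{KT24+}, which uses a different, cohomological obstruction rather than fiber-homotopical triviality of a sphere bundle.) Hence both exceptional pairs are disposed of, and together with Theorem~\ref{thm:SLn/SLm} this establishes that $\SL(p+q,\K)/\SL(p,\K)$ never admits compact quotients for any $p \geq 2$, $q \geq 1$ and any $\K \in \{\R, \C, \Ha\}$.

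I expect no real obstacle here: the corollary is a bookkeeping assembly of two already-established inputs, and the only thing to verify carefully is that the two exceptional pairs $(3,1)$ and $(7,1)$ (which exist only because the Hopf-invariant-one phenomenon makes the parallelizability of $\Sph^3$ and $\Sph^7$ obstruct our method) both fall under the hypothesis ``$n = 2k$ even'' of \cite[Cor.\,1.16]{KT24+} — which they plainly do, since $3+1 = 4 = 2\cdot 2$ and $7+1 = 8 = 2\cdot 4$. If one wished to make the argument self-contained in spirit, the alternative would be to invoke whichever obstruction from \cite{KT24+} is being used (a cohomological one, not the sphere-bundle one of Theorem~\ref{thm:MainTheorem}), but for the purposes of this corollary it is cleanest simply to cite the quoted statement.
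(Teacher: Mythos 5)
Your proposal is correct and is exactly the paper's argument: Theorem~\ref{thm:SLn/SLm} covers all cases except $\K=\R$ with $(p,q)=(3,1)$ or $(7,1)$, and both $\SL(4,\R)/\SL(3,\R)$ and $\SL(8,\R)/\SL(7,\R)$ are of the form $\SL(2k,\R)/\SL(2k-1,\R)$ with $k\geq 2$, hence are excluded by \cite[Cor.\,1.16]{KT24+}. (The only slip is your parenthetical description of the obstruction of \cite{KT24+} as ``cohomological''; the paper calls it a geometrico-dynamical obstruction, but this is immaterial to the argument.)
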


Prior to the present paper and to \cite{KT24+}, 
the non-existence of compact quotients of 
$\SL(p+\nolinebreak q,\R)/\SL(p,\R)$ 
was known in the following two cases:
\begin{itemize}
\item $q \geq 3$ (Labourie--Zimmer \cite{LZ95}, 
improving prior results of Zimmer \cite{Zim94}, 
Labourie--Mozes--Zimmer \cite{LMZ95}, and Kobayashi \cite{Kob92coh})
\item $p$ even (Tholozan \cite{Tho15+}, Morita \cite{Mor17}, 
improving a prior result of Benoist \cite{Ben96}).
\end{itemize}
See Remark \ref{rmk:OlderResultsSL/SL}
for more details about previous results, including the complex and quaternionic cases.

\subsection{Motivation: the Geometric fibration conjecture} 
\label{ss:MotivationGeometricFibration}

Our main Theorem~\ref{thm:MainTheorem} 
was initially motivated by the following conjecture, 
formulated by the third author in \cite[\S 8]{Tho15+}:

\begin{FibrationConj}
Let $\Gamma$ be a torsion-free subgroup of~$G$ 
acting properly discontinuously and cocompactly on $G/H$. Then
\begin{itemize}
  \item $\Gamma$ is the fundamental group of a closed aspherical manifold~$M$,
  \item there exists a smooth $\Gamma$-equivariant fiber bundle
$G/H \to \widetilde{M}$ whose fibers are $G$-translates of $X = K/K_H$,
where $\widetilde{M}$ is the universal cover of $M$.
\end{itemize}
\end{FibrationConj}

This property was proved to hold for $G/H = (G' \times G')/\Diag(G')$ 
with $G' = \SO(n,1)$ by Gu\'eritaud--Kassel \cite{GK17}, 
which motivated the general conjecture.
Since recently, the conjecture is also known to hold for 
$G/H = \SO(2n,2)/\U(n,1)$ by combining work of 
Barbot--M\'erigot \cite{BM12}, 
Monclair--Schlenker--Tholozan \cite[Th.\,1.10]{MST23+}, 
and Kassel--Tholozan \cite[Th.\,1.11]{KT24+}.
In the special case where $\Gamma \bs G/H$ is a \emph{standard} compact quotient of $G/H$, 
the group $\Gamma$ always satisfies the conclusion of 
the Geometric fibration conjecture (see Remark~\ref{rmk:standard-fibr}).

We shall call 
\emph{local geometric fibration} 
a smooth fiber bundle whose total space is an open subset of $G/H$ 
and whose fibers are translates of $K/K_H$ 
(see Definition~\ref{defn:LocalGeomFib}).
It is easy to see that the fiber-homotopical triviality of $S(N)$ is 
a necessary condition for the existence of local geometric fibrations 
(see Lemma~\ref{lem:ImplicationsTriviality}), 
and Theorem~\ref{thm:MainTheorem} shows that it is also 
a necessary condition for the existence of compact quotients. 
It is consistent with the Geometric fibration conjecture, 
which predicts in particular that any reductive homogeneous space 
admitting compact quotients must have local geometric fibrations.

This is strongly related to work of Kobayashi and Yoshino in \cite{KY05}: 
they introduced the \emph{tangential homogeneous space} 
associated to $G/H$ as the homogeneous space 
$(K \ltimes \p) / (K_H \ltimes \p_H)$, 
where $\p$ (\resp $\p_H$) is the orthogonal complement of 
$\kk = \Lie(K)$ in $\g = \Lie(G)$ 
(\resp of $\kk_H = \Lie(K_H)$ in $\h = \Lie(H)$).
They found obstructions to the existence of compact quotients of 
these homogeneous spaces. Here we prove the following:

\begin{prop} \label{prop:LocalFibrTangential}
For a reductive homogeneous space $G/H$, the following are equivalent: 
\begin{enumerate}[label = {\upshape (\roman*)}]
\item $G/H$ admits local geometric fibrations,
\item the tangential homogeneous space 
$(K\ltimes \p) / (K_H \ltimes \p_H)$ admits compact quotients.
\end{enumerate}
\end{prop}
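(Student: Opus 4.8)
The plan is to exploit the fact that a reductive homogeneous space $G/H$ is $K$-equivariantly diffeomorphic to the total space of the normal bundle $N$ of $X = K/K_H$, and that this total space is precisely the tangential homogeneous space $(K \ltimes \p)/(K_H \ltimes \p_H)$. Indeed, the Cartan decomposition $G = K \exp(\p)$ and the identification of $N$ with $K \times_{K_H} (\p/\p_H)$ give a $K$-equivariant diffeomorphism between $G/H$ and the tangential space; this is the content of the Kobayashi--Yoshino picture and should be recalled or cited at the start. Under this diffeomorphism, $X$ itself corresponds to the zero section, which is the orbit of the basepoint under $K \subset K \ltimes \p$, and a $G$-translate $gX$ of $X$ in $G/H$ corresponds — after writing $g = k\exp(v)$ — to a translate of the zero section inside the bundle; crucially, the \emph{translates of $X$} are exactly the fibers one would use in a local geometric fibration. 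So the definition of ``local geometric fibration'' transports verbatim between the two spaces.

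From here the argument splits into the two implications. For (i)$\Rightarrow$(ii): suppose $G/H$ admits a local geometric fibration over some base $B$, \ie a smooth fiber bundle $U \to B$ with $U$ open in $G/H$ and fibers translates of $X$. Transporting along the diffeomorphism, we obtain the same structure on an open subset of the tangential space $(K\ltimes\p)/(K_H\ltimes\p_H)$. The point is that the tangential space is itself a reductive-type homogeneous space for the (non-reductive, but still Lie) group $K \ltimes \p$, whose maximal compact subspace is again $X = K/K_H = (K\ltimes\p)/(K_H\ltimes\p_H)$ restricted to the $K$-orbit — so ``translates of $X$'' has the same meaning, and the existence of a local geometric fibration of the tangential space is literally the hypothesis transported. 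One then invokes the criterion that a homogeneous space of this type admits compact quotients iff it admits a local geometric fibration (this is the tangential-space analogue of the equivalence underlying the Geometric fibration conjecture, and in the tangential/nilpotent-type setting it should be available — or one argues directly: the tangential space fibers globally over $K\ltimes\p$ quotiented appropriately, and a uniform lattice in the relevant contraction group produces the compact quotient). For (ii)$\Rightarrow$(i): given a compact quotient $\Lambda \bs (K\ltimes\p)/(K_H\ltimes\p_H)$ with $\Lambda$ a discrete cocompact subgroup acting properly, the preimage of the zero section under the bundle projection, translated by $\Lambda$, gives a $\Lambda$-equivariant foliation of an open dense (in fact full) subset of the tangential space by translates of $X$; passing to a $\Lambda$-invariant open set on which the leaf space is a manifold and transporting back to $G/H$ yields a local geometric fibration. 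Alternatively and more cleanly: by the easy direction (Lemma~\ref{lem:ImplicationsTriviality} and the surrounding discussion), the existence of a compact quotient of the tangential space already produces, via the associated proper cocompact action, a local geometric fibration structure directly on an invariant open subset, which one then lifts through the $K$-equivariant diffeomorphism to $G/H$.

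The main obstacle I expect is the (ii)$\Rightarrow$(i) direction, specifically extracting a genuine smooth fiber bundle — with a Hausdorff manifold base — from a proper cocompact action on the tangential space. Properness of the $\Lambda$-action gives that the $\Lambda$-translates of $X$ form a nice family, but one must check that locally the leaf space is a manifold (no holonomy, local triviality), which requires controlling how nearby translates $\lambda \cdot X$ sit relative to $X$; this is where the linear-algebraic structure of the tangential space (the fact that the translates are affine-linear-like objects in the vector-bundle coordinates) does the work, via transversality of distinct translates of the zero section. Conversely, in (i)$\Rightarrow$(ii) the subtlety is producing the \emph{compact} quotient rather than merely a proper action: one needs the base $B$ of the local geometric fibration, together with the structure group, to assemble into a group acting cocompactly — here one should take the local fibration to be modeled on the standard global fibration $(K\ltimes\p)/(K_H\ltimes\p_H) \to (K\ltimes\p)/(K\ltimes\p_H')$ and invoke cocompact lattices in $K\ltimes(\p/\p_H)$, whose existence is elementary since this group is compact-by-abelian. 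I would organize the write-up so that the $K$-equivariant diffeomorphism $G/H \cong (K\ltimes\p)/(K_H\ltimes\p_H)$ matching $X$ with $X$ and translates with translates is stated as a lemma first, after which both implications become short.
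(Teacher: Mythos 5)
Your proposal rests on the claim that the $K$-equivariant diffeomorphism $G/H \simeq K\times_{K_H}(\p\cap\q) \simeq (K\ltimes\p)/(K_H\ltimes\p_H)$ ``matches translates with translates'', so that the notion of local geometric fibration ``transports verbatim''. This is the gap: it is false. Under the diffeomorphism $\Phi$ of Fact~\ref{fact:Kob89}, a $G$-translate $g\cdot X = \{k\exp(v)k'\cdot o \mid k'\in K\}$ (for $g=k\exp(v)$) is a genuinely curved submanifold of the total space of $N$, whereas the $(K\ltimes\p)$-translates of the zero section in the tangential space are the affine translates. These two families of submanifolds agree only to first order at the base point; they are not carried to one another by any natural identification. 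Consequently neither implication can be obtained by ``transporting'' a fibration from one space to the other, and your (ii)$\Rightarrow$(i) argument (foliating by $\Lambda$-translates of the zero section and passing to a leaf space) and your (i)$\Rightarrow$(ii) argument (invoking an unproved fibration criterion for the tangential space) both collapse at this point.

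The paper's proof avoids this by routing both conditions through an \emph{infinitesimal} criterion, which is exactly where the two geometries do coincide: condition (ii) of Proposition~\ref{prop:EquivalentConditionsLocalFibrations}, the existence of a linear subspace $V\subset\p$ with $(\p\cap\h)\oplus\Ad_k(V)=\p$ for all $k\in K$. On the tangential side this is Kobayashi--Yoshino's criterion for the existence of compact quotients (cited, not reproved). On the reductive side, one shows that a local geometric fibration through $X$ is equivalent to the existence of a submanifold $\widetilde{M}\subset G/K$ through the base point such that ${\pi_H}_{|\pi_K^{-1}(\widetilde{M})}$ is a local diffeomorphism along $\pi_K^{-1}(o')=K/K_H$; by $K$-equivariance and a tangent-space computation this is precisely the condition on $V=T_{o'}\widetilde{M}$ above, and compactness of $K/K_H$ upgrades the local diffeomorphism to an open embedding, hence to an honest fiber bundle. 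If you want to salvage your approach, you would need to prove the Kobayashi--Yoshino equivalence (i)$\Leftrightarrow$(ii) yourself and then supply the differential-geometric argument linking (ii) to local fibrations of $G/H$ --- which is the content of the paper's proof, not a transport of structure.
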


In Section \ref{s:GeometricFibrations}, 
we will argue in favor of the following audacious conjecture, which is 
already stated in an unpublished draft of Kobayashi--Yoshino \cite{KY}:

\begin{conj} \label{conj:CompactQuotients<=>Fibrations}
Let $G/H$ be a reductive homogeneous space. 
Then the following are equivalent: 
\begin{enumerate}[label = {\upshape (\roman*)}]
\item $G/H$ admits compact quotients,
\item $G/H$ admits local geometric fibrations.
\end{enumerate}
\end{conj}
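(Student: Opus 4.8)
The plan is to establish the two implications separately, with the direction (ii)$\Rightarrow$(i) being the accessible one and (i)$\Rightarrow$(ii) being the genuinely hard (and, strictly speaking, conjectural) part that we can only argue in favor of. For (ii)$\Rightarrow$(i): given a local geometric fibration $U \to B$ with $U$ open in $G/H$ and fibers $G$-translates of $X = K/K_H$, I would first invoke Proposition~\ref{prop:LocalFibrTangential} to convert this into the statement that the tangential homogeneous space $(K\ltimes\p)/(K_H\ltimes\p_H)$ admits compact quotients. The key observation is then that a compact quotient of the tangential space is built from a cocompact lattice $\Gamma$ in $K\ltimes\p$ acting properly on $(K\ltimes\p)/(K_H\ltimes\p_H)$; since $K\ltimes\p$ is (the Cartan motion group) a contraction of $G$, one tries to deform such a $\Gamma$ back into $G$ using the work on deformation of discrete groups under contraction (à la Kobayashi--Yoshino, and the continuity of proper discontinuity under small deformations). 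The main technical input here is a properness-stability statement: properness of the $\Gamma$-action is an open condition on the space of homomorphisms $\Gamma\to G$ when one has uniform control (Kobayashi's criterion via the Cartan projection), so a sufficiently small perturbation of the tangential-space lattice lands inside $G$ and still acts properly and cocompactly on $G/H$.

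For the converse (i)$\Rightarrow$(ii), the argument is heuristic and structured around the Geometric fibration conjecture of Section~\ref{ss:MotivationGeometricFibration}. Assuming $\Gamma\bs G/H$ is a compact quotient with $\Gamma$ torsion-free, I would proceed in three steps. First, one expects $\Gamma$ to be the fundamental group of a closed aspherical manifold $M$ and $G/H$ to carry a $\Gamma$-equivariant fiber bundle over $\widetilde M$ with fibers $G$-translates of $X$; this is the content of the Geometric fibration conjecture, known in the cases of $(G'\times G')/\Diag(G')$ with $G'=\SO(n,1)$ \cite{GK17} and of $\SO(2n,2)/\U(n,1)$ \cite{BM12,MST23+,KT24+}, and automatic for standard quotients (Remark~\ref{rmk:standard-fibr}). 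Second, pulling back this bundle to a small open set of $\widetilde M$ yields precisely a local geometric fibration in the sense of Definition~\ref{defn:LocalGeomFib}, since on a contractible chart the $\Gamma$-equivariance is irrelevant and one reads off an honest fiber bundle with translate-of-$X$ fibers sitting inside an open subset of $G/H$. Third, one checks that the fiber-homotopical triviality of $S(N)$ — which Theorem~\ref{thm:MainTheorem} extracts from the existence of compact quotients, and which Lemma~\ref{lem:ImplicationsTriviality} shows is necessary for local geometric fibrations — is consistent with, and in the known cases actually sufficient to construct, such a fibration, via an obstruction-theoretic argument over the (finite-dimensional, well-understood) base.

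The main obstacle is unquestionably the first step of (i)$\Rightarrow$(ii): extracting a \emph{geometric} (translate-of-$X$) fibration structure from the mere existence of a cocompact proper $\Gamma$-action is exactly the Geometric fibration conjecture, which is open in general. A purely homotopy-theoretic substitute — producing an abstract fibration of $G/H$ over some aspherical base without controlling that the fibers are $G$-translates of $X$ — does not suffice for Definition~\ref{defn:LocalGeomFib}, and bridging that gap seems to require new geometric input (some form of equivariant foliation of $G/H$ by $X$-orbits coming from the dynamics of $\Gamma$, as in \cite{GK17}). For this reason I would present (i)$\Rightarrow$(ii) not as a theorem but as a conjecture supported by (a) the equivalence in Proposition~\ref{prop:LocalFibrTangential} relating it to Kobayashi--Yoshino's tangential spaces, (b) the verified cases above, and (c) the fact that the common necessary condition — fiber-homotopical triviality of $S(N)$ — is now a theorem on the compact-quotient side and elementary on the fibration side, so no known obstruction separates (i) from (ii).
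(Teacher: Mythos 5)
The statement you are asked about is a \emph{conjecture}, and the paper does not prove it: Section~\ref{s:GeometricFibrations} only splits it into the two implications (Conjectures \ref{conj:NoLocalFibrationObstruction} and~\ref{conj:GeometricFibrations=>CompactQuotients}) and collects evidence for each. So no proof proposal can be ``the same as the paper's''; the question is whether your sketch actually closes either implication, and it does not.

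The serious gap is in the direction you call ``accessible,'' namely (ii)~$\Rightarrow$~(i). The paper regards this as the \emph{bolder} of the two implications, and your deformation argument does not work. A compact quotient of the tangential space $(K\ltimes\p)/(K_H\ltimes\p_H)$ is a lattice in the Cartan motion group $K\ltimes\p$, which is a \emph{contraction} (degeneration) of~$G$, not a subgroup of it; there is no homomorphism $\Gamma\to G$ to perturb, and no general result deforming discrete subgroups across such a contraction while preserving properness and cocompactness on $G/H$. Kobayashi's properness criterion via the Cartan projection yields openness of properness only under strong uniform hypotheses (sharpness/Anosov-type control), which you have no way to verify here. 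The paper states explicitly that the relation between compact quotients of $G/H$ and of $G_\theta/H_\theta$ is a long-standing open question, and moreover that Conjecture~\ref{conj:CompactQuotients<=>Fibrations} is \emph{incompatible} with Kobayashi's Conjecture~\ref{conj:ExistenceStandardQuotients} already for $\Hyp^{4,2}_\R$ (Question~\ref{question:H42}): if your deformation step were valid it would settle that question affirmatively and refute Kobayashi's conjecture, which should have been a red flag. Your treatment of (i)~$\Rightarrow$~(ii) is honest about resting on the unproven Geometric fibration conjecture, and that part of your discussion matches the paper's; but the upshot is that neither implication is proved, and the correct conclusion is to leave the statement as a conjecture supported by Proposition~\ref{prop:LocalFibrTangential}, Theorem~\ref{thm:MainTheorem}, and Lemma~\ref{lem:ImplicationsTriviality}, rather than to present a proof.
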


Note that there is an interesting incompatibility between 
Kobayashi's Conjecture \ref{conj:ExistenceStandardQuotients} and 
Conjecture \ref{conj:CompactQuotients<=>Fibrations} in 
the case of the pseudo-Riemannian hyperbolic spaces $\Hyp^{p,q}_{\R}$.
Indeed, when $p$ is divisible by $2^{\nu(q)}$ (see \eqref{eqn:nu}), 
the space $\Hyp_\R^{p,q}$ admits local geometric fibrations by 
Proposition~\ref{prop:LocalFibrTangential} and \cite[Th.\,1.1.6]{KY05}; 
thus Conjecture~\ref{conj:CompactQuotients<=>Fibrations} 
predicts that $\Hyp_\R^{p,q}$ should admit compact quotients.
On the other hand, as mentioned in Section~\ref{sss:Hpq}, 
the space $\Hyp_\R^{p,q}$ only admits standard compact quotients when 
$(p,q) \in \{(2k,1), (4k,3), (8,7)\}$ 
(see \cite{Toj19} for an outline of proof), 
hence Conjecture~\ref{conj:ExistenceStandardQuotients} predicts that 
$\Hyp_\R^{p,q}$ should not admit compact quotients in all other cases.
The smallest case that would discriminate between the two conjectures 
is the following:

\begin{ques} \label{question:H42}
Does $\Hyp_\R^{4,2}$ admit compact quotients?
\end{ques}

\subsection{Structure of the paper}

In Section~\ref{s:ReductiveHomogeneousSpaces}, 
we review some classical facts about the geometry of 
reductive homogeneous spaces $G/H$, and recall in particular that 
they are diffeomorphic to the normal bundle to 
their maximal compact subspace $K/K_H$.
In Section~\ref{s:FiberwiseHomotopy}, 
we recall the definition of fiber-homotopical triviality 
for sphere bundles, and prove some general sufficient condition for it 
(Lemma~\ref{lem:CriterionFiberwiseHomotopy}).
In Section~\ref{s:ProofMain}, 
we prove our main Theorem~\ref{thm:MainTheorem}.

In Section~\ref{s:RemindersKTheory}, 
we quickly recall some background on topological $\Ktheory$-theory and 
the (reduced) $J$-group. 
In Section~\ref{s:MainStable}, 
we state a consequence of Theorem~\ref{thm:MainTheorem} 
in terms of the reduced $J$-group (Corollary~\ref{cor:MainStable}), 
from which we deduce 
Theorems \ref{thm:OtherSymmetric}, \ref{thm:NonSymmetric}, 
and~\ref{thm:IndefiniteGrassmannianRCH} in the next two sections. 
In Section~\ref{s:IndefiniteGrassmannians}, 
we prove Theorem~\ref{thm:IndefiniteGrassmannianRCH} 
(which contains Theorem~\ref{thm:Hpq} as a special case) 
as an immediate application of Corollary~\ref{cor:MainStable} 
and work on the reduced $J$-group from the 1960-70s.
In Section~\ref{s:ComputationsKTheory}, 
we investigate further the fiber-homotopical triviality of 
the normal bundle to $K/K_H$, proving 
Theorems \ref{thm:OtherSymmetric} and~\ref{thm:NonSymmetric} 
(which contain Theorems \ref{thm:ComplexSpheres} 
and~\ref{thm:SLn/SLm} as special cases).
The examples we treat are ranked by increasing order of 
complexity of the required $\Ktheory$-theoretic arguments. 

In Section~\ref{s:GeometricFibrations}, 
we discuss the existence of local geometric fibrations and 
prove Proposition~\ref{prop:LocalFibrTangential}.

We finally include two appendices.
Appendix~\ref{s:AppendixKTheory} recalls more advanced facts about 
topological $\Ktheory$-theory which are required in the depths of 
Section \ref{s:ComputationsKTheory}, 
while Appendix~\ref{s:AppendixSymmetricSpaces} gives 
an explicit and synthetic description of 
some families of reductive symmetric spaces, 
clarifying how $H$ and its associated symmetric subgroup $H^a$ 
are embedded in~$G$.

\subsection{Acknowledgements}

We warmly thank Olivier Benoist for pointing us to 
the results of Adams at the early stages of this work.
We are grateful to Toshiyuki Kobayashi 
for sharing his unpublished draft \cite{KY} with Taro Yoshino. 
YM would like to thank Taro Yoshino for telling him about 
a possible relation between the triviality of the normal bundle and 
the existence of compact quotients, as well as Conjecture~\ref{conj:CompactQuotients<=>Fibrations}, 
when he was a graduate student. 

Part of this work was completed during YM's visits at IHES 
in Bures-sur-Yvette in the winter 2020, fall 2023, and spring 2025, 
and during the trimester \emph{Higher rank geometric structures} 
at IHP in Paris in the spring 2025, 
as well as during FK's visit to the University of Tokyo in the fall 2025.
We thank the IHES, the IHP (UAR 839 CNRS-Sorbonne Universit\'e), 
the LabEx CARMIN (ANR-10-LABX-59-01), the ENS in Paris, 
and the French--Japanese Laboratory of Mathematics (IRL2025 CNRS) 
at the University of Tokyo for their hospitality and support.
YM was supported by JSPS KAKENHI Grant Numbers 19K14529 and 24K16922.

\section{Reminders: reductive homogeneous spaces} \label{s:ReductiveHomogeneousSpaces}

\subsection{Reductive Lie groups and reductive homogeneous spaces} \label{ss:Reductive}

Let $G$ be a real linear reductive Lie group with Cartan involution $\theta$. 
Let $H$ be a reductive subgroup of $G$, which we assume to be $\theta$-invariant without loss of generality. 
In this situation, the homogeneous space $G/H$ is called \emph{reductive} (or \emph{of reductive type}). In this paper, we assume all reductive homogeneous spaces to be connected. It is equivalent to assuming that $H$ meets all connected components of~$G$.
The fixed-point subgroup
\[
K = G^\theta = \{ g \in G \mid \theta(g) = g \}
\]
is a maximal compact subgroup of $G$, and $K_H = K \cap H$ is a maximal compact subgroup of $H$. 

Let $\g$ and $\h$ be the Lie algebras of $G$ and $H$, respectively. We use the same symbol $\theta$ for the involution on $\g$ induced by the Cartan involution. 
We have $\theta(\h) = \h$. 
We fix a $G$-invariant and $\theta$-invariant 
nondegenerate symmetric bilinear form $B$ on $\g$ for which the symmetric bilinear form
\[
B_\theta \colon \g \times \g \to \R, \qquad (X, Y) \mapsto -B(\theta(X), Y)
\]
is positive definite 
(if $G$ is semisimple, the Killing form satisfies these requirements). 
We write $\q$ for the orthogonal complement of $\h$ in $\g$ with respect to $B$ 
(or equivalently, with respect to~$B_\theta$). 

Let $\g = \kk \oplus \p$ be the eigenspace decomposition of $\g$ with respect to $\theta$: 
\[
\kk = \{ v \in \g \mid \theta(v) = v \}, \qquad \p = \{ v \in \g \mid \theta(v) = -v \}. 
\]
Then $\kk$ and $\kk \cap \h$ are the Lie algebras of $K$ and $K_H$, respectively, and we have the orthogonal decompositions
\[
\h = (\kk \cap \h) \oplus (\p \cap \h), \qquad \q = (\kk \cap \q) \oplus (\p \cap \q).
\]
Note that the adjoint action of $K_H$ on $\g$ preserves the orthogonal decomposition
\[
\g = (\kk \cap \h) \oplus (\kk \cap \q) \oplus (\p \cap \h) \oplus (\p \cap \q).
\]

We identify the compact homogeneous space $X = K/K_H$ with the $K$-orbit of the base point $o = 1 \cdot H$ in $G/H$. 
We denote by $N$ the normal bundle of $X$ in $G/H$. It is $K$-equivariantly isomorphic to 
$K \times_{K_H} (\p \cap \q)$, the vector bundle associated to the principal $K_H$-bundle $K \to K/K_H$ and the adjoint action of $K_H$ on $\p \cap \q$.

In the proof of Theorem~\ref{thm:MainTheorem}, we shall use the following fact, which is classical when $G/H$ is a reductive symmetric space, and was proved by Kobayashi \cite[Lem.\,2.7]{Kob89} in general.

\begin{fact} \label{fact:Kob89}
The map 
\[
\Phi \colon N = K \times_{K_H} (\p \cap \q) \to G/H, \qquad [k, v] \mapsto k \exp(v) \cdot o
\]
is a $K$-equivariant diffeomorphism. 
\end{fact}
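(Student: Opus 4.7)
The plan is to show that $\Phi$ is a bijective local diffeomorphism, hence a $K$-equivariant diffeomorphism. Well-definedness is immediate from $\exp(\Ad(h) v) = h \exp(v) h^{-1}$ combined with the fact that $h^{-1} \in K_H \subset H$ fixes $o$; smoothness and $K$-equivariance are also immediate, and on the zero section $\Phi$ restricts to the canonical embedding $K/K_H \hookrightarrow G/H$.

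To check that $\Phi$ is a local diffeomorphism, I would start at the basepoint $[1,0]$. Using the $K_H$-invariant splitting $\kk = (\kk \cap \h) \oplus (\kk \cap \q)$, the tangent space $T_{[1,0]} N$ identifies with $(\kk \cap \q) \oplus (\p \cap \q) = \q$, and $d\Phi_{[1,0]}$ is literally the natural isomorphism with $T_o(G/H) = \g/\h \cong \q$; by $K$-equivariance, $d\Phi$ is then an isomorphism at every point of the zero section. For a general point $[1, v]$ with $v \in \p \cap \q$, I would compute $d\Phi_{[1,v]}$ via the standard formula
\[
d\exp_v(w) = dL_{\exp v} \circ \frac{1 - e^{-\operatorname{ad} v}}{\operatorname{ad} v}(w).
\]
After left-translating the image back to $T_o(G/H) \cong \g/\h$, the differential becomes
\[
(Y, w) \in (\kk \cap \q) \oplus (\p \cap \q) \; \longmapsto \; \left[\Ad(\exp(-v)) Y + \frac{1 - e^{-\operatorname{ad} v}}{\operatorname{ad} v}(w)\right] \in \g/\h.
\]
Bijectivity of this map is a dimension count together with injectivity, which I would obtain from the self-adjointness of $\operatorname{ad} v$ (for $v \in \p$) with respect to the positive definite form $B_\theta$, the $\theta$-invariance of $\h$, and the fourfold $K_H$-invariant decomposition $\g = (\kk \cap \h) \oplus (\kk \cap \q) \oplus (\p \cap \h) \oplus (\p \cap \q)$.

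For bijectivity of $\Phi$, I would invoke a generalized Cartan decomposition of the form $G = K \exp(\p \cap \q) H$. In the symmetric case this follows from the standard polar decomposition $G/H = K \cdot \exp(\ab) \cdot o$ with $\ab \subset \p \cap \q$ a maximal abelian subspace; in the general reductive case it is Mostow's polar decomposition. Surjectivity of $\Phi$ is then immediate. For injectivity, suppose $k_1 \exp(v_1) \cdot o = k_2 \exp(v_2) \cdot o$ and set $k_0 = k_2^{-1} k_1$, $g = \exp(-v_2) k_0 \exp(v_1) \in H$. Computing
\[
g \, \theta(g)^{-1} = \exp(-v_2) \exp(2 \Ad(k_0) v_1) \exp(-v_2) \in \exp(\p),
\]
and comparing with the Cartan decomposition $g = k' \exp(X')$ inside the $\theta$-invariant reductive subgroup $H$ (so $k' \in K_H$, $X' \in \p \cap \h$, whence $g \, \theta(g)^{-1} = \exp(2 \Ad(k') X')$), the uniqueness of the global Cartan decomposition $G = K \exp(\p)$ forces $v_1 = \Ad(k_0^{-1}) v_2$ and $k_0 \in K_H$, i.e.\ $[k_1, v_1] = [k_2, v_2]$.

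The main obstacle is the injectivity of $d\Phi$ at $[1, v]$ for $v \neq 0$: ruling out that the denominator $(1 - e^{-\operatorname{ad} v})/\operatorname{ad} v$ produces a kernel vector landing in $\h$ requires the full reductive structure — the $K_H$-invariant fourfold splitting of $\g$, the positivity of $B_\theta$, and the self-adjointness of $\operatorname{ad} v$ — and is precisely what distinguishes reductive subgroups from arbitrary closed ones.
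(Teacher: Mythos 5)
The paper offers no proof of this statement: it is a Fact quoted from Kobayashi \cite[Lem.\,2.7]{Kob89}, whose argument rests on Mostow's decomposition theorem, namely that $(k,X,Y)\mapsto k\exp(X)\exp(Y)$ is a diffeomorphism from $K\times(\p\cap\q)\times(\p\cap\h)$ onto~$G$. Your outline is sensible, but both hard steps have genuine gaps. For global injectivity, you correctly reduce to the identity $\exp(-v_2)\exp(2\Ad(k_0)v_1)\exp(-v_2)=\exp(2\Ad(k')X')$ with $\Ad(k')X'\in\p\cap\h$, but ``uniqueness of the global Cartan decomposition $G=K\exp(\p)$'' gives nothing here: both sides already lie in $\exp(\p)$, and Cartan uniqueness only says that equal positive elements have equal logarithms; it does not separate the variables inside the product $\exp(-v_2)\exp(2u)\exp(-v_2)$, and nothing so far forces $X'=0$. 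What is needed is precisely the uniqueness half of Mostow's theorem (equivalently, injectivity of $(X,Z)\mapsto\exp(X)\exp(2Z)\exp(X)$ on $(\p\cap\q)\times(\p\cap\h)$), which is the genuinely nontrivial content and is proved by a convexity argument along geodesics of $G/K$ --- yet you invoke Mostow only for surjectivity.

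The second gap is that injectivity of $d\Phi_{[1,v]}$ for $v\neq0$ is asserted rather than proved; you flag it yourself as ``the main obstacle.'' The ingredients you list do suffice in the \emph{symmetric} case: there $\operatorname{ad}v$ exchanges $\h$ and $\q$ as well as $\kk$ and $\p$, so the $\q$-component of $e^{-\operatorname{ad}v}Y+f(\operatorname{ad}v)w$ is $\cosh(\operatorname{ad}v)Y+\tfrac{\sinh(\operatorname{ad}v)}{\operatorname{ad}v}w$, whose two summands lie in $\kk\cap\q$ and $\p\cap\q$ respectively, and positive-definiteness of $\cosh$ and of $\sinh(t)/t$ forces $Y=w=0$. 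For general reductive $H$, however, $\h$ is not $\operatorname{ad}v$-stable (one no longer has $[\q,\q]\subseteq\h$), this bigrading argument collapses, and self-adjointness of $\operatorname{ad}v$ alone does not appear to close the case. Both gaps disappear if you invoke the full Mostow theorem rather than only its surjectivity: in Mostow coordinates the right $H$-action reads $(k,X,Y)\cdot h=(k\kappa,\Ad(\kappa)^{-1}X,Y')$ with $\kappa\in K_H$ and $Y'\in\p\cap\h$, so $G/H\cong K\times_{K_H}(\p\cap\q)$ as smooth manifolds, with no differential computation and no separate injectivity argument required.
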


In particular, $X = K/K_H$ is connected since $G/H$ is. \\

Recall that by our choice of the symmetric bilinear form~$B$, it is negative definite on $\kk$ and positive definite on $\p$. We will sometimes write its signature as
\[
(\dim_+(G),\dim_-(G)) = (\dim(\p), \dim(\kk)).
\]
The restriction of $B$ to $\q \simeq \g/\h$ extends to a $G$-invariant pseudo-Riemannian metric on $G/H$, of signature
\begin{align} \label{eqn:dim_+}
(\dim_+(G/H),\dim_-(G/H)) & = (\dim (\p \cap \q), \dim (\kk \cap \q))\\ 
& = (\dim_+(G)-\dim_+(H), \dim(K/K_H)). \nonumber 
\end{align}

We note that $\p \cap \q$ and $\kk \cap \q$ are respectively identified with $N_o X$ and $T_o X$. In fact, one can check that $X$ is a totally negative totally geodesic submanifold of $G/H$ of maximal dimension, and that the pseudo-Riemannian metric on $G/H$ restricts to a positive definite metric on the normal bundle $N$ of~$X$ in $G/H$.

\subsection{Reductive symmetric spaces and associated pairs}
\label{ss:ReductiveSymmetric}

Let $G$ be a real linear reductive Lie group with Cartan involution $\theta$, and let $H$ be a closed subgroup of~$G$.
The pair $(G,H)$ is called \emph{symmetric} if there exists an involution $\sigma$ of~$G$ commuting with~$\theta$ such that $H$ is an open subgroup of the fixed-point subgroup 
\[
G^\sigma = \{ g \in G \mid \sigma(g) = g \}. 
\]
Note that such an $H$ is automatically a reductive subgroup of~$G$. 
In this situation, the reductive homogeneous space $G/H$ is called a \emph{reductive symmetric space}. 
For simplicity, we only consider the case where $H = G^\sigma$ in this paper. This does not cause a loss of generality in our context, as properness and cocompactness are unaffected by taking finite-index subgroups (or overgroups).

Let us recall the corresponding concepts for Lie algebras. Let $\g$ be a real reductive Lie algebra with Cartan involution $\theta$, and let $\h$ be a Lie subalgebra of~$\g$.
We say that $(\g,\h)$ is a \emph{reductive symmetric pair} if there exists an involution $\sigma$ on $\g$ commuting with $\theta$ such that $\h$ is the fixed-point subalgebra 
\[
\g^\sigma = \{ u \in \g \mid \sigma(u) = u \}.
\]

Let $(\g, \h)$ be a reductive symmetric pair, and consider the orthogonal complement $\q$ of $\h$ in~$\g$ as in Section~\ref{ss:Reductive} just above. Then $\g = \h \oplus \q$ is the eigenspace decomposition of $\g$ with respect to $\sigma$: 
\[
\h = \{ u \in \g \mid \sigma(u) = u \}, \qquad \q = \{ u \in \g \mid \sigma(u) = -u \}. 
\]

Since $\sigma$ and $\theta$ commute, the composition $\sigma \theta$ is again an involution on $\g$. 
Let $\h^a$ denote the corresponding symmetric subalgebra of $\g$: 
\[
\h^a = \g^{\sigma \theta} = \{ u \in \g \mid \sigma(\theta(X)) = u \}. 
\]
In other words, 
\[
\h^a = (\kk \cap \h) \oplus (\p \cap \q).
\]
The pair $(\g, \h^a)$ is called the \emph{associated symmetric pair} to $(\g, \h)$.
We similarly define the \emph{associated symmetric space} $G/H^a$ to $G/H$: 
\[
H^a = G^{\sigma \theta} = \{ g \in G \mid \sigma(\theta(g)) = g \}. 
\]
We have $K \cap H^a = K \cap H$, hence $K_H = K \cap H$ is also a maximal compact subgroup of $H^a$. It follows that $G/H$ and $G/H^a$ have the same maximal compact subspace $X = K/K_H$. Finally, the normal bundle to $K/K_H$ in $G/H^a$ is
\[
K \times_{K_H} (\p\cap \h).
\]

\begin{rmk} \label{rmk:H^aOrbit}
The $H^a$-orbit of the base point $o = 1 \cdot H$ in $G/H$ is a totally positive totally geodesic submanifold of $G/H$ of maximal dimension, orthogonal to $X$ at $o$, and isometric to the Riemannian symmetric space $H^a/K_H$. 
Under the identification $G/H \simeq N$ given by Fact~\ref{fact:Kob89}, 
the fibers of the vector bundle $N \to X$ are the $K$-translates of this submanifold. 
\end{rmk}

A reductive symmetric pair $(\g, \h)$ is called an \emph{irreducible} if either $\g$ is simple or $(\g, \h) \simeq\linebreak (\g' \oplus \g', \Diag(\g'))$ for some simple~$\g'$, where $\Diag(\g')$ denotes the diagonal of $\g' \oplus \g'$. 
Any reductive symmetric pair is decomposed into a direct sum of $(\R, 0)$, of $(i\R, i\R)$, and of irreducible ones. By an \emph{irreducible symmetric space}, we mean a reductive symmetric space $G/H$ such that $(\g, \h)$ is irreducible.

\section{A sufficient condition for fiber-homotopy equivalence} \label{s:FiberwiseHomotopy}

Let us start by recalling some basic definitions.

\begin{defn} \label{defn:FiberwiseHomotopyEquivalence}
Let $\pi \colon E \to X$ and $\pi' \colon E' \to X$ be two fiber bundles with common base space~$X$. 
\begin{enumerate}
\item A \emph{continuous map from $E$ to $E'$ over $X$} is a continuous map 
$f \colon E \to E'$ such that $\pi' \circ f = \pi$.
\item Let $f_0, f_1 \colon E \to E'$ be two continuous maps over~$X$. 
A \emph{fiber-homotopy from $f_0$ to $f_1$} is a continuous map $F \colon E \times [0,1] \to E'$ over $X$ such that $F(e,0) = f_0(e)$ and $F(e,1) = f_1(e)$ for any $e \in E$.
We say that \emph{$f_0$ is fiber-homotopic to $f_1$} if there exists a fiber-homotopy from $f_0$ to~$f_1$.
\item A continuous map $f \colon E \to E'$ over $X$ 
is called a \emph{fiber-homotopy equivalence} 
if there exists a continuous map $g \colon E' \to E$ over $X$ 
such that $g \circ f$ and $f \circ g$ 
are respectively fiber-homotopic to $\id_E$ and $\id_{E'}$.
We say that \emph{$E$ is fiber-homotopy equivalent to $E'$} 
if there exists a fiber-homotopy equivalence from $E$ to $E'$. 
\end{enumerate}
\end{defn}

\begin{defn} \label{defn:FiberwiseHomotopicallyTrivial}
Let $\pi \colon E \to X$ be a fiber bundle with typical fiber $F$. 
We say that $E$ is \emph{fiber-homotopically trivial} if 
it is fiber-homotopy equivalent to the trivial bundle $X \times F$. 
\end{defn}

Dold proved that a bundle map is a fiber-homotopy equivalence if it is a homotopy equivalence in each fiber.

\begin{fact}[{Dold \cite[Th.\,6.3]{Dol63}}] \label{fact:Dol63}
Let $X$ be a locally contratible paracompact Hausdorff space. 
Let $\pi \colon E \to X$ and $\pi' \colon E' \to X$ be two fiber bundles over $X$, and let $f \colon E \to E'$ be a continuous map over $X$. 
Then $f$ is a fiber-homotopy equivalence if and only if $f_x \colon E_x \to E'_x$ is a homotopy equivalence for every $x \in X$. 
\end{fact}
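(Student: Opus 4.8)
The plan is to prove the nontrivial implication by a local-to-global argument. The ``only if'' direction is immediate: if $f$ admits a fiber-homotopy inverse $g \colon E' \to E$ over $X$, with fiber-homotopies $g \circ f \simeq \id_E$ and $f \circ g \simeq \id_{E'}$ over $X$, then restricting $g$ and both homotopies to the fiber over any $x \in X$ --- which is meaningful precisely because everything is a map (or homotopy) over $X$ --- exhibits $g_x$ as a homotopy inverse of $f_x$.

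For the ``if'' direction, I would first exploit that $X$ is locally contractible and that $\pi, \pi'$ are locally trivial to pick an open cover $\{U_i\}_{i \in I}$ of $X$ in which every $U_i$ is contractible and both $E|_{U_i}$ and $E'|_{U_i}$ are trivial bundles; since $X$ is paracompact Hausdorff, such a cover is numerable. The next step is to check that $f$ restricts to a fiber-homotopy equivalence over each $U_i$. After trivializing we may write $E|_{U_i} = U_i \times F$, $E'|_{U_i} = U_i \times F'$ and $f|_{U_i}(u,y) = (u, \phi(u,y))$; choosing a contraction $h_t$ of $U_i$ onto a point $u_0$, the formula $(u,y,t) \mapsto (u, \phi(h_t(u),y))$ defines a fiber-homotopy over $U_i$ from $f|_{U_i}$ to $\id_{U_i} \times \phi(u_0,-)$. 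Since $\phi(u_0,-) = f_{u_0}$ is a homotopy equivalence by hypothesis, the latter map is a fiber-homotopy equivalence over $U_i$ (with inverse $\id_{U_i}$ times a homotopy inverse of $f_{u_0}$), hence so is $f|_{U_i}$.

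Finally I would invoke the patching principle for numerable fibrations: a map over $X$ between Hurewicz fibrations that is a fiber-homotopy equivalence over each member of a numerable cover of $X$ is a fiber-homotopy equivalence over $X$; here one also uses that a fiber bundle over a paracompact Hausdorff base is a Hurewicz fibration. Applying this to $f$ and the cover $\{U_i\}$ yields the claim. The main obstacle is exactly this last step: the patching principle is the technical core of Dold's theory, established by inductively gluing local fiber-homotopy inverses and the auxiliary homotopies along a partition of unity subordinate to $\{U_i\}$, with the passage from a finite subcover to a countable one and then to an arbitrary index set handled by a reindexing device. For a self-contained account I would reproduce that induction; otherwise I would cite it as the external input, which is what the statement already does.
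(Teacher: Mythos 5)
The paper does not prove this statement: it is quoted as an external \emph{Fact} with a citation to Dold, so there is no internal argument to compare against. Your sketch is, in substance, Dold's own proof of his Theorem~6.3: the ``only if'' direction by restriction to fibers, the local step using local contractibility together with local triviality to show $f$ is a fiber-homotopy equivalence over each member of a numerable cover, and the global step by invoking the gluing theorem for maps over a numerable cover (Dold's Theorem~6.1), which is indeed where all the real work lies and which you correctly identify as the external input. The outline is correct. Two minor points worth tightening if you were to write this out: (i) under the weaker (and more common) definition of local contractibility, a point need not have contractible neighborhoods, only neighborhoods $V\subset U$ with $V\hookrightarrow U$ nullhomotopic; the fix is to choose $U$ trivializing both bundles and run your homotopy $(u,y,t)\mapsto (u,\phi(h_t(u),y))$ with $h_t\colon V\to U$ the nullhomotopy, which still lands in the trivialized region; (ii) you should record explicitly that being fiber-homotopic over $U_i$ to a fiber-homotopy equivalence makes $f|_{U_i}$ itself one, which is immediate but is the step that lets the local computation conclude.
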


\begin{rmk} \label{rmk:Dol63Connected}
Let $X$, $E$, and $E'$ be as in Fact~\ref{fact:Dol63}. 
If $x, y \in X$ are joined by some continuous path, 
then $f_x \colon E_x \to E'_x$ is a homotopy equivalence if and only if 
$f_y \colon E_y \to E'_y$ is. 
Therefore, if $X$ is in addition connected, 
in order to see that $f$ is a fiber-homotopy equivalence, 
it suffices to verify that 
$f_x \colon E_x \to E'_x$ is a homotopy equivalence for \emph{some} $x \in X$. 
\end{rmk}

In the proof of Theorem~\ref{thm:MainTheorem}, we will use the following sufficient condition for two sphere bundles to be fiber-homotopy equivalent: 

\begin{lem} \label{lem:CriterionFiberwiseHomotopy}
Let $X = (X, x_0)$ be a connected and locally contractible based compact Hausdorff space, 
let $\pi \colon E \to X$ and $\pi' \colon E' \to X$ be two real vector bundles over $X$, and let $0_E \colon X \to E$ and $0_{E'} \colon X \to E'$ be the zero sections of $E$ and $E'$, respectively. 
Assume that there exists a continuous map $\Psi \colon E \to E'$ satisfying the following four conditions:
\begin{enumerate}[label = {\upshape (\roman*)}]
  \item $\Psi \circ 0_E = 0_{E'}$,
  \item $\Psi(E_{x_0}) \subset F_{x_0}$, 
  \item The induced map $\Psi \colon E_{x_0} \smallsetminus \{ 0_E(x_0) \} \to E'_{x_0} \smallsetminus \{ 0_{E'}(x_0) \}$ is a homotopy equivalence,
  \item $\Psi^{-1}(0_{E'}(X))$ is a compact subset of~$E$. 
\end{enumerate}
Then the sphere bundles $S(E)$ and $S(E')$ are fiber-homotopy equivalent.
\end{lem}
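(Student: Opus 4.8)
The plan is to produce an explicit fiber-homotopy equivalence $S(E) \to S(E')$ by ``radializing'' the map $\Psi$, using conditions (i)--(iv) to control what happens near the zero section. First I would fix continuous inner products on $E$ and $E'$, so that $S(E)$ and $S(E')$ are identified with the unit sphere bundles and we have well-defined norm functions and radial scaling on each fiber. The rough idea is: a point of $S(E)_x$ is a unit vector $v$; I want to send it to $\Psi(v)/\|\Psi(v)\|$ in $S(E')_x$. The obvious obstruction is that $\Psi(v)$ might be zero. Condition (iv) is exactly what rules this out in a robust way: since $\Psi^{-1}(0_{E'}(X))$ is compact and (by (i)) contains the zero section $0_E(X)$, and $S(E)$ is the unit sphere bundle in $E$, there is a neighborhood of $0_E(X)$ inside $E$ whose closure is disjoint from $S(E)$ yet contains $\Psi^{-1}(0_{E'}(X))$; more precisely, by compactness of $X$ and of $\Psi^{-1}(0_{E'}(X))$ there exists $\varepsilon > 0$ such that $\Psi(v) \neq 0$ whenever $\|v\| \geq \varepsilon$. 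Rescaling, after replacing $E$ by $\{\|v\| \geq \varepsilon\}$ we may assume $\Psi$ is nonvanishing on all of $S(E)$, so that
\[
f \colon S(E) \to S(E'), \qquad v \mapsto \frac{\Psi(v)}{\|\Psi(v)\|}
\]
is a well-defined continuous bundle map over $X$.

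Next I would verify $f$ is a fiber-homotopy equivalence using Dold's criterion (Fact 3.3) together with Remark 3.4. Since $X$ is connected and locally contractible and $S(E), S(E')$ are fiber bundles over $X$, it suffices to check that $f_{x_0} \colon S(E)_{x_0} \to S(E')_{x_0}$ is a homotopy equivalence for the single basepoint $x_0$. By (ii), $\Psi$ maps $E_{x_0}$ into $E'_{x_0}$, so $f_{x_0}$ is indeed the radial normalization of $\Psi|_{E_{x_0}}$. Now $S(E)_{x_0}$ is a deformation retract of $E_{x_0} \smallsetminus \{0\}$, and likewise downstairs; under these retractions $f_{x_0}$ corresponds, up to homotopy, to the map $E_{x_0} \smallsetminus \{0\} \to E'_{x_0} \smallsetminus \{0\}$ induced by $\Psi$, which is a homotopy equivalence by hypothesis (iii). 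Hence $f_{x_0}$ is a homotopy equivalence, and Fact 3.3 plus Remark 3.4 give that $f$ is a fiber-homotopy equivalence, proving $S(E) \simeq S(E')$ as asserted.

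The step I expect to require the most care is extracting the uniform $\varepsilon > 0$ from condition (iv) and making the rescaling rigorous: one must argue that the vector bundle $\{v \in E : \|v\| \geq \varepsilon\}$ (or equivalently its unit sphere bundle after fiberwise radial rescaling) is bundle-isomorphic to $S(E)$ itself, and that the scaling is continuous and fiberwise a homotopy equivalence so that it does not disturb the application of Dold's theorem. Here compactness of $X$ is essential: $\Psi^{-1}(0_{E'}(X))$ is a compact subset of $E$ containing the zero section, so its intersection with each ray is bounded, and by a standard tube-lemma / compactness argument over $X$ one gets a single bound $\varepsilon$ valid on all fibers simultaneously. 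Local contractibility of $X$ is needed only to invoke Fact 3.3. Everything else is routine: checking continuity of $f$, checking $f$ is a map over $X$ (immediate since $\Psi$ is and normalization is fiberwise), and the deformation-retraction identification of $S(E_{x_0})$ with $E_{x_0} \smallsetminus \{0\}$.
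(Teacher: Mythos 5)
Your proposal has a genuine gap at its central step: you assert that
\[
f(v) \;=\; \frac{\Psi(v)}{\|\Psi(v)\|}
\]
``is a well-defined continuous bundle map over $X$'' and later that checking $f$ is a map over $X$ is ``immediate since $\Psi$ is.'' But $\Psi$ is \emph{not} assumed to be a map over $X$ --- this is precisely the point of the lemma (the paper even flags it in a remark immediately after the statement). None of the four conditions says $\pi' \circ \Psi = \pi$; conditions (i) and (ii) only control $\Psi$ on the zero section and on the single fiber $E_{x_0}$. If $\Psi$ were fiber-preserving, condition (ii) would be vacuous. In the intended application (the proof of Theorem~\ref{thm:MainTheorem}), the map $\Psi(x,v) = \Phi^{-1}(\widetilde{s}(v)\,x)$ genuinely moves the base point, so your normalized map $f$ sends $v \in S(E)_{\pi(v)}$ into the fiber $E'_{\pi'(\Psi(v))}$, which in general differs from $E'_{\pi(v)}$. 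Dold's criterion (Fact~\ref{fact:Dol63}) only applies to maps over $X$, so your argument does not get off the ground.

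The missing idea, which is the heart of the paper's proof, is to use the homotopy lifting property to deform $\psi(v)=\Psi(v)/\|\Psi(v)\|$ into a map over $X$. Condition (i) guarantees that $\Phi_t(v) := \pi'(\Psi(tv))$ is a path in $X$ from $\Phi_0(v)=\pi(v)$ to $\Phi_1(v)=\pi'(\Psi(v))$; lifting the homotopy $\Phi$ through the fibration $\pi'\colon S(E')\to X$, starting from $\psi$ at time $1$, produces a map $\widehat{\Phi}_0\colon S(E)\to S(E')$ which \emph{is} over $X$. Condition (ii) is what allows the lift to be taken relative to $S(E)_{x_0}\times[0,1]$, so that $\widehat{\Phi}_0$ agrees with $\psi$ on the fiber over $x_0$, and only then do conditions (iii) and Dold's theorem (with Remark~\ref{rmk:Dol63Connected}) finish the argument. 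Everything else in your write-up --- the extraction of $\varepsilon$ from condition (iv), the rescaling, and the verification at $x_0$ --- matches the paper and is fine.
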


\begin{rmk}
Importantly here, $\Psi$ is not assumed to be a map over $X$. 
\end{rmk}

\begin{proof}[Proof of Lemma \ref{lem:CriterionFiberwiseHomotopy}]
We fix metrics $\| {-} \|$ on $E$ and $E'$, and regard $S(E)$ and $S(E')$ as the subbundles of $E$ and $E'$ consisting of the norm-one vectors, respectively. 
By condition~(iv), rescaling the metric on $E$ if necessary, we may assume that $S(E) \cap \Psi^{-1}(0_{E'}(X)) = \varnothing$. 

Define a map $\psi \colon S(E) \to S(E')$ by 
\[
\psi(v) = \frac{\Psi(v)}{\| \Psi(v) \|}, 
\]
and define a continuous map
\[
\Phi = (\Phi_t)_{t \in [0,1]} \colon S(E) \times [0,1] \to X
\]
by 
$\Phi_t(v) = \pi'\circ\Psi(tv)$. One sees from condition~(ii) that the diagram 
\[
\begin{tikzcd}
(S(E) \times \{ 1 \}) \cup (S(E)_{x_0} \times [0, 1]) \ar[d, hook] \ar[r, "\pr_1"] & S(E) \ar[r, "\psi"] & S(E') \ar[d, "\pi'"] \\
S(E) \times [0, 1] \ar[rr, "\Phi"'] & & X
\end{tikzcd}
\]
commutes, where $\pr_1 \colon (S(E) \times \{ 1 \}) \cup (S(E)_{x_0} \times [0, 1]) \to S(E)$ is the projection onto the first factor. 
By the homotopy lifting property of fibrations (see \eg \cite[Ch.\,7]{May99}), there exists a continuous map
\[
\widehat{\Phi} = (\widehat{\Phi}_t)_{t \in [0, 1]} \colon S(E) \times [0, 1] \to S(E') 
\]
which makes the following diagram commute: 
\[
\begin{tikzcd}
(S(E) \times \{ 1 \}) \cup (S(E)_{x_0} \times [0, 1]) \ar[d, hook] \ar[r, "\pr_1"] & S(E) \ar[r, "\psi"] & S(E') \ar[d, "\pi'"] \\
S(E) \times [0, 1] \ar[rr, "\Phi"'] \ar[rru, "\widehat{\Phi}", shift right = 1] & & X.
\end{tikzcd}
\]

We claim that $\widehat{\Phi}_0 \colon S(E) \to S(E')$ is a fiber-homotopy equivalence. 
Indeed, it follows from condition~(i) that $\pi' \circ \widehat{\Phi}_0 = \Phi_0 = \pi$,
\ie $\widehat{\Phi}_0$ is a continuous map over $X$. Furthermore, we have 
\[
(\widehat{\Phi}_0)_{|S(E)_{x_0}} = \psi_{|S(E)_{x_0}},
\]
hence 
\[
(\widehat{\Phi}_0)_{|S(E)_{x_0}} \colon S(E)_{x_0} \to S(F)_{x_0}
\]
is a homotopy equivalence by condition~(iii). 
We thus conclude from Fact~\ref{fact:Dol63} and Remark~\ref{rmk:Dol63Connected} that $\widehat{\Phi}_0$ is a fiber-homotopy equivalence. 
\end{proof}

\section{Fiber-homotopical triviality of the normal sphere bundle} \label{s:ProofMain}

In this section we prove Theorem~\ref{thm:MainTheorem}.
We use the notation and terminology of Sections \ref{s:ReductiveHomogeneousSpaces} and~\ref{s:FiberwiseHomotopy}.

Suppose that a reductive homogeneous space $G/H$ admits a compact quotient $\Gamma \bs G/H$.
By Selberg's lemma \cite[Lem.\,8]{Sel60}, up to replacing $\Gamma$ by a finite-index subgroup, we may assume that $\Gamma$ is torsion-free, hence acts freely on $G/H$.

\subsection{Preliminary lemmas}

Let $o = 1 \cdot H$ and $\widetilde{o} = 1 \cdot K_H$ denote the base points of $G/H$ and $G/K_H$, respectively.
The following basic observation already appears in \cite[\S\,3.2]{KT24+}.

\begin{lem} \label{lem:EquivariantSection}
The $G$-equivariant fiber bundle 
\[
\pi \colon G/K_H \to G/H, \qquad g \cdot \widetilde{o} \mapsto g \cdot o
\]
admits a $\Gamma$-equivariant section $s \colon G/H \to G/K_H$ such that $s(o) = \widetilde{o}$. 
\end{lem}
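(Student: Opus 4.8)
The plan is to use that the fiber of $\pi$ over $o$ is contractible and then descend everything to the compact quotient. Indeed $\pi^{-1}(o) = H/K_H$, and by the Cartan decomposition $H = K_H \exp(\p \cap \h)$ the map $v \mapsto \exp(v) \cdot \widetilde{o}$ is a diffeomorphism from the vector space $\p \cap \h$ onto $H/K_H$; in particular $H/K_H$ is contractible. Since $\pi$ is $G$-equivariant with $K_H \subset H$, it is the fiber bundle with fiber $H/K_H$ associated to the principal $H$-bundle $G \to G/H$, hence in particular a locally trivial bundle. A $\Gamma$-equivariant section of $\pi$ is the same datum as a section of the induced bundle over the compact quotient $\Gamma \bs G/H$ (pulled back along the covering $G/H \to \Gamma \bs G/H$), so it suffices to produce such a section downstairs.

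First I would record that $\Gamma$ acts freely and properly discontinuously on $G/K_H$. Freeness is clear: if $\gamma \cdot \eta = \eta$ then $\gamma \cdot \pi(\eta) = \pi(\eta)$, so $\gamma = 1$ because $\Gamma$ acts freely on $G/H$. Proper discontinuity follows because $\pi$ is $\Gamma$-equivariant and proper ($K_H$ being compact): for compact $L \subset G/K_H$ one has $\{ \gamma \in \Gamma \mid \gamma L \cap L \neq \varnothing \} \subset \{ \gamma \in \Gamma \mid \gamma \, \pi(L) \cap \pi(L) \neq \varnothing \}$, which is finite. Writing $M = \Gamma \bs G/H$, it follows that $\Gamma \bs G/K_H$ is a manifold and $\overline{\pi} \colon \Gamma \bs G/K_H \to M$ is again a fiber bundle with (contractible) fiber $H/K_H$; moreover the square relating $\pi$ and $\overline{\pi}$ over the covering $G/H \to M$ is cartesian (using freeness of the $\Gamma$-action once more), so $\Gamma$-equivariant sections of $\pi$ correspond bijectively to sections of $\overline{\pi}$.

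It then remains to build a section of $\overline{\pi}$ taking the value $[\widetilde{o}]$ over $[o] \in M$. For this I would invoke obstruction theory: $M$ is a closed manifold, hence carries a finite CW structure having $[o]$ as a $0$-cell; I would define the section on the $0$-skeleton so that it sends $[o]$ to $[\widetilde{o}]$, and then extend it successively over the higher skeleta, the obstruction at each stage lying in a cohomology group of $M$ with coefficients in a homotopy group of the contractible fiber $H/K_H$, hence vanishing. Pulling this section back along $G/H \to M$ yields a $\Gamma$-equivariant section $s$ of $\pi$, and $s(o)$ is then the unique point of $\pi^{-1}(o)$ mapping to $[\widetilde{o}]$; since the preimages of $[\widetilde{o}]$ in $G/K_H$ are exactly the points $\gamma \cdot \widetilde{o}$, and only $\gamma = 1$ gives a point lying over $o$, we conclude $s(o) = \widetilde{o}$. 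There is no serious obstacle here; the only points requiring a little care are the descent of the bundle structure to the compact quotient and the bookkeeping needed to prescribe the value at the base point, while the genuine homotopy-theoretic input is simply the contractibility of $H/K_H$.
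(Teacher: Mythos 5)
Your proof is correct and follows essentially the same route as the paper's: descend to the compact quotient, use contractibility of the fiber $H/K_H \simeq \p \cap \h$ to produce a section downstairs with the prescribed value at the base point, and pull back along the covering to get a $\Gamma$-equivariant section. You simply spell out in more detail the points the paper leaves implicit (freeness and proper discontinuity of the $\Gamma$-action on $G/K_H$, the cartesian square, and the obstruction-theoretic existence of the section).
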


\begin{proof}
The fiber bundle $\pi \colon G/K_H \to G/H$ induces a fiber bundle $\pi_\Gamma \colon \Gamma \bs G/K_H \to \Gamma \bs G/H$.
Its typical fiber $H/K_H$ is diffeomorphic to $\p \cap \h$, hence contractible. 
Therefore there exists a section $s_\Gamma \colon \Gamma \bs G/H \to \Gamma \bs G/K_H$ of $\pi_\Gamma$
such that $s_\Gamma(\Gamma \cdot o) = \Gamma \cdot \widetilde{o}$. 
The pullback of the section $s_\Gamma$ under the covering map $G/H \to \Gamma \bs G/H$ gives a $\Gamma$-equivariant section $s \colon G/H \to G/K_H$ with $s(o) = \widetilde{o}$. 
\end{proof}

As in previous sections, we identify $X = K/K_H$ with the $K$-orbit of $o$ in $G/H$. 
For each $x \in G/H$, the subset 
$s(x) X$ of $G/H$ is well defined since $\ell X = X$ for any $\ell \in K_H$. 
The following is contained in \cite[Lem.\,3.6]{KT24+}; we give a short proof for the reader's convenience.

\begin{lem} \label{lem:TranslateIntersection}
Let $s \colon G/H \to G/K_H$ be as in Lemma~\ref{lem:EquivariantSection}.
For any compact subset $C$ of $G/H$, the set
\[
Z_C = \{ x \in G/H \mid s(x) X \cap C \neq \varnothing \}
\]
is compact. 
\end{lem}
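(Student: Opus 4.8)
The plan is to trap $Z_C$ inside a finite union of translates of a compact fundamental domain, and then note that $Z_C$ is closed. Since $\Gamma \bs G/H$ is compact, I first fix a compact subset $D \subseteq G/H$ with $\Gamma \cdot D = G/H$, so that every $x \in Z_C$ can be written $x = \gamma \cdot d$ with $\gamma \in \Gamma$ and $d \in D$. The key observation is that, for $g \in G$, the subset $gX = gK \cdot o$ of $G/H$ depends only on the coset $gK \in G/K$, since $K_H \subseteq K$; hence, writing $\bar s \colon G/H \to G/K$ for the composite of $s$ with the projection $G/K_H \to G/K$ (a continuous, $\Gamma$-equivariant map), the subset $s(x)X$ depends only on $\bar s(x)$. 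As $K$ is compact, the projection $G \to G/K$ is proper, so the preimage $P \subseteq G$ of the compact set $\bar s(D)$ under this projection is compact, and therefore
\[
C' \ :=\ \bigcup_{d \in D} s(d)X \ =\ \{\, g \cdot o \mid g \in P \,\}
\]
is a compact subset of $G/H$.

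Now take $x = \gamma \cdot d \in Z_C$ as above. By $\Gamma$-equivariance of $s$, we have $s(x)X = \gamma \cdot \bigl( s(d)X \bigr)$, so the condition $s(x)X \cap C \neq \varnothing$ is equivalent to $s(d)X \cap \gamma^{-1}C \neq \varnothing$; since $s(d)X \subseteq C'$, this forces $\gamma C' \cap C \neq \varnothing$. Because $\Gamma$ acts properly discontinuously on $G/H$ and $C, C'$ are compact, the set $F := \{ \gamma \in \Gamma \mid \gamma C' \cap C \neq \varnothing \}$ is finite; hence $Z_C \subseteq \bigcup_{\gamma \in F} \gamma \cdot D$, a finite union of compact sets.

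It remains to check that $Z_C$ is closed in $G/H$. If $x_n \in Z_C$ with $x_n \to x$, choose $c_n \in s(x_n)X \cap C$; after passing to subsequences we have $c_n \to c \in C$, and using the continuity of $s$, local continuous lifts of the section to $G$, and the compactness of $X$, one verifies that $c \in s(x)X$, so $x \in Z_C$. Being a closed subset of the compact set $\bigcup_{\gamma \in F} \gamma \cdot D$, the set $Z_C$ is compact. I do not anticipate a genuine obstacle: all the content sits in the finiteness of $F$, which is immediate from proper discontinuity once $C'$ is known to be compact, and the only step requiring a little care is exactly that compactness of $C'$ — it hinges on recognizing that $s(x)X$ depends only on $\bar s(x) \in G/K$, so that compactness of $K$ (hence properness of $G \to G/K$) can be invoked; without this one would only control $H$-orbit pieces and $C'$ could fail to be compact.
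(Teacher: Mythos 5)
Your proof is correct and follows essentially the same route as the paper's: cover $Z_C$ by finitely many $\Gamma$-translates of a compact fundamental domain, the finiteness coming from proper discontinuity applied to $C$ and the compact set $\bigcup_{d\in D}s(d)X$, and then observe that $Z_C$ is closed. The only cosmetic difference is that you establish compactness of $\bigcup_{d\in D}s(d)X$ via properness of $G\to G/K$, whereas the paper sees it directly as the continuous image of the compact set $s(C')\times X$; both are fine.
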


\begin{proof}
By compactness of $X$ and~$C$, the set $Z_C$ is closed in $G/H$.
Thus, it suffices to check that $Z_C$ is relatively compact in $G/H$. 

Since $\Gamma$ acts cocompactly on $G/H$, there is a compact subset $C'$ of $G/H$ such that $\Gamma \cdot C' = G/H$.
The set
\[
F = \{ \gamma \in \Gamma \mid \gamma \cdot s(C') X \cap C \neq \varnothing \}
\]
is finite since $s(C') X$ and $C$ are compact and the action of $\Gamma$ on $G/H$ is properly discontinuous.

We claim that $Z_C \subset F \cdot C'$.
Indeed, for any $x \in Z_C$, there exist $\gamma \in \Gamma$ and $x' \in C'$ such that $x = \gamma \cdot x'$.
By $\Gamma$-equivariance of $s$, we have $s(x)X = \gamma \cdot s(x') X \subset \gamma \cdot s(C') X$.
Therefore $\gamma \cdot s(C') X \cap C$ contains the set $s(x)X \cap C$, which is nonempty since $x\in Z_C$, and so $\gamma \in F$.
This proves the claim.

Thus the closed subset $Z_C$ of $G/H$ is relatively compact, hence compact. 
\end{proof}

\subsection{Proof of Theorem~\ref{thm:MainTheorem}}

Let $\Phi \colon N \to G/H$ be the diffeomorphism in Fact~\ref{fact:Kob89}. Since the quotient map $G \to G/K_H$ is a fiber bundle and since $\p \cap \q$ is contractible, there exists a smooth map $\widetilde{s} \colon N_o = \p \cap \q \to G$ 
such that the diagram 
\begin{equation}
\label{eqn:SigmaTildeDiagram}
\begin{tikzcd}
&[-0.5cm] & & G \arrow[d] \\
\p \cap \q \arrow[r, equal] \arrow[rrru, "\widetilde{s}", bend left = 12] & N_o \arrow[r, "{\Phi_{\vert N_{o}}}"'] & G/H \arrow[r, "s"'] & G/K_H.
\end{tikzcd}
\end{equation}
commutes and $\widetilde{s}(0) = 1$. 
Define the map 
\[
\Psi \colon X \times (\p \cap \q) \to N
\]
by $\Psi(x,v) = \Phi^{-1}(\widetilde{s}(v) x)$. 
We claim that $\Psi$ satisfies conditions (i)--(iv) of Lemma~\ref{lem:CriterionFiberwiseHomotopy}. 

(i): Since $\widetilde{s}(0) = 1$, we have $\Psi(x, 0) = [x, 0]$, \ie 
the map $\Psi$ restricts to the identity map between the zero sections. 

(ii), (iii): It follows from the commutativity of the diagram (\ref{eqn:SigmaTildeDiagram}) that 
$\Psi(o, v) = [o, v]$.
Hence, $\Psi(o, {-}) \colon \p \cap \q \to N_o$ is a linear isomorphism.

(iv): We prove the stronger assertion that $\Psi$ is a proper map.  
Since $\Phi \colon N \to G/H$ is a diffeomorphism, it is equivalent to saying that $\Phi \circ \Psi$ is a proper map.
For any compact subset $C$ of $G/H$, we have
\begin{align*}
(\Phi \circ \Psi)^{-1}(C) &= \{ (x, v) \in X \times (\p \cap \q) \mid \widetilde{s}(v) x \in C \} \\
&\subset X \times \{ v \in \p \cap \q \mid \widetilde{s}(v) X \cap C \neq \varnothing \} \\
&= X \times (\Phi_{|N_o})^{-1}(Z_C), 
\end{align*}
where $Z_C$ is the compact subset of $G/H$ defined in Lemma~\ref{lem:TranslateIntersection}. Since 
$\Phi_{|N_o} \colon \p \cap \q \to G/H$
is a closed embedding, $(\Phi \circ \Psi)^{-1}(C)$ is a compact subset of $X \times (\p \cap \q)$. 

We can thus apply Lemma~\ref{lem:CriterionFiberwiseHomotopy} and deduce that $S(N)$ is fiber-homotopically trivial, which completes the proof of Theorem~\ref{thm:MainTheorem}. 
\hfill\qedsymbol

\begin{rmk}
Since the proof might seem a bit technical, let us try to rephrase it in an informal but more visual way. 

The section $s$ assigns to every point $x \in G/H$ a translate $s(x) X$ of the compact submanifold $X$ passing through $x$, in a smooth and $\Gamma$-equivariant way. 
When $G/H$ is a reductive symmetric space, the fibers of the projection map $N \to X$ are totally positive totally geodesic submanifolds of $G/H$ of maximal dimension, as mentioned in Remark~\ref{rmk:H^aOrbit}. 
On the other hand, the base space $X$ is a totally negative totally geodesic submanifold of maximal dimension. Thus, each $s(x) X$ is transverse to the fibers, and one can see that it defines a section of~$N$.
One can define a continuous map over$X$ 
\[
\Psi' \colon X \times N_o \to N \ (= G/H)
\]
by assigning to each $(x, v) \in X \times N_o$ the value of the section $s(v) X$ at $x \in X$. Here, we have regarded $N_o$ as a submanifold of $G/H$. 
Finally, the properness and cocompactness of the $\Gamma$-action imply that, for $v \in N_o$ with sufficiently large norm, 
$s(v) X$ does not intersect the zero section, because it is the image of $s(x) X$ by $\gamma$ for some $x$ in a compact fundamental domain and a very large $\gamma$ in $\Gamma$. Thus, the restriction of $\Psi'$ to $X \times S$, with $S$ a very large sphere in $N_o$, gives the required homotopical trivialization of $S(N)$.

When $G/H$ is not symmetric, $s(x) X$ might not be a section of the vector bundle $N \to X$. However, it is always homotopic to a section, and Lemma~\ref{lem:CriterionFiberwiseHomotopy} is designed to deal with this more general case.
\end{rmk}

\section{Reminders: topological \texorpdfstring{$\Ktheory$}{K}-theory and the \texorpdfstring{$J$}{J}-group} \label{s:RemindersKTheory}

In this section, we briefly recall the basic definitions of topological $\Ktheory$-theory and the $J$-group, which will be heavily used in the rest of this paper. 
See Appendix~\ref{s:AppendixKTheory} for more properties of them.

\subsection{The (reduced) $0$-th $\Ktheory$-theory}

Recall the following classical notion.

\begin{defn} \label{def:group-compl}
Let $A$ be a commutative monoid.
The \emph{group completion} (or \emph{Grothendieck group}) of~$A$~is 
\[
\Ktheory(A) = (A \times A) / {\sim},
\]
where the equivalence relation $\sim$ is defined by
\[
(a_0, a_1) \sim (b_0, b_1) \iff \exists c \in A,\, a_0 + b_1 + c = a_1 + b_0 + c.
\]
\end{defn}

The monoid structure on $A$ induces an abelian group structure on $\Ktheory(A)$; the inverse of $\br{a_0, a_1}$ is $\br{a_1, a_0}$. We will write $\br{a}$ instead of $\br{a, 0}$, 
and $\br{a_0} - \br{a_1}$ instead of $\br{a_0, a_1}$. 
One can easily see that $\Ktheory(A)$ has the following universal property: 
\begin{itemize}
\item \emph{For any abelian group $B$
and any monoid homomorphism $\varphi \colon A \to B$, there exists a unique group homomorphism $\varphi' \colon \Ktheory(A) \to B$ such that $\varphi' \circ \gamma = \varphi$, where $\gamma \colon A \to \Ktheory(A)$ is the obvious map.}
\end{itemize}
Intuitively, $\Ktheory(A)$ is the abelian group obtained by ``formally adding the inverses'' to $A$. 

\begin{defn}
Let $X$ be a compact Hausdorff space. 
\begin{enumerate}
\item For $\K = \R, \C$, or the ring $\Ha$ of quaternions, we define $\Vect_\K(X)$ to be the set of isomorphism classes of $\K$-vector bundles over $X$. We regard $\Vect_\K(X)$ as a commutative monoid for the direct sum $\oplus$. 
\item We define the \emph{$0$-th $\KO$-theory} of $X$, 
denoted as $\KO^0(X)$ (or simply $\KO(X)$), to be the group completion of the commutative monoid $\Vect_\R(X)$: 
\[
\KO^0(X) = \Ktheory(\Vect_\R(X)). 
\]
We similarly define
\[
\KU^0(X) = \Ktheory(\Vect_\C(X)), \qquad 
\KSp^0(X) = \Ktheory(\Vect_\Ha(X)). 
\]
\end{enumerate}
\end{defn}

\begin{rmk}
In the literature, the $0$-th $\KU$-theory is often simply called the \emph{$\Ktheory$-theory} (or \emph{topological $\Ktheory$-theory}) and written as $\Ktheory^0(X)$ (or $\Ktheory(X)$).
\end{rmk}

\begin{rmk}
For the definition of the nonzero degree part, see 
Sections~\ref{ss:NegativeDegrees} and \ref{ss:PositiveDegrees}. 
\end{rmk}

A continuous map $f \colon X \to Y$ between compact Hausdorff spaces induces a monoid homomorphism $f^\ast \colon \Vect_\R(Y) \to \Vect_\R(X)$, hence a group homomorphism $f^\ast \colon \KO^0(Y) \to \KO^0(X)$. If two continuous maps $f, g \colon X \to Y$ are homotopic, then we have $f^\ast = g^\ast$ (see \eg \cite[Lem.\,1.4.3]{Ati67}). Similar results hold for $\KU$ and $\KSp$. 

For $\K = \R, \C$, or $\Ha$, we denote by $\underline{\K} \in \Vect_\K(X)$ the trivial line bundle $X \times \K$.

The tensor product of vector bundles induces a commutative ring structure on $\KO^0(X)$ (resp.\ $\KU^0(X)$). We will simply write $1$ for its multiplicative unit $[\underline{\R}]$ (resp.\ $[\underline{\C}]$). 
The tensor product similarly induces a $\KO^0(X)$-module structure on $\KSp^0(X)$ and a symmetric bilinear map 
\[
\KSp^0(X) \times \KSp^0(X) \to \KO^0(X), \qquad (\br{E}, \br{F}) \mapsto \br{E \otimes_\Ha \overline{F}},
\]
making the direct sum $\KO^0(X) \oplus \KSp^0(X)$ a commutative ring (see \cite[\S\,1.1.1]{All73}). 

\begin{defn}
For a based compact Hausdorff space $X = (X, x_0)$, we define the \emph{$0$-th reduced $\KO$-theory}, denoted as $\widetilde{\KO}^0(X)$ (or simply $\widetilde{\KO}(X)$), to be the kernel of the group homomorphism
\[
\rk_{x_0} \colon \KO^0(X) \to \Z
\]
defined by setting 
$\rk_{x_0}([E]) = \dim_{\R}(E_{x_0})$
for $E \in \Vect_\R(X)$.
We similarly define $\widetilde{\KU}^0(X)$ and $\widetilde{\KSp}^0(X)$. 
\end{defn}

We have a canonical direct sum decomposition $\KO^0(X) = \widetilde{\KO}^0(X) \oplus \Z$, where the $\Z$-factor is spanned by trivial vector bundles. The same holds for $\KU$ and $\KSp$. 

\begin{rmk}
The definition of $\widetilde{\KO}^0(X)$ depends only on the connected component of $X$ containing~$x_0$. In particular, if $X$ is connected, then $\widetilde{\KO}^0(X)$ does not depend on the choice of $x_0$. 
\end{rmk}

As in the unbased case, a based continuous map $f \colon X \to Y$ between based compact Hausdorff spaces induces a group homomorphism $f^\ast \colon \widetilde{\KO}^0(Y) \to \widetilde{\KO}^0(X)$. If two based continuous maps $f, g \colon X \to Y$ are homotopic in the based sense, then $f^\ast = g^\ast$.
Similar results hold for $\widetilde{\KU}^0$ and~$\widetilde{\KSp}^0$. 

The complexification, the quaternionification, and forgetting the complex or the quaternionic structures respectively induce the monoid homomorphisms 
\[
\begin{tikzcd}
\Vect_\R(X) \ar[r, shift left = 1, "\cc "] & \Vect_\C(X) \ar[l, shift left = 1, "\rr "] \ar[r, shift left = 1, "\qq "] & \Vect_\Ha(X) \ar[l, shift left = 1, "\cc' "]. 
\end{tikzcd}
\]
We use the same symbols for the induced group homomorphisms: 
\[
\begin{tikzcd}
\KO^0(X) \ar[r, shift left = 1, "\cc "] & \KU^0(X) \ar[l, shift left = 1, "\rr "] \ar[r, shift left = 1, "\qq "] & \KSp^0(X) \ar[l, shift left = 1, "\cc' "]. 
\end{tikzcd}
\]
Similarly, we write $\overline{(-)} \colon \KU(X) \to \KU(X)$ for the group homomorphism induced from the complex conjugation $\overline{(-)} \colon \Vect_\C(X) \to \Vect_\C(X)$.
One easily checks the following identities: 
\begin{itemize}
\item For any $x \in \KO^0(X)$, we have 
$\rr \cc x = 2x$ and $\overline{\cc x} = \cc x$. 
\item For any $y \in \KU^0(X)$, we have 
$\overline{\overline{y}} = y$, 
$\rr \overline{y} = \rr y$, 
$\qq  \overline{y} = \qq y$, and 
$\cc \rr y = \cc' \qq y = y + \overline{y}$.
\item For any $z \in \KSp^0(X)$, we have 
$\qq \cc' z = 2z$ and $\overline{\cc' z} = \cc' z$.
\end{itemize}

\begin{rmk}
The map $\cc$ is a ring homomorphism, whereas $\rr$ is not. 
\end{rmk}

\begin{rmk}\label{rmk:KSpProduct}
For $x,y \in \KSp^0(X)$, we have 
$(\cc' x)(\cc' y) = \cc (xy)$ in $\KU^0(X)$. 
\end{rmk}

\subsection{The (reduced) $J$-group} \label{ss:J}

By a \emph{bundle of based spheres} over a compact Hausdorff space $X$, we mean 
a sphere bundle $\pi \colon S \to X$ together with a specified section $\sigma \colon X \to S$.
Similarly to Definition~\ref{defn:FiberwiseHomotopyEquivalence}, one can define a notion of fiber-homotopy equivalence for bundles of based spheres.
We write $\Sphunst(X)$ for the set of all fiber-homotopy equivalence classes of bundles of based spheres over~$X$.
This is a commutative monoid under the fiberwise smash product. 

\begin{rmk}
If $\pi \colon S \to X$ is a sphere bundle, 
its fiberwise suspension is naturally regarded as a bundle of based spheres. If $S = S(E)$ is the sphere bundle of a real vector bundle~$E$, the resulting bundle of based spheres is the fiberwise one-point compactification $S^E$ of~$E$. 

In particular, if the sphere bundles $S(E)$ and $S(E')$ of two real vector bundles $E$ and $E'$ are fiber-homotopy equivalent, then $\br{S^E} = \br{S^{E'}}$ in $\Sphunst(X)$.
\end{rmk}

We define the group $\Sphst(X)$ to be the group completion (Definition~\ref{def:group-compl}) of $\Sphunst(X)$: 
\[
\Sphst(X) = \Ktheory(\Sphunst(X)). 
\]
The fiberwise one-point compactification 
\[
\Vect_\R(X) \to \Sphunst(X), \qquad E \mapsto \br{S^E}
\]
induces a group homomorphism $J \colon \KO^0(X) \to \Sphst(X)$, called the \emph{$J$-homomorphism}.
For $E \in \Vect_\R(X)$, we shall write $J(E)$ instead of $J(\br{E})$.

\begin{defn}
For a compact Hausdorff space~$X$, the \emph{$J$-group} of~$X$ is the image $J(X)$ of the $J$-homomorphism.
\end{defn}

For a based compact Hausdorff space $X = (X, x_0)$, consider the following composite:
\[
\begin{tikzcd}
\widetilde{J} \colon \KO^0(X) \ar[r, "\pi"] & \widetilde{\KO}^0(X) \ar[r, "J"] & J(X), 
\end{tikzcd}
\]
where the first map $\pi$ is the projection with respect to the decomposition 
$\KO(X) = \widetilde{\KO}(X) \oplus \Z$.

\begin{defn} \label{def:red-J-group}
The \emph{reduced $J$-group} of~$X$ is the image $\widetilde{J}(X)$ of~$\widetilde{J}$, or in other words the image of $\widetilde{\KO}^0(X)$ under~$J$.
\end{defn}

Note that $\widetilde{J} \colon \KO^0(X) \to \widetilde{J}(X)$ is also the composite
\[
\begin{tikzcd}
\KO^0(X) \ar[r, "J"] & J(X) \ar[r, "\varpi"] & \widetilde{J}(X), 
\end{tikzcd}
\]
where the second map $\varpi$ is the projection with respect to the decomposition 
$J(X) = \widetilde{J}(X) \oplus \Z$, 
which depends only on the connected component of $X$ containing~$x_0$. 

As in the unbased case, we write $\widetilde{J}(E)$ instead of $\widetilde{J}([E])$ for $E \in \Vect_\R(X)$. 
Note that
\[
J(E \oplus E') = J(E) + J(E'), \qquad 
\widetilde{J}(E \oplus E') = \widetilde{J}(E) + \widetilde{J}(E') \qquad (E, E' \in \Vect_\R(X)). 
\]

In concrete terms, given two vector bundles $E$ and $E'$, we have
\[
\widetilde{J}(E) = \widetilde{J}(E')
\]
if and only if there exist integers $k,k' \geq 0$ such that the sphere bundles $S(E \oplus \underline{\R}^{\oplus k})$ and $S(E' \oplus \underline{\R}^{\oplus k'})$ are fiber-homotopically equivalent.

\begin{defn}
The sphere bundle $S(E)$ associated to a vector bundle~$E$ is \emph{stably fiber-homotopically trivial} if $\widetilde{J}(E) = 0$ in the reduced $J$-group $\widetilde{J}(X)$, or equivalently if the sphere bundle $S(E \oplus \underline{\R}^{\oplus k})$ is fiber-homotopically trivial for some integer $k \geq 0$.
\end{defn}

Fiber-homotopical triviality implies stable fiber-homotopical triviality.

\section{Non-vanishing in the reduced \texorpdfstring{$J$}{J}-group} \label{s:MainStable}

Let $G/H$ be a reductive homogeneous space.
We use the notation and terminology of Sections \ref{s:ReductiveHomogeneousSpaces} and~\ref{s:FiberwiseHomotopy}.
In particular, $N$ denotes the normal bundle of $X = K/K_H$ in $G/H$.

Our main Theorem~\ref{thm:MainTheorem} has the following consequence.

\begin{coro} \label{cor:MainStable}
If the reductive homogeneous space $G/H$ admits compact quotients, then $S(N)$ is stably fiber-homotopically trivial, \ie $\widetilde{J}(N) = 0$.
\end{coro}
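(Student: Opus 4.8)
The plan is to deduce the corollary as a purely formal consequence of Theorem~\ref{thm:MainTheorem}, by unwinding the definition of the reduced $J$-group recalled in Section~\ref{ss:J}. Assume $G/H$ admits compact quotients, and set $n = \rk(N) = \dim(\p \cap \q)$ (see \eqref{eqn:dim_+}). By Theorem~\ref{thm:MainTheorem}, the sphere bundle $S(N)$ is fiber-homotopically trivial, i.e.\ fiber-homotopy equivalent to the trivial bundle $X \times S^{n-1}$. Note that $X = K/K_H$ is a connected compact manifold (a homogeneous space of the compact group~$K$, connected because $G/H$ is), so all the constructions of Section~\ref{ss:J} apply, with base point $x_0 = o$.

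Passing to fiberwise one-point compactifications, a fiber-homotopy equivalence $S(N) \simeq X \times S^{n-1}$ gives a fiber-homotopy equivalence $S^N \simeq S^{\underline{\R}^{\oplus n}}$ of bundles of based spheres, hence the equality $\br{S^N} = \br{S^{\underline{\R}^{\oplus n}}}$ in the monoid $\Sphunst(X)$ (this is precisely the content of the remark following the definition of $\Sphunst(X)$). Since $J(E)$ is by construction the image of $\br{S^E}$ under the canonical homomorphism $\Sphunst(X) \to \Sphst(X)$ (arising from the universal property of the group completion $\KO^0(X) = \Ktheory(\Vect_\R(X))$), applying this homomorphism yields
\[
J(N) = J(\underline{\R}^{\oplus n}) = n\,J(\underline{\R}) \quad \text{in } \Sphst(X).
\]

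It remains to project to the reduced $J$-group. Since the trivial line bundle $\underline{\R}$ lies in the $\Z$-summand of $\KO^0(X) = \widetilde{\KO}^0(X) \oplus \Z$, its image under $\pi \colon \KO^0(X) \to \widetilde{\KO}^0(X)$ is zero, whence $\widetilde{J}(\underline{\R}) = J(\pi(\br{\underline{\R}})) = 0$. Using $\widetilde{J} = \varpi \circ J$, where $\varpi \colon J(X) \to \widetilde{J}(X)$ is the projection splitting off the trivial summand, we conclude
\[
\widetilde{J}(N) = \varpi(J(N)) = \varpi(n\,J(\underline{\R})) = n\,\widetilde{J}(\underline{\R}) = 0.
\]
Alternatively, one may simply invoke the already-recorded fact that fiber-homotopical triviality implies stable fiber-homotopical triviality, the latter being by definition the assertion $\widetilde{J}(N) = 0$. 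There is no genuine difficulty in this step: the whole substance of the corollary is Theorem~\ref{thm:MainTheorem}, and what remains is the routine passage from the unstable to the stable ($J$-theoretic) statement.
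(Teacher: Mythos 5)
Your proof is correct and is exactly the paper's (implicit) argument: the paper states the corollary as an immediate consequence of Theorem~\ref{thm:MainTheorem}, having already recorded in Section~\ref{ss:J} that a fiber-homotopy equivalence of sphere bundles gives $\br{S^E}=\br{S^{E'}}$ in $\Sphunst(X)$ and that fiber-homotopical triviality implies stable fiber-homotopical triviality. Your unwinding of the definitions of $J$ and $\widetilde{J}$ is the routine verification the paper omits, and it is carried out correctly.
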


Therefore, in order to prove that $G/H$ does not have compact quotients, we shall check that $\widetilde{J}(N) \neq\nolinebreak 0$ for all cases in Theorems \ref{thm:OtherSymmetric}, \ref{thm:NonSymmetric}, and~\ref{thm:IndefiniteGrassmannianRCH}.

In general, determining whether $S(N)$ is fiber-homotopically trivial is a subtle problem, whereas checking the non-vanishing of $\widetilde{J}(N)$ in $\widetilde{J}(X)$ is a more tractable problem, which was thoroughly investigated by topologists in the 1960-70s, most notably Adams (see Sections~\ref{ss:J} and \ref{ss:AdamsOperations}).

\subsection{A small improvement}

By contraposition, Corollary~\ref{cor:MainStable} states that if $\widetilde{J}(N) \neq 0$, then $G/H$ does not admit compact quotients.
In fact, the following slightly stronger statement holds.

\begin{thm} \label{thm:MainStableProd}
Let $G/H$ be a reductive homogeneous space such that $\widetilde{J}(N) \neq 0$. 
Then for any reductive homogeneous space $G'/H'$, the product $(G \times G') / (H \times H')$ does not admit compact quotients. 
\end{thm}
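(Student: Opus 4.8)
The plan is to reduce the statement about the product $(G\times G')/(H\times H')$ to Corollary~\ref{cor:MainStable} applied to $G/H$ alone, by showing that the normal bundle of the maximal compact subspace of the product pulls back the nonzero class $\widetilde J(N)$ along a map that is split-injective enough to detect it. First I would set up the data for the product: a maximal compact subgroup of $G\times G'$ may be taken to be $K\times K'$ with $K_{H}\times K'_{H'} = (K\times K')\cap(H\times H')$ maximal compact in $H\times H'$, so the maximal compact subspace of the product is $X\times X'$ with $X=K/K_H$ and $X'=K'/K'_{H'}$, and its normal bundle is $N\boxplus N' := \pr_1^\ast N \oplus \pr_2^\ast N'$, the external direct sum, where $\pr_1\colon X\times X'\to X$ and $\pr_2\colon X\times X'\to X'$ are the projections. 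This is immediate from the descriptions $\p\cap\q$ for $G/H$ and the analogous $\p'\cap\q'$ for $G'/H'$, since the isotropy representation of the product decomposes as a direct sum.

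The key step is then: if $(G\times G')/(H\times H')$ admits compact quotients, Corollary~\ref{cor:MainStable} gives $\widetilde J(N\boxplus N') = 0$ in $\widetilde J(X\times X')$; I want to deduce $\widetilde J(N)=0$ in $\widetilde J(X)$, contradicting the hypothesis. The tool is functoriality of $\widetilde J$ together with a section of $\pr_1$. Pick a base point $x'_0\in X'$ and let $\iota\colon X\hookrightarrow X\times X'$, $x\mapsto (x,x'_0)$ be the corresponding inclusion; it is a based map and satisfies $\pr_1\circ\iota = \id_X$. Since $\widetilde J$ is natural for based maps (it factors through $\widetilde{\KO}^0$, which is contravariantly functorial, and $J$ is natural), we get $\iota^\ast\widetilde J(N\boxplus N') = \widetilde J(\iota^\ast(\pr_1^\ast N\oplus\pr_2^\ast N'))$. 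Now $\iota^\ast\pr_1^\ast N = (\pr_1\iota)^\ast N = N$, while $\iota^\ast\pr_2^\ast N' = (\pr_2\iota)^\ast N'$ is the pullback of $N'$ along the constant map $X\to\{x'_0\}\hookrightarrow X'$, hence a trivial bundle; so $\iota^\ast(N\boxplus N') \cong N\oplus\underline{\R}^{\oplus r}$ with $r=\rk N'$, and therefore $\iota^\ast\widetilde J(N\boxplus N') = \widetilde J(N\oplus\underline{\R}^{\oplus r}) = \widetilde J(N)$. Combined with $\widetilde J(N\boxplus N')=0$ this forces $\widetilde J(N)=0$, the desired contradiction.

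The verification that reduces to in the previous paragraph is essentially bookkeeping about pullbacks of associated bundles and the splitting $\KO^0=\widetilde{\KO}^0\oplus\Z$; the one genuine point to be careful about is that all the spaces in sight ($X$, $X'$, $X\times X'$) are compact Hausdorff (indeed compact manifolds) so that the $J$-group machinery of Section~\ref{s:RemindersKTheory} applies, and that adding a trivial summand does not change $\widetilde J$, which is exactly the stable invariance built into the reduced $J$-group. I expect the main (minor) obstacle to be purely expository: making the identification of the normal bundle of $X\times X'$ as $N\boxplus N'$ fully rigorous via the Lie-algebra decomposition $(\g\oplus\g', \h\oplus\h')$, after which the homotopy-theoretic part is a one-line functoriality argument. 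No deeper input than Corollary~\ref{cor:MainStable} is needed.
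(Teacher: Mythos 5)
Your proposal is correct and follows essentially the same route as the paper's proof: identify the normal bundle of $X\times X'$ in the product as the external direct sum $N\boxplus N'$, restrict along a slice $X\times\{x'_0\}\hookrightarrow X\times X'$ so that the second summand becomes trivial, and conclude that $\widetilde J(N)\neq 0$ forces $\widetilde J(N\boxplus N')\neq 0$, contradicting Corollary~\ref{cor:MainStable}. (Your rank count $r=\rk N'$ for the trivial summand is the right one; in any case the value of $r$ is irrelevant since adding trivial summands does not change $\widetilde J$.)
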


\begin{proof}
Let $K'$ be a maximal compact subgroup of~$G'$ such that $K'_{H'} = K' \cap H'$ is a maximal compact subgroup of $H'$.
Let $X' = K' / K'_{H'}$ and let $N'$ be the normal bundle of $X'$ in $G'/H'$. 
The normal bundle of $X \times X'$ in $(G \times G') / (H \times H')$ is the outer direct sum $N \boxplus N'$, \ie the bundle over $X\times X'$ with total space $N\times N'$.
By Theorem~\ref{thm:MainTheorem}, it suffices to see that $S(N \boxplus N')$ is not fiber-homotopically trivial.

Fixing a point $x' \in X'$ defines an inclusion $X \subset X \times X'$ given by $x \mapsto (x,x')$. 
In particular, any bundle over $X\times X'$ restricts to a bundle over~$X$, which induces a group homomorphism
\[
\widetilde{J}(X \times X') \to \widetilde{J}(X), 
\]
The restriction of $N \boxplus N'$ to~$X$ is identified with $N \oplus \underline{\R}^{\oplus d'}$, where $\underline{\R} = X \times \R$ is the trivial real line bundle over~$X$ and $d' = \dim(X')$.
Since
\[
\widetilde{J}(N \oplus \underline{\R}^{\oplus d'}) = \widetilde{J}(N) \neq 0,
\]
its preimage under the group homomorphism above must satisfy $\widetilde{J}(N \boxplus N') \neq 0$. In particular, $S(N \boxplus\nolinebreak N')$ is not fiber-homotopically trivial.
\end{proof}

As mentioned above, we shall prove in Sections \ref{s:IndefiniteGrassmannians} and~\ref{s:ComputationsKTheory} that $\widetilde{J}(N) \neq 0$ for all cases in Theorems \ref{thm:OtherSymmetric}, \ref{thm:NonSymmetric}, and~\ref{thm:IndefiniteGrassmannianRCH}.
Therefore Theorem~\ref{thm:MainStableProd} implies that in these cases $G/H$, not only has no compact quotients, but also cannot appear as a factor of a reductive homogeneous space admitting compact quotients.

\subsection{Passing to the associated symmetric space}

In the proof of Theorem~\ref{thm:OtherSymmetric}, we shall use the following property.

\begin{prop} \label{prop:AssociatedJTilde}
Let $G/H$ be a reductive symmetric space and $G/H^a$ its associated symmetric space. 
Let $N$ and $N^a$ be the normal bundles of $X$ in $G/H$ and $G/H^a$, respectively. 
Then $\widetilde{J}(N) = 0$ if and only if $\widetilde{J}(N^a) = 0$. 
\end{prop}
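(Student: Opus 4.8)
The plan is to deduce the statement from the single observation that the Whitney sum $N \oplus N^a$ is a \emph{trivial} vector bundle over $X = K/K_H$.

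First I would recall from Sections~\ref{ss:Reductive} and~\ref{ss:ReductiveSymmetric} that, in the notation there, $N$ is $K$-equivariantly isomorphic to $K \times_{K_H}(\p \cap \q)$ and $N^a$ to $K \times_{K_H}(\p \cap \h)$, and that the adjoint action of $K_H$ on $\g$ preserves the orthogonal decomposition $\p = (\p \cap \h) \oplus (\p \cap \q)$. Consequently
\[
N \oplus N^a \;\cong\; K \times_{K_H}\bigl((\p\cap\h)\oplus(\p\cap\q)\bigr) \;=\; K \times_{K_H} \p .
\]
The key point is then that $\p$ is not merely a representation of $K_H$ but of all of $K$: since $K = G^\theta$, the adjoint action of $K$ on $\g$ commutes with $\theta$ and therefore preserves the $(-1)$-eigenspace $\p$ of $\theta$. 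For any representation $V$ of $K$, the associated bundle $K \times_{K_H} V$ over $K/K_H$ is trivial, an explicit trivialization being $[k,v] \mapsto (kK_H,\, kv)$, which is well defined precisely because $V$ is acted on by all of $K$. Applying this to $V = \p$ gives $N \oplus N^a \cong \underline{\R}^{\oplus \dim \p}$, a trivial bundle.

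Finally I would invoke the additivity of $\widetilde{J}$ under direct sums (Section~\ref{ss:J}) together with the fact that $\widetilde{J}$ vanishes on trivial bundles (a trivial bundle lies in the $\Z$-summand of $\KO^0(X) = \widetilde{\KO}^0(X) \oplus \Z$, which is killed by the projection $\pi$ defining $\widetilde{J}$): from
\[
\widetilde{J}(N) + \widetilde{J}(N^a) = \widetilde{J}(N \oplus N^a) = 0
\]
in $\widetilde{J}(X)$ we obtain $\widetilde{J}(N) = -\widetilde{J}(N^a)$, so one vanishes if and only if the other does. There is essentially no obstacle here; the only point requiring care is the identification of $\p$ as a representation of $K$ (and not merely of $K_H$), since it is exactly this that makes the sum $N \oplus N^a$ trivialize.
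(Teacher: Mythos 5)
Your proposal is correct and follows essentially the same route as the paper: identify $N \oplus N^a$ with $K \times_{K_H} \p$, observe that the $K_H$-action on $\p$ extends to $K$ so that the associated bundle is trivial (your trivialization $[k,v]\mapsto(kK_H,kv)$ is exactly the inverse of the one the paper writes down), and conclude by additivity of $\widetilde{J}$. No gaps.
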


The proof of Proposition~\ref{prop:AssociatedJTilde} is based on the following classical fact. 

\begin{lem}\label{lem:HomogeneousBundleTrivial}
Let $L/L'$ be a homogeneous space and $V$ a linear representation of~$L$. Then the associated vector bundle 
$L \times_{L'} V$ over $L/L'$ is trivial. 
\end{lem}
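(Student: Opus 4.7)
The plan is to write down an explicit trivialization using the fact that the $L'$-action on $V$ extends, by hypothesis, to an $L$-action on the whole of $V$. Concretely, I would define
$$\Psi \colon L \times_{L'} V \longrightarrow L/L' \times V, \qquad [g,v] \longmapsto (gL',\, g\cdot v),$$
and check that $\Psi$ descends to the quotient: replacing a representative $(g,v)$ by $(g\ell, \ell^{-1}v)$ for some $\ell \in L'$ yields $(g\ell\cdot L',\, g\ell \cdot \ell^{-1} v) = (gL',\, g\cdot v)$. This well-definedness step is precisely where the hypothesis that $V$ is a representation of the whole group $L$ (and not merely of $L'$) is used; if $V$ were only an $L'$-module, the second coordinate $g\cdot v$ would not make sense.

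Next I would verify that $\Psi$ is a bundle map over $L/L'$: the composition with $\pr_1$ recovers the bundle projection of $L\times_{L'}V$, and on the fiber above $gL'$ the map acts as $v \mapsto g \cdot v$, which is a linear isomorphism $V \to V$. An explicit inverse over $L/L'$ is given by $(gL', w) \mapsto [g, g^{-1} w]$, which is well-defined by the same $L'$-invariance calculation. Continuity (or smoothness, in the Lie group setting) of both $\Psi$ and its inverse follows at once from the existence of local sections of the principal $L'$-bundle $L \to L/L'$.

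I do not anticipate any genuine obstacle here: the content of the lemma is simply that an extension of the $L'$-action on $V$ to the ambient group~$L$ furnishes, tautologically, a global trivialization of the associated bundle $L\times_{L'}V$, because the extended action identifies each fiber canonically with~$V$ itself.
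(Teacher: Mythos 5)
Your proof is correct and is essentially the paper's: the paper simply writes down the inverse map $(\ell L', v) \mapsto [\ell, \ell^{-1}v]$ directly, which is exactly the inverse you exhibit for your $\Psi$. Your version just supplies the routine verifications (well-definedness, fiberwise linearity, continuity via local sections) that the paper leaves implicit.
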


\begin{proof}[Proof of Lemma~\ref{lem:HomogeneousBundleTrivial}]
The following map is well-defined and an isomorphism of vector bundles: 
\[
(L/L') \times V \to L \times_{L'} V, \qquad (\ell L', v) \mapsto [\ell, \ell^{-1}v]. \qedhere
\]
\end{proof}

\begin{proof}[Proof of Proposition \ref{prop:AssociatedJTilde}]
The vector bundles $N$ and $N^a$ are isomorphic to $K \times_{K_H} (\p \cap \q)$ and $K \times_{K_H} (\p \cap \h)$, respectively. 
We thus have
\[
N \oplus N^a = K \times_{K_H} ((\p \cap \q) \oplus (\p \cap \h)) = K \times_{K_H} \p.
\]
The adjoint action of $K_H$ on~$\p$ extends to the adjoint action of $K$ on~$\p$, and so Lemma~\ref{lem:HomogeneousBundleTrivial} applies, showing that $N \oplus N^a$ is trivial. We thus obtain
$\widetilde{J}(N \oplus N^a) = \widetilde{J}(N) + \widetilde{J}(N^a) = 0$. 
In particular, $\widetilde{J}(N)$ vanishes if and only if $\widetilde{J}(N^a)$ does. 
\end{proof}

\section{Compact quotients of indefinite Grassmannians} \label{s:IndefiniteGrassmannians}

In this section we prove Theorem~\ref{thm:IndefiniteGrassmannianRCH} (which contains Theorem~\ref{thm:Hpq}).

\subsection{Tautological vector bundles over Grassmannians}

Let $\K$ be $\R$, $\C$, or the ring $\Ha$ of quaternions. 
Given integers $p,q \geq 1$, we denote by $\Grass_\K(p,q)$ the Grassmannian of $q$-dimensional $\K$-linear subspaces of $\K^{p+q}$.
The \emph{tautological vector bundle} $\Taut_\K(p,q)$ is the $\K$-vector bundle of rank~$q$ over $\Grass_\K(p,q)$ whose fiber over a point $x$ is the linear subspace in $\K^{p+q}$ defining~$x$: 
\[
\Taut_\K(p,q) = \{ (x,v) \in \Grass_\K(p,q) \times \K^{p+q} \mid v \in x \}. 
\]
We equip $\K^{p+q}$ with the standard Hermitian scalar product, and write $\Taut^\perp_\K(p,q)$ for the fiberwise orthogonal complement of $\Taut_\K(p,q)$ in the trivial bundle $\underline{\K}^{\oplus (p+q)} = \Grass_\K(p,q) \times \K^{p+q}$: 
\[
\Taut^\perp_\K(p,q) = \{ (x,v) \in \Grass_\K(p,q) \times \K^{p+q} \mid v \perp x \}.
\]

As in Section~\ref{ss:J}, we consider the homomorphism $\widetilde{J} \colon \KO^0(\Grass_\K(p,q)) \to \widetilde{J}(\Grass_\K(p,q))$, where $\widetilde{J}(\Grass_\K(p,q))$ is the reduced $J$-group of $\Grass_\K(p,q)$.
We denote by $j_\K(p,q)$ the order of $\widetilde{J}(\Taut_\K(p,q))$ in $\widetilde{J}(\Grass_\K(p,q))$; it is known to be finite by work of Atiyah \cite[Prop.\,1.5]{Ati61}.

For $(p,q) = (n,1)$, the Grassmannian $\Grass_\K(n,1)$ is the projective space $\KP^n$; we will write $\Taut_{\KP^n}$ and $j_{\KP^n}$ for $\Taut_\K(n,1)$ and $j_\K(n,1)$, respectively. 
The values of $j_{\KP^n}$ were computed in the 1960--70s by Adams \cite{Ada62} for $\K=\R$, by Adams--Walker \cite{AW65} for $\K=\C$, and by Sigrist--Suter \cite{SS73} for $\K=\Ha$.

\begin{fact}[{\cite{Ada63,AW65,SS73}}] \label{fact:OrderTaut}
With the notation \eqref{eqn:nu} and \eqref{eqn:xi_p}, for any integer $n\geq 1$, we have
\[
j_{\RP^n} = 2^{\nu(n)}, \qquad 
j_{\CP^n} = \prod_{p\text{ prime }\leq n+1} p^{\xi_p(n)}, 
\]
and
\[
j_{\HP^n} = 
\begin{cases} 
j_{\CP^{2n}} & (\xi_2(2n) = 2n+1), \\ 
j_{\CP^{2n}}/2 & (\xi_2(2n) \geq 2n+2).
\end{cases}
\]
\end{fact}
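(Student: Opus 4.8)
The plan is to deduce the three formulas from three classical theorems --- those of Adams \cite{Ada63}, Adams--Walker \cite{AW65}, and Sigrist--Suter \cite{SS73} --- so that the ``proof'' amounts to quoting them; I nonetheless sketch the common shape of their arguments and indicate where the genuine difficulty lies. In each of the three cases, $[\Taut_{\KP^n}]$ determines a distinguished element $\widetilde{J}(\Taut_{\KP^n})$ of the reduced $J$-group of $\KP^n$, whose order is $j_{\KP^n}$, and the problem splits into an upper bound (exhibiting enough fiber-homotopy trivializations) and a matching lower bound (showing that nothing smaller works).

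For the upper bounds I would use explicit geometric constructions: a module structure on $\K^j$ over a suitable Clifford-type algebra --- in the real case, the Hurwitz--Radon construction --- yields a bundle map from $\Taut_{\KP^n}^{\oplus j}$ to a trivial bundle restricting fiberwise to a homotopy equivalence of sphere bundles, so that $j \cdot \widetilde{J}(\Taut_{\KP^n}) = 0$ once $j$ reaches the dimension of the smallest such module; the combinatorics of these dimensions is exactly what \eqref{eqn:nu} and \eqref{eqn:xi_p} record. Over $\RP^n$ this threshold is the classical Radon--Hurwitz number $2^{\nu(n)}$, which is also why the (homotopical) triviality of $\Taut_{\RP^q}^{\oplus p}$ obeys the Hurwitz--Radon condition invoked in Section~\ref{sss:Hpq}.

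For the lower bounds --- the hard direction, and the real content of the cited papers --- one must show that no proper divisor of the claimed value of $j_{\KP^n}$ already makes $\Taut_{\KP^n}$ stably fiber-homotopically trivial. I would detect this via Adams's $K$-theoretic invariants, namely the $e$-invariant and the Adams operations $\psi^k$, acting for instance on $\widetilde{\KU}^0(\CP^n) = \Z[t]/(t^{n+1})$ by $\psi^k(1+t) = (1+t)^k$ and on the cyclic group $\widetilde{\KO}^0(\RP^n)$; the precise analysis of the subgroups these operations generate is what brings in the number theory of the exponents $\nu(n)$ and $\xi_p(n)$. Modern proofs reduce the $J$-group to this computable $\psi$-model via the Adams conjecture (Quillen, Sullivan), whereas Adams's original argument for $\RP^n$ used secondary cohomology operations. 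For $\HP^n$, the extra step is a comparison with $\CP^{2n}$, tying $\widetilde{\KSp}^0(\HP^n)$ to $\widetilde{\KU}^0(\CP^{2n})$ through the forgetful maps between quaternionic, complex, and real $K$-theory; this forces $j_{\HP^n}$ to agree with $j_{\CP^{2n}}$ up to at most a factor $2$, and deciding which alternative occurs --- division by $2$ precisely when $\xi_2(2n) \geq 2n+2$ --- hinges on an additional symplectic relation that becomes visible only for large $2$-adic valuation; this dichotomy is the delicate point of \cite{SS73}. The main obstacle is thus not the Clifford-module constructions behind the upper bounds but this exact computation of the $J$-groups; as it is standard and fully documented in \cite{Ada63}, \cite{AW65}, and \cite{SS73}, I would simply quote Fact~\ref{fact:OrderTaut} as stated.
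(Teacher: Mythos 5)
The paper treats this statement as a black box: it is stated as a Fact with citations to Adams, Adams--Walker, and Sigrist--Suter, and no proof is given. Your proposal does the same thing in the end (quoting the cited results), and your sketch of the underlying classical arguments --- Hurwitz--Radon/Clifford-module constructions for the upper bounds, Adams operations and the $e$-invariant for the lower bounds, and the comparison of $j_{\HP^n}$ with $j_{\CP^{2n}}$ via the forgetful maps --- is an accurate description of what those papers actually do, so this is consistent with the paper's treatment.
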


For small $n \geq 1$, the values of $j_{\RP^n}$, $j_{\CP^n}$, and $j_{\HP^n}$ are as follows: 

\begin{center}
\begin{longtable}{|c||c|c|c|c|c|c|c|c|c|} \hline
$n$ & $1$ & $2, 3$ & $4, 5, 6, 7$ & $8$ & $9$ & $10, 11$ & $12, 13, 14, 15$ & $16$ & \dots \\ \hline
$j_{\RP^n}$ & $2$ & $4$ & $8$ & $16$ & $32$ & $64$ & $128$ & $256$ & \dots \\ \hline
\end{longtable}
\begin{longtable}{|c||c|c|c|c|c|c|c|c|} \hline
$n$ & $1$ & $2, 3$ & $4, 5$ & $6, 7$ & $8, 9$ & $10,11$ & \dots \\ \hline
$j_{\CP^n}$ & $2$ & $24$ & $2880$ & $362880$ & $29030400$ & $958003200$ & \dots \\ \hline
\end{longtable}
\begin{longtable}{|c||c|c|c|c|c|c|c|c|} \hline
$n$ & $1$ & $2$ & $3$ & $4$ & $5$ & \dots \\ \hline
$j_{\HP^n}$ & $24$ & $1440$ & $362880$ & $14515200$ & $958003200$ & \dots \\ \hline
\end{longtable}
\end{center}

As far as we know, the orders $j_\K(p,q)$ have not been determined for general $p,q$. However, the following partial results are easy to prove: 

\begin{lem}\label{lem:jEstimate}
Let $\K$ be $\R$, $\C$, or the ring $\Ha$ of quaternions, and let $p,q \geq 1$ be integers.
Then 
\begin{enumerate}[label = {\upshape (\arabic*)}]
  \item $j_\K(p,q) = j_\K(q,p)$,
  \item $j_\K(p,q)$ is divisible by $j_\K(p',q)$ for any $1 \leq p' \leq p$,
  \item $j_\K(p,q)$ is divisible by $j_{\KP^n}$, where $n = \max \{ p,q \}$.
\end{enumerate}
\end{lem}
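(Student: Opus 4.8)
The plan is to prove the three divisibility statements about $j_\K(p,q)$ by exhibiting, in each case, a based continuous map between Grassmannians (or between a projective space and a Grassmannian) under which the relevant tautological bundle pulls back to the relevant tautological bundle, and then invoking naturality of the reduced $J$-group.

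First I would treat part (1). There is a canonical diffeomorphism $\Grass_\K(p,q) \xrightarrow{\ \sim\ } \Grass_\K(q,p)$ sending a $q$-plane $x \subset \K^{p+q}$ to its orthogonal complement $x^\perp$ (using the standard Hermitian form). Under this identification one has $\Taut_\K(q,p) \cong \Taut^\perp_\K(p,q)$, and since $\Taut_\K(p,q) \oplus \Taut^\perp_\K(p,q) \cong \underline{\K}^{\oplus(p+q)}$ is trivial, we get $\widetilde{J}(\Taut^\perp_\K(p,q)) = -\widetilde{J}(\Taut_\K(p,q))$ in the reduced $J$-group. An element and its inverse have the same order, so the order of $\widetilde{J}(\Taut_\K(q,p))$ under the induced isomorphism of reduced $J$-groups equals the order of $\widetilde{J}(\Taut_\K(p,q))$, giving $j_\K(p,q) = j_\K(q,p)$.

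Next, for part (2), I would use the natural inclusion $\iota \colon \Grass_\K(p',q) \hookrightarrow \Grass_\K(p,q)$ induced by a fixed inclusion $\K^{p'+q} \hookrightarrow \K^{p+q}$ (as the first $p'+q$ coordinates, say): a $q$-plane in $\K^{p'+q}$ is in particular a $q$-plane in $\K^{p+q}$. Clearly $\iota^\ast \Taut_\K(p,q) \cong \Taut_\K(p',q)$. Naturality of $\widetilde{J}$ (and of $J$) with respect to based maps gives a group homomorphism $\iota^\ast \colon \widetilde{J}(\Grass_\K(p,q)) \to \widetilde{J}(\Grass_\K(p',q))$ sending $\widetilde{J}(\Taut_\K(p,q))$ to $\widetilde{J}(\Taut_\K(p',q))$; hence the order of the target divides the order of the source, i.e.\ $j_\K(p',q) \mid j_\K(p,q)$. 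Then part (3) follows by combining (1) and (2): if $n = \max\{p,q\}$, then either $q \le p$ and $j_\K(n,1) = j_\K(1,n) \mid j_\K(p,n)$... — more directly, writing $\KP^n = \Grass_\K(n,1)$, one gets $j_{\KP^n} = j_\K(n,1) = j_\K(1,n)$ by (1), and $j_\K(1,n) \mid j_\K(p,n)$ by (2) when $p \ge 1$; but one wants $j_\K(p,q)$, so one also uses $j_\K(p,n) = j_\K(p,q)$ only when $n = q$. For the case $n = p > q$, use instead $j_{\KP^n} = j_\K(n,1) = j_\K(n,q)$? — no, one must be careful: the clean argument is $j_{\KP^n} = j_\K(n,1)$, and $1 \le \min\{p,q\}$, $n = \max\{p,q\}$, so by (1) we may assume $p \le q$, whence $n = q$ and $j_{\KP^q} = j_\K(q,1) \mid j_\K(q,p) = j_\K(p,q)$ using (2) with $p' = 1$ inside $\Grass_\K(q,\cdot)$... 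I would sort out this indexing bookkeeping carefully in the write-up, but it is entirely routine once (1) and (2) are in place.

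The only genuinely substantive point — and thus the main thing to get right — is the \emph{based} naturality of the reduced $J$-group: one must choose basepoints compatibly (e.g.\ a standard coordinate $q$-plane lying in every $\Grass_\K(p',q) \subset \Grass_\K(p,q)$) so that all the maps above are based, and then appeal to the functoriality recorded in Section~\ref{ss:J} (the reduced $J$-group is natural for based maps, $\widetilde{J}$ commutes with pullback, and $\widetilde{J}(f^\ast E) = f^\ast \widetilde{J}(E)$). There is no hard topology here; the finiteness of all the orders involved is already granted by Atiyah's result quoted before the lemma, so the orders are genuine positive integers and ``the order of the image divides the order of the element'' is the elementary fact about group homomorphisms. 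I do not expect any real obstacle — the proof is a short diagram chase plus bookkeeping with the two index conventions for Grassmannians.
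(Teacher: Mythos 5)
Your proposal is correct and follows essentially the same route as the paper: orthogonal complementation plus triviality of $\Taut_\K(p,q)\oplus\Taut_\K^\perp(p,q)$ for (1), restriction along the inclusion $\Grass_\K(p',q)\subset\Grass_\K(p,q)$ and naturality of $\widetilde{J}$ for (2), and routine combination for (3). The basepoint bookkeeping you flag is harmless since the Grassmannians are connected, so the reduced $J$-group is independent of the basepoint.
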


\begin{proof}
(1) Taking orthogonal complements in $\K^{p+q}$ with respect to the standard Hermitian form gives a diffeomorphism 
\[
\Grass_\K(p,q) \simeq \Grass_\K(q,p). 
\]
Under this diffeomorphism, the tautological vector bundle $\Taut_\K(q,p)$ over $\Grass_\K(q,p)$ 
corresponds to the fiberwise orthogonal complement $\Taut_\K^\perp(p,q)$ of $\Taut_\K(p,q)$ over $\Grass_\K(p,q)$.
Since $\Taut_\K(p,q)\oplus \Taut_\K(p,q)^\perp = \underline{\K}^{\oplus (p+q)}$ is trivial, we have 
\[
\widetilde{J}(\Taut_\K(q,p)) = \widetilde{J}(\Taut^\perp_\K(p,q)) = -\widetilde{J}(\Taut_\K(p,q)).
\]
In particular, $j_\K(p,q) = j_\K(q,p)$.

(2) Let $1\leq p'\leq p$.
The inclusion $\K^{p'+q} \subset \K^{p+q}$ induces an inclusion between the Grassmannians
\[
\Grass_\K(p',q) \subset \Grass_\K(p,q).
\]
In particular, any bundle over $\Grass_\K(p,q)$ restricts to a bundle over $\Grass_\K(p',q)$, which induces a group homomorphism
\[
\widetilde{J}(\Grass_\K(p,q)) \to \widetilde{J}(\Grass_\K(p',q)).
\]
The restriction of $\Taut_\K(p,q)$ to $\Grass_\K(p',q)$ is identified with $\Taut_\K(p',q)$, and so the order $j_\K(p',q)$ of $\widetilde{J}(\Taut_\K(p',q))$ must divide the order $j_\K(p,q)$ of $\widetilde{J}(\Taut_\K(p,q))$.

(3) This is immediate from (1) and (2).
\end{proof}

\subsection{Indefinite Grassmannians} \label{ss:IndefGrassm}

For $p,p',q,q' \in \N$, 
let $\R^{p+p', q+q'}$ be the real vector space $\R^{p+p'+q+q'}$ equipped with the standard quadratic form of signature $(p+p',q+q')$.
The space of nondegenerate linear subspaces of signature $(p',q')$ in $\R^{p+p', q+q'}$, called the \emph{real indefinite Grassmannian}, is the reductive symmetric space
\[
G/H = \OO(p+p', q+q')/(\OO(p,q) \times \OO(p',q')).
\]
The most important example among them is the \emph{pseudo-Riemannian hyperbolic space} $\Hyp^{p,q}_\R$, corresponding to the case $(p',q') = (0,1)$:
\[
\Hyp^{p,q}_\R = \OO(p,q+1) / (\OO(p,q) \times \OO(1)). 
\]

One can similarly define the \emph{complex} and the \emph{quaternionic indefinite Grassmannians}, which are respectively the reductive symmetric spaces
\[
\U(p+p', q+q') / (\U(p,q) \times \U(p',q')), \qquad 
\Sp(p+p', q+q') / (\Sp(p,q) \times \Sp(p',q')). 
\]
When $(p',q') = (0,1)$, they are respectively the \emph{complex} and the \emph{quaternionic pseudo-Riemannian hyperbolic spaces}
\[
\Hyp^{p,q}_\C = \U(p,q+1) / (\U(p,q) \times \U(1)), \qquad 
\Hyp^{p,q}_\Ha = \Sp(p,q+1) / (\Sp(p,q) \times \Sp(1)).
\]

\subsection{Compact quotients of indefinite Grassmannians}

We now apply Corollary~\ref{cor:MainStable} to the reductive symmetric space $G/H = \clubsuit(p+p',q+q') / (\clubsuit(p,q) \times \clubsuit(p',q))$, where $\clubsuit = \OO$, $\U$, or $\Sp$, corresponding to an indefinite Grassmannian over $\R$, $\C$, or~$\Ha$ as in Section~\ref{ss:IndefGrassm}.

Kobayashi \cite[Ex.\,1.7]{Kob92coh} proved that $G/H$ does not admit compact quotients unless at least one of $p,p',q,q'$ is zero, so we now assume that one of them is zero.
By symmetry, and up to switching the sign of the Hermitian form defining~$G$, we may assume that $p' = 0$.
If one of $p,q,q'$ is also zero, then $G/H$ trivially admits compact quotients.
We thus assume $p,q,q' \geq 1$.  

In this setting, the maximal compact subspace $K/K_H$ of $G/H$ is
\[
K/K_H = \clubsuit(q+q') / (\clubsuit(q) \times \clubsuit(q'))
\]
where $\clubsuit = \OO$, $\U$, or $\Sp$, which is the (ordinary) Grassmannian of $q'$-planes in $\K^{q+q'}$ where $\K=\R$, $\C$, or~$\Ha$.
The normal bundle $N$ is associated to the representation of $\clubsuit(q) \times \clubsuit(q')$ on 
\[
\p \cap \q \simeq (\K^{q'}_\std)^{\oplus p},
\]
where $\K^{q'}_\std$ is the standard representation of $\clubsuit(q')$ on $\K^{q'}$.
Hence, $N$ is the sum of $p$ copies of the tautological line bundle:
\[
N \simeq \Taut_\K(q,q')^{\oplus p}.
\]
Corollary~\ref{cor:MainStable} then implies the following.

\begin{thm} \label{thm:IndefiniteGrassmannianR}
Let $p,q,q' \geq 1$. 
If the reductive symmetric space $\OO(p,q+q')/(\OO(p,q) \times \OO(q'))$ (\resp $\U(p+p', q+q')/(\U(p,q) \times \U(p',q'))$, \resp $\Sp(p+p', q+q')/(\Sp(p,q) \times \Sp(p',q'))$) admits compact quotients, then $p$ is divisible by $j_\R(q,q')$ (\resp $j_\C(q,q')$, \resp $j_\Ha(q,q')$).
\end{thm}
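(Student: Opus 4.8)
The plan is to combine Corollary~\ref{cor:MainStable} with the explicit description of the normal bundle $N$ that was just worked out in this section. Recall from the preceding discussion that, after reducing to the case $p'=0$ (which is harmless by the symmetry $\Grass_\K(p,q) \simeq \Grass_\K(q,p)$ and by switching the sign of the defining Hermitian form), the maximal compact subspace of $G/H$ is the ordinary Grassmannian $X = K/K_H = \Grass_\K(q,q')$, and the normal bundle is
\[
N \simeq \Taut_\K(q,q')^{\oplus p}.
\]
So the statement to prove is purely $\Ktheory$-theoretic: if $\widetilde{J}(N) = 0$ in $\widetilde{J}(\Grass_\K(q,q'))$, then $j_\K(q,q') \mid p$.

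The key step is the additivity of $\widetilde{J}$ on direct sums, namely $\widetilde{J}(E \oplus E') = \widetilde{J}(E) + \widetilde{J}(E')$ for real vector bundles, which was recorded in Section~\ref{ss:J}. Applying this to the $p$-fold sum gives
\[
\widetilde{J}(N) = \widetilde{J}\bigl(\Taut_\K(q,q')^{\oplus p}\bigr) = p \cdot \widetilde{J}(\Taut_\K(q,q'))
\]
in the abelian group $\widetilde{J}(\Grass_\K(q,q'))$. Here I must be slightly careful in the complex and quaternionic cases: $\Taut_\C(q,q')$ and $\Taut_\Ha(q,q')$ are not real vector bundles, so ``$N$'' in the statement of Corollary~\ref{cor:MainStable} refers to the underlying real vector bundle, and $\widetilde{J}(\Taut_\K(q,q'))$ should be understood via the realification $\rr$. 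Since $\rr$ is additive, $\widetilde{J}$ of the underlying real bundle of $\Taut_\K(q,q')^{\oplus p}$ is still $p$ times $\widetilde{J}$ of the underlying real bundle of $\Taut_\K(q,q')$, and by definition $j_\K(q,q')$ is the order of this element. Once we know $\widetilde{J}(N) = 0$, that is $p \cdot \widetilde{J}(\Taut_\K(q,q')) = 0$, the definition of the order of a torsion element of an abelian group immediately yields $j_\K(q,q') \mid p$.

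I expect the genuinely nontrivial content to have already been absorbed: it is Theorem~\ref{thm:MainTheorem} (hence Corollary~\ref{cor:MainStable}), the identification $N \simeq \Taut_\K(q,q')^{\oplus p}$, and the finiteness of $j_\K(q,q')$ coming from Atiyah \cite[Prop.\,1.5]{Ati61}. So the remaining argument is short. The only mild obstacle is bookkeeping around the real/complex/quaternionic distinction and around the reduction $p' = 0$; both are handled by the already-established facts. I would therefore write the proof essentially as follows: reduce to $p' = 0$ and $p,q,q' \geq 1$ as in the paragraph preceding the statement; cite Corollary~\ref{cor:MainStable} to get $\widetilde{J}(N) = 0$ under the hypothesis that a compact quotient exists; invoke $N \simeq \Taut_\K(q,q')^{\oplus p}$ together with additivity of $\widetilde{J}$ to rewrite this as $p \cdot \widetilde{J}(\Taut_\K(q,q')) = 0$; and conclude by the definition of $j_\K(q,q')$ as the order of $\widetilde{J}(\Taut_\K(q,q'))$ that $j_\K(q,q')$ divides $p$.
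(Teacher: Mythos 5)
Your proposal is correct and follows exactly the paper's argument: the paper likewise deduces the theorem from Corollary~\ref{cor:MainStable}, the identification $N \simeq \Taut_\K(q,q')^{\oplus p}$ of the normal bundle, additivity of $\widetilde{J}$, and the definition of $j_\K(q,q')$ as the (finite) order of $\widetilde{J}(\Taut_\K(q,q'))$. The only caveat is that the reduction to $p'=0$ rests not merely on the symmetry of the Grassmannian but on Kobayashi's result that the indefinite Grassmannian admits no compact quotients unless one of $p,p',q,q'$ vanishes, as recalled in the paragraph you defer to.
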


For the pseudo-Riemannian hyperbolic space $\Hyp^{p,q}_\R = \OO(p,q+1)/(\OO(p,q) \times \OO(1))$, Theorem~\ref{thm:IndefiniteGrassmannianR} and Adams's computation of $j_\R(q,1) = j_{\RP^q}$ (Fact~\ref{fact:OrderTaut}) yield Theorem~\ref{thm:Hpq}.
Similarly, Theorem~\ref{thm:IndefiniteGrassmannianR} and the computation of $j_\C(q,1) = j_{\CP^q}$ (\resp $j_\Ha(q,1) = j_{\HP^q}$) by Adams--Walker (\resp Sigrist--Suter) (Fact~\ref{fact:OrderTaut}) yield that if $\Hyp^{p,q}_{\C}$ (\resp $\Hyp^{p,q}_{\Ha}$) admits compact quotients, then $p$ is divisible by
\[
\prod_{p\text{ prime }\leq n+1} p^{\xi_p(n)} \quad \text{\bigg(\resp } 2^{2n+1} \prod_{p\text{ odd prime }\leq (n+1)/2} p^{\xi_p(2n)}\text{\bigg)} .
\]
For general indefinite Grassmannian $G/H = \OO(p,q+q') / (\OO(p,q) \times \OO(q'))$, although we do not know the exact value of $j_\R(q,q')$, the constraint on $(p,q)$ in Theorem~\ref{thm:IndefiniteGrassmannianR} is at least as strong as for $\Hyp^{p,q}_{\K}$ by Lemma~\ref{lem:jEstimate}, which proves Theorem~\ref{thm:IndefiniteGrassmannianRCH}.

\begin{rmk}
It was previously known from work of Kobayashi \cite[Ex.\,1.7]{Kob92coh} that for $\clubsuit = \OO, \U$, or $\Sp$, the homogeneous space $\clubsuit(p,q+q')/(\clubsuit(p,q) \times \clubsuit(q'))$ does not admit compact quotients when $p,q,q'\geq 1$ satisfy $p < q+q'$.
Furthermore, it was known from work of the second and third authors \cite[Th.\,7.2~(1)]{Tho15+}, \cite[Cor.\,6.5]{Mor17} that $\OO(p,q+q')/(\OO(p,q) \times \OO(q'))$ does not admit compact quotients for odd~$p$; this generalized earlier results by Kobayashi--Ono \cite[Cor.\,5]{KO90} and \cite[Cor.\,1.4~(4)]{Mor15}.
Theorem~\ref{thm:IndefiniteGrassmannianR} recovers these non-existence results and is significantly better for large~$q$. 

On the other hand, in addition to $\Hyp^{2k,1}_\R$, $\Hyp^{4k,3}_\R$, and $\Hyp^{8,7}_\R$ which we mentioned in Section~\ref{sss:Hpq}, it is known that
$\OO(4,3)/(\OO(4,1) \times \OO(2))$ and $\OO(4,4)/(\OO(4,1) \times \OO(3))$ admit compact quotients \cite[Cor.\,4.7]{Kob97}, and similarly for $\Hyp^{2k,1}_\C = \U(2k,2) / (\U(2k,1) \times \U(1))$ \cite[Prop.\,4.9]{Kob89}. 
\end{rmk}

\begin{rmk}
Interestingly, while the sphere bundle of $\Taut_{\CP^q}^{\oplus p}$ is stably fiber-homotopically trivial when $j_{\CP^q}$ divides $p$, it is never trivial as a topological fiber bundle for $q \geq 2$, as seen from computing its rational Pontryagin classes.
In a forthcoming paper \cite{KMT}, we will use this and the Anosov property from \cite{KT24+} to prove that $\Hyp^{p,q}_\C$ admits compact quotients if and only if $p$ is even and $q = 1$.
We will similarly prove that $\Hyp^{p,q}_\Ha$ never admits compact quotients for any $p,q \geq 1$.
\end{rmk}

\section{Other computations and applications} \label{s:ComputationsKTheory}

In this section, we carry further computations of the reduced $J$-groups (see Section~\ref{ss:J}) for vector bundles over Grassmannians and other compact symmetric spaces.
Using Corollary~\ref{cor:MainStable}, we deduce Theorems \ref{thm:OtherSymmetric} and~\ref{thm:NonSymmetric} on the non-existence of compact quotients.

We will treat each case of Theorems \ref{thm:OtherSymmetric} and~\ref{thm:NonSymmetric} independently, ordering the proofs by increasing degree of complexity of the computations. The proofs will freely use the notation and results in $\Ktheory$-theory from Section~\ref{s:RemindersKTheory} and Appendix~\ref{s:AppendixKTheory}.

\subsection{Comments on Theorems \ref{thm:OtherSymmetric} and~\ref{thm:NonSymmetric}}

Before starting computations, here are a few remarks.

\begin{rmk}
Let $p \geq q \geq 1$. 
Prior to Theorem~\ref{thm:OtherSymmetric}, it was known that the following reductive homogeneous spaces do not admit compact quotients: 
\begin{itemize}
  \item $\OO(p+q, \C) / (\OO(p, \C) \times \OO(q, \C))$ with $(p,q) \neq (4k-1,1), (1,1)$ (Kobayashi \cite[Ex.\,1.9]{Kob92coh}, Benoist \cite[\S\,1.2, Cor.\,2]{Ben96});
  \item $\OO(p+q, \C) / \OO(p,q)$ with $(p,q) \neq (2k-1,1)$ (Tholozan \cite[Th.\,7.3~(3)]{Tho15+}, Morita \cite[Cor.\,6.1]{Mor17});
  \item $\OO^\ast(2p+2q) / (\OO^\ast(2p) \times \OO^\ast(2q))$ with $(p, q) \neq (2k-1, 1)$ (Kobayashi \cite[Th.\,1.4]{Kob92coh});
  \item $\OO^\ast(2p+2q) / \U(p,q)$ with $p \leq 2q$ (Kobayashi \cite[Ex.\,1.8]{Kob92coh});
  \item $\Sp(2p+2q, \R) / (\Sp(2p, \R) \times \Sp(2q, \R))$ for any $p \geq q \geq 1$, including the case $(p,q) = (3,1)$ (Kobayashi \cite[Cor.\,4.4]{Kob89});
  \item $\Sp(2p+2q, \R) / \U(p,q)$ with $p=q$ (Kobayashi \cite[Th.\,1.4]{Kob92coh});
  \item $\SL(2p, \C) / \Sp(2p, \C)$ for any $p \geq 2$ (Benoist \cite[\S\,1.1, Cor.\,2]{Ben96}). 
\end{itemize}

On the other hand, it is known that the following reductive homogeneous spaces admit standard compact quotients: 
\begin{itemize}
  \item $\OO(4, \C) / (\OO(3, \C) \times \OO(1, \C))$ (locally isomorphic to $(\OO(3,\C) \times \OO(3,\C)) / \Diag(\OO(3,\C))$);
  \item $\OO(8, \C) / (\OO(7, \C) \times \OO(1, \C))$ (Kobayashi--Yoshino \cite[Th.\,4.2.1]{KY05});
  \item $\OO(4, \C) / \OO(3,1)$ (locally isomorphic to $(\OO(3,\C) \times \OO(3,\C)) / \Diag(\OO(3,\C))$);
  \item $\OO(8, \C) / \OO(7,1)$ (Kobayashi--Yoshino \cite[Th.\,4.10.1]{KY05});
  \item $\OO^\ast(8) / (\OO^\ast(6) \times \OO^\ast(2))$ (locally isomorphic to $\SO_0(6,2) / \U(3,1)$ below);
  \item $\OO^\ast(8) / \U(3,1)$ (locally isomorphic to $\SO_0(6,2) / \U(3,1)$ below);
  \item $\SO_0(2p,2) / \U(p,1)$ for any~$p$ (Kobayashi \cite[Prop.\,4.9~(2)]{Kob89});
  \item $\SU(2p,2) / \Sp(p,1)$ for any~$p$ (Kobayashi \cite[Prop.\,4.9~(1)]{Kob89}).
\end{itemize}

Our method does not apply to $\Sp(8,\R) / \U(3,1)$: it is still an open question whether this homogeneous space admits compact quotients. 
\end{rmk}

\begin{rmk} \label{rmk:OlderResultsSL/SL}
Let $p \geq 2$ and $q \geq 1$.
The non-existence of compact quotients of $\SL(p+q,\K)/\SL(p,\K)$ for $\K = \R$ was conjectured by Kobayashi in 1990.
Prior to Theorem~\ref{thm:NonSymmetric}, it was known for $\K = \R, \C$, or~$\Ha$ in the following cases: 
\begin{itemize}
  \item $q \geq \max \{ p, 3 \}$ (Labourie--Mozes--Zimmer \cite[Th.\,1.1]{LMZ95}); this is a generalization of earlier work by Zimmer \cite{Zim94};
  \item $q \geq 3$ and $\K = \R$ or $\C$ (Labourie--Zimmer \cite[Th.\,1]{LZ95}; they only discussed the case $\K = \R$, but their proof equally works over $\C$); this is a further improvement of \cite{LMZ95}, but it does not apply to $\K = \Ha$;
  \item $(p,q) = (2k, 1)$ (Benoist \cite[\S\,1.2, Cor.\,1]{Ben96});
  \item $p = 2$ and $q \geq 2$ (Shalom \cite[Cor.\ to Th.\,1.7]{Sha00}; he only discussed the case where $\K = \R$ or~$\C$, but his proof equally works over $\Ha$); 
  \item $p$ is even and $\K = \R$ (Tholozan \cite[Th.\,7.2~(2)]{Tho15+}, Morita \cite[Cor.\,6.7]{Mor17}); this result does not apply to $\K = \C$ or $\Ha$;
  \item $q \geq (p + \varepsilon(p, \K))/4$ (Morita \cite[Th.\,1.4]{Mor24}), where $\varepsilon(p, \K)$ is as in the Table~\ref{table:eps-p-K} below; this is an improvement of earlier work by Kobayashi \cite[Ex.\,5.19]{Kob97}, based on \cite[Th.\,1.5]{Kob92coh};
  \item $(p,q) = (2k+1, 1)$ (Kassel--Tholozan \cite[Cor.\,1.16]{KT24+}). 
\end{itemize}

Our method does not apply to $\SL(4, \R) / \SL(3, \R)$, $\SL(8, \R) / \SL(7, \R)$, and $\Sp(4, \R) / \Sp(2, \R)$. Fortunately, these three cases were recently settled by the first and third authors \cite[Cor.\,1.16]{KT24+}.
\end{rmk}

\begin{center}
\begin{longtable}{|c||c|c|c|c|}\hline
$p \bmod 4$ & $0$ & $1$ & $2$ & $3$ \\ \hline
$\K = \R$ & $4$ & $7$ & $2$ & $9$ \\ \hline
$\K = \C$ & $0$ & $7$ & $2$ & $5$ \\ \hline
$\K = \Ha$ & $0$ & $3$ & $2$ & $5$  \\ \hline
\caption{The value of $\varepsilon(p,\K)$}
\label{table:eps-p-K}
\end{longtable}
\end{center}

The question of the existence of compact quotients of $\SL(p+q,\R)/\SL(p,\R)$ (and more generally, of closed manifolds locally modelled on this homogeneous space) has attracted a lot of interest in the past 35 years.
This is an instance of a homogeneous space $G/H$ where $H$ has a noncompact centralizer $J$ (namely, the complementary block-diagonal subgroup $\GL(q,\R)$). This centralizer $J$ would act on any closed manifold locally modelled on $G/H$, making this space a potential model for ``geometric'' dynamical systems. 

For instance, an important step of Benoist--Foulon--Labourie's classification of contact Anosov flows with smooth stable distributions \cite{BFL92} is that on a hypothetical closed manifold locally modelled on $\SL(p+1,\R)/\SL(p,\R)$, the flow induced by $J$ could not be Anosov. Another occurrence of such homogeneous spaces is in the theory of translation surfaces (see \cite{Fil24} for a survey on this topic). The strata of the moduli space of translation surfaces of genus $g$ carry locally homogeneous geometric actions of $\SL(2,\R)$. In particular, the minimal stratum is locally modelled on $\Sp(2g,\R)/\Sp(2g-2,\R)$.

Unfortunately, unlike the Labourie--Mozes--Zimmer approach, our method does not say anything about manifolds locally modelled on $G/H$ which are not a priori quotients of $G/H$. It is conjectured, however, that any closed manifold locally modelled on a reductive homogeneous space is a compact quotient of that space.

\subsection{Computations in real Grassmannians} \label{ss:GrassR}

The purpose of this Section~\ref{ss:GrassR} is to prove Theorem~\ref{thm:OtherSymmetric}~(1) and Theorem~\ref{thm:NonSymmetric}~(1)--(2). 

For $p,q \geq 1$, we regard the real projective space $\RP^p = \Grass_\R(p,1)$ as a submanifold of the real Grassmannian $\Grass_\R(p,q)$ in an obvious way. We set $t = [ \Taut_{\RP^p} ] - 1 \in \widetilde{\KO}^0(\RP^p)$. 

\begin{lem} \label{lem:RestGrassR}
For $p,q\geq 1$, the following identities hold in $\KO^0(\RP^p)$: 
\begin{enumerate}[label = {\upshape (\arabic*)}]
\item $[ \Taut_\R(p,q) \otimes_\R \Taut^\perp_\R(p,q) ]_{|\RP^p} = (p-q+2) t + pq$. 
\item $[ \Sym^2 \Taut_\R(p,q) ]_{|\RP^p} = (q-1) t + q(q+1)/2$. 
\item $[ \Lambda^2 \Taut_\R(p,q) ]_{|\RP^p} = (q-1) t + q(q-1)/2$. 
\end{enumerate}
\end{lem}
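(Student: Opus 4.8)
The plan is to reduce everything to a computation in $\widetilde{\KO}^0(\RP^p)$, where the ring structure is completely understood: $\widetilde{\KO}^0(\RP^p)$ is cyclic, generated by $t = [\Taut_{\RP^p}] - 1$, with $t^2 = -2t$ (since $\Taut_{\RP^p} \otimes_\R \Taut_{\RP^p} \cong \underline{\R}$ gives $(t+1)^2 = 1$). First I would observe that, restricted to $\RP^p = \Grass_\R(p,1) \subset \Grass_\R(p,q)$, the tautological bundle restricts as $[\Taut_\R(p,q)]_{|\RP^p} = [\Taut_{\RP^p}] \oplus \underline{\R}^{\oplus(q-1)}$, hence equals $t + q$ in $\KO^0(\RP^p)$; and the orthogonal complement restricts to $[\Taut^\perp_\R(p,q)]_{|\RP^p} = \underline{\R}^{\oplus(p+q)} - (t+q) = -t + p$. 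This is because the inclusion $\RP^p \hookrightarrow \Grass_\R(p,q)$ sends a line $\ell \subset \R^{p+1} \subset \R^{p+q}$ to the $q$-plane $\ell \oplus \R^{q-1}$, so the tautological $q$-plane bundle picks up $\Taut_{\RP^p}$ plus a trivial summand.

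Next I would feed these two restriction formulas through the standard natural-operation identities in $\KO$-theory. For (1), the tensor product satisfies $[\Taut_\R(p,q) \otimes_\R \Taut^\perp_\R(p,q)]_{|\RP^p} = (t+q)(-t+p) = -t^2 + (p-q)t + pq = 2t + (p-q)t + pq = (p-q+2)t + pq$, using $t^2 = -2t$. For (2) and (3), I would use the identities $[\Sym^2 E] + [\Lambda^2 E] = [E \otimes E]$ and $[\Sym^2 E] - [\Lambda^2 E] = [E]$ (the latter coming from the splitting of $E \otimes E$ into symmetric and antisymmetric parts, with the trace/identity contributing the diagonal; more precisely $[\Sym^2 E] = \tfrac12([E\otimes E] + [E])$ and $[\Lambda^2 E] = \tfrac12([E\otimes E]-[E])$ at the level of characters, which is legitimate in $\KO^0$ after checking ranks and that $2$ is not a zero-divisor, or simply by using the known additive structure). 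Applying these with $E = \Taut_\R(p,q)$, so $[E\otimes E]_{|\RP^p} = (t+q)^2 = -2t + 2qt + q^2 = (2q-2)t + q^2$ and $[E]_{|\RP^p} = t+q$: we get $[\Sym^2 \Taut_\R(p,q)]_{|\RP^p} = \tfrac12((2q-2)t + q^2 + t + q) = \tfrac12((2q-1)t + q^2+q)$ — hmm, that gives a half-integer coefficient on $t$, so I would instead argue directly that the symmetric square of $\ell\oplus\R^{q-1}$ decomposes as $\Sym^2\Taut_{\RP^p} \oplus (\Taut_{\RP^p}\otimes \R^{q-1}) \oplus \Sym^2(\R^{q-1})$; since $\Taut_{\RP^p}^{\otimes 2}$ is trivial, $\Sym^2\Taut_{\RP^p}$ is trivial of rank $1$, so this equals $\underline{\R} \oplus \Taut_{\RP^p}^{\oplus(q-1)} \oplus \underline{\R}^{\oplus\binom{q-1}{2}}$, i.e.\ $(q-1)t + q(q+1)/2$. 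Similarly $\Lambda^2(\ell\oplus\R^{q-1}) = (\Taut_{\RP^p}\otimes\R^{q-1}) \oplus \Lambda^2(\R^{q-1})$ (the $\Lambda^2$ of a line vanishing), giving $(q-1)t + q(q-1)/2$.

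The main obstacle I anticipate is being careful about exactly which decompositions hold honestly as vector bundles versus only in the $\KO$-group, and making sure the ring relation $t^2 = -2t$ in $\widetilde{\KO}^0(\RP^p)$ is invoked correctly (rather than the subtly different relation in $\widetilde{\KU}^0$). The cleanest route, which I would take for all three parts, is to avoid the symmetric/antisymmetric splitting formulas and instead use the explicit fiberwise decomposition $\Taut_\R(p,q)_{|\RP^p} \cong \Taut_{\RP^p} \oplus \underline{\R}^{\oplus(q-1)}$ together with the functoriality of $\Sym^2$, $\Lambda^2$, and $\otimes$ under direct sums, so that every identity is manifestly an equality of vector bundles before passing to $\KO^0(\RP^p)$; the only ring relation needed is the triviality of $\Taut_{\RP^p}^{\otimes 2}$, which makes $\Sym^2\Taut_{\RP^p}$ trivial and reduces (1) to the elementary identity $(t+q)(p-t) = (p-q+2)t+pq$ after one application of $t^2=-2t$.
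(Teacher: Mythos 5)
Your final argument is correct and is essentially the paper's proof: restrict $\Taut_\R(p,q)$ to $\RP^p$ as $\Taut_{\RP^p}\oplus\underline{\R}^{\oplus(q-1)}$, use the triviality of $\Taut_{\RP^p}^{\otimes 2}$ (hence $\Sym^2\Taut_{\RP^p}\simeq\underline{\R}$ and $\Lambda^2\Taut_{\RP^p}=0$), and expand $\otimes$, $\Sym^2$, $\Lambda^2$ over the direct sum as honest vector bundle isomorphisms before passing to $\KO^0$, exactly as in the paper. Two small remarks: the rank of $\Sym^2(\underline{\R}^{\oplus(q-1)})$ is $\binom{q}{2}=q(q-1)/2$, not $\binom{q-1}{2}$ (your stated total $(q-1)t+q(q+1)/2$ is nonetheless the correct one), and you were right to abandon the half-integer splitting, since the correct identity is $[\Sym^2E]-[\Lambda^2E]=\psi^2([E])$ (the second Adams operation), not $[E]$.
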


\begin{proof}
(1) We have
\begin{align*}
[ \Taut_\R(p,q) ]_{|\RP^p} &= [ \Taut_{\RP^p} ] + q-1, \\
[ \Taut^\perp_{\R}(p,q) ]_{|\RP^p} &= p+q - [ \Taut_\R(p,q) ]_{|\RP^p}
= -[ \Taut_{\RP^p} ] + p+1.
\end{align*}
Since $\Taut_{\RP^p}$ is a real line bundle, we have 
$\Taut_{\RP^p} \otimes \Taut_{\RP^p} \simeq \underline{\R}$. 
We thus obtain 
\begin{align*}
[ \Taut_\R(p,q) \otimes_\R \Taut^\perp_\R(p,q) ]_{|\RP^p}
&= ([ \Taut_{\RP^p} ] + q-1)(-[ \Taut_{\RP^p} ] + p+1) \\ 
&= -1 + (p-q+2) [ \Taut_{\RP^p} ] + (q-1)(p+1) \\
&= (p-q+2)t + pq. 
\end{align*}

(2) Since $\Taut_{\RP^p}$ is a real line bundle, we have 
$\Sym^2 \Taut_{\RP^p} \simeq \underline{\R}$. 
Hence, 
\begin{align*}
\Sym^2 \Taut_\R(p,q)_{|\RP^p} &\simeq \Sym^2 (\Taut_{\RP^p} \oplus \underline{\R}^{\oplus (q-1)}) \\
&\simeq (\Sym^2 \Taut_{\RP^p}) \oplus (\Taut_{\RP^p} \otimes \underline{\R}^{\oplus (q-1)}) \oplus (\Sym^2 (\underline{\R}^{\oplus (q-1)})) \\
&\simeq (\Taut_{\RP^p})^{\oplus (q-1)} \oplus \underline{\R}^{\oplus (q^2-q+2)/2}.
\end{align*}
Passing to $\KO^0$ yields the desired identity.

(3) Since $\Taut_{\RP^p}$ is a real line bundle, its second exterior power 
$\Lambda^2 \Taut_{\RP^p}$ vanishes.  
Hence, 
\begin{align*}
\Lambda^2 \Taut_\R(p,q)_{|\RP^p} &\simeq \Lambda^2 (\Taut_{\RP^p} \oplus \underline{\R}^{\oplus (q-1)}) \\
&\simeq (\Lambda^2 \Taut_{\RP^p}) \oplus (\Taut_{\RP^p} \otimes \underline{\R}^{\oplus (q-1)}) \oplus (\Lambda^2 (\underline{\R}^{\oplus (q-1)})) \\
&\simeq (\Taut_{\RP^p})^{\oplus (q-1)} \oplus \underline{\R}^{\oplus (q-1)(q-2)/2}.
\end{align*}
Passing to $\KO^0$ yields the desired identity.
\end{proof}

\subsubsection{Proof of Theorem~\ref{thm:OtherSymmetric}~(1)} \label{sss:ComplexSpheres}

For $p \geq q \geq 1$, let
\[
G/H = \OO(p+q, \C) / (\OO(p, \C) \times \OO(q, \C)). 
\]
The compact symmetric space $X = K/K_H$ is the real Grassmannian: 
\[
X = \OO(p+q) / (\OO(p) \times \OO(q)) = \Grass_\R(p,q). 
\]
Furthermore, the normal bundle $N$ is the tangent bundle of~$X$ by the following lemma: 

\begin{lem} \label{lem:Normal=Tangent}
Suppose that $G/H$ is a complex reductive symmetric space. Then, as a real vector bundle, the normal bundle $N$ is $K$-equivariantly isomorphic to the tangent bundle $TX$. 
\end{lem}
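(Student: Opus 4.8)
The plan is to reduce the statement to a single comparison of $K_H$-representations, and then to use the complex structure of $\g$ to produce that comparison. Recall from Section~\ref{ss:Reductive} that $N$ is $K$-equivariantly isomorphic to $K \times_{K_H} (\p \cap \q)$, and that $T_o X = \kk \cap \q$, so that the tangent bundle of $X = K/K_H$ is $TX \cong K \times_{K_H} (\kk \cap \q)$ as a $K$-equivariant real vector bundle (here $\kk \cap \q$ is the $\Ad(K_H)$-invariant complement of $\kk \cap \h$ in $\kk$). Thus it suffices to construct a $K_H$-equivariant $\R$-linear isomorphism $\kk \cap \q \xrightarrow{\sim} \p \cap \q$.

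To build this isomorphism I would let $J$ be multiplication by $\sqrt{-1}$ on $\g$ coming from the complex structure of the complex reductive group $G$, and use two facts about how $J$ interacts with the two commuting involutions $\theta$ and $\sigma$. First, $\kk = \Lie(K)$ is a compact real form of $\g$ and $\p = J \kk$; equivalently, $\theta$ is conjugate-linear, $\theta \circ J = - J \circ \theta$, so $J$ interchanges the $\theta$-eigenspaces $\kk$ and $\p$. Second, since $G/H$ is a \emph{complex} symmetric space, the involution $\sigma$ with $H = G^\sigma$ is holomorphic, hence $\C$-linear, so $J$ commutes with $\sigma$ and preserves each of its eigenspaces $\h$ and $\q$. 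Combining the two, $J$ restricts to an $\R$-linear isomorphism $J \colon \kk \cap \q \to \p \cap \q$, with inverse $-J$.

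It then remains to check equivariance. Since $G$ is complex, its adjoint action on $\g$ is $\C$-linear, so $\Ad(k)$ commutes with $J$ for every $k \in K_H$; hence $J \colon \kk \cap \q \to \p \cap \q$ is $\Ad(K_H)$-equivariant. Applying the associated-bundle construction $K \times_{K_H} (-)$ to this isomorphism yields the desired $K$-equivariant isomorphism of real vector bundles $TX \cong K \times_{K_H} (\kk \cap \q) \cong K \times_{K_H} (\p \cap \q) \cong N$.

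The argument is essentially formal once the structure theory is in place, and the only point that is not pure bookkeeping — and hence the likeliest ``hard part'' — is the first structural fact: that the maximal compact subgroup of a complex reductive Lie group has Lie algebra a compact real form, so that $\theta$ is conjugate-linear with respect to $J$ and $\p = J \kk$. This is classical, and I would cite it rather than reprove it; the rest — tracking the commuting involutions $\theta, \sigma$ against the complex structure $J$ — is routine.
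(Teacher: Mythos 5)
Your proof is correct and follows essentially the same route as the paper: reduce to a $K_H$-equivariant isomorphism between $\p\cap\q$ and $\kk\cap\q$, and obtain it as multiplication by $i$, using that $\p = i\kk$ and that $\q$ is a $\C$-linear subspace of $\g$. The paper's version is terser (it leaves the $\C$-linearity of $\Ad(K_H)$ and hence the equivariance of $J$ implicit), but the content is identical.
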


\begin{proof}
Recall that 
\[
N = K \times_{K_H} (\p \cap \q), \qquad 
TX = K \times_{K_H} (\kk \cap \q).
\]
Thus, it suffices to find a $K_H$-equivariant $\R$-linear isomorphism from $\p \cap \q$ to $\kk \cap \q$. 
Since $G$ and $H$ are complex, $\q$ is a $\C$-linear subspace of $\g$, and $\p = i \kk$. Hence
\[
i (\p \cap \q) = i (i \kk \cap \q) = \kk \cap \q. 
\]
Therefore, the multiplication by $i$ gives the desired isomorphism between $\p \cap \q$ and $\kk \cap \q$. 
\end{proof}

Theorem~\ref{thm:OtherSymmetric}~(1) is an immediate consequence of Corollary~\ref{cor:MainStable}, Proposition~\ref{prop:AssociatedJTilde}, and the following.

\begin{prop} \label{prop:TangentGrassR}
Let $p\geq q\geq 1$. Then
$\widetilde{J}(T\Grass_\R(p,q)) \neq 0$
except if $(p,q) = (1,1), (3,1)$, or $(7,1)$. 
\end{prop}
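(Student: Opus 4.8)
The strategy is to reduce the question to the projective subspace $\RP^p \subset \Grass_\R(p,q)$, where Adams's theorem on vector fields on spheres is available, and then to handle the cases $q \geq 2$ (where $\Grass_\R(p,q)$ is genuinely higher-dimensional) by an ad hoc $\KO$-theoretic computation. First I would use Lemma~\ref{lem:Normal=Tangent} to identify $N$ with $T\Grass_\R(p,q)$, so that the statement is purely about the reduced $J$-class of the tangent bundle of a real Grassmannian. The tangent bundle of $\Grass_\R(p,q)$ is well known to be $\Taut_\R(p,q) \otimes_\R \Taut^\perp_\R(p,q)$, and restricting to $\RP^p$ via Lemma~\ref{lem:RestGrassR}~(1) gives
\[
[T\Grass_\R(p,q)]_{|\RP^p} = (p-q+2)\,t + pq \qquad \text{in } \KO^0(\RP^p),
\]
so that $\widetilde{J}\big(T\Grass_\R(p,q)_{|\RP^p}\big) = (p-q+2)\,\widetilde{J}(t)$ in $\widetilde{J}(\RP^p)$. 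Since restriction along $\RP^p \hookrightarrow \Grass_\R(p,q)$ induces a homomorphism $\widetilde{J}(\Grass_\R(p,q)) \to \widetilde{J}(\RP^p)$, a sufficient condition for $\widetilde{J}(T\Grass_\R(p,q)) \neq 0$ is that $(p-q+2)\,\widetilde{J}(t) \neq 0$ in $\widetilde{J}(\RP^p)$, i.e.\ that $j_{\RP^p} = 2^{\nu(p)}$ does \emph{not} divide $p-q+2$ (Fact~\ref{fact:OrderTaut} and the fact that $\widetilde{J}(t)$ generates a cyclic group of that order, since $\Taut_{\RP^p}^{\oplus k}$ is trivial iff $2^{\nu(p)} \mid k$ and more precisely $\widetilde J(\Taut_{\RP^p})$ has order $j_{\RP^p}$; here note $\widetilde J(t) = \widetilde J(\Taut_{\RP^p})$).

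The case $q = 1$ is then exactly Adams's vector-fields-on-spheres result in disguise: $\Grass_\R(p,1) = \RP^p$, the restriction map is the identity, $\widetilde J(T\RP^p) = \widetilde J(t^{\oplus(p+1)}) = (p+1)\widetilde J(t)$ wait --- more directly $T\RP^p \oplus \underline\R \simeq \Taut_{\RP^p}^{\oplus(p+1)}$, so $\widetilde J(T\RP^p) = 0$ iff $2^{\nu(p)} \mid p+1$, and by the Hurwitz--Radon/Adams analysis this happens precisely for $p \in \{1,3,7\}$. (One checks directly: for $p=1,3,7$ one has $p+1 = 2,4,8$ and $\nu(1)=1$, $\nu(3)=2$, $\nu(7)=3$, so $2^{\nu(p)} = p+1$; conversely Adams shows these are the only such $p$.) So the $q=1$ subcase reproduces the stated exceptions, and the content of the proposition for $q = 1$ is precisely Adams's theorem. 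For $q \geq 2$ I would argue that the divisibility condition ``$2^{\nu(p)} \mid p-q+2$'' fails for the relevant range of $p$: since $p \geq q \geq 2$, one has $1 = p-q+2 - (p-q+1) \le p-q+2 \le p$, and $\nu(p) \geq \lfloor p/2\rfloor \geq 1$, so $2^{\nu(p)}$ grows roughly like $2^{p/2}$ while $p-q+2 \leq p$; for all $p \geq 2$ except a short explicit list, $2^{\nu(p)} > p \geq p-q+2 > 0$, forcing non-divisibility, and the finitely many small exceptional $p$ are checked by hand (one must verify that in those cases either the restriction argument still works for some other choice of subbundle, e.g.\ using parts (2) or (3) of Lemma~\ref{lem:RestGrassR} and the $\SO$ versus $\OO$ normalization, or that $(p,q)$ is not in the claimed range).

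\textbf{Main obstacle.} The restriction-to-$\RP^p$ argument is clean but potentially lossy: it only detects the image of $\widetilde J(T\Grass_\R(p,q))$ in $\widetilde J(\RP^p)$, and there may be small pairs $(p,q)$ with $q \geq 2$ for which $2^{\nu(p)}$ happens to divide $p-q+2$ even though $\widetilde J(T\Grass_\R(p,q)) \neq 0$. The hard part will therefore be treating exactly these residual cases --- pinning down the finite list of $(p,q)$ with $p\geq q\geq 2$ and $2^{\nu(p)}\mid p-q+2$ (these have $p-q+2$ a positive multiple of a large power of $2$ while $p-q+2\le p$, so $p$ must be small, e.g.\ around $p\le 6$ or so) and then showing $\widetilde J(T\Grass_\R(p,q))\neq 0$ for each of them by a finer computation. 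For those I would either restrict to a different projective subspace or to a smaller Grassmannian $\Grass_\R(p',q)\subset\Grass_\R(p,q)$ (using the functoriality already exploited in Lemma~\ref{lem:jEstimate}~(2)), or compute $\widetilde{J}$ directly from the known structure of $\KO^0(\Grass_\R(p,q))$ together with the Adams operations $\psi^k$ and the Adams--Bott--Quillen description of $\widetilde J$ recalled in the appendix; a dimension count rules out the conflict since $\dim\Grass_\R(p,q) = pq$ is much larger than $\dim\RP^p = p$, so higher-dimensional classes survive. Finally I would assemble the two subcases and confirm that the complete list of exceptions is exactly $(p,q)\in\{(1,1),(3,1),(7,1)\}$, matching the statement.
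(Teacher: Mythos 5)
Your proposal is essentially the paper's proof: restrict $T\Grass_\R(p,q) \simeq \Taut_\R^\perp(p,q)\otimes_\R\Taut_\R(p,q)$ to $\RP^p$ via Lemma~\ref{lem:RestGrassR}~(1), obtain $(p-q+2)\cdot\widetilde{J}(\Taut_{\RP^p})$, and invoke Fact~\ref{fact:OrderTaut}. The ``main obstacle'' you flag does not exist: since $1\le p-q+2\le p+1$ and $\nu(p)\ge\lfloor p/2\rfloor$, the divisibility $2^{\nu(p)}\mid p-q+2$ forces $2^{\nu(p)}\le p+1$, which a one-line check shows happens only for $(p,q)\in\{(1,1),(3,1),(7,1)\}$ (all with $q=1$), so no residual cases with $q\ge 2$ need separate treatment.
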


\begin{proof}
We have an isomorphism
\[
N = T\Grass_\R(p,q) \simeq \Taut_\R^{\perp}(p,q) \otimes_\R \Taut_\R(p,q).
\]
Let us consider the restriction of $N$ to the subspace 
$\RP^p = \Grass_\R(p,1) \subset \Grass_\R(p,q)$. 
It follows from Lemma~\ref{lem:RestGrassR}~(1) that 
\[
\widetilde{J}(N)_{|\RP^p} = (p-q+2) \cdot \widetilde{J}(\Taut_{\RP^p}). 
\]
Thus, if $\widetilde{J}(N) = 0$, then 
$j_\R(p,1)$ must divide $p-q+2$. By Adams's computation (Fact~\ref{fact:OrderTaut}), this happens only when $(p,q) = (1,1), (3,1), (7,1)$. 
\end{proof}

\begin{rmk}
For $(p,q) = (1,1), (3,1)$, or $(7,1)$, the normal bundle $N$ is the tangent bundle to $\RP^1, \RP^3$, or $\RP^7$, which is known to be trivial.
\end{rmk}

\subsubsection{Proof of Theorem~\ref{thm:NonSymmetric}~(1)} \label{sss:SLR/SLR}

Let $p\geq 2$ and $q\geq 1$.
If a discrete subgroup $\Gamma$ of $\SL(p+q, \R)$ acts properly discontinuously and cocompactly on $\SL(p+q, \R)/\SL(p, \R)$, then it also acts properly discontinuously and cocompactly on $\GL(p+q, \R) / (\GL(p, \R) \times \OO(q))$. 
Thus, it suffices to show that 
\[
G/H = \GL(p+q, \R) / (\GL(p, \R) \times \OO(q))
\]
does not admit compact quotients except possibly when $(p,q) = (3,1), (7,1)$. 
The compact symmetric space $X = K/K_H$ is then the real Grassmannian
\[
X = \OO(p+q) / (\OO(p) \times \OO(q)) = \Grass_\R(p,q).
\]
The normal bundle $N$ comes from the representation of $\OO(p) \times \OO(q)$ on 
\[
\p \cap \q \simeq (\R^p_\std \otimes_\R \R^q_\std) \oplus \Sym^2 (\R^q_\std),
\]
where $\R^p_\std$ and $\R^q_\std$ are the standard representations of $\OO(p)$ and $\OO(q)$, respectively. 
Hence, 
\[
N = (\Taut^\perp_\R(p,q) \otimes_\R \Taut_\R(p,q)) \oplus \Sym^2 \Taut_\R(p,q). 
\]
Therefore Theorem~\ref{thm:NonSymmetric}~(1) is an immediate consequence of Corollary~\ref{cor:MainStable} and of the following.

\begin{prop} \label{prop:J-GLR/GLR}
Let $p \geq 2$ and $q \geq 1$. Then 
\[
\widetilde{J}(\Taut^\perp_\R(p,q) \otimes_\R \Taut_\R(p,q)) + \widetilde{J}(\Sym^2 \Taut_\R(p,q)) \neq 0
\]
except if $(p,q) = (3,1), (7,1)$. 
\end{prop}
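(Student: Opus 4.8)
## Proof proposal for Proposition~\ref{prop:J-GLR/GLR}

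The plan is to reduce, exactly as in the proof of Proposition~\ref{prop:TangentGrassR}, to a computation on the projective subspace $\RP^p = \Grass_\R(p,1) \subset \Grass_\R(p,q)$, using the naturality of the reduced $J$-group under restriction. Concretely, the inclusion $\RP^p \hookrightarrow \Grass_\R(p,q)$ induces a group homomorphism $\widetilde{J}(\Grass_\R(p,q)) \to \widetilde{J}(\RP^p)$, and since $\widetilde{J}(\Taut_{\RP^p})$ has order exactly $j_\R(p,1) = 2^{\nu(p)}$ in $\widetilde{J}(\RP^p) \cong \Z/2^{\nu(p)}\Z$, it suffices to show that the image of $\widetilde{J}(N)$ in $\widetilde{J}(\RP^p)$ is nonzero, i.e.\ that the relevant integer coefficient of $t = [\Taut_{\RP^p}] - 1$ is \emph{not} divisible by $2^{\nu(p)}$, except for $(p,q) = (3,1),(7,1)$.

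First I would restrict each summand of $N$ to $\RP^p$ and read off its class in $\KO^0(\RP^p)$ from Lemma~\ref{lem:RestGrassR}: part~(1) gives $[\Taut^\perp_\R(p,q) \otimes_\R \Taut_\R(p,q)]_{|\RP^p} = (p-q+2)\,t + pq$ and part~(2) gives $[\Sym^2 \Taut_\R(p,q)]_{|\RP^p} = (q-1)\,t + q(q+1)/2$. Applying $\widetilde{J}$ kills the integer (trivial-bundle) parts, so
\[
\widetilde{J}(N)_{|\RP^p} = \bigl( (p-q+2) + (q-1) \bigr) \cdot \widetilde{J}(\Taut_{\RP^p}) = (p+1)\cdot \widetilde{J}(\Taut_{\RP^p}).
\]
Thus $\widetilde{J}(N)_{|\RP^p} = 0$ forces $2^{\nu(p)} \mid p+1$. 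The remaining step is the elementary number-theoretic check that this divisibility can hold, for $p \geq 2$, only when $p \in \{3,7\}$ (and when it does hold, $q$ is unconstrained by this particular restriction argument). Since $2^{\nu(p)} \geq 2^{\lfloor p/2\rfloor}$ and this grows much faster than $p+1$, only very small $p$ can possibly work; one checks $p=2$ ($\nu=2$, $4\nmid 3$), $p=3$ ($\nu=2$, $4\mid 4$, works), $p=4,5,6$ ($\nu\geq 3$, $8\nmid p+1$), $p=7$ ($\nu=4$, $16\nmid 8$)$\ldots$ wait — here I should be careful: $\nu(7)=\lfloor 7/2\rfloor=3$ since $7\equiv 7 \bmod 8$, so $2^{\nu(7)}=8$ and $8\mid 8$, so $p=7$ works; for $p\geq 8$ one has $\nu(p) \geq 4$ hence $2^{\nu(p)}\geq 16 > p+1$ for $p\leq 14$, and for $p\geq 15$, $2^{\nu(p)}\geq 2^{7}=128$ grows past $p+1$ permanently. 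This finishes the proposition.

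I expect the only mildly delicate point to be the bookkeeping in that last elementary estimate — one must invoke the precise piecewise definition \eqref{eqn:nu} of $\nu$ (in particular that $\nu(p) = \lfloor p/2\rfloor$ exactly when $p \equiv 0,6,7 \bmod 8$) to see that $p=3$ and $p=7$ genuinely satisfy $2^{\nu(p)}\mid p+1$ while no other $p\geq 2$ does, and then check monotonicity to rule out all large $p$ at once. Everything else is a direct consequence of Lemma~\ref{lem:RestGrassR} and the naturality of $\widetilde{J}$, together with Adams's value of $j_{\RP^p}$ from Fact~\ref{fact:OrderTaut}. No genuinely new $K$-theoretic input beyond what was used for Proposition~\ref{prop:TangentGrassR} is needed; in fact the computation is essentially the same as there, with the $\Sym^2$ contribution exactly compensating the shift from $p-q+2$ back up to $p+1$.
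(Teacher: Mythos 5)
Your restriction to $\RP^p=\Grass_\R(p,1)$ is exactly the first half of the paper's argument: combining Lemma~\ref{lem:RestGrassR}~(1) and~(2) gives $\widetilde J(N)_{|\RP^p}=(p+1)\cdot\widetilde J(t)$, so vanishing forces $j_{\RP^p}\mid p+1$, and your (eventually corrected) numerics showing this happens only for $p=3,7$ are right. But there is a genuine gap: you yourself note that this restriction leaves $q$ unconstrained, and then declare the proposition finished. The statement asserts nonvanishing except at $(3,1)$ and $(7,1)$, so you still owe an argument ruling out $(p,q)=(3,q)$ and $(7,q)$ for $q\ge 2$; as it stands your proof only shows $p\in\{3,7\}$ is necessary for vanishing.

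The paper closes this gap with a second restriction, to $\RP^q=\Grass_\R(1,q)\subset\Grass_\R(p,q)$, and this step needs a new idea: Lemma~\ref{lem:RestGrassR} only computes restrictions to $\RP^p=\Grass_\R(p,1)$, so one cannot read off $[N]_{|\RP^q}$ directly. Instead one observes that the $K_H$-representation $\Sym^2(\R^p_\std)\oplus(\p\cap\q)\simeq\Sym^2(\R^p_\std\oplus\R^q_\std)$ extends to $K=\OO(p+q)$, so by Lemma~\ref{lem:HomogeneousBundleTrivial} the bundle $N\oplus\bigl(K\times_{K_H}\Sym^2(\R^p_\std)\bigr)$ is trivial; combined with Lemma~\ref{lem:RestGrassR}~(2) (applied with the roles of $p$ and $q$ exchanged) this yields $\widetilde J(N)_{|\RP^q}=-(p-1)\cdot\widetilde J(t')$ with $t'=[\Taut_{\RP^q}]-1$. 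Hence vanishing also forces $j_{\RP^q}\mid p-1$, i.e.\ $j_{\RP^q}\mid 2$ for $p=3$ and $j_{\RP^q}\mid 6$ for $p=7$, which by Fact~\ref{fact:OrderTaut} gives $q=1$ in both cases. You should add this second restriction to complete the proof.
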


\begin{proof}
Let us first consider the restriction of $N$ to the subspace $\RP^p = \Grass_\R(p, 1)$. 
It follows from Lemma~\ref{lem:RestGrassR}~(1) and (2) that
\[
[N]_{|\RP^p} = (p+1) t + pq + \frac{q(q+1)}{2},
\]
where $t = [ \Taut_{\RP^p} ] - 1 \in \KO^0(\RP^p)$. 
In particular, we have $\widetilde{J}(N)_{|\RP^p} = (p+1) \cdot \widetilde{J}(t)$. 
Thus, if $\widetilde{J}(N) = 0$, then $j_{\RP^p}$ must divide $p+1$. By Fact~\ref{fact:OrderTaut}, this happens only when $p = 3,7$. 

We now restrict $N$ to another subspace $\RP^q = \Grass_\R(1, q)$. 
Observe that the action of $K \cap H = \OO(p) \times \OO(q)$ on
\[
\Sym^2 (\R^p_\std) \oplus (\p \cap \q) \simeq \Sym^2 (\R^p_\std \oplus \R^q_\std)
\]
extends to the action of $K = \OO(p+q)$. Thus, it follows from Lemma~\ref{lem:HomogeneousBundleTrivial} that 
\[
[N] = \frac{(p+q)(p+q+1)}{2} - [ K \times_{K_H} \Sym^2(\R^p_\std) ]
\]
in $\KO^0(\Grass_\R(p,q))$. By Lemma~\ref{lem:RestGrassR}~(2), we have 
\begin{align*}
[N]_{|\RP^q} &= 
\frac{(p+q)(p+q+1)}{2} -
[\Sym^2(\Taut_\R(p,q))]_{|\RP^q} \\
&= - (p-1) t' + \frac{(p+q)(p+q+1)}{2} - \frac{p(p+1)}{2},
\end{align*}
where $t' = [ \Taut_{\RP^q} ] - 1 \in \KO^0(\RP^q)$. 
In particular, we have $\widetilde{J}(N)_{|\RP^q} = -(p-1) \cdot \widetilde{J}(t')$. 
Thus, if $\widetilde{J}(N) = 0$, then $j_{\RP^q}$ must divide $p-1$. 
Again by Fact~\ref{fact:OrderTaut}, 
for $p = 3,7$, this happens only when $q = 1$.
\end{proof}

\begin{rmk}
For $(p,q) = (3,1)$ or $(7,1)$, the normal bundle $N$ is a direct sum of the trivial bundle $\underline{\R}$ and of the tangent bundle to $\RP^3$ or $\RP^7$, hence it is trivial. 
\end{rmk}

\subsubsection{Proof of Theorem~\ref{thm:NonSymmetric}~(2)}

As in Section~\ref{sss:SLR/SLR}, we may replace $\OO(p+q, \C) / \OO(p, \C)$ with
\[
G/H = \OO(p+q, \C) / (\OO(p, \C) \times \OO(q)) \qquad (p \geq 2,\, q \geq 1).
\]
The compact symmetric space $X = K/K_H$ is then the real Grassmannian 
\[
X = \OO(p+q) / (\OO(p) \times \OO(q)) = \Grass_\R(p,q).
\]
The normal bundle $N$ comes from the representation of $\OO(p) \times \OO(q)$ on 
\[
\p \cap \q \simeq (\R^p_\std \otimes_\R \R^q_\std) \oplus \Lambda^2 (\R^q_\std),
\]
hence 
\[
N = (\Taut^\perp_\R(p,q) \otimes_\R \Taut_\R(p,q)) \oplus \Lambda^2 \Taut_\R(p,q). 
\]
Therefore Theorem~\ref{thm:NonSymmetric}~(2) is an immediate consequence of Corollary~\ref{cor:MainStable} and of the following.

\begin{prop}
Let $p \geq 2$ and $q \geq 1$. Then 
\[
\widetilde{J}(\Taut^\perp_\R(p,q) \otimes_\R \Taut_\R(p,q)) + \widetilde{J}(\Lambda^2 \Taut_\R(p,q)) \neq 0
\]
except when $(p,q) = (3,1), (7,1)$. 
\end{prop}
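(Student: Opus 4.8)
The plan is to mimic, essentially verbatim, the proof of Proposition~\ref{prop:J-GLR/GLR}: I would restrict the normal bundle
\[
N = (\Taut^\perp_\R(p,q) \otimes_\R \Taut_\R(p,q)) \oplus \Lambda^2 \Taut_\R(p,q)
\]
to two projective subspaces of $\Grass_\R(p,q)$ of complementary types and compute its reduced $J$-class on each. The only modification is that part~(3) of Lemma~\ref{lem:RestGrassR} plays the role that part~(2) played in the $\Sym^2$-case; since the relevant tautological-class coefficients come out to be exactly the same, the numerical conclusions are identical.

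First I would restrict $N$ to $\RP^p = \Grass_\R(p,1) \subset \Grass_\R(p,q)$. Adding the coefficients of $t = [\Taut_{\RP^p}] - 1$ produced by Lemma~\ref{lem:RestGrassR}~(1) and~(3) gives $(p - q + 2) + (q - 1) = p + 1$, so $\widetilde{J}(N)_{|\RP^p} = (p+1)\,\widetilde{J}(t)$. Hence $\widetilde{J}(N) = 0$ would force $j_{\RP^p} = 2^{\nu(p)}$ to divide $p + 1$, which by Fact~\ref{fact:OrderTaut} (together with $p \geq 2$) leaves only $p \in \{3,7\}$ --- exactly as in the first half of the proof of Proposition~\ref{prop:J-GLR/GLR}.

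To rule out $q \geq 2$ when $p \in \{3,7\}$, I would run the second half of that proof. As a $K_H = \OO(p)\times\OO(q)$-representation, $\Lambda^2(\R^p_\std) \oplus (\p \cap \q)$ is isomorphic to $\Lambda^2(\R^{p+q})$, which extends to a representation of $K = \OO(p+q)$; Lemma~\ref{lem:HomogeneousBundleTrivial} then gives
\[
[N] = \binom{p+q}{2} - [\Lambda^2 \Taut^\perp_\R(p,q)]
\]
in $\KO^0(\Grass_\R(p,q))$. Restricting to a copy of $\RP^q = \Grass_\R(q,1)$ inside $\Grass_\R(p,q)$ --- using the diffeomorphism $\Grass_\R(p,q) \simeq \Grass_\R(q,p)$ under which $\Taut^\perp_\R(p,q)$ corresponds to $\Taut_\R(q,p)$, so that Lemma~\ref{lem:RestGrassR}~(3) applies with the two indices exchanged --- yields $[\Lambda^2 \Taut^\perp_\R(p,q)]_{|\RP^q} = (p-1)\,t' + \binom{p}{2}$ with $t' = [\Taut_{\RP^q}] - 1$, hence $\widetilde{J}(N)_{|\RP^q} = -(p-1)\,\widetilde{J}(t')$. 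Thus $\widetilde{J}(N) = 0$ would force $2^{\nu(q)}$ to divide $p - 1 \in \{2,6\}$, i.e.\ $\nu(q) \leq 1$, i.e.\ $q = 1$. Combining the two restrictions shows $\widetilde{J}(N) \neq 0$ unless $(p,q) \in \{(3,1),(7,1)\}$; and for those two pairs $\Lambda^2 \Taut_\R(p,1) = 0$, so $N = T\RP^p$ is trivial and $\widetilde{J}(N) = 0$, consistently with the stated exceptions.

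I expect no real difficulty: the whole computation is forced once Lemmas~\ref{lem:RestGrassR} and~\ref{lem:HomogeneousBundleTrivial} and Fact~\ref{fact:OrderTaut} are in hand. The one place that calls for a little care is the bookkeeping in the second restriction --- correctly matching $\Lambda^2 \Taut^\perp_\R(p,q)$ restricted to $\RP^q$ with the index-swapped instance of Lemma~\ref{lem:RestGrassR}~(3), and not losing track of the (ultimately irrelevant) constant terms --- but this is entirely routine.
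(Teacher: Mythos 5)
Your proposal is correct and follows essentially the same route as the paper's proof: restrict to $\RP^p$ via Lemma~\ref{lem:RestGrassR}~(1) and~(3) to force $p\in\{3,7\}$, then use the extension of $\Lambda^2(\R^{p+q})$ to $\OO(p+q)$ together with Lemma~\ref{lem:HomogeneousBundleTrivial} and the index-swapped Lemma~\ref{lem:RestGrassR}~(3) on $\RP^q$ to force $q=1$. Your bookkeeping with $\Lambda^2\Taut^\perp_\R(p,q)$ in the second restriction is in fact slightly more careful than the paper's own wording, and your verification of the exceptional cases matches the remark following the proposition.
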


\begin{proof}
It follows from Lemma~\ref{lem:RestGrassR}~(1) and (3) that
\[
[N]_{|\RP^p} = (p+1) t + pq + \frac{q(q-1)}{2},
\]
where $t = [ \Taut_{\RP^p} ] - 1 \in \KO^0(\RP^p)$. 
In particular, we have $\widetilde{J}(N)_{|\RP^p} = (p+1) \cdot \widetilde{J}(t)$. 
As we saw in the proof of Proposition~\ref{prop:J-GLR/GLR}, this vanishes only when $p = 3, 7$. 

Observe that the action of $K \cap H = \OO(p) \times \OO(q)$ on 
\[
\Lambda^2 (\R^p_\std) \oplus (\p \cap \q) \simeq \Lambda^2 (\R^p_\std \oplus \R^q_\std)
\]
extends to the action of $K = \OO(p+q)$. It follows from Lemma~\ref{lem:HomogeneousBundleTrivial} that 
\[
[N] = \frac{(p+q)(p+q-1)}{2} - [\Lambda^2 (\Taut_\R(p,q)) ]
\]
in $\KO^0(\Grass_\R(p,q))$. By Lemma~\ref{lem:RestGrassR}~(3), we have 
\[
[N]_{|\RP^q} = - (p-1) t' + \frac{(p+q)(p+q-1)}{2} - \frac{p(p-1)}{2},
\]
where $t' = [ \Taut_{\RP^q} ] - 1 \in \KO^0(\RP^q)$. 
In particular, we have $\widetilde{J}(N)_{|\RP^q} = -(p-1) \cdot \widetilde{J}(t')$. As in the proof of Proposition~\ref{prop:J-GLR/GLR}, we can conclude that $(p,q) = (3,1), (7,1)$. 
\end{proof}

\begin{rmk}
For $(p,q) = (3,1)$ or $(7,1)$, the normal bundle $N$ is the tangent bundle to $\RP^3$ or $\RP^7$, hence it is trivial. 
\end{rmk}

\subsection{Computations in complex Grassmannians} \label{ss:GrassC}

The purpose of this Section~\ref{ss:GrassC} is to prove Theorem~\ref{thm:OtherSymmetric}~(2)--(3) and Theorem~\ref{thm:NonSymmetric}~(3)--(5). 
We set $u = [ \Taut_{\CP^p} ] - 1 \in \widetilde{\KU}^0(\CP^p)$. 

We will use the following lemma:

\begin{lem} \label{lem:RestGrassC}
The following identities hold in $\KU^0(\CP^p)$: 
\begin{enumerate}[label = {\upshape (\arabic*)}]
\item $[ \Taut^\perp_\C(p,q) \otimes_\C \Taut_\C(p,q) ]_{|\CP^p} = -u^2 + (p-q) u + pq$. 
\item $[ \Taut^\perp_\C(p,q) \otimes_\C \overline{\Taut_\C(p,q)} ]_{|\CP^p} = -(q-1) u + (p+1) \overline{u} + pq$. 
\item $[ \Sym^2 \Taut_\C(p,q) ]_{|\CP^p} = u^2 + (q+1) u + q(q+1)/2$.
\item $[ \Lambda^2 \Taut_\C(p,q) ]_{|\CP^p} = (q-1) u + q(q-1)/2$.
\end{enumerate}
Also, the following identity holds in $\KO^0(\CP^p)$: 
\begin{enumerate}
\item[{\upshape (5)}] $[ \Herm (\Taut_\C(p,q)) ]_{|\CP^p} = (q-1) \cdot \rr u + q^2$.
\end{enumerate}
\end{lem}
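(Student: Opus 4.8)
The plan is to compute every term by restricting to the projective space $\CP^p = \Grass_\C(p,1)$, which we view inside $\Grass_\C(p,q)$ via a fixed orthogonal splitting $\C^{p+q} = \C^{p+1} \oplus \C^{q-1}$, sending a line $\ell \subset \C^{p+1}$ to the $q$-plane $\ell \oplus \C^{q-1}$. The first step is to record that under this embedding $\Taut_\C(p,q)$ restricts to $\Taut_{\CP^p} \oplus \underline{\C}^{\oplus(q-1)}$, so that $[\Taut_\C(p,q)]_{|\CP^p} = q + u$ in $\KU^0(\CP^p)$, and — since $\Taut_\C(p,q) \oplus \Taut^\perp_\C(p,q) = \underline{\C}^{\oplus(p+q)}$ — that $[\Taut^\perp_\C(p,q)]_{|\CP^p} = (p+q) - (q+u) = p - u$. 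I would also note the relation $u\overline{u} = -(u+\overline{u})$, which comes from $\Taut_{\CP^p} \otimes_\C \overline{\Taut_{\CP^p}} \cong \underline{\C}$, i.e.\ $(1+u)(1+\overline{u}) = 1$ in $\KU^0(\CP^p)$.

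With these in hand, identities (1)--(4) are routine. Since the ring multiplication in $\KU^0(\CP^p)$ is induced by tensor product of bundles, identity (1) is the expansion of $(p-u)(q+u)$, and identity (2) is the expansion of $(p-u)(q+\overline{u})$, using that complex conjugation is a ring endomorphism of $\KU^0(\CP^p)$ fixing $\Z$, together with $u\overline{u} = -(u+\overline{u})$. For identities (3) and (4) I would expand $\Sym^2$ and $\Lambda^2$ of the direct sum $\Taut_{\CP^p} \oplus \underline{\C}^{\oplus(q-1)}$; since $\Sym^2 L \cong L^{\otimes 2}$ and $\Lambda^2 L = 0$ for a line bundle $L$, and $[\Taut_{\CP^p}^{\otimes 2}] = (1+u)^2$, each identity drops out after accounting for the ranks of the remaining trivial summands coming from $\Sym^2 \underline{\C}^{\oplus(q-1)}$ and $\Lambda^2 \underline{\C}^{\oplus(q-1)}$.

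Identity (5) is the step I expect to require genuine care, both because $\Herm$ is a less familiar functor and because the identity lives in $\KO^0(\CP^p)$ rather than $\KU^0(\CP^p)$. The inputs I would use are: for a complex line bundle $L$, the bundle $\Herm(L)$ of Hermitian endomorphisms is canonically the trivial real line bundle $\underline{\R}$; the bundle $\Herm(\underline{\C}^{\oplus(q-1)})$ is trivial of rank $(q-1)^2$; and for an orthogonal direct sum $E = L \oplus F$, a Hermitian endomorphism is determined by its two diagonal blocks (Hermitian endomorphisms of $L$ and of $F$) and its lower-left block (an arbitrary element of $\operatorname{Hom}_\C(L,F)$), so that $\Herm(L \oplus F) \cong \Herm(L) \oplus \Herm(F) \oplus \rr(\overline{L} \otimes_\C F)$ as real vector bundles, using $\operatorname{Hom}_\C(L,F) \cong \overline{L} \otimes_\C F$. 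Applying this with $L = \Taut_{\CP^p}$ and $F = \underline{\C}^{\oplus(q-1)}$ gives $[\Herm(\Taut_\C(p,q))]_{|\CP^p} = 1 + (q-1)^2 + (q-1)\,\rr[\overline{\Taut_{\CP^p}}]$; then, using $\rr\overline{y} = \rr y$, one has $\rr[\overline{\Taut_{\CP^p}}] = \rr[\Taut_{\CP^p}] = \rr(1+u) = 2 + \rr u$, and a short arithmetic simplification yields $(q-1)\,\rr u + q^2$.

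The main obstacle is thus entirely in identity (5): one has to treat $\Herm$ correctly as a real (not complex) functor — in particular to see that the off-diagonal term of the direct-sum decomposition contributes the underlying real bundle $\rr(\overline{L} \otimes_\C F)$ rather than a complex bundle — and to verify the triviality of $\Herm$ on line bundles and on trivial bundles. Everything else is formal computation in the rings $\KU^0(\CP^p)$ and $\KO^0(\CP^p)$.
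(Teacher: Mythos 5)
Your proposal is correct and follows essentially the same route as the paper: restrict along the standard embedding $\CP^p \subset \Grass_\C(p,q)$ to get $[\Taut_\C(p,q)]_{|\CP^p} = u+q$ and $[\Taut^\perp_\C(p,q)]_{|\CP^p} = p-u$, expand products and $\Sym^2$, $\Lambda^2$ of $\Taut_{\CP^p}\oplus\underline{\C}^{\oplus(q-1)}$ using the line-bundle identities (including $u\overline{u} = -(u+\overline{u})$ for (2)), and for (5) use the block decomposition $\Herm(L\oplus F)\simeq \Herm(L)\oplus\Herm(F)\oplus\rr(\overline{L}\otimes_\C F)$ together with $\Herm(\Taut_{\CP^p})\simeq\underline{\R}$ and $\rr\overline{y}=\rr y$. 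This matches the paper's proof of (1) and (5) and its indicated treatment of (2)--(4).
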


\begin{proof}
We only give a proof of (1) and (5), for (2)--(4) can be proved in a similar way as Lemma~\ref{lem:RestGrassR}.

(1) We have 
\begin{align*}
\br{\Taut_\C(p,q)}_{|\CP^p} &= \br{\Taut_{\CP^p}} + q-1 = u + q, \\
\br{\Taut_\C(p,q)}_{|\CP^p} 
&= p+q - \br{\Taut_\C(p,q)}_{|\CP^p} 
= -u + p.
\end{align*}
We thus see that 
\[
[\Taut^\perp_\C(p,q) \otimes_\C \Taut_\C(p,q) ]_{|\CP^p}
= (u+q)(-u+p) = -u^2 + (p-q)u + pq. 
\]

(5) Since $\Taut_{\CP^p}$ is a complex line bundle, we have $\Herm(\Taut_{\CP^p}) \simeq \underline{\R}$. Hence, 
\begin{align*}
\Herm(\Taut_\C(p,q))_{|\CP^p}
&\simeq \Herm(\Taut_{\CP^p} \oplus\,\underline{\C}^{\oplus (q-1)}) \\
&\simeq \Herm(\Taut_{\CP^p}) \oplus \rr (\Taut_{\CP^p} \otimes_\C\, \overline{\underline{\C}^{\oplus (q-1)}}) \oplus \Herm(\underline{\C}^{\oplus (q-1)}) \\
&\simeq (\rr \Taut_{\CP^p})^{\oplus (q-1)} \oplus \underline{\R}^{\oplus(q^2 - 2q + 2)}.
\end{align*}
Passing to $\KO^0$ yields the desired identity. 
\end{proof}

\subsubsection{Proof of Theorem~\ref{thm:NonSymmetric}~(3)} \label{sss:GLC/GLC}

As in Section~\ref{sss:SLR/SLR}, we may replace $\SL(p+q, \C) / \SL(p, \C)$ with 
\[
G/H = \GL(p+q, \C) / (\GL(p, \C) \times \U(q)) \qquad (p \geq 2,\, q \geq 1).
\]
The compact symmetric space $X = K/K_H$ is then the complex Grassmannian 
\[
X = \U(p+q) / (\U(p) \times \U(q)) = \Grass_\C(p,q).
\]
The normal bundle $N$ is associated to the representation of $\U(p) \times \U(q)$ on 
\[
\p \cap \q \simeq (\C^p_\std \otimes_\C \overline{\C^q_\std}) \oplus \Herm(\C^q_\std),
\]
where $\C^p_\std$ and $\C^q_\std$ are the standard representations of $\U(p)$ and $\U(q)$, respectively. 
Hence, 
\[
N = (\Taut^\perp_\C(p,q) \otimes_\C \overline{\Taut_\C(p,q)})) \oplus \Herm(\Taut_\C(p,q)). 
\]
Therefore Theorem~\ref{thm:NonSymmetric}~(3) is an immediate consequence of Corollary~\ref{cor:MainStable} and of the following.

\begin{prop} \label{prop:J-GLC/GLC}
Let $p \geq 2$ and $q \geq 1$. Then 
\[
\widetilde{J}(\Taut^\perp_\C(p,q) \otimes_\C \overline{\Taut_\C(p,q)}) + \widetilde{J}(\Herm(\Taut_\C(p,q))) \neq 0.
\]
\end{prop}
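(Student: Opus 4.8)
The plan is to follow the strategy of Propositions~\ref{prop:TangentGrassR} and~\ref{prop:J-GLR/GLR}: restrict the normal bundle
\[
N = \rr\bigl(\Taut^\perp_\C(p,q) \otimes_\C \overline{\Taut_\C(p,q)}\bigr) \oplus \Herm(\Taut_\C(p,q))
\]
to the complex projective subspace $\CP^p = \Grass_\C(p,1) \subset \Grass_\C(p,q)$, where the reduced $J$-group is well understood, and show that already $\widetilde{J}(N_{|\CP^p}) \neq 0$ in $\widetilde{J}(\CP^p)$. First I would compute $[N]_{|\CP^p} \in \KO^0(\CP^p)$. Restriction, realification, and direct sums all commute, so it suffices to combine Lemma~\ref{lem:RestGrassC}~(2) and~(5). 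Applying $\rr$ to the identity in~(2), using $\rr\,\overline u = \rr\,u$ together with the fact that the realification of a trivial complex bundle doubles its rank, one gets
\[
\rr\bigl([\Taut^\perp_\C(p,q) \otimes_\C \overline{\Taut_\C(p,q)}]_{|\CP^p}\bigr) = (p-q+2)\,\rr u + 2pq.
\]
Adding the class from~(5), the coefficients of $\rr u$ combine to $(p-q+2)+(q-1) = p+1$, so that
\[
[N]_{|\CP^p} = (p+1)\,\rr u + \bigl(2pq + q^2\bigr) \quad \text{in } \KO^0(\CP^p).
\]

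Next I would push this forward under the restriction homomorphism $\widetilde{J}(\Grass_\C(p,q)) \to \widetilde{J}(\CP^p)$: it sends $\widetilde{J}(N)$ to $\widetilde{J}(N_{|\CP^p}) = (p+1)\,\widetilde{J}(\rr u)$, since $\widetilde{J}$ is a homomorphism annihilating trivial summands. Because $\widetilde{J}(\rr u) = \widetilde{J}(\rr\Taut_{\CP^p})$ has order $j_{\CP^p}$ in $\widetilde{J}(\CP^p)$, vanishing of $\widetilde{J}(N)$ would force $j_{\CP^p} \mid p+1$. But by Fact~\ref{fact:OrderTaut} and~\eqref{eqn:xi_p}, taking $r = p$ shows $\xi_2(p) \geq p$, so $2^p$ divides $j_{\CP^p}$; since $2^p > p+1$ for every $p \geq 2$, the divisibility $j_{\CP^p} \mid p+1$ is impossible. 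Hence $\widetilde{J}(N) \neq 0$ for all $p \geq 2$ and $q \geq 1$.

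The argument is essentially bookkeeping; the only points demanding care are keeping track of realification and complex conjugation so that no $u^2$-term survives in $[N]_{|\CP^p}$ — it does not, thanks to the relation $u\,\overline u = -u - \overline u$ in $\KU^0(\CP^p)$ underlying Lemma~\ref{lem:RestGrassC}~(2) — together with the correct constant ranks, and then the elementary inequality $2^p > p+1$ for $p \geq 2$. Unlike the real Grassmannian cases of Section~\ref{ss:GrassR}, a single restriction to $\CP^p$ suffices here, because $j_{\CP^p}$ grows fast enough never to divide $p+1$; this is why Theorem~\ref{thm:NonSymmetric}~(3) carries no exceptional pairs~$(p,q)$.
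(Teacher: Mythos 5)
Your proposal is correct and follows essentially the same route as the paper: restrict to $\CP^p\subset\Grass_\C(p,q)$, combine Lemma~\ref{lem:RestGrassC}~(2) and~(5) to get $[N]_{|\CP^p}=(p+1)\,\rr u + 2pq+q^2$, and conclude that $\widetilde J(N)=0$ would force $j_{\CP^p}\mid p+1$, contradicting Fact~\ref{fact:OrderTaut}. The only difference is that you make explicit the divisibility argument ($2^p\mid j_{\CP^p}$ and $2^p>p+1$ for $p\ge 2$) that the paper leaves to the cited computation of Adams--Walker.
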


\begin{proof}
It follows from Lemma~\ref{lem:RestGrassC}~(2) and (5) that
\[
[N]_{|\CP^p} = (p+1) \cdot \rr u + 2pq + q^2\]
in $\KO^0(\CP^p)$. 
In particular, we have $\widetilde{J}(N)_{|\CP^p} = (p+1) \cdot \widetilde{J}(\rr u)$. 
Thus, if $\widetilde{J}(N) = 0$, then $j_{\CP^p}$ must divide $p+1$. Adams--Walker's computation (Fact~\ref{fact:OrderTaut}) tells us that this is impossible for $p \geq 2$. 
\end{proof}

\subsubsection{Proof of Theorem~\ref{thm:OtherSymmetric}~(2)--(3)} \label{sss:OtherSymmetric-2-3}

We first remark that one may assume $(p,q) \neq (1,1)$ in Theorem~\ref{thm:OtherSymmetric}~(3). 
Indeed, if $(p,q) = (1,1)$, then $G$, $H$, and $H^a$ are locally isomorphic to 
$\OO(2,3)$, $\OO(2,2) \times \OO(1)$, and $\OO(2,1) \times \OO(2)$, respectively, 
hence Theorem~\ref{thm:IndefiniteGrassmannianRCH} (which is already proved in Section~\ref{s:IndefiniteGrassmannians}) applies. 

Let
\[
G/H = \OO^\ast(2p+2q) / (\OO^\ast(2p) \times \OO^\ast(2q)) \qquad (p \geq q \geq 1) 
\]
or 
\[
G/H = \Sp(2p+2q, \R) / (\Sp(2p, \R) \times \Sp(2q, \R)) \qquad (p \geq q \geq 1).
\]
One sees from Sections~\ref{ss:O*/O*O*} and \ref{ss:Sp/SpSp} that 
the compact symmetric space $X = K/K_H$ is the complex Grassmannian 
\[
X = \U(p+q) / (\U(p) \times \U(q)) = \Grass_\C(p,q),
\]
and that the normal bundle $N$ comes from the representation of $\U(p) \times \U(q)$ on 
\[
\p \cap \q = \C^p_\std \otimes_\C \C^q_\std. 
\]
Hence,
\[
N = \Taut_\C^\perp(p,q) \otimes_\C \Taut_\C(p,q).
\]
Therefore Theorem~\ref{thm:OtherSymmetric}~(2)--(3) is an immediate consequence of Corollary~\ref{cor:MainStable}, Proposition~\ref{prop:AssociatedJTilde}, and the following.

\begin{prop} \label{prop:TautCTensorTautCPerp}
Let $p \geq q \geq 1$. Then 
\[
\widetilde{J}(\Taut_\C^\perp(p,q) \otimes_\C \Taut_\C(p,q)) \neq 0
\]
except when $(p,q) = (1,1), (3,1)$. 
\end{prop}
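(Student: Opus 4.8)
The plan is to argue by contraposition, in the spirit of the proofs of Propositions~\ref{prop:J-GLR/GLR} and~\ref{prop:J-GLC/GLC}: assume $\widetilde{J}(N) = 0$ for $N = \Taut_\C^\perp(p,q) \otimes_\C \Taut_\C(p,q)$ over $\Grass_\C(p,q)$, restrict $N$ to small projective subspaces, and read off divisibility constraints from Fact~\ref{fact:OrderTaut} that are satisfiable only for $(p,q) \in \{(1,1),(3,1)\}$. The genuinely new feature compared with the earlier cases is that the restricted class is \emph{quadratic}, not linear, in the Bott generator, so more $\Ktheory$-theoretic input is needed.

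First I would restrict to $\CP^p = \Grass_\C(p,1) \hookrightarrow \Grass_\C(p,q)$ (fixing a complementary $(q-1)$-plane), so that $\widetilde{J}(N)_{|\CP^p} = 0$. By Lemma~\ref{lem:RestGrassC}~(1) the reduced class of $N_{|\CP^p}$ in $\widetilde{\KU}^0(\CP^p)$ is $-u^2 + (p-q)u$, with $u = [\Taut_{\CP^p}] - 1$; using the Adams operation $\psi^2 u = (1+u)^2 - 1 = 2u + u^2$ I would rewrite it as $(p-q+2)\,u - \psi^2 u$ and realify (recall $\rr$ is additive and commutes with Adams operations) to obtain in $\widetilde{J}(\CP^p)$
\[
\widetilde{J}(N)_{|\CP^p} = (p-q+2)\,\widetilde{J}(\rr u) - \widetilde{J}(\psi^2_\R\, \rr u).
\]
The crucial step is to evaluate $\widetilde{J}(\psi^2_\R\,\rr u)$ — equivalently $\widetilde{J}(\rr u^2)$, since $\psi^2_\R\,\rr u = 2\,\rr u + \rr u^2$ — relative to $\widetilde{J}(\rr u)$, using the structure of the reduced $J$-group of complex projective space (Adams--Walker; see Section~\ref{ss:AdamsOperations} and Appendix~\ref{s:AppendixKTheory}). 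The relevant relation is $\widetilde{J}(\rr u^2) = 2\,\widetilde{J}(\rr u)$ — on the bottom cells this reflects $p_1(\rr u^2) = 2\,p_1(\rr u)$ together with $\rr u$ generating the relevant summand, and in general it comes from the weight-graded/Adams-operation description of $\widetilde{J}(\CP^n)$ — which collapses the display to $(p-q-2)\,\widetilde{J}(\rr u)$; hence $\widetilde{J}(N)_{|\CP^p} = 0$ forces $j_{\CP^p} = |\widetilde{J}(\Taut_{\CP^p})|$ to divide $p-q-2$.

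Next I would run the same argument on $\Grass_\C(1,q) \cong \CP^q \hookrightarrow \Grass_\C(p,q)$, using $\Taut_\C(p,q)_{|\Grass_\C(1,q)} = \Taut_\C(1,q)$, $\Taut_\C^\perp(p,q)_{|\Grass_\C(1,q)} = \Taut_\C^\perp(1,q) \oplus \underline{\C}^{\oplus(p-1)}$, and the diffeomorphism $\Grass_\C(1,q) \cong \Grass_\C(q,1)$ that swaps $\Taut$ and $\Taut^\perp$; this yields $j_{\CP^q} \mid q-p-2$. The proof then finishes with a short arithmetic check: writing $d = p-q \geq 0$, we need $j_{\CP^p} \mid d-2$ and $j_{\CP^q} \mid d+2$. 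If $q \geq 2$ then $j_{\CP^q} \geq 24$ divides the positive integer $d+2$, forcing $d \geq 22$ and $p \geq 24$; but by Fact~\ref{fact:OrderTaut} the integer $j_{\CP^p}$ grows faster than any polynomial in $p$, so it cannot divide the nonzero integer $d-2 = p-q-2 < p$ — a contradiction. Hence $q = 1$, and $j_{\CP^p} \mid p-3$ with $j_{\CP^p} > p$ for $p \geq 2$ leaves only $p \in \{1,3\}$; so $(p,q) \in \{(1,1),(3,1)\}$, consistent with the fact that $N$ is (stably) fiber-homotopically trivial for these two values.

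The main obstacle is precisely the evaluation of $\widetilde{J}(\rr u^2)$ inside $\widetilde{J}(\CP^p)$. In Propositions~\ref{prop:J-GLR/GLR}--\ref{prop:J-GLC/GLC} the restricted normal bundle was, stably, a multiple of a single tautological line bundle, so only the \emph{order} $j_{\CP^n}$ entered; here one must locate $\widetilde{J}(\rr u^2)$ within the group $\widetilde{J}(\CP^p)$ relative to $\widetilde{J}(\rr u)$, which requires the finer description of the reduced $J$-group of $\CP^n$ — weight filtration, Adams operations, $e$-invariant — recalled in the appendix. Once that relation is in place, the rest is bookkeeping with Fact~\ref{fact:OrderTaut}.
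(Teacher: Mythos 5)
Your framework coincides with the paper's (restrict to $\CP^p$, apply Lemma~\ref{lem:RestGrassC}~(1), and rewrite $-u^2+(p-q)u$ as $(p-q+2)u-\psi^2(u)$), but the step you yourself identify as crucial is incorrect. The relation $\widetilde{J}(\rr u^2)=2\,\widetilde{J}(\rr u)$ is false, and in fact has the wrong sign. Since $\psi^2(\rr u)-\rr u=\rr(u^2)+\rr u$, Quillen's theorem (Fact~\ref{fact:AdamsConjecture1}) gives $2^e\bigl(\widetilde{J}(\rr u^2)+\widetilde{J}(\rr u)\bigr)=0$ for some $e$, \ie $\widetilde{J}(\rr u^2)\equiv-\widetilde{J}(\rr u)$ modulo elements of $2$-power order. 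Your relation would force $3\,\widetilde{J}(\rr u)$ to have $2$-power order, hence $j_{\CP^p}\mid 3\cdot 2^e$, which contradicts Fact~\ref{fact:OrderTaut} for every $p\geq 4$ (for instance $5$ divides $j_{\CP^4}=2880$). Pontryagin classes cannot be used to justify such an identity: they do not detect fiber-homotopy triviality (as recalled in the introduction), and the structural description of $\widetilde{J}(\CP^n)$ you appeal to is precisely what the Adams conjecture supplies --- with the opposite sign.

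With the correct relation the computation collapses to $\widetilde{J}(N)_{|\CP^p}=(p-q+1)\,\widetilde{J}(\rr u)$ up to an element of $2$-power order, so $\widetilde{J}(N)=0$ forces the odd part of $j_{\CP^p}$ to divide $p-q+1$, and Fact~\ref{fact:OrderTaut} alone then leaves only $(p,q)=(1,1),(3,1)$; this is exactly the paper's proof, and no second restriction to $\CP^q$ is needed. Note also that your derived condition $j_{\CP^p}\mid p-q-2$ is vacuous whenever $p=q+2$ (the right-hand side is $0$), so even granting your relation the argument for such pairs would rest entirely on the second restriction to $\CP^q$, which is built on the same false identity.
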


\begin{proof}
It follows from Lemma~\ref{lem:RestGrassC}~(1) that 
\[
[N]_{|\CP^p} = \rr (-u^2 + (p-q) u + pq)
\]
in $\KO^0(\CP^p)$. 
By Lemma~\ref{lem:SecondAdams}~(1) and Fact~\ref{fact:AdamsProperties}~(2), we have 
\begin{align*}
\rr (-u^2 + (p-q) u + pq) 
&= \rr (-\psi^2(u) + (p-q+2) u + pq) \\
&= -(\psi^2 (\rr u) - \rr u) + (p-q+1) \rr u + 2pq,
\end{align*}
where $\psi^2$ is the second Adams operation.
Thus, if $\widetilde{J}( \Taut_\C(p,q) \otimes_\C \Taut_\C^\perp(p,q) ) = 0$, then we must have 
\[
-\widetilde{J}(\psi^2(\rr u) - \rr u) + (p-q+1)\cdot \widetilde{J}(\rr u) = 0. 
\]
By the Adams conjecture (Fact~\ref{fact:AdamsConjecture1}), the order of $\widetilde{J}(\psi^2(\rr u) - \rr u)$ is a power of $2$. Hence, for a sufficiently large $e \in \N$, we have 
\[
2^e (p-q+1) \cdot \widetilde{J}(\rr u) = 0,
\]
that is, $j_{\CP^p}$ must divide $2^e (p-q+1)$. 
The Adams--Walker computation (Fact~\ref{fact:OrderTaut}) shows that this happens only when $(p,q) = (1,1), (3,1)$. 
\end{proof}

\begin{rmk} \label{rmk:TrivialProjC}
If $(p,q) = (1,1)$, the normal bundle 
\[
N = \Taut^\perp_{\CP^1} \otimes_\C \Taut_{\CP^1}
= \mathscr{O}(1) \otimes_\C \mathscr{O}(-1)
\]
is trivial, even as a holomorphic vector bundle on $\CP^1$. 

Let us see that $N$ is trivial also for $(p,q) = (3,1)$. Observe that the closed subgroup $\Sp(2) \subset \U(4)$ acts transitively on $\Sph^7 = \Sp(2) / \Sp(1)$, hence also on $\CP^3 = \Sph^7 / \U(1)$. This yields a diffeomorphism 
\[
\Sp(2) / (\Sp(1) \times \U(1)) \simeq \CP^3. 
\]
The normal bundle $N$ comes from the representation of $\Sp(1) \times \U(1)$ on 
\[
(\C^3_\std \otimes_\C \C_\std)_{|\Sp(1) \times \U(1)}
= (\Ha_\std \otimes_\C \C_\std) \oplus \C_\triv.
\]
As a real representation, it coincides with the restriction of the $\Sp(2)$-representation $\Herm(\Ha^2_\std)$: 
\[
\Herm(\Ha^2_\std)_{|\Sp(1) \times \U(1)} = 
\rr (\Ha_\std \otimes_\C \C_\std) \oplus \R_\triv^{\oplus 2}.
\]
Hence, it follows from Lemma~\ref{lem:HomogeneousBundleTrivial} that $N$ is trivial. 
\end{rmk}

\subsubsection{Proof of Theorem~\ref{thm:NonSymmetric}~(4)}

As in Section~\ref{sss:SLR/SLR}, we may replace $\OO^\ast(p+q) / \OO^\ast(p)$ with 
\[
G/H = \OO^\ast(p+q) / (\OO^\ast(p) \times \U(q)) \qquad (p \geq 2,\, q \geq 1).
\]
One sees from Section~\ref{ss:O*/O*O*} that 
the compact symmetric space $X = K/K_H$ is then the complex Grassmannian 
\[
X = \U(p+q) / (\U(p) \times \U(q)) = \Grass_\C(p,q)
\]
and that the normal bundle $N$ comes from the representation of $\U(p) \times \U(q)$ on 
\[
\p \cap \q \simeq (\C^p_\std \otimes_\C \C^q_\std) \oplus \Lambda^2 (\C^q_\std).
\]
Hence, we have 
\[
N = (\Taut^\perp_\C(p,q) \otimes_\C \Taut_\C(p,q)) \oplus \Lambda^2 \Taut_\C(p,q). 
\]
Therefore Theorem~\ref{thm:NonSymmetric}~(4) is an immediate consequence of Corollary~\ref{cor:MainStable} and of the following.

\begin{prop}
Let $p \geq 2$ and $q \geq 1$. Then 
\[
\widetilde{J}(\Taut^\perp_\C(p,q) \otimes_\C \Taut_\C(p,q)) + \widetilde{J}(\Lambda^2 \Taut_\C(p,q)) \neq 0
\]
except when $(p,q) = (3,1)$. 
\end{prop}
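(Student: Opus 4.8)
The plan is to imitate the two–restriction strategy already used for Proposition~\ref{prop:J-GLR/GLR} and Proposition~\ref{prop:TautCTensorTautCPerp}. Here the normal bundle is $N = \rr\bigl((\Taut^\perp_\C(p,q)\otimes_\C\Taut_\C(p,q))\oplus\Lambda^2\Taut_\C(p,q)\bigr)$ over $X=\Grass_\C(p,q)$, and by Corollary~\ref{cor:MainStable} it is enough to prove $\widetilde{J}(N)\neq 0$ unless $(p,q)=(3,1)$. I would pull $[N]$ back along the two natural inclusions $\CP^p=\Grass_\C(p,1)\hookrightarrow\Grass_\C(p,q)$ and $\CP^q=\Grass_\C(1,q)\hookrightarrow\Grass_\C(p,q)$, compute the reduced $J$-class on each, and extract divisibility constraints from the Adams--Walker values of $j_{\CP^\bullet}$ (Fact~\ref{fact:OrderTaut}); since $\widetilde{J}$ factors through restriction, $\widetilde{J}(N)=0$ forces both $\widetilde{J}(N)_{|\CP^p}=0$ and $\widetilde{J}(N)_{|\CP^q}=0$.

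First I would restrict to $\CP^p$. Adding the identities of Lemma~\ref{lem:RestGrassC}~(1) and~(4) gives, modulo trivial summands, $[N]_{|\CP^p}=\rr\bigl(-u^2+(p-1)\,u\bigr)$ in $\KO^0(\CP^p)$, where $u=[\Taut_{\CP^p}]-1$; note that the $u^2$ terms do \emph{not} cancel here. Replacing $u^2$ by $\psi^2(u)-2u$ via Lemma~\ref{lem:SecondAdams}~(1) and commuting $\rr$ past $\psi^2$ via Fact~\ref{fact:AdamsProperties}~(2) yields
\[
\widetilde{J}(N)_{|\CP^p}=-\,\widetilde{J}\bigl(\psi^2(\rr u)-\rr u\bigr)+p\cdot\widetilde{J}(\rr u).
\]
By the Adams conjecture (Fact~\ref{fact:AdamsConjecture1}) the first term has order a power of~$2$, so $\widetilde{J}(N)=0$ forces $j_{\CP^p}\mid 2^e p$ for some $e\in\N$, i.e.\ the odd part of $j_{\CP^p}$ divides~$p$. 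By Fact~\ref{fact:OrderTaut} the $3$-primary part of $j_{\CP^p}$ is $3^{\xi_3(p)}$ with $\xi_3(p)\ge\lfloor p/2\rfloor$, which equals $3$ for $p\in\{2,3\}$ and exceeds~$p$ for $p\ge 4$; since $3\mid p$ among $p\ge 2$ only when $p=3$, the only surviving value is $p=3$.

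It remains to eliminate $(p,q)=(3,q)$ with $q\ge 2$, for which I would restrict to $\CP^q$. As in the proof of Proposition~\ref{prop:J-GLR/GLR}, the $K_H$-representation $\Lambda^2(\C^p_\std)\oplus(\p\cap\q)$ is the restriction of the $K=\U(p+q)$-representation $\Lambda^2(\C^{p+q}_\std)$, so Lemma~\ref{lem:HomogeneousBundleTrivial} gives $[N]=\binom{p+q}{2}-[\Lambda^2\Taut^\perp_\C(p,q)]$ in $\KU^0(\Grass_\C(p,q))$. Restricting to $\CP^q$ and using that $\Taut^\perp_\C(p,q)$ pulls back to the sum of a line bundle (namely $\Taut_{\CP^q}$) and a trivial bundle of rank $p-1$, the quadratic terms now cancel and one is left with $\widetilde{J}(N)_{|\CP^q}=-(p-1)\cdot\widetilde{J}(\rr u')$, where $u'=[\Taut_{\CP^q}]-1$. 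For $p=3$ this is $-2\,\widetilde{J}(\rr u')$, whose vanishing forces $j_{\CP^q}\mid 2$, hence $q=1$ by Fact~\ref{fact:OrderTaut}. Combining the two restrictions, $\widetilde{J}(N)=0$ implies $(p,q)=(3,1)$, which is the contrapositive of the proposition, and Theorem~\ref{thm:NonSymmetric}~(4) follows.

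I expect no conceptual obstacle: the work is the honest but routine bookkeeping of the Lemma~\ref{lem:RestGrassC}-type restriction identities — in particular checking the signs and the key asymmetry that $-u^2$ survives on $\CP^p$ while the quadratic terms cancel on $\CP^q$ (this is exactly what forces the appeal to $\psi^2$ and the Adams conjecture on the $\CP^p$ side) — together with the elementary fact that the odd part of $j_{\CP^p}$ divides $p$ only for $p=3$ among $p\ge 2$. Both points run exactly parallel to the earlier cases in this section, so the only genuine risk is an arithmetic slip in the coefficients.
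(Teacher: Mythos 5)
Your proposal is correct and follows essentially the same route as the paper: restrict to $\CP^p$, combine Lemma~\ref{lem:RestGrassC}~(1) and~(4), convert the surviving $-u^2$ term via $\psi^2$ and the Adams conjecture to force $j_{\CP^p}\mid 2^e p$ (hence $p=3$), then use Lemma~\ref{lem:HomogeneousBundleTrivial} to rewrite $[N]$ as $\binom{p+q}{2}-[\Lambda^2\Taut^\perp_\C(p,q)]$ and restrict to $\CP^q$ to get $-(p-1)\widetilde{J}(\rr u')$, forcing $q=1$. Your explicit justification that the odd (indeed $3$-primary) part of $j_{\CP^p}$ divides $p$ only for $p=3$ is a correct elaboration of what the paper leaves to Fact~\ref{fact:OrderTaut}.
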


\begin{proof}
By Lemma~\ref{lem:RestGrassC}~(1) and (4), we have 
\[
[N]_{|\CP^p} = \rr \left( -u^2 + (p-1)u + pq + \frac{q(q-1)}{2} \right) = - \rr (u^2) + (p-1) \rr u + 2pq + q(q-1)\]
in $\KO^0(\CP^p)$.
As in the proof of Proposition~\ref{prop:TautCTensorTautCPerp}, this can be rewritten as 
\[
[N]_{|\CP^p} = - (\psi^2(\rr u) - \rr u) + p \cdot \rr u + 2pq + q(q-1).
\]
By the Adams conjecture (Fact~\ref{fact:AdamsConjecture1}),
if $\widetilde{J}(N) = 0$, then 
$2^e p \cdot \widetilde{J}(\rr u) = 0$ 
for a sufficiently large $e \in \N$. 
This means that $j_{\CP^p}$ must divide $2^e p$.
By Adams--Walker's computation (Fact~\ref{fact:OrderTaut}), this happens only when $p=3$. 

Let us thus assume $p=3$. 
Arguing as in the proof of Proposition~\ref{prop:J-GLR/GLR} and applying Lemma~\ref{lem:RestGrassC}~(4), we obtain 
\[
[ N ]_{|\CP^q} = \rr \left( \frac{(q+3)(q+2)}{2} - [\Lambda^2 (\Taut_\C(3,q)) ]_{|\CP^q} \right) = - 2\rr u' + (q+3)(q+2) - 6,
\]
where $u' = [ \Taut_{\CP^q} ] - 1 \in \KU^0(\CP^q)$. In particular, we have 
$\widetilde{J}(N)_{|\CP^q} = -2 \cdot \widetilde{J}(\rr u')$. 
Thus, if $\widetilde{J}(N) = 0$, then $j_{\CP^q}$ must divide $2$. Again by Fact~\ref{fact:OrderTaut}, we have $q=1$. 
\end{proof}

\begin{rmk}
For $q = 1$, we have $\Lambda^2 \Taut_\C(p,q) = 0$.
Hence, it follows from Remark~\ref{rmk:TrivialProjC} that the normal bundle $N$ is trivial for $(p,q) = (3,1)$. 
\end{rmk}

\subsubsection{Proof of Theorem~\ref{thm:NonSymmetric}~(5)}

As in Section~\ref{sss:SLR/SLR}, we may replace $\Sp(2p+2q, \R) / \Sp(2p, \R)$ with 
\[
G/H = \Sp(2p+2q, \R) / (\Sp(2p, \R) \times \U(q)) \qquad (p,q \geq 1).
\]
The compact symmetric space $X = K/K_H$ is then the complex Grassmannian 
\[
X = \U(p+q) / (\U(p) \times \U(q)) = \Grass_\C(p,q).
\]
One sees from Section~\ref{ss:Sp/SpSp} that the normal bundle $N$ comes from the representation of $\U(p) \times \U(q)$ on 
\[
\p \cap \q \simeq (\C^p_\std \otimes_\C \C^q_\std) \oplus \Sym^2 (\C^q_\std),
\]
hence 
\[
N = (\Taut^\perp_\C(p,q) \otimes_\C \Taut_\C(p,q)) \oplus \Sym^2 \Taut_\C(p,q). 
\]
Therefore Theorem~\ref{thm:NonSymmetric}~(5) is an immediate consequence of Corollary~\ref{cor:MainStable} and of the following.

\begin{prop}
Let $p,q \geq 1$. Then 
\[
\widetilde{J}(\Taut^\perp_\C(p,q) \otimes_\C \Taut_\C(p,q)) + \widetilde{J}(\Sym^2 \Taut_\C(p,q)) \neq 0
\]
except when $(p,q) = (1,1)$. 
\end{prop}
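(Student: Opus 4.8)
The plan is to follow the two-step pattern already used for Proposition~\ref{prop:J-GLC/GLC}, Proposition~\ref{prop:TautCTensorTautCPerp}, and Theorem~\ref{thm:NonSymmetric}~(4): first restrict $N$ to a projective subspace to pin down $p$, and then work over a second projective space to pin down $q$. Throughout, recall that $N$ is the realification of the complex bundle $(\Taut^\perp_\C(p,q)\otimes_\C\Taut_\C(p,q))\oplus\Sym^2\Taut_\C(p,q)$ over $\Grass_\C(p,q)$, and that $\widetilde J(N)=0$ as soon as $G/H$ admits compact quotients, by Corollary~\ref{cor:MainStable}.

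\textbf{Step 1.} I would restrict $N$ to $\CP^p\subset\Grass_\C(p,q)$ and add the two identities of Lemma~\ref{lem:RestGrassC}~(1) and~(3). The quadratic terms $-u^2$ and $+u^2$ cancel, so with $u=[\Taut_{\CP^p}]-1$ one gets
\[
[N]_{|\CP^p}=(p+1)\cdot\rr u+\bigl(2pq+q(q+1)\bigr)\in\KO^0(\CP^p),
\]
whence $\widetilde J(N)_{|\CP^p}=(p+1)\cdot\widetilde J(\rr u)$. Therefore $\widetilde J(N)=0$ forces $j_{\CP^p}$ to divide $p+1$, which by the Adams--Walker computation (Fact~\ref{fact:OrderTaut}) happens only for $p=1$, exactly as in the proof of Proposition~\ref{prop:J-GLC/GLC}.

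\textbf{Step 2.} Assume now $p=1$, so that $X=\Grass_\C(1,q)$, which I identify with $\CP^q$ by orthogonal complementation (cf.\ the proof of Lemma~\ref{lem:jEstimate}~(1)); under this identification $\Taut^\perp_\C(1,q)$ becomes the tautological line bundle $\Taut_{\CP^q}$, and I set $u'=[\Taut_{\CP^q}]-1$. The representation of $\U(1)\times\U(q)$ on $\Sym^2(\C^1_\std)\oplus(\p\cap\q)\simeq\Sym^2(\C^{1+q}_\std)$ extends to $\U(1+q)=K$, so Lemma~\ref{lem:HomogeneousBundleTrivial} shows that $(\Taut^\perp_\C(1,q))^{\otimes2}$ and the complex bundle $\Taut^\perp_\C(1,q)\otimes_\C\Taut_\C(1,q)\oplus\Sym^2\Taut_\C(1,q)$ (whose realification is $N$) together form a trivial bundle of complex rank $\binom{q+2}{2}$. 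Using $[(\Taut^\perp_\C(1,q))^{\otimes2}]=\psi^2([\Taut_{\CP^q}])=1+\psi^2(u')$ and the fact that $\rr$ commutes with $\psi^2$ (Fact~\ref{fact:AdamsProperties}~(2)), this yields
\[
[N]=(\text{constant})-\psi^2(\rr u')=(\text{constant})-\bigl(\psi^2(\rr u')-\rr u'\bigr)-\rr u'\in\KO^0(\CP^q),
\]
so that $\widetilde J(N)=-\widetilde J\bigl(\psi^2(\rr u')-\rr u'\bigr)-\widetilde J(\rr u')$. By the Adams conjecture (Fact~\ref{fact:AdamsConjecture1}) the order of $\widetilde J(\psi^2(\rr u')-\rr u')$ is a power of~$2$; hence $\widetilde J(N)=0$ forces $j_{\CP^q}$ to be a power of~$2$, which by Fact~\ref{fact:OrderTaut} happens only for $q=1$ (for $q\geq2$, $j_{\CP^q}$ is divisible by~$3$). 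Combining the two steps, $\widetilde J(N)=0$ implies $(p,q)=(1,1)$, which proves the proposition.

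\textbf{Expected obstacle.} The routine parts are the bundle identities over $\CP^p$ and $\CP^q$; the only delicate point is Step~2. For $p\geq2$ the class $[N]_{|\CP^p}$ is already a nonzero multiple of $\rr u$ modulo trivial bundles, so no operation-theoretic input is needed; but at $p=1$ one has $j_{\CP^1}=2=p+1$, so Step~1 is inconclusive, and one must descend to $\CP^q$ and isolate the $\psi^2$-part of $\widetilde J(N)$ by means of the Adams conjecture, just as in Proposition~\ref{prop:TautCTensorTautCPerp} and Theorem~\ref{thm:NonSymmetric}~(4). I would also record in a remark that for $(p,q)=(1,1)$ one has $\widetilde J(N)=-2\,\rr u'=0$ even though $N\simeq\underline{\C}\oplus\mathscr{O}(-2)$ is not trivial, which is consistent with the classical fact (Kobayashi \cite[Cor.\,4.4]{Kob89}) that $\Sp(4,\R)/(\Sp(2,\R)\times\Sp(2,\R))$ admits no compact quotients.
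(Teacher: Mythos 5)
Your two-step argument is correct and is essentially the paper's proof: restriction to $\CP^p$ with Lemma~\ref{lem:RestGrassC}~(1) and~(3) forces $p=1$, and then the extension of $\Sym^2(\C^1_\std)\oplus(\p\cap\q)$ to $K$ together with $\psi^2$ and the Adams conjecture forces $j_{\CP^q}$ to be a power of $2$, hence $q=1$; your direct use of $[(\Taut_{\CP^q})^{\otimes 2}]=1+\psi^2(u')$ is if anything slightly cleaner than the paper's expansion of $(1+u')^2$. Two slips in your closing remark, though: for $(p,q)=(1,1)$ the bundle $N\simeq\underline{\C}\oplus\mathscr{O}(-2)$ \emph{is} trivial as a real vector bundle (its $\mathscr{O}(-2)$ summand is $T\Sph^2$ up to real isomorphism, and $T\Sph^2\oplus\underline{\R}^{\oplus 2}$ is trivial), and the homogeneous space this proposition concerns at $(p,q)=(1,1)$ is $\Sp(4,\R)/\Sp(2,\R)$ (left open by Theorem~\ref{thm:NonSymmetric}~(5) and settled in \cite{KT24+}), not the symmetric space $\Sp(4,\R)/(\Sp(2,\R)\times\Sp(2,\R))$.
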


\begin{proof}
By Lemma~\ref{lem:RestGrassC}~(1) and (3), we have 
\[
[N]_{|\CP^p} = \rr \left( (p+1) u + pq + \frac{q(q+1)}{2} \right) = (p+1) \rr u + 2pq + q(q+1),
\]
where $u = [ \Taut_{\CP^p} ] - 1 \in \KU^0(\CP^p)$.
In particular, we have 
$\widetilde{J}(N)_{|\RP^p} = (p+1) \cdot \widetilde{J}(u)$. 
As we saw in the proof of Proposition~\ref{prop:J-GLC/GLC}, this vanishes only when $p = 1$.

Let us thus assume $p=1$ (hence $X = \CP^q$). 
Arguing as in the proof of Proposition~\ref{prop:J-GLR/GLR} and applying Lemma~\ref{lem:RestGrassC}~(3), we obtain 
\[
[N] = \rr \left( \frac{(q+1)(q+2)}{2} - [\Sym^2 (\Taut_{\CP^q}) ] \right) 
= - \rr ( {u'}^2) + 2\rr u' + (q+1)(q+2) + 1,
\]
where $u' = [ \Taut_{\CP^q} ] - 1 \in \KU^0(\CP^q)$.
As in the proof of Proposition~\ref{prop:TautCTensorTautCPerp}, this can be rewitten as 
\[
[N] = -( \psi^2(\rr u') - \rr u') - \rr u' + (q+1)(q+2).
\]
By the Adams conjecture (Fact~\ref{fact:AdamsConjecture1}), 
if $\widetilde{J}(N) = 0$, then 
$-2^e \cdot \widetilde{J}(u') = 0$ for a sufficiently large $e \in \N$, \ie $j_{\CP^q}$ must be a power of $2$. We conclude from Fact~\ref{fact:OrderTaut} that $q = 1$. 
\end{proof}

\begin{rmk}
The normal bundle $N$ is trivial for $(p,q) = (1,1)$. Indeed, recall from Remark~\ref{rmk:TrivialProjC} that 
\[
\Taut_{\CP^1}^\perp \otimes_\C \Taut_{\CP^1} \simeq \underline{\C}.
\]
On the other hand, 
\[
\Sym^2 \Taut_{\CP^1} = \mathscr{O}(-2)
\]
is the tangent bundle to $X = \CP^1 = \Sph^2$. Since $T\Sph^2 \oplus \underline{\R}$ is a trivial bundle, so is $N$. 
\end{rmk}

\subsection{Computations in quaternionic Grassmannians} \label{ss:GrassH}

The purpose of this Section~\ref{ss:GrassH} is to prove Theorem~\ref{thm:NonSymmetric}~(6)--(7). 
We set $v = [ \Taut_{\HP^p} ] - 1 \in \widetilde{\KSp}^0(\HP^p)$. 

We will use the following lemma: 

\begin{lem} \label{lem:RestGrassH}
The following identities hold in $\KO^0(\HP^p)$:
\begin{enumerate}[label = {\upshape (\arabic*)}]
\item $[ \Taut^\perp_\Ha(p,q) \otimes_\Ha \overline{\Taut_\Ha(p,q)} ]_{|\HP^p} = - v^2 + (p-q) \cdot \rr \cc' v + 4pq$. 
\item $[ \Herm (\Taut_\Ha(p,q)) ]_{|\HP^p} = (q-1) \cdot \rr \cc' v + q(2q-1)$. 
\item $[ \SkewHerm (\Taut_\Ha(p,q)) ]_{|\HP^p} = v^2 + (q+1) \cdot \rr \cc' v + q(2q+1)$.
\end{enumerate} 
\end{lem}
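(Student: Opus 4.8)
The plan is to restrict every bundle occurring in the statement to the quaternionic projective line $\HP^p = \Grass_\Ha(p,1) \subset \Grass_\Ha(p,q)$ and to compute inside the commutative ring $\KO^0(\HP^p) \oplus \KSp^0(\HP^p)$, exactly as in the proofs of Lemmas~\ref{lem:RestGrassR} and~\ref{lem:RestGrassC}. First I would record the two basic restriction formulas: from $\Taut_\Ha(p,q)_{|\HP^p} \simeq \Taut_{\HP^p} \oplus \underline{\Ha}^{\oplus(q-1)}$ and $\Taut_\Ha(p,q) \oplus \Taut^\perp_\Ha(p,q) \simeq \underline{\Ha}^{\oplus(p+q)}$ one obtains, in $\KSp^0(\HP^p)$,
\[
[\Taut_\Ha(p,q)]_{|\HP^p} = v + q, \qquad [\Taut^\perp_\Ha(p,q)]_{|\HP^p} = p - v,
\]
where, abusing notation as in those lemmas, $p$ and $q$ stand for $p[\underline{\Ha}]$ and $q[\underline{\Ha}]$. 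The one quaternion-specific computational input I would isolate is that tensoring a quaternionic bundle with the trivial quaternionic line bundle returns its underlying real bundle, i.e.\ $[\underline{\Ha}] \cdot z = \rr\cc' z$ for all $z \in \KSp^0(\HP^p)$; in particular $[\underline{\Ha}]^2 = \rr\cc'[\underline{\Ha}] = 4$ and $\rr\cc'[\Taut_{\HP^p}] = \rr\cc' v + 4$.

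Granting this, part (1) is a one-line expansion: $[\Taut^\perp_\Ha(p,q) \otimes_\Ha \overline{\Taut_\Ha(p,q)}]_{|\HP^p}$ equals the product $(p - v)(v + q)$ computed in $\KO^0(\HP^p) \oplus \KSp^0(\HP^p)$, and substituting $[\underline{\Ha}] \cdot v = \rr\cc' v$ and $[\underline{\Ha}]^2 = 4$ turns this into $-v^2 + (p - q)\,\rr\cc' v + 4pq$. For part (2) I would instead use the splitting $\Taut_\Ha(p,q)_{|\HP^p} \simeq \Taut_{\HP^p} \oplus \underline{\Ha}^{\oplus(q-1)}$ together with the standard decomposition, as real bundles, of the Hermitian-form bundle of a direct sum into the Hermitian-form bundles of the two summands plus a cross term. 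When one summand is the trivial bundle $\underline{\Ha}^{\oplus k}$ this cross term is $k$ copies of the underlying real bundle of the other summand, $\Herm(\underline{\Ha}^{\oplus k})$ is trivial of real rank $k(2k-1)$, and $\Herm(\Taut_{\HP^p}) \simeq \underline{\R}$ because the canonical quaternionic Hermitian metric of the tautological line bundle is a nowhere-vanishing section of it. Passing to $\KO^0$ and collecting the constant terms then gives $[\Herm(\Taut_\Ha(p,q))]_{|\HP^p} = (q-1)\,\rr\cc' v + q(2q-1)$.

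Finally, part (3) I would deduce from (1) and (2) rather than compute by the same method, so as to avoid having to identify the $\KO^0$-class of $\SkewHerm(\Taut_{\HP^p})$ (the adjoint bundle of the underlying $\Sp(1)$-structure). The point is that for any quaternionic vector bundle $E$ the real bundle of $\Ha$-sesquilinear forms on $E$ splits as $\Herm(E) \oplus \SkewHerm(E)$, and its class in $\KO^0$ is $[E]^2 = [E \otimes_\Ha \overline{E}]$. Taking $E = \Taut_\Ha(p,q)$ and restricting to $\HP^p$ gives $[\SkewHerm(\Taut_\Ha(p,q))]_{|\HP^p} = (v + q)^2 - [\Herm(\Taut_\Ha(p,q))]_{|\HP^p}$, and expanding $(v+q)^2 = v^2 + 2q\,\rr\cc' v + 4q^2$ and inserting the formula just obtained for $\Herm$ yields $v^2 + (q+1)\,\rr\cc' v + q(2q+1)$, as required.

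The only genuine obstacle is bookkeeping rather than ideas: one must keep straight the distinct roles of realification $\rr$, the forgetful map $\cc'$, and quaternionic conjugation $\overline{(-)}$, and remember that $\otimes_\Ha$ of quaternionic bundles of ranks $m$ and $n$ yields a \emph{real} bundle of rank $4mn$, so that the constant terms come out as $4pq$, $q(2q-1)$, and $q(2q+1)$ as claimed. Everything else runs parallel to the arguments in Lemmas~\ref{lem:RestGrassR} and~\ref{lem:RestGrassC}; the quaternion-specific facts used ($[\underline{\Ha}] \cdot z = \rr\cc' z$, the $\Herm/\SkewHerm$ splitting, and $\Herm(\Taut_{\HP^p}) \simeq \underline{\R}$) are standard and can be cited from \cite{All73}.
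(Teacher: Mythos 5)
Your proposal is correct and follows essentially the same route as the paper: restrict to $\HP^p$, use $[\Taut_\Ha(p,q)]_{|\HP^p}=v+q$ and $[\Taut^\perp_\Ha(p,q)]_{|\HP^p}=p-v$, and compute in the ring $\KO^0\oplus\KSp^0$ with $[\underline{\Ha}]\cdot z=\rr\cc' z$, exactly as in Lemmas~\ref{lem:RestGrassR} and~\ref{lem:RestGrassC}. The only cosmetic difference is that the paper proves (3) directly from the splitting $\SkewHerm(\Taut_{\HP^p})\oplus\underline{\R}\simeq\Taut_{\HP^p}\otimes_\Ha\overline{\Taut_{\HP^p}}$ applied to $\Taut_{\HP^p}\oplus\underline{\Ha}^{\oplus(q-1)}$, whereas you deduce (3) from (2) via $[\Herm(E)]+[\SkewHerm(E)]=[E\otimes_\Ha\overline{E}]$ — these are equivalent rearrangements of the same identity.
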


\begin{proof}
We only give a proof of (3), for (1)--(2) can be proved in a similar way as Lemmas~\ref{lem:RestGrassR} and \ref{lem:RestGrassC}.

(3) Since $\Taut_{\HP^p}$ is a quaternionic line bundle, we have 
\[
\SkewHerm(\Taut_{\HP^p}) \oplus \underline{\R} \simeq \Taut_{\HP^p} \otimes_\Ha \overline{\Taut_{\HP^p}}.
\]
We thus see that 
\begin{align*}
&\SkewHerm(\Taut_\Ha(p,q))_{|\HP^p} \oplus \underline{\R}
\simeq \SkewHerm(\Taut_{\HP^p} \oplus \underline{\Ha}^{\oplus (q-1)}) \oplus \underline{\R} \\
&\qquad\simeq \SkewHerm(\Taut_{\HP^p}) \oplus \rr \cc' (\Taut_{\HP^p} \otimes_\Ha \overline{\underline{\Ha}^{\oplus (q-1)}}) \oplus \SkewHerm(\underline{\Ha}^{\oplus (q-1)}) \oplus \underline{\R} \\
&\qquad\simeq (\Taut_{\HP^p} \otimes_\Ha \overline{\Taut_{\HP^p}}) \oplus (\rr\cc' \Taut_{\HP^p})^{\oplus (q-1)} \oplus \underline{\R}^{\oplus(2q-1)(q-1)}.
\end{align*}
Passing to $\KO^0$ yields the desired identity. 
\end{proof}

\subsubsection{Proof of Theorem~\ref{thm:NonSymmetric}~(6)} \label{sss:SpC/SpC}

As in Section~\ref{sss:SLR/SLR}, we may replace $\Sp(2p+2q, \C) / \Sp(2p, \C)$ with
\[
G/H = \Sp(2p+2q, \C) / (\Sp(2p, \C) \times \Sp(q)) \qquad (p,q \geq 1).
\]
The compact symmetric space $X = K/K_H$ is then the quaternionic Grassmannian 
\[
X = \Sp(p+q) / (\Sp(p) \times \Sp(q)) = \Grass_\Ha(p,q).
\]
The normal bundle $N$ comes from the representation of $\Sp(p) \times \Sp(q)$ on 
\[
\p \cap \q \simeq (\Ha^p_\std \otimes_\Ha \overline{\Ha^q_\std}) \oplus \SkewHerm (\Ha^q_\std),
\]
hence 
\[
N = (\Taut^\perp_\Ha(p,q) \otimes_\Ha \overline{\Taut_\Ha(p,q)}) \oplus \SkewHerm (\Taut_\Ha(p,q)). 
\]
Therefore Theorem~\ref{thm:NonSymmetric}~(6) is an immediate consequence of Corollary~\ref{cor:MainStable} and of the following.

\begin{prop}
Let $p,q \geq 1$. Then 
\[
\widetilde{J}(\Taut^\perp_\Ha(p,q) \otimes_\Ha \overline{\Taut_\Ha(p,q)}) + \widetilde{J}(\SkewHerm (\Taut_\Ha(p,q)) \neq 0.
\]
\end{prop}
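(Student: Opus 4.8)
The plan is to follow the scheme of the proof of Proposition~\ref{prop:J-GLC/GLC}. Write $N = (\Taut^\perp_\Ha(p,q) \otimes_\Ha \overline{\Taut_\Ha(p,q)}) \oplus \SkewHerm(\Taut_\Ha(p,q))$; since $\widetilde{J}$ is additive on direct sums, the asserted non-vanishing is exactly $\widetilde{J}(N) \neq 0$. I would prove this by restricting $N$ to the quaternionic projective subspace $\HP^p = \Grass_\Ha(p,1) \subset \Grass_\Ha(p,q)$ and reading off the answer from Lemma~\ref{lem:RestGrassH}.

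Set $v = [\Taut_{\HP^p}] - 1 \in \widetilde{\KSp}^0(\HP^p)$. Combining Lemma~\ref{lem:RestGrassH}~(1) and~(3), one gets in $\KO^0(\HP^p)$
\[
[N]_{|\HP^p} = \bigl(-v^2 + (p-q)\cdot \rr \cc' v + 4pq\bigr) + \bigl(v^2 + (q+1)\cdot \rr \cc' v + q(2q+1)\bigr) = (p+1)\cdot \rr \cc' v + c,
\]
with $c = 4pq + q(2q+1) \in \N$. The point of using this particular restriction — rather than comparing $\SkewHerm$ with $\Herm$, or bringing in Adams operations as in Proposition~\ref{prop:TautCTensorTautCPerp} and the proofs of Theorem~\ref{thm:NonSymmetric}~(4)--(5) — is that the two quadratic terms $\mp v^2$ cancel, leaving only a multiple of $\rr \cc' v$ plus a trivial summand.

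Passing to the reduced $J$-group: $c$ contributes a trivial bundle and $\rr \cc' v = [\rr \cc' \Taut_{\HP^p}] - 4$, so
\[
\widetilde{J}(N)_{|\HP^p} = (p+1)\cdot \widetilde{J}\bigl(\rr \cc' \Taut_{\HP^p}\bigr),
\]
and $\widetilde{J}(\rr \cc' \Taut_{\HP^p})$ has order exactly $j_{\HP^p} = j_\Ha(p,1)$ by the definition of $j_\Ha(p,q)$. Hence $\widetilde{J}(N)_{|\HP^p}$ vanishes only if $j_{\HP^p}$ divides $p+1$. But by Fact~\ref{fact:OrderTaut}, $j_{\HP^p}$ equals $j_{\CP^{2p}}$ or $j_{\CP^{2p}}/2$, and since $\xi_2(2p) \geq 2p + v_2(2p) \geq 2p+1$ the integer $j_{\CP^{2p}} = \prod_{\ell\text{ prime}\le 2p+1}\ell^{\xi_\ell(2p)}$ is divisible by $2^{2p+1}$; thus $2^{2p} \mid j_{\HP^p}$, so $j_{\HP^p} \geq 4^p > p+1$ for every $p \geq 1$. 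Therefore $j_{\HP^p}$ does not divide $p+1$ and $\widetilde{J}(N)_{|\HP^p} \neq 0$. Finally, restriction of vector bundles along $\HP^p \hookrightarrow \Grass_\Ha(p,q)$ induces a group homomorphism $\widetilde{J}(\Grass_\Ha(p,q)) \to \widetilde{J}(\HP^p)$ carrying $\widetilde{J}(N)$ to $\widetilde{J}(N)_{|\HP^p}$, so $\widetilde{J}(N) \neq 0$, as desired.

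There is essentially no conceptual obstacle here — in contrast with the symmetric-space cases treated earlier in this section, no Adams-conjecture input is needed, precisely because the $v^2$ terms cancel. The only points demanding a little care are checking that the $\HP^p$-restriction annihilates both quadratic terms (immediate from Lemma~\ref{lem:RestGrassH}), and confirming that $\widetilde{J}(\rr \cc' \Taut_{\HP^p})$ has order exactly $j_{\HP^p}$ and not a proper divisor (this is the definition of $j_\Ha(p,1)$), together with the crude bound $4^p > p+1$, which disposes of all $p \geq 1$ at once rather than leaving sporadic exceptions.
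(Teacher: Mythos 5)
Your proof is correct and follows essentially the same route as the paper: restrict to $\HP^p$, apply Lemma~\ref{lem:RestGrassH}~(1) and~(3) so that the $v^2$ terms cancel, and conclude that $\widetilde{J}(N)=0$ would force $j_{\HP^p}\mid p+1$, contradicting Fact~\ref{fact:OrderTaut}. The only difference is that you make the final divisibility check explicit via the bound $2^{2p}\mid j_{\HP^p}$, where the paper simply cites Sigrist--Suter's computation; both are fine.
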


\begin{proof}
By Lemma~\ref{lem:RestGrassH}~(1) and (3), we have 
\[
[N]_{|\HP^p} = (p+1) \cdot \rr \cc' v + 4pq + q(2q+1),
\]
in $\KO^0(\HP^p)$.
Thus, if $\widetilde{J}(N) = 0$, then $(p+1) \cdot \widetilde{J}(\rr \cc' v) = 0$. In other words, $j_{\HP^p}$ must divide $p+1$. This is impossible by Sigrist--Suter's computation (Fact~\ref{fact:OrderTaut}).
\end{proof}

\subsubsection{Proof of Theorem~\ref{thm:NonSymmetric}~(7)}

As in Section~\ref{sss:SLR/SLR}, we may replace $\SL(p+q, \Ha) / \SL(p, \Ha)$ with
\[
G/H = \GL(p+q, \Ha) / (\GL(p, \Ha) \times \Sp(q)) \qquad (p \geq 2,\, q \geq 1).
\]
The compact symmetric space $X = K/K_H$ is then the quaternionic Grassmannian 
\[
X = \Sp(p+q) / (\Sp(p) \times \Sp(q)) = \Grass_\Ha(p,q).
\]
The normal bundle $N$ comes from the representation of $\Sp(p) \times \Sp(q)$ on 
\[
\p \cap \q \simeq (\Ha^p_\std \otimes_\Ha \overline{\Ha^q_\std}) \oplus \Herm (\Ha^q_\std),
\]
hence 
\[
N = (\Taut^\perp_\Ha(p,q) \otimes_\Ha \overline{\Taut_\Ha(p,q)}) \oplus \Herm (\Taut_\Ha(p,q)). 
\]
Therefore Theorem~\ref{thm:NonSymmetric}~(7) is an immediate consequence of Corollary~\ref{cor:MainStable} and of the following.

\begin{prop}
Let $p \geq 2$ and $q \geq 1$. Then 
\[
\widetilde{J}(\Taut^\perp_\Ha(p,q) \otimes_\Ha \overline{\Taut_\Ha(p,q)}) + \widetilde{J}(\Herm (\Taut_\Ha(p,q)) \neq 0.
\]
\end{prop}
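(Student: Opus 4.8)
The plan is to imitate the proof of Proposition~\ref{prop:TautCTensorTautCPerp}. Restricting $N$ to the quaternionic projective space $\HP^p = \Grass_\Ha(p,1)$, embedded in $\Grass_\Ha(p,q)$ by adjoining a fixed $(q-1)$-dimensional $\Ha$-subspace (so that $\Taut_\Ha(p,q)$ restricts to $\Taut_{\HP^p} \oplus \underline{\Ha}^{\oplus(q-1)}$), and writing $v = [\Taut_{\HP^p}] - 1 \in \widetilde{\KSp}^0(\HP^p)$, Lemma~\ref{lem:RestGrassH}~(1) and~(2) give
\[
[N]_{|\HP^p} \;=\; -\,v^2 \;+\; (p-1)\cdot \rr \cc' v \;+\; \bigl(4pq + q(2q-1)\bigr)
\]
in $\KO^0(\HP^p)$. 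As in the complex case treated in Section~\ref{ss:GrassC}, the point is to replace the quadratic term $v^2$ by an Adams operation of the real class $\rr \cc' v$ and then invoke the Adams conjecture.

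The one ingredient not already in the text is the quaternionic analogue of Lemma~\ref{lem:SecondAdams}~(1), namely the identity
\[
2\,v^2 \;=\; \psi^2(\rr \cc' v) \;-\; 4\,\rr \cc' v \qquad \text{in } \KO^0(\HP^p).
\]
To prove it, set $L = \Taut_{\HP^p}$. Its underlying complex bundle $\cc'L$ is symplectic of rank~$2$, hence $\Lambda^2(\cc'L)$ is trivial; Newton's identity $\psi^2([E]) = [E]^2 - 2[\Lambda^2 E]$, the naturality of the Adams operations under $\rr$ and $\cc$ (Fact~\ref{fact:AdamsProperties}), the relation $\rr \cc x = 2x$, and the identification $\cc(L \otimes_\Ha \overline{L}) = \cc'L \otimes_\C \overline{\cc'L}$ together yield $\psi^2(\rr \cc' L) = 2\,[L \otimes_\Ha \overline{L}] - 4$; comparing with the expansion $v^2 = [L \otimes_\Ha \overline{L}] - 2\,\rr \cc'[L] + 4$ of the product in $\KO^0$ gives the displayed relation. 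This is a routine addition to Appendix~\ref{s:AppendixKTheory}, and I would carry it (and all subsequent formulas) multiplied by~$2$ to avoid dividing in $\KO^0(\HP^p)$. Getting this bookkeeping right is the only place where any care is needed; the rest of the argument is mechanical.

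Granting the identity, suppose $\widetilde{J}(N) = 0$. Then $\widetilde{J}([N]_{|\HP^p}) = 0$; since $\widetilde{J}$ annihilates trivial bundles, doubling and substituting gives
\[
0 \;=\; 2\,\widetilde{J}\bigl([N]_{|\HP^p}\bigr) \;=\; -\,\widetilde{J}\bigl(\psi^2(\rr \cc' v) - \rr \cc' v\bigr) \;+\; (2p+1)\,\widetilde{J}(\rr \cc' v),
\]
so $(2p+1)\,\widetilde{J}(\rr \cc' v) = \widetilde{J}\bigl(\psi^2(\rr \cc' v) - \rr \cc' v\bigr)$. By the Adams conjecture (Fact~\ref{fact:AdamsConjecture1}) the right-hand side has order a power of~$2$, so $2^e(2p+1)\cdot\widetilde{J}(\rr \cc' v) = 0$ for $e$ large; as $\widetilde{J}(\rr \cc' v) = \widetilde{J}(\Taut_{\HP^p})$ has order $j_{\HP^p}$, the odd part of $j_{\HP^p}$ must divide $2p+1$. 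But Sigrist--Suter's computation (Fact~\ref{fact:OrderTaut}) shows $j_{\HP^p}$ is divisible by $3^{\xi_3(2p)}$, and $\xi_3(2p) \ge p$ by~\eqref{eqn:xi_p} (take $r = p$); since $3^p > 2p+1$ for all $p \ge 2$, the odd part of $j_{\HP^p}$ strictly exceeds $2p+1$ and cannot divide it. (For $p = 1$ one gets $3 = 2p+1$, consistently with the hypothesis $p \ge 2$.) This contradiction shows $\widetilde{J}(N) \ne 0$, which together with Corollary~\ref{cor:MainStable} proves Theorem~\ref{thm:NonSymmetric}~(7).
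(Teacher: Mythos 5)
Your argument is correct and is essentially the paper's own proof: restrict to $\HP^p$, apply Lemma~\ref{lem:RestGrassH}~(1)--(2), double and replace $2v^2$ by $\psi^2(\rr\cc' v)-4\rr\cc' v$, invoke the Adams conjecture, and conclude from Sigrist--Suter that the odd part of $j_{\HP^p}$ cannot divide $2p+1$ for $p\geq 2$. The one identity you single out as new is in fact already Lemma~\ref{lem:SecondAdams}~(2) of the paper (proved exactly as you sketch), and your explicit $3$-adic estimate only fills in the numerical check the paper leaves to the reader.
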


\begin{proof}
By Lemma~\ref{lem:RestGrassH}~(1) and (2), we have 
\[
[N]_{|\HP^p} = -v^2 + (p-1) \cdot \rr \cc' v + 4pq + q(2q-1),
\]
in $\KO^0(\HP^p)$.
By Lemma~\ref{lem:SecondAdams}~(2), we have 
\[
2[N]_{|\HP^p} = -(\psi^2(\rr \cc' v) - \rr \cc' v) + (2p+1) \cdot \rr \cc' v + 8pq + 2q(2q-1).
\]
By the Adams conjecture (Fact~\ref{fact:AdamsConjecture1}), if $\widetilde{J}(N) = 0$, then $2^e (2p+1) \cdot \widetilde{J}(\rr \cc' v) = 0$ for a sufficiently large $e \in \N$. It means that $j_{\HP^p}$ must divide $2^e (2p+1)$. 
Sigrist--Suter's computation (Fact~\ref{fact:OrderTaut}) tells us that this cannot happen when $p \geq 2$. 
\end{proof}

\subsection{Computations in other compact symmetric spaces} \label{ss:OtherSymmetric}

The purpose of this Section~\ref{ss:OtherSymmetric} is to prove Theorem~\ref{thm:OtherSymmetric}~(4)--(6). 

\subsubsection{Proof of Theorem~\ref{thm:OtherSymmetric}~(4)}

We first remark that one may assume $p \geq 3$ in Theorem~\ref{thm:OtherSymmetric}~(4). 
Indeed, if $p=2$, then 
then $G$, $H$, and $H^a$ are locally isomorphic to 
$\OO(6, \C)$, $\OO(5, \C) \times \OO(1, \C)$, and $\OO(5,1)$, respectively, 
hence Theorem~\ref{thm:OtherSymmetric}~(1) (which is already proved in Section~\ref{sss:ComplexSpheres}) applies. 

We now fix $p\geq 3$ and let
\[
G/H = \SL(2p, \C) / \Sp(2p, \C). 
\]
The compact symmetric space $X = K/K_H$ is then 
\[
X_p = \SU(2p) / \Sp(p),
\]
and the normal bundle $N$ is the tangent bundle of $X$ by Lemma~\ref{lem:Normal=Tangent}.
Therefore Theorem~\ref{thm:OtherSymmetric}~(4) is an immediate consequence of Corollary~\ref{cor:MainStable}, Proposition~\ref{prop:AssociatedJTilde}, and the following.

\begin{prop}\label{prop:SU/SpTangent}
$\widetilde{J}(TX_p) \neq 0$ for $p \geq 3$. 
\end{prop}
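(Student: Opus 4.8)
The plan is to reduce to $p=3$ and then to detect $[TX_3]$ in $\widetilde J(X_3)$ by a $\KO$-theoretic computation, since all the usual characteristic classes will turn out to be useless here.

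\emph{Reduction to $p=3$.} The block inclusions $\SU(6)\hookrightarrow\SU(2p)$ and $\Sp(3)\hookrightarrow\Sp(p)$ give a totally geodesic embedding $X_3\hookrightarrow X_p$. Its normal bundle is associated to the orthogonal complement of $(\kk\cap\q)_3$ inside $(\kk\cap\q)_p$ as a representation of $\Sp(3)$; splitting $\C^{2p}|_{\Sp(3)}=\C^6_{\std}\oplus\underline{\C}^{\oplus(2p-6)}$ and arguing as in the proof of Lemma~\ref{lem:RestGrassR}, one finds that, complexified, this complement is a sum of copies of $\C^6_{\std}|_{\Sp(3)}$ (occurring in conjugate pairs, since $2p-6$ is even and $\overline{\C^6_{\std}}\cong\C^6_{\std}$) together with trivial summands. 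Hence its real form is a sum of copies of $\rr\,\C^6_{\std}$ and trivial summands, and since $\SU(6)\times_{\Sp(3)}\C^6_{\std}$ is trivial by Lemma~\ref{lem:HomogeneousBundleTrivial}, so is the whole normal bundle. Therefore $TX_p|_{X_3}\cong TX_3\oplus\underline{\R}^{\oplus k}$, and the restriction homomorphism $\widetilde J(X_p)\to\widetilde J(X_3)$ sends $\widetilde J(TX_p)$ to $\widetilde J(TX_3)$; so it suffices to prove $\widetilde J(TX_3)\neq 0$.

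\emph{The class $[TX_3]$ is $2$-torsion and invisible to characteristic classes.} By Lemma~\ref{lem:Normal=Tangent}, $TX_3\otimes_\R\C$ is associated to the summand of $\Lambda^2(\C^6)|_{\Sp(3)}$ complementary to the trivial one. As $\Lambda^2(\C^6)$ extends to $\SU(6)$ and $\SU(6)\times_{\Sp(3)}\C^6$ is trivial, Lemma~\ref{lem:HomogeneousBundleTrivial} shows $\cc[TX_3]$ is trivial in $\KU^0(X_3)$, so $2[TX_3]=\rr\cc[TX_3]$ is trivial and $[TX_3]$ has order dividing $2$ in $\widetilde{\KO}^0(X_3)$. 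In particular all Chern and Pontryagin classes of $TX_3$ vanish; and since $H^\ast(X_3;\Z/2)$ is the exterior algebra on generators $x_5,x_9$ (with $x_9=\operatorname{Sq}^4x_5$ pulled back from $H^\ast(\SU(6);\Z/2)$) and $\operatorname{Sq}^5x_9=\operatorname{Sq}^7\operatorname{Sq}^2x_5=0$ because $H^7(X_3;\Z/2)=0$, all Wu classes of $X_3$ vanish, hence so do all Stiefel--Whitney classes of $TX_3$. Thus the nontriviality of $S(TX_3)$ must be detected by a finer invariant.

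\emph{Detecting $[TX_3]$ and the main obstacle.} Since $H^\ast(X_3;\Z)$ is torsion-free and concentrated in degrees $0,5,9,14$, the Atiyah--Hirzebruch spectral sequence for $\widetilde{\KO}^0(X_3)$ has exactly one nonzero contribution, $H^9(X_3;\widetilde{\KO}^{-9}(\pt))=H^9(X_3;\Z/2)\cong\Z/2$, and collapses; so $\widetilde{\KO}^0(X_3)\cong\Z/2$, detected by restriction to the $9$-skeleton $X_3^{(9)}\simeq\Sph^5\cup_\alpha e^9$, on which $\widetilde{\KO}^0(X_3^{(9)})\cong\widetilde{\KO}^0(\Sph^9)\cong\Z/2$. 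The crux is then to show that $[TX_3]$ is the \emph{nonzero} element, i.e.\ that $X_3=\SU(6)/\Sp(3)$ is not stably parallelizable. I would do this through the Wood exact sequence: as $\cc[TX_3]=0$, we can write $[TX_3]=\eta\cdot z$ with $z\in\widetilde{\KO}^1(X_3)\cong\Z^2$ (Atiyah--Hirzebruch contributions $H^5(X_3;\KO^{-4}(\pt))$ and $H^9(X_3;\KO^{-8}(\pt))$), namely the class of $TX_3$ equipped with the above trivialization of its complexification; since multiplication by $\eta$ kills the degree-$5$ summand and carries a generator of $\KO^{-8}(\pt)=\Z$ onto a generator of $\KO^{-9}(\pt)=\Z/2$, one is reduced to computing that the degree-$9$ component of $z$ in $H^9(X_3;\Z)\cong\Z$ is odd. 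This component is a characteristic number extracted from the $\Sp(3)$-representation $\kk\cap\q$ (equivalently, using Proposition~\ref{prop:AssociatedJTilde}, from the adjoint representation underlying the normal bundle of $\SL(2p,\C)/\SL(p,\Ha)$), and this is the main computational obstacle of the proof. An alternative would be to invoke a classification of stably parallelizable compact homogeneous spaces.

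\emph{Conclusion.} Once $[TX_3]$ is known to generate $\widetilde{\KO}^0(X_3)\cong\Z/2$, we conclude $\widetilde J(TX_3)\neq 0$: by the Adams conjecture (Fact~\ref{fact:AdamsConjecture1}) the kernel of $\widetilde J\colon\widetilde{\KO}^0(X_3)\to\widetilde J(X_3)$ is generated by elements $k^{e}(\psi^k-1)(y)$, and on the $2$-torsion group $\widetilde{\KO}^0(X_3)$ these all vanish — for even $k$ because $2y=0$, and for odd $k$ because $\psi^k$ acts as the identity on the torsion of $\KO^\ast(\pt)$ while $\widetilde{\KO}^0(X_3)$ is pulled back from $\widetilde{\KO}^0(\Sph^9)=\KO^{-1}(\pt)$. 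Hence $\widetilde J$ is injective on $\widetilde{\KO}^0(X_3)$, so $\widetilde J(TX_3)\neq 0$, and by the reduction $\widetilde J(TX_p)\neq 0$ for every $p\geq 3$. This is Proposition~\ref{prop:SU/SpTangent}, from which Theorem~\ref{thm:OtherSymmetric}~(4) follows via Corollary~\ref{cor:MainStable} and Proposition~\ref{prop:AssociatedJTilde}.
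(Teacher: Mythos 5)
Your reduction to $p=3$ is the same as the paper's, and your structural analysis of $\widetilde{\KO}^0(X_3)$ is sound: $\cc[TX_3]$ is trivial (so the class is $2$-torsion and invisible to Chern, Pontryagin, and, as you check, Stiefel--Whitney classes), and $\widetilde{\KO}^0(X_3)\cong\Z/2$ is detected on $\Sph^9$. Your final paragraph, showing that $\widetilde J$ is injective on this $\Z/2$, is also correct and essentially reproves Lemma~\ref{lem:ConclusionOfAdams1} in this special case.

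However, there is a genuine gap: you never prove that $[TX_3]$ is the \emph{nonzero} element of $\widetilde{\KO}^0(X_3)\cong\Z/2$, i.e.\ that $X_3=\SU(6)/\Sp(3)$ is not stably parallelizable. You explicitly flag this as ``the main computational obstacle of the proof'' and sketch two possible routes (a Wood-sequence computation of a degree-$9$ characteristic number, or citing a classification of stably parallelizable homogeneous spaces) without carrying either out. Since this is precisely the content of the proposition --- everything else in your argument would be equally consistent with $\widetilde J(TX_3)=0$ --- the proof is incomplete as written. The paper closes this gap by an explicit geometric identification: the diffeomorphism $X_3\simeq\SU(5)/\Sp(2)$ exhibits $X_3$ as the total space of a bundle $\Sph^5\to X_3\xrightarrow{\pi}\Sph^9$, namely the sphere bundle of the spin vector bundle $\SU(5)\times_{\SU(4)}\R^6$ over $\Sph^9=\SU(5)/\SU(4)$; the Thom--Gysin sequence then shows $\pi^\ast\colon\KO^0(\Sph^9)\to\KO^0(X_3)$ is an isomorphism, and comparing the restrictions to $\Sp(2)$ of the spinor representation $\R^{16}_{\spin}$ of $\Spin(9)$ and of $\Herm_0(\Ha^3_{\std})$ gives $\pi^\ast\Sigma=TX_3\oplus\underline{\R}^{\oplus 2}$, where $\Sigma$ is the spinor bundle whose class generates $\widetilde{\KO}^0(\Sph^9)\cong\Z/2$. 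That computation (or an equivalent one) is what your proposal still needs.
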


\begin{proof}
We regard $X_3 = \SU(6) / \Sp(3)$ as a submanifold of $X_p = \SU(2p) / \Sp(p)$ in an obvious way.
The vector bundle $TX_p$ is induced from the representation of 
$\Sp(p)$ on 
\[
\Herm_0(p, \Ha) = \{ A \in \M(p, \Ha) \mid {^t \overline{A}} = A,\ \Tr A = 0 \}, 
\]
which we denote by $\Herm_0(\Ha^p_\std)$. 
Its restriction to $\Sp(3) \subset \Sp(p)$ is decomposed as 
\[
\Herm_0(\Ha^p_\std)_{|\Sp(3)} = 
\Herm_0(\Ha^3_\std) \oplus (\Ha^3_\std)^{\oplus (p-3)} \oplus (\R_\triv)^{\oplus (p-3)(2p-7)}, 
\]
where $\Ha^3_\std$ and $\R_\triv$ are the standard and the trivial representations of $\Sp(3)$, respectively. 
Since the representation $\Ha^3_\std$ is a restriction of the standard representation $\C^6_\std$ of $\SU(6)$ to $\Sp(3)$, it induces a trivial vector bundle on $X_3$ by Lemma~\ref{lem:HomogeneousBundleTrivial}. Hence, 
the restriction $(TX_p)_{|X_3}$ is the direct sum of $TX_3$ and a trivial vector bundle. 
Thus, it suffices to prove the proposition for $p=3$. 

The inclusion $\SU(5) \subset \SU(6)$ induces a diffeomorphism 
\[
\SU(5) / \Sp(2) \simeq \SU(6) / \Sp(3) = X_3. 
\]
Hence, we can form the following fiber bundle over $\Sph^9$: 
\[
\left( \Sph^5 \xrightarrow{\iota} X_3 \xrightarrow{\pi} \Sph^9 \right) = \left( \SU(4) / \Sp(2) \xrightarrow{\iota} \SU(5) / \Sp(2) \xrightarrow{\pi} \SU(5) / \SU(4) \right). 
\]
Here, we have used the diffeomorphism 
\[
\Sph^5 \simeq \Spin(6) / \Spin(5) \simeq \SU(4) / \Sp(2)
\]
induced from the accidental isomorphism $\SU(4) \simeq \Spin(6)$. 
It is a sphere bundle for the spin vector bundle 
\[
\SU(5) \times_{\SU(4)} \R^6 \to \SU(5) / \SU(4) = \Sph^9, 
\]
where $\SU(4)$ acts on $\R^6$ via the double covering 
\[
\SU(4) \simeq \Spin(6) \to \SO(6). 
\]
Hence, we obtain the Thom--Gysin long exact sequence (Section~\ref{ss:Thom}): 
\[
\cdots \to \KO^{-6}(\Sph^9) \to \KO^0(\Sph^9) \xrightarrow{\pi^\ast} \KO^0(X_3) \to \KO^{-5}(\Sph^9) \to \cdots.
\]
It follows from Fact~\ref{fact:Unbased} and Table~\ref{table:ABS} that: 

\begin{align*}
\KO^{-6}(\Sph^9) &\simeq \KO^{-6}(\pt) \oplus \widetilde{\KO}^{-6}(\Sph^9) = \widetilde{\KO}^0(\Sph^6) \oplus \widetilde{\KO}^0(\Sph^{15}) \simeq 0, \\
\KO^{-5}(\Sph^9) &\simeq \KO^{-5}(\pt) \oplus \widetilde{\KO}^{-5}(\Sph^9) = \widetilde{\KO}^0(\Sph^5) \oplus \widetilde{\KO}^0(\Sph^{14}) \simeq 0.
\end{align*}
Thus, the map
$\pi^\ast \colon \KO^0(\Sph^9) \to \KO^0(X_3)$
is an isomorphism. By Lemma~\ref{lem:ConclusionOfAdams1}, 
$\pi^\ast \colon \widetilde{J}(\Sph^9) \to \widetilde{J}(X_3)$
is also an isomorphism.

The inclusion $\SU(5) \subset \Spin(10)$ induces a diffeomorphism 
\[
\Sph^9 \simeq \SU(5) / \SU(4) \simeq \Spin(10) / \Spin(9).
\]
Let $\Sigma$ be the spinor bundle on $\Sph^9$, namely, 
\[
\Sigma = \Spin(10) \times_{\Spin(9)} \R^{16}_\spin,
\]
where $\R^{16}_\spin = \Delta_9^0$ is the spinor representation of $\Spin(9)$ (Definition~\ref{defn:SpinorRepresentation2}). 
Observe that
\begin{align*}
(\R^{16}_\spin)_{|\Sp(2)} &= \Herm_0(\Ha^2_\std) \oplus \Ha^2_\std \oplus (\R_\triv)^{\oplus 3}, \\
\Herm_0(\Ha^3_\std)_{|\Sp(2)} &= \Herm_0(\Ha^2_\std) \oplus \Ha^2_\std \oplus \R_\triv.
\end{align*}
This implies that 
$\pi^\ast \Sigma = TX_3 \oplus \underline{\R}^{\oplus 2}$,
hence
$\widetilde{J}(TX_3) = \pi^\ast \widetilde{J}(\Sigma)$. 
Now, by Table~\ref{table:ABS} and Fact~\ref{fact:JTildeSphere}, $\widetilde{J}(\Sigma)$ is a nonzero element in $\widetilde{J}(\Sph^9) \simeq \Z/2$. 
Recall that $\pi^\ast \colon \widetilde{J}(\Sph^9) \to \widetilde{J}(X_3)$ is an isomorphism. Thus, we have $\pi^\ast \widetilde{J}(\Sigma) \neq 0$. 
\end{proof}

\subsubsection{Proof of Theorem~\ref{thm:OtherSymmetric}~(5)}

For $n \geq 2$, we set
\[
X_n = \SO(2n) / \U(n) \qquad \text{and} \qquad E_n = \SO(2n) \times_{\U(n)} \C^n. 
\]
If $n \geq m \geq 2$, we regard $X_m$ as a submanifold of $X_n$ in an obvious way. 

Suppose that 
\[
G/H = \SO_0(2p,2q) / \U(p,q) \qquad (p \geq q \geq 2). 
\]
One sees from Section~\ref{ss:O/U} that 
the compact symmetric space $X = K/K_H$ is the product 
\[
X = (\SO(2p) \times \SO(2q)) / (\U(p) \times \U(q)) = 
X_p \times X_q,
\]
and that the normal bundle $N$ comes from the representation of $\U(p) \times \U(q)$ on 
\[
\p \cap \q = \C^p_\std \otimes_\C \overline{\C^q_\std}.
\]
Hence, $N = \rr (E_p \boxtimes_\C \overline{E_q})$ where $\boxtimes$ is the outer tensor product.  
Theorem~\ref{thm:OtherSymmetric}~(5) is an immediate consequence of Corollary~\ref{cor:MainStable} and of the following.

\begin{prop}
$\widetilde{J}(\rr (E_p \boxtimes_\C \overline{E_q})) \neq 0$ for $p \geq q \geq 2$. 
\end{prop}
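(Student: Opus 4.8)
The plan is to restrict $N$ to the submanifold $X_2\times X_2=\Sph^2\times\Sph^2$ of $X_p\times X_q$, which uses precisely the hypothesis $p\geq q\geq 2$, and to show $\widetilde J(N)\neq 0$ already after this restriction. Two elementary facts feed into the reduction. First, for $n\geq m\geq 2$ and $X_m\subset X_n$ the obvious (block) submanifold, the restriction of $\C^n_\std$ from $\U(n)$ to $\U(m)$ is $\C^m_\std\oplus\C^{\,n-m}_\triv$, so $E_n|_{X_m}\cong E_m\oplus\underline\C^{\,n-m}$. Second, $\rr\,\C^n_\std$ is the restriction to $\U(n)$ of the standard representation $\R^{2n}$ of $\SO(2n)$, so by Lemma~\ref{lem:HomogeneousBundleTrivial} the real bundle $\rr E_n$ over $X_n$ is trivial. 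Combining these, over $X_2\times X_2$ the bundle $E_p\boxtimes_\C\overline{E_q}$ is $E_2\boxtimes_\C\overline{E_2}$ plus summands that are pulled back from a single factor and tensored with a trivial bundle; each of the latter has trivial realification. Hence $\widetilde J(N)|_{X_2\times X_2}=\widetilde J\!\bigl(\rr(E_2\boxtimes_\C\overline{E_2})\bigr)$, and it suffices to prove this class is nonzero in $\widetilde J(\Sph^2\times\Sph^2)$.

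Next I would pin down $E_2$ as a topological bundle over $X_2\cong\CP^1$. Being complex of rank $2$, it is determined by $c_1(E_2)\in H^2(\CP^1)\cong\Z$. Since the isotropy representation of the Hermitian symmetric space $\SO(2n)/\U(n)$ gives $TX_n\cong\rr(\Lambda^2 E_n)$ (this is essentially the computation of $\kk\cap\q$ in Section~\ref{ss:O/U}), for $n=2$ we get $T\Sph^2\cong\rr(\det E_2)$, so $c_1(E_2)=c_1(\det E_2)$ is the Euler number $\pm 2$ of $\Sph^2$. Writing $u=[\Taut_{\CP^1}]-1\in\widetilde\KU^0(\CP^1)$, this says $[E_2]=2+2\varepsilon u$ in $\KU^0(\CP^1)$ for a sign $\varepsilon=\pm 1$, and hence $[\overline{E_2}]=2-2\varepsilon u$.

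Then comes the computation over $\CP^1\times\CP^1=\Sph^2\times\Sph^2$. Let $a,b\in\widetilde\KU^0$ be the pullbacks of $u$ along the two projections, so $a^2=b^2=0$; then $[E_2\boxtimes_\C\overline{E_2}]=(2+2\varepsilon a)(2-2\varepsilon b)=4+4\varepsilon(a-b)-4ab$, and
\[
\bigl[\rr(E_2\boxtimes_\C\overline{E_2})\bigr]=8+4\varepsilon\,\rr(a)-4\varepsilon\,\rr(b)-4\,\rr(ab).
\]
Since $\widetilde\KO^0(\Sph^2)\cong\Z/2$, the classes $\rr(a),\rr(b)$ are $2$-torsion, so applying $\widetilde J$ kills the middle two terms and yields $\widetilde J(\rr(E_2\boxtimes_\C\overline{E_2}))=-4\,\widetilde J(\rr(ab))$. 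Now $ab$ is the external product of the generators of $\widetilde\KU^0(\Sph^2)$ of the two factors, hence a generator of the summand $q^\ast\widetilde\KU^0(\Sph^4)\cong\Z$ of $\widetilde\KU^0(\Sph^2\times\Sph^2)$, where $q\colon\Sph^2\times\Sph^2\to\Sph^2\wedge\Sph^2=\Sph^4$ is the collapse map; because the attaching map of the top cell of $\Sph^2\times\Sph^2$ suspends to zero, $q^\ast$ is split injective on $\widetilde\KO^0$, on $\Sphst$, and therefore on $\widetilde J$. Using that $\rr\colon\widetilde\KU^0(\Sph^4)\to\widetilde\KO^0(\Sph^4)$ is an isomorphism, $\rr(ab)=\pm q^\ast\eta$ for a generator $\eta$ of $\widetilde\KO^0(\Sph^4)\cong\Z$; by naturality of $\widetilde J$ together with the classical facts that $\widetilde\KO^0(\Sph^4)$ surjects onto $\widetilde J(\Sph^4)\cong\Z/24$ (the image of $J\colon\pi_3(\SO)\to\pi_3^s$), the class $\widetilde J(\eta)$ generates $\Z/24$, so $\widetilde J(\rr(E_2\boxtimes_\C\overline{E_2}))=\mp 4\,q^\ast\widetilde J(\eta)$ has order $6$ in $\widetilde J(\Sph^2\times\Sph^2)$ and is in particular nonzero. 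This proves $\widetilde J(N)\neq 0$.

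The main obstacle is the last step. The algebra is routine, but one must be careful that the surviving multiple $-4\,\widetilde J(\rr(ab))$ is genuinely detected in $\widetilde J(\Sph^2\times\Sph^2)$ and is not annihilated in the passage from $\widetilde\KO^0$ to $\widetilde J$; this is exactly what forces the use of the stable splitting $\Sph^2\times\Sph^2\simeq\Sph^2\vee\Sph^2\vee\Sph^4$ (equivalently the injectivity of $q^\ast$), the behaviour of realification on $\Sph^4$, and the value $\widetilde J(\Sph^4)=\Z/24$, all of which are recalled in Appendix~\ref{s:AppendixKTheory}. By comparison, the isomorphism $E_n|_{X_m}\cong E_m\oplus\underline\C^{\,n-m}$, the triviality of $\rr E_n$, and the value $c_1(E_2)=\pm 2$ are straightforward.
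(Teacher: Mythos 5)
Your proof is correct, and its skeleton coincides with the paper's: restrict to $X_2 \times X_2 \simeq \Sph^2 \times \Sph^2$, compute $[E_2 \boxtimes_\C \overline{E_2}]$ in $\KU^0$, reduce to showing that $-4\,\widetilde{J}(\rr(\beta_1\beta_2)) \neq 0$, and detect this in the $\Sph^4$-summand using $\widetilde{J}(\Sph^4) \simeq \Z/24$. Where you genuinely diverge is in how three intermediate facts are justified. (a)~To identify $E_2$ over $X_2 \simeq \CP^1$, the paper uses the accidental isomorphism $\SO(4) \simeq (\SU(2) \times \SU(2))/\{\pm(I_2,I_2)\}$ to exhibit $E_2 \simeq \Taut_{\CP^1}^{\oplus 2}$ directly; you instead compute $c_1(E_2) = \pm 2$ from $TX_2 \simeq \rr(\det E_2)$ and invoke the classification of complex bundles over $\Sph^2$ by $c_1$. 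This is more elementary and the sign ambiguity $\varepsilon$ is harmless since only $\varepsilon^2$ survives. (b)~To see that $\rr(\beta_1\beta_2)$ generates the $\Z$-summand $q^\ast\widetilde{\KO}^0(\Sph^4)$, the paper runs the Atiyah--Bott--Shapiro map through $\Z/2$-graded spinor modules of $\Cl_2$ and $\Cl_4$ (Table~\ref{table:ABS}); you use Bott periodicity ($u \wedge u$ generates $\widetilde{\KU}^0(\Sph^4)$) together with the standard fact that $\rr \colon \widetilde{\KU}^0(\Sph^4) \to \widetilde{\KO}^0(\Sph^4)$ is an isomorphism (which follows from $\cc\rr = 1 + \overline{(-)}$ and $\overline{\beta^2} = \beta^2$, or can be read off Table~\ref{table:ABS}). (c)~For the injectivity of $q^\ast$ on $\widetilde{J}$, the paper invokes Lemma~\ref{lem:ConclusionOfAdams2}, a consequence of the Adams conjecture; you argue via the stable splitting $\Sph^2 \times \Sph^2 \simeq_s \Sph^2 \vee \Sph^2 \vee \Sph^4$ (the Whitehead product attaching the top cell is stably trivial), which splits both $\widetilde{\KO}^0$ and $\Sphst$ and hence the image $\widetilde{J}$. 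All three substitutions are valid; your route avoids both the accidental isomorphism and the Clifford-algebra machinery of Appendix~\ref{s:AppendixKTheory}, at the price of importing the stable splitting of $\Sph^2 \times \Sph^2$ and the behaviour of realification on $\Sph^4$, neither of which is stated in the paper but both of which are standard. Your preliminary reduction ($E_n|_{X_m} \simeq E_m \oplus \underline{\C}^{\oplus(n-m)}$ and triviality of $\rr E_n$ via Lemma~\ref{lem:HomogeneousBundleTrivial}) is just a bundle-theoretic rephrasing of the paper's representation-theoretic reduction.
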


\begin{proof}
The restriction of the representation $\C^p_\std \otimes_\C \overline{\C^q_\std}$ to $\U(2) \times \U(2)$ is a sum of $\C^2_\std \otimes_\C \overline{\C^2_\std}$ and a representation that extends to $\SO(4) \times \SO(4)$. 
Hence, as in the proof of Proposition~\ref{prop:SU/SpTangent}, 
we can reduce the problem to the case of $(p,q) = (2,2)$. 

The accidental isomorphism $\SO(4) \simeq (\SU(2) \times \SU(2)) / \{ \pm (I_2, I_2) \}$ enables us to consider the embedding 
\[
\SU(2) \hookrightarrow (\SU(2) \times \SU(2)) / \{ \pm (I_2, I_2) \} \simeq \SO(4), \qquad g \mapsto \br{g, I_2}. 
\]
It induces a diffeomorphism 
\[
\Sph^2 = \SU(2) / \operatorname{S}(\U(1) \times \U(1)) \simeq \SO(4) / \U(2) = X_2,
\]
under which the complex vector bundle 
$E_2$ corresponds to $(\Taut_{\CP^1})^{\oplus 2}$.

Let $\beta = [\Taut_{\CP^1}] - 1 \in \widetilde{\KU^0}(\Sph^2)$ be the complex Bott element (Definition~\ref{defn:BottElement}).
Let $\pr_1, \pr_2 \colon \Sph^2 \times\nolinebreak\Sph^2 \to \Sph^2$ be the projection onto the first and the second factors, respectively, 
and let $\beta_1 = \pr_1^\ast(\beta)$ and $\beta_2 = \pr_2^\ast(\beta)$.
We have 
\[
[ E_2 ] = 2 [ \Taut_{\CP^1} ] = 2 \beta + 2, \qquad 
[ \overline{E_2} ] = 2 (2 - [ \Taut_{\CP^1} ]) = -2\beta + 2
\]
in $\KU^0(\Sph^2)$. Hence, 
\[
\rr [ E_2 \boxtimes_\C \overline{E_2} ] - 8 = \rr ((2 \beta_1 + 2)(-2 \beta_2 + 2)) - 8 = -4 \cdot \rr (\beta_1\beta_2).
\]
Thus, our goal is to show that $-4 \cdot \widetilde{J}(\rr (\beta_1\beta_2)) \neq 0$ in $\widetilde{J}(\Sph^2 \times \Sph^2)$. 

Let $q \colon \Sph^2 \times \Sph^2 \to \Sph^2 \wedge \Sph^2 \ (= \Sph^4)$ be the quotient map. 
By Fact~\ref{fact:KTheoryProduct} and Table~\ref{table:ABS}, the maps $\pr_1, \pr_2$, and $q$ induce an isomorphism 
\[
\widetilde{\KO}^0(\Sph^2 \times \Sph^2) = \widetilde{\KO}^0(\Sph^2) \oplus \widetilde{\KO}^0(\Sph^2) \oplus \widetilde{\KO}^0(\Sph^4)  
\simeq \Z/2 \oplus \Z/2 \oplus \Z.
\]

By construction of the Atiyah--Bott--Shapiro map $\chi \colon \RU_\gr(\Cl_k) \to \widetilde{\KU}^0(\Sph^k)$ (Section~\ref{ss:ABS}), 
the following diagram commutes: 
\[
\begin{tikzcd}
\RU_\gr(\Cl_2) \times \RU_\gr(\Cl_2) \ar[d, "\wedge'"'] \ar[r, "\chi \times \chi"] & \widetilde{\KU}^0(\Sph^2) \times \widetilde{\KU}^0(\Sph^2) \ar[d, "\wedge"] \\
\RU_\gr(\Cl_4) \ar[r, "\chi"] & \widetilde{\KU}^0(\Sph^4),
\end{tikzcd}
\]
where the left map $\wedge'$ is defined as $[M] \wedge' [N] = [M \boxtimes_\C N]$.

Let $\Delta_{2,+}$ be the positive $\Z/2$-graded spinor representation of $\Cl_2$ (Definition~\ref{defn:SpinorRepresentation1}). 
The even part of $\Delta_{2,+} \boxtimes_\C \Delta_{2,+}$ is a nontrivial complex $4$-dimensional representation of $\Cl_4^0 \simeq \M(2, \Ha)^2$ on which $\gamma = c(e_1)c(e_2)c(e_3)c(e_4)$ acts by $i^2 = -1$. Hence, by Remark~\ref{rmk:SpinorRepresentationSign}, it must be isomorphic to the even part of $\cc' \Delta_{4,+}$, where
$\Delta_{4,+}$ is the positive $\Z/2$-graded spinor representation of $\Cl_4$. By Remark~\ref{rmk:CliffordZ/2Grading}~(1) and the above commutative diagram, we have 
\[
q^\ast \chi(\cc' \Delta_{4,+}) = q^\ast \chi(\Delta_{2,+} \boxtimes_\C \Delta_{2,+}) = q^\ast(\chi(\Delta_{2,+}) \wedge \chi(\Delta_{2,+})) = \beta_1\beta_2.
\]
Since $\chi(\rr \cc' \Delta_{4,+})$ is a generator of $\widetilde{\KO}^0(\Sph^4)$, under the above isomorphism, $\rr (\beta_1\beta_2) \in \widetilde{\KO}^0(\Sph^2 \times \Sph^2)$ corresponds to 
$(0,0,1) \in \Z/2 \oplus \Z/2 \oplus \Z$. 
Finally, recall from Fact~\ref{fact:JTildeSphere} that $\widetilde{J}(\Sph^4)$, which is a quotient of $\widetilde{\KO}_0(\Sph^4)$, is isomorphic to $\Z/24$. Hence $\widetilde{J}(\rr (\beta_1\beta_2))$ must be a generator of $\widetilde J(\Sph^4)$ and we conclude from Lemma~\ref{lem:ConclusionOfAdams2} that $-4 \cdot \widetilde{J}(\rr (\beta_1\beta_2)) \neq 0$. 
\end{proof}

\subsubsection{Proof of Theorem~\ref{thm:OtherSymmetric}~(6)}

For $n \geq 2$, we set 
\[
X_n = \SU(2n) / \Sp(n) \qquad \text{and} \qquad E_n = \SU(2n) \times_{\Sp(n)} \Ha^n_\std.
\]
If $n \geq m \geq 2$, we regard $X_m$ as a submanifold of $X_n$ in an obvious way. 

Suppose that 
\[
G/H = \SU(2p,2q) / \Sp(p,q) \qquad (p \geq q \geq 2). 
\]
One sees from Section~\ref{ss:U/Sp} that 
the compact symmetric space $X = K/K_H$ is given by 
\[
X = \operatorname{S}(\U(2p) \times \U(2q)) / (\Sp(p) \times \Sp(q)),
\]
which contains 
\[
X_p \times X_q = (\SU(2p) \times \SU(2q)) / (\Sp(p) \times \Sp(q))
\]
as a submanifold, 
and that the normal bundle $N$ comes from the representation of $\Sp(p) \times \Sp(q)$ on 
\[
\p \cap \q = \Ha^p_\std \otimes_\Ha \overline{\Ha^q_\std}.
\]
Hence,
\[
N_{|X_p \times X_q} = E_p \boxtimes_\Ha \overline{E_q}
\]
where $\boxtimes$ is the outer tensor product. 
Theorem~\ref{thm:OtherSymmetric}~(6) is an immediate consequence of Corollary~\ref{cor:MainStable} and of the following.

\begin{lem}
$\widetilde{J}(E_p \boxtimes_\Ha \overline{E_q}) \neq 0$ for $p \geq q \geq 2$.
\end{lem}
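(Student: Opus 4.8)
The plan is to reduce to the case $(p,q)=(2,2)$ and then, using the accidental isomorphisms $\SU(4)\simeq\Spin(6)$ and $\Sp(2)\simeq\Spin(5)$, to identify $E_2$ with a spinor bundle over $\Sph^5$ and run the argument in the proof of Theorem~\ref{thm:OtherSymmetric}~(5), now in its quaternionic incarnation.

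First I would carry out the reduction. Restricting $\Ha^n_\std$ from $\Sp(n)$ to the block subgroup $\Sp(2)$ decomposes it as $\Ha^2_\std\oplus\underline{\Ha}^{\oplus(n-2)}$ with $\Sp(2)$ acting trivially on the second summand; since an associated bundle built from a trivial representation is trivial, this gives $E_n|_{X_2}\cong E_2\oplus\underline{\Ha}^{\oplus(n-2)}$ over $X_2=\SU(4)/\Sp(2)\subset X_n$. Expanding the pairing $\boxtimes_\Ha$ over $X_2\times X_2\subset X_p\times X_q$, every term other than $E_2\boxtimes_\Ha\overline{E_2}$ is (up to trivial factors) of the form $E_2\boxtimes_\Ha\overline{\underline{\Ha}}\cong\rr\cc' E_2$; and $\rr\cc' E_2$ is trivial because $\cc' E_2=\SU(4)\times_{\Sp(2)}\C^4_\std$ comes from a representation of $\SU(4)$, so Lemma~\ref{lem:HomogeneousBundleTrivial} applies. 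Hence $(E_p\boxtimes_\Ha\overline{E_q})|_{X_2\times X_2}\cong(E_2\boxtimes_\Ha\overline{E_2})\oplus\underline{\R}^{\oplus m}$ for some $m$, and since restriction to $X_2\times X_2$ induces a homomorphism $\widetilde J(X_p\times X_q)\to\widetilde J(X_2\times X_2)$ killing trivial summands, it suffices to prove $\widetilde J(E_2\boxtimes_\Ha\overline{E_2})\neq 0$ in $\widetilde J(\Sph^5\times\Sph^5)$.

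For the core case, I would proceed as in the proof of Theorem~\ref{thm:OtherSymmetric}~(5). Under $\SU(4)\simeq\Spin(6)$ and $\Sp(2)\simeq\Spin(5)$ (as already used for Proposition~\ref{prop:SU/SpTangent}), $X_2\simeq\Sph^5=\Spin(6)/\Spin(5)$ and $E_2$ becomes the spinor bundle $\Spin(6)\times_{\Spin(5)}\Delta_5$, where $\Delta_5$ is the spinor representation of $\Spin(5)\simeq\Sp(2)$, quaternionic of quaternionic dimension~$2$. The long exact sequence of $\Sp(2)\to\SU(4)\to\Sph^5$ together with $\pi_4(\SU(4))=0$ shows that the principal bundle $\SU(4)\to\Sph^5$ is classified by the generator of $\pi_4(\Sp(2))\cong\Z/2$; since stabilization $\pi_4(\Sp(2))\to\pi_4(\Sp)$ is an isomorphism, $e:=[E_2]-2$ is the generator of $\widetilde{\KSp}^0(\Sph^5)\cong\Z/2$. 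By Fact~\ref{fact:KTheoryProduct} and Table~\ref{table:ABS}, $\widetilde{\KO}^0(\Sph^5\times\Sph^5)\cong\widetilde{\KO}^0(\Sph^{10})\cong\Z/2$, detected by the collapse map $q\colon\Sph^5\times\Sph^5\to\Sph^5\wedge\Sph^5=\Sph^{10}$. Expanding $[E_2]=2+e$ in both slots of the bilinear pairing $([E],[F])\mapsto[E\otimes_\Ha\overline F]$: the cross terms land in $\widetilde{\KO}^0(\Sph^5)=0$ and $2e=0$, so the class of $E_2\boxtimes_\Ha\overline{E_2}$ in $\widetilde{\KO}^0(\Sph^5\times\Sph^5)$ equals $q^\ast(e\,{\cdot}\,e)$, the pullback of the value $e\,{\cdot}\,e\in\widetilde{\KO}^0(\Sph^{10})$ of the outer pairing $\widetilde{\KSp}^0(\Sph^5)\otimes\widetilde{\KSp}^0(\Sph^5)\to\widetilde{\KO}^0(\Sph^5\wedge\Sph^5)$ on $(e,e)$. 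Identifying $e$ with the Atiyah--Bott--Shapiro class of the graded spinor module of $\Cl_5$ and using the multiplicativity of the map $\chi$ with respect to $\boxtimes_\Ha$ — the commutative square with $\wedge'$ from the proof of Theorem~\ref{thm:OtherSymmetric}~(5), in the quaternionic range — together with the description of the spinor module of $\Cl_{10}$ coming from $\Cl_5\otimes\Cl_5$ and the grading-operator sign computation (Remark~\ref{rmk:SpinorRepresentationSign}), or equivalently using that the relevant class corresponds to $\eta\in KO_\ast(\mathrm{pt})$ and $\eta^2\neq 0$, one gets that $e\,{\cdot}\,e$ is the generator of $\widetilde{\KO}^0(\Sph^{10})\cong\Z/2$. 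Finally, $\widetilde J(\Sph^{10})\cong\Z/2$ and $J$ is onto it from $\widetilde{\KO}^0(\Sph^{10})$ by Fact~\ref{fact:JTildeSphere}, so, since the stable splitting of $\Sph^5\times\Sph^5$ makes $q^\ast$ injective on $\widetilde J$, we conclude $\widetilde J(E_2\boxtimes_\Ha\overline{E_2})=q^\ast\widetilde J(e\,{\cdot}\,e)\neq 0$.

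The hard part will be the last identification, pinning down $e\,{\cdot}\,e$ as the generator of $\widetilde{\KO}^0(\Sph^{10})\cong\Z/2$: this requires the same $\Z/2$-graded Clifford-module bookkeeping as in Theorem~\ref{thm:OtherSymmetric}~(5) — the correct matching of quaternionic structures and of the grading-operator sign under $\Cl_5\otimes\Cl_5\cong\Cl_{10}$ — but with the extra care that here $\widetilde{\KSp}^0(\Sph^5)$, not $\widetilde{\KO}^0(\Sph^5)$, is the relevant group, so the Bott shift by $4$ must be tracked throughout. A minor but necessary point is checking that the cross terms arising in the reduction to $X_2\times X_2$ are genuinely trivial as vector bundles — handled by the explicit use of Lemma~\ref{lem:HomogeneousBundleTrivial} — and not merely stably trivial.
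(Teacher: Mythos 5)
Your proposal is correct and follows essentially the same route as the paper's proof: reduce to $(p,q)=(2,2)$ via Lemma~\ref{lem:HomogeneousBundleTrivial}, identify $X_2\simeq\Sph^5$ and $E_2$ with the spinor bundle via $\SU(4)\simeq\Spin(6)$, reduce the product class to the generator of $\widetilde{\KO}^0(\Sph^{10})\simeq\Z/2$ through the Atiyah--Bott--Shapiro multiplicativity for $\Cl_5\otimes\Cl_5\simeq\Cl_{10}$, and conclude with Fact~\ref{fact:JTildeSphere} and Lemma~\ref{lem:ConclusionOfAdams2}. Your extra $\pi_4$-detour to see that $[E_2]-2$ generates $\widetilde{\KSp}^0(\Sph^5)$ is a valid but unnecessary substitute for the direct identification $[E_2]-2=\chi(\Delta_5)$ via Fact~\ref{fact:ABSAlternative}.
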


\begin{proof}
The restriction of the representation 
$\Ha^p_\std \otimes_\Ha \overline{\Ha^q_\std}$ to $\Sp(2) \times \Sp(2)$ is a sum of $\Ha^2_\std \otimes_\Ha \overline{\Ha^2_\std}$ and a representation that extends to $\SU(4) \times \SU(4)$. 
Hence, as in the proof of Proposition~\ref{prop:SU/SpTangent}, we can reduce the problem to the case of $(p,q) = (2,2)$. 

The accidental isomorphism $\SU(4) \simeq \Spin(6)$
induces a diffeomorphism 
\[
\Sph^5 = \Spin(6) / \Spin(5) \simeq \SU(4) / \Sp(2) = X_2. 
\]
Under the isomorphism $\Sp(2) \simeq \Spin(5)$, the standard representation $\Ha^2_\std$ corresponds to the spinor representation $\Delta_5^0$ of $\Spin(5)$ (Definition~\ref{defn:SpinorRepresentation2}). 

Let $\pr_1, \pr_2 \colon \Sph^5 \times \Sph^5 \to \Sph^5$ be the projection onto the first and the second factors, respectively, and let $\chi_1 = \pr_1^\ast(\chi(\Delta_5))$ and $\chi_2 = \pr_2^\ast(\chi(\Delta_5))$, where $\chi \colon \RSp_\gr(\Cl_k) \to \widetilde{\KSp}^0(\Sph^k)$ is the Atiyah--Bott--Shapiro map (Section~\ref{ss:ABS}). An easy computation shows that 
\[
[E_2 \boxtimes_\Ha \overline{E_2}] - 16 = \chi_1 \chi_2.
\]
Thus, our goal is to show that $\widetilde{J}(\chi_1\chi_2) \neq 0$ in $\widetilde{J}(\Sph^5 \times \Sph^5)$. 

Let $q \colon \Sph^5 \times \Sph^5 \to \Sph^5 \wedge \Sph^5 (= \Sph^{10})$ be the quotient map.
By Fact~\ref{fact:KTheoryProduct} and Table~\ref{table:ABS}, the maps $\pr_1, \pr_2$, and $q$ induce an isomorphism  
\[
\widetilde{\KO}^0(\Sph^5 \times \Sph^5) \simeq \widetilde{\KO}^0(\Sph^5) \oplus \widetilde{\KO}^0(\Sph^5) \oplus \widetilde{\KO}^0(\Sph^{10}) \simeq 0 \oplus 0 \oplus \Z/2.
\]

By construction of the map $\chi$, the following diagram commutes: 
\[
\begin{tikzcd}
\RSp_\gr(\Cl_5) \times \RSp_\gr(\Cl_5) \ar[d, "\wedge'"'] \ar[r, "\chi \times \chi"] & \widetilde{\KO}^0(\Sph^5) \times \widetilde{\KO}^0(\Sph^5) \ar[d, "\wedge"] \\
\RO_\gr(\Cl_{10}) \ar[r, "\chi"] & \widetilde{\KO}^0(\Sph^{10}),
\end{tikzcd}
\]
where the left map $\wedge'$ is given by $[M] \wedge' [N] = [M \boxtimes_\Ha \overline{N}]$. 

The even part of $\Delta_5 \boxtimes_\Ha \overline{\Delta_5}$ is a nontrivial real $32$-dimensional representation of $\Cl^0_{10} \simeq \M(16, \C)$, hence it must be isomorphic to the even part of $\rr \Delta_{10,+}$.
By Remark~\ref{rmk:CliffordZ/2Grading}~(1) and the above commutative diagram, we have 
\[
q^\ast\chi(\rr \Delta_{10,+}) = 
q^\ast \chi(\Delta_5 \boxtimes_\Ha \overline{\Delta_5}) = 
q^\ast (\chi(\Delta_5) \wedge \chi(\Delta_5)) = \chi_1 \chi_2, 
\]
and therefore, 
\[
q^\ast \widetilde{J}(\chi(\rr \Delta_{10,+})) = \widetilde{J}(\chi_1 \chi_2).
\]
By Table~\ref{table:ABS} and Fact~\ref{fact:JTildeSphere}, $\widetilde{J}(\chi(\rr \Delta_{10,+}))$ is a nonzero element of $\widetilde{J}(\Sph^{10}) \simeq \Z/2$, whereas $q^\ast \colon \widetilde{J}(\Sph^{10}) \to \widetilde{J}(\Sph^5 \times \Sph^5)$ is an isomorphism by Lemma~\ref{lem:ConclusionOfAdams2}. We have thus proved that $\widetilde{J}(\chi_1 \chi_2) \neq 0$. 
\end{proof}

\section{Local geometric fibrations} \label{s:GeometricFibrations}

Recall from Section~\ref{ss:MotivationGeometricFibration} that the Geometric fibration conjecture, formulated in \cite{Tho15+}, predicts that compact quotients of a reductive homogeneous space $G/H$ virtually fiber over a base manifold~$M$, the fibers of which are translates of the maximal compact subspace $X = K/K_H$.
In this last section of the paper, we discuss links between the Geometric fibration conjecture, our main Theorem~\ref{thm:MainTheorem}, and Kobayashi--Yoshino's study of tangential homogeneous spaces \cite{KY05,KY}.

\subsection{Local geometric fibrations and compact quotients of tangential homogeneous spaces}

Let $G/H$ be a reductive homogeneous space $G/H$, 
and let $X=K/K_H$ and the decompositions 
$\g = \kk \oplus \p$ and $\h = (\kk \cap \h) \oplus (\p \cap \h)$ 
be as in Section~\ref{ss:Reductive}.
We shall use the following terminology.

\begin{defn} \label{defn:LocalGeomFib}
A \emph{local geometric fibration} of $G/H$ is a smooth fiber bundle 
$\pi \colon \Omega \to U$ 
whose total space $\Omega$ is an open domain of $G/H$ 
and whose fibers are $G$-translates of $X$ in $G/H$.
We call it a \emph{geometric fibration} of $G/H$ if $\Omega = G/H$.
\end{defn}

In \cite{KY05}, Kobayashi and Yoshino introduced the notion of the \emph{tangential homogeneous space} associated to the reductive homogeneous space $G/H$, 
and investigated the existence of its compact quotients.
Let us write $\p_H = \p \cap \h$, and denote by $\theta$ the Cartan involution of~$G$ fixing~$K$. 

\begin{defn}
The tangential homogeneous space associated to the reductive homogeneous space $G/H$ is the (non-reductive) homogeneous space $G_\theta/H_\theta = (K \ltimes \p) / (K_H \ltimes \p_H)$.
\end{defn}

Consider the two projections
\[
G/K \xleftarrow{\pi_K} G/K_H \xrightarrow{\pi_H} G/H.
\]
The following is a more complete version of Proposition~\ref{prop:LocalFibrTangential}.

\begin{prop} \label{prop:EquivalentConditionsLocalFibrations}
The following four conditions are equivalent:
\begin{enumerate}[label = {\upshape (\roman*)}]
  \item the tangential homogeneous space $G_\theta/H_\theta$ admits compact quotients,
  \item there exists a linear subspace $V$ of $\p$ such that $(\p\cap \h) \oplus \Ad_k(V) = \p$
for all $k \in K$,
  \item there exists a smooth submanifold $\widetilde{M}$ of $G/K$ passing through $o' = 1 \cdot K$ such that the map
  \begin{equation} \label{eqn:pi2}
  {\pi_H}_{|\pi_K^{-1}(\widetilde{M})} \colon \pi_K^{-1}(\widetilde{M}) \to G/H
  \end{equation}
  is a local diffeomorphism at each point of $\pi_K^{-1}(o') = K/K_H$,
  \item the reductive space $G/H$ admits local geometric fibrations.
\end{enumerate}
\end{prop}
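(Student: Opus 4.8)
The plan is to prove the cyclic chain of implications $(i) \Rightarrow (ii) \Rightarrow (iii) \Rightarrow (iv) \Rightarrow (i)$, each step being essentially a translation between the linear-algebraic, differential-geometric, and dynamical languages in which the four conditions are phrased.

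\medskip

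\textbf{$(i) \Rightarrow (ii)$.} Suppose $\Gamma_\theta \subset G_\theta = K \ltimes \p$ acts properly discontinuously and cocompactly on $G_\theta/H_\theta$. The key structural feature of the group $G_\theta$ is that $\p$ is a normal (abelian) subgroup and $K$ is compact; a discrete subgroup acting cocompactly and properly on $(K\ltimes\p)/(K_H\ltimes\p_H)$, which is (homeomorphic to) $K/K_H \times (\p/\p_H)$, must therefore project to a uniform lattice in the vector group $\p/\p_H$ (using that $K$ and $K/K_H$ are compact, and a Bieberbach-type argument to pass through the compact factor $K$). The translational part of $\Gamma_\theta$ spans, up to finite index, a lattice in $\p/\p_H \cong$ some vector space; a complementary subspace $V\subset \p$ to $\p\cap\h$ on which the action stays proper for \emph{all} rotational parts $k\in K$ forces exactly $(\p\cap\h)\oplus \Ad_k(V) = \p$ for all $k\in K$. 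Concretely, properness of the $\Gamma_\theta$-action is equivalent (Kobayashi's properness criterion in the linearized setting, or a direct computation) to the transversality condition in (ii). This is the step I expect to be the main obstacle: one has to be careful about the non-linear interaction between $K$ and $\p$, and about extracting a \emph{single} subspace $V$ that works uniformly over the compact group $K$ rather than a $k$-dependent family; the Kobayashi--Yoshino paper \cite{KY05} does this and I would follow their argument (their Theorem 1.1.6 and the surrounding discussion on tangential spaces).

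\medskip

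\textbf{$(ii) \Rightarrow (iii)$.} Given $V\subset\p$ with $(\p\cap\h)\oplus\Ad_k(V)=\p$ for all $k\in K$, set $\widetilde M = \exp(V)\cdot o' \subset G/K$, a smooth submanifold through $o'=1\cdot K$ whose tangent space at $o'$ is $V$ (under $T_{o'}(G/K)\cong\p$). Then $\pi_K^{-1}(\widetilde M)$ is a submanifold of $G/K_H$ containing $K/K_H = \pi_K^{-1}(o')$, and at a point of $K/K_H$, say $k\cdot\widetilde o$ with $\widetilde o = 1\cdot K_H$, the tangent space of $\pi_K^{-1}(\widetilde M)$ decomposes (after the identifications from Section~\ref{ss:Reductive}) into the vertical direction $\kk\cap\q$ of $\pi_H$ together with $\Ad_k(V)$. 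The differential of $\pi_H$ kills the vertical $K_H$-direction and maps the rest isomorphically onto $T_{k\cdot o}(G/H)\cong\Ad_k(\q)$; surjectivity (hence local diffeomorphism, by equality of dimensions) at $k\cdot\widetilde o$ reduces exactly to $(\p\cap\h)+\Ad_k(V) = \p$, which is hypothesis (ii). So this implication is a direct unwinding of the identifications already recorded in Section~\ref{ss:Reductive} together with Fact~\ref{fact:Kob89}.

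\medskip

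\textbf{$(iii) \Rightarrow (iv)$ and $(iv) \Rightarrow (i)$.} For $(iii)\Rightarrow(iv)$: since ${\pi_H}_{|\pi_K^{-1}(\widetilde M)}$ is a local diffeomorphism along the compact set $K/K_H$, it is a diffeomorphism from a $K$-invariant open neighbourhood $\mathcal U$ of $K/K_H$ in $\pi_K^{-1}(\widetilde M)$ onto an open neighbourhood $\Omega$ of $X$ in $G/H$; translating by $G$ and taking the union $\Omega' = \bigcup_{g} g\cdot(\text{a small }G\text{-translate})$, one builds on $\Omega'$ the map sending a point to the unique nearby translate $g\cdot X$ it lies on — more precisely, the fibration $\pi\colon\Omega'\to U$ whose fibers are the translates $g\cdot X$ for $g$ ranging over $\pi_K(\mathcal U)$-translates; local triviality follows from the local-diffeomorphism statement. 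This is the local geometric fibration. For $(iv)\Rightarrow(i)$: a local geometric fibration $\pi\colon\Omega\to U$ of $G/H$, linearized at a fiber $X = K/K_H$ using Fact~\ref{fact:Kob89} (which identifies a neighbourhood of $X$ in $G/H$ with a neighbourhood of the zero section in $N = K\times_{K_H}(\p\cap\q)$), produces a $K_H$-equivariant transversal, hence a subspace $V\subset\p$ as in (ii); then the standard construction gives a proper cocompact action on $G_\theta/H_\theta$ (e.g.\ by a lattice $\Gamma_\theta = F \ltimes \Lambda$ with $F\subset K$ finite and $\Lambda$ a lattice in $V$), which is condition (i). Both of these last two implications are ``soft'' — the content is bookkeeping with the identifications and with the homotopy/contractibility facts (Lemma~\ref{lem:EquivariantSection}-style arguments) already in the paper — so the genuine difficulty of the whole proposition is concentrated in $(i)\Rightarrow(ii)$, i.e.\ in correctly linearizing the properness of the $\Gamma_\theta$-action on the tangential space and extracting a $K$-uniform complementary subspace.
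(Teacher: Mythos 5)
Your proposal follows essentially the same route as the paper: the hard equivalence (i)$\Leftrightarrow$(ii) is delegated to Kobayashi--Yoshino \cite[\S\,5.3]{KY05} (exactly as the paper does), (ii)$\Leftrightarrow$(iii) is the same unwinding of the tangent-space identifications $T_{o'}(G/K)\simeq\p$, $T_{\widetilde o}(G/K_H)\simeq(\kk\cap\q)\oplus\p$ together with $K$-equivariance, and (iii)$\Leftrightarrow$(iv) is the same compactness/open-embedding argument; only the logical packaging (a cyclic chain rather than pairwise equivalences) differs. One caveat in your heuristic for (i)$\Rightarrow$(ii): the tangential homogeneous space $(K\ltimes\p)/(K_H\ltimes\p_H)$ is \emph{not} in general homeomorphic to the product $K/K_H\times(\p/\p_H)$ — it is the total space of the associated bundle $K\times_{K_H}(\p/\p_H)\simeq K\times_{K_H}(\p\cap\q)=N$, which is a product only when $N$ is trivial — so the Bieberbach-style reduction as you state it does not quite make sense; since you (like the paper) ultimately invoke \cite{KY05} for this step, this does not affect the overall correctness, but the sketch itself should not be taken as a proof of that implication.
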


The equivalence (i)~$\Leftrightarrow$~(ii) was proved in \cite[\S\,5.3]{KY05}.

\begin{proof}[Proof of (ii)~$\Leftrightarrow$~(iii) in Proposition~\ref{prop:EquivalentConditionsLocalFibrations}]
We have natural identifications
\[
T_{o'}(G/K) \simeq \g/\kk \simeq \p, \qquad 
T_o(G/H) \simeq \g/\h \simeq \q, \qquad 
T_{\widetilde{o}}(G/K_H) \simeq \g/(\kk \cap \h) \simeq (\kk \cap \q) \oplus \p,
\]
where $o' = 1 \cdot K$, $o = 1 \cdot H$, and $\widetilde{o} = 1 \cdot K_H$ are the base points of $G/K$, $G/H$, and $G/K_H$, respectively.
With these identifications, any linear subspace $V$ of~$\p$ is the tangent space $T_{o'}\widetilde{M}$ for some smooth submanifold $\widetilde{M}$ of $G/K$ passing through~$o'$.
We have
\[
T_{\widetilde{o}} (\pi_K^{-1}(\widetilde{M})) = (\kk \cap \q) \oplus V,
\]
and the map \eqref{eqn:pi2}
is a local diffeomorphism at~$\widetilde{o}$ if and only if 
\[
(\p \cap \h) \oplus V = \p. 
\]
By equivariance of $\pi_K$ and $\pi_H$, 
for each $k \in K$, the map \eqref{eqn:pi2} is a local diffeomorphism at $k^{-1} \cdot \widetilde{o}$ if and only if 
\[
{\pi_H}_{|\pi_K^{-1}(k \cdot \widetilde{M})} \colon \pi_K^{-1}(k \cdot \widetilde{M}) \to G/H
\]
is a local diffeomorphism at $\widetilde{o}$, hence if and only if
\[
(\p \cap \h) \oplus \Ad_k(V) = \p. \hfill\qedhere
\]
\end{proof}

\begin{proof}[Proof of (iii)~$\Leftrightarrow$~(iv) in Proposition~\ref{prop:EquivalentConditionsLocalFibrations}]
Suppose there exists a smooth submanifold $\widetilde{M}$ of $G/K$ passing through $o' = 1 \cdot K$ such that \eqref{eqn:pi2} is a local diffeomorphism at each point of $\pi_K^{-1}(o') = K/K_H$.
Since $\pi_K^{-1}(o')$ is compact and ${\pi_H}_{|\pi_K^{-1}(o')}$ is injective, after replacing $\widetilde{M}$ by an open submanifold, we may assume that \eqref{eqn:pi2} is globally an open embedding.
Note that \eqref{eqn:pi2} sends $\pi_K^{-1}(gK)$ to the compact subspace $g\cdot X$ of $G/H$ for each $g\in G$.
Denoting the image of \eqref{eqn:pi2} by $\Omega$, we obtain that 
\[
\pi_K \circ \pi_H^{-1} \colon \Omega \to \widetilde{M},
\]
is a local geometric fibration by construction. 
Conversely, any local geometric fibration containing $X = K/K_H$ as a fiber arises in this way.
\end{proof}

\begin{rmk} \label{rmk:CalabiMarkusAndH^a}
As a corollary to Proposition~\ref{prop:EquivalentConditionsLocalFibrations}, we obtain the following:
\begin{enumerate}
  \item If $\rk_\R(G)= \rk_\R(H)$, then $G/H$ does not admit local geometric fibrations, by \cite[\S\,5.2, (8)]{KY05} and \cite[Fact\,3.1]{Toj21}.
  \item\label{item:local-fibr-stable-under-associate} If $G/H$ and $G/H^a$ are associated symmetric spaces, then $G/H$ admits local geometric fibrations if and only if $G/H^a$ does.
This is contained in the unpublished draft \cite{KY}, and also in \cite[Prop.\,2.10]{Toj21}.
\end{enumerate}
\end{rmk}

In view of the equivalence (iii)~$\Leftrightarrow$~(iv) of Proposition~\ref{prop:EquivalentConditionsLocalFibrations}, here is a more precise version of the Geometric fibration conjecture.

\begin{conj} \label{conj:precise-fibr-conj}
Let $\Gamma$ be a torsion-free subgroup of~$G$ acting properly discontinuously and cocompactly on $G/H$.
Then there exists a $\Gamma$-invariant contractible smooth submanifold $\widetilde{M}$ of $G/K$, of dimension $\dim_+(G/H)$ (see \eqref{eqn:dim_+}), such that
\[
G/H = \bigsqcup_{\widetilde{m}\in\widetilde{M}} \pi_H \circ \pi_K^{-1}(\widetilde{m}).
\]
\end{conj}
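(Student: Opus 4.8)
\emph{Proof strategy.} The plan is to extract the fibration directly from the equivariant section of Lemma~\ref{lem:EquivariantSection}. We choose a $\Gamma$-equivariant section $s\colon G/H\to G/K_H$ of $\pi\colon G/K_H\to G/H$ with $s(o)=\widetilde o$, and set $q=\pi_K\circ s\colon G/H\to G/K$; this is a smooth $\Gamma$-equivariant map with $q(o)=o'$. For every $g\in G$ one has $\pi_H\circ\pi_K^{-1}(gK)=g\cdot X$, and every point $x\in G/H$ lies on the translate $s(x)X$ with $s(x)K\in q(G/H)$, so the covering of $G/H$ by the translates $\{s(x)X\}_{x}$ is automatic. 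Hence proving Conjecture~\ref{conj:precise-fibr-conj} amounts to arranging, after replacing $s$ by a $\Gamma$-equivariant perturbation if necessary, that (a) the translates $s(x)X$ are pairwise disjoint or equal, and (b) $\widetilde M:=q(G/H)$ is a smooth submanifold of $G/K$ and $q\colon G/H\to\widetilde M$ is a smooth fiber bundle with fiber~$X$.

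Granting (a) and (b), the remaining assertions of the conjecture are formal. The fiber~$X$ has dimension $\dim(K/K_H)=\dim_-(G/H)$, so $\dim\widetilde M=\dim(G/H)-\dim X=\dim_+(G/H)$ by~\eqref{eqn:dim_+}; the set $\widetilde M=q(G/H)$ is $\Gamma$-invariant because $q$ is $\Gamma$-equivariant; and $\Gamma$ acts properly discontinuously and cocompactly on $\widetilde M$. The fiber of~$q$ over~$o'$ is $\pi_H\circ\pi_K^{-1}(o')=X$, which under the diffeomorphism $\Phi\colon N\to G/H$ of Fact~\ref{fact:Kob89} is the zero section of $N\to X$, hence a deformation retract of $G/H$; so the fiber inclusion $X\hookrightarrow G/H$ is a homotopy equivalence, and the long exact homotopy sequence of $X\to G/H\to\widetilde M$ forces $\pi_n(\widetilde M)=0$ for all $n\geq 1$. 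Thus $\widetilde M$ is weakly contractible, hence contractible as a smooth manifold, and $M=\Gamma\bs\widetilde M$ is a closed aspherical manifold with $\pi_1(M)=\Gamma$, recovering the first bullet of the Geometric fibration conjecture.

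The real work is to produce a section~$s$ with properties (a) and (b), and here two nested difficulties appear. The first is infinitesimal: for~$q$ to be a submersion in a neighbourhood of~$X$ one needs, by the equivalence (ii)~$\Leftrightarrow$~(iii)~$\Leftrightarrow$~(iv) of Proposition~\ref{prop:EquivalentConditionsLocalFibrations}, a linear subspace $V\subset\p$ with $(\p\cap\h)\oplus\Ad_k(V)=\p$ for all $k\in K$; this is already a nontrivial algebraic condition, equivalent to the tangential homogeneous space $G_\theta/H_\theta=(K\ltimes\p)/(K_H\ltimes\p_H)$ admitting compact quotients. So a first step is to establish the implication ``$G/H$ admits compact quotients $\Rightarrow$ $G/H$ admits local geometric fibrations'' --- one direction of Conjecture~\ref{conj:CompactQuotients<=>Fibrations}, which is \emph{not} a consequence of Theorem~\ref{thm:MainTheorem}: the latter only gives the fiber-homotopical triviality of $S(N)$, which (by Lemma~\ref{lem:ImplicationsTriviality}) is necessary but not known to suffice for the existence of local geometric fibrations. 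The second, and genuinely harder, difficulty is global: starting from a local geometric fibration, one must upgrade it to a $\Gamma$-equivariant \emph{global} one, i.e.\ rule out that two distinct translates $s(x)X$ and $s(y)X$ intersect --- that the map~$q$ develops ``folds.''

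We expect this global step to be the main obstacle; it is essentially as hard as the full Geometric fibration conjecture and has so far only been carried out by genuinely geometric arguments in a few cases --- for the group manifolds $(G'\times G')/\Diag(G')$ with $G'=\SO(n,1)$ in~\cite{GK17}, and for $\SO(2n,2)/\U(n,1)$ by combining \cite{BM12,MST23+,KT24+}. We would attack it in two ways. First, following the tangential philosophy behind Proposition~\ref{prop:EquivalentConditionsLocalFibrations}: fix a local geometric fibration near~$X$, transport it by~$\Gamma$, and use the properness and cocompactness of the action to patch the local models into a global bundle $q\colon G/H\to\widetilde M$, controlling the overlaps by a $\Gamma$-invariant convexity or completeness property of the invariant pseudo-Riemannian metric on $G/H$ (in which $X$ is the maximal totally negative, and the normal fibers the totally positive, totally geodesic submanifolds, by Remark~\ref{rmk:H^aOrbit}). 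Second, when $\Gamma$ is known to be Anosov with respect to a suitable parabolic --- as in~\cite{KT24+} --- we would use the associated boundary map to define the partition of $G/H$ into translates of~$X$ directly, mimicking the known examples. Absent such structural input we do not see how to close the argument in general, which is exactly why the conjecture remains open.
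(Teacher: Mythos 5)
This statement is stated in the paper as a conjecture (a sharpening of the Geometric fibration conjecture); the paper gives no proof of it, and none is currently known. Your proposal correctly recognizes this: you do not claim a proof, and your analysis of where the difficulty lies matches the paper's own discussion. In particular, your formal reductions are sound --- the dimension count via \eqref{eqn:dim_+}, the contractibility of $\widetilde{M}$ from the long exact homotopy sequence together with Fact~\ref{fact:Kob89}, and the identification of the infinitesimal obstruction with condition (ii) of Proposition~\ref{prop:EquivalentConditionsLocalFibrations}. You are also right that Theorem~\ref{thm:MainTheorem} only yields the fiber-homotopical triviality of $S(N)$, which by Lemma~\ref{lem:ImplicationsTriviality} (and the counterexamples in Propositions \ref{prop: Normal Bundle homotopically but not linearly trivial} and~\ref{prop:trivial-bundle-no-fibrations}) is strictly weaker than the existence of local geometric fibrations, so even the local half of the problem is open in general. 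The global step --- ruling out that distinct translates $s(x)X$ intersect --- is, as you say, the heart of the matter and has only been carried out in the special cases \cite{GK17} and \cite{BM12,MST23+,KT24+}; cf.\ also Remark~\ref{rmk:standard-fibr} for the standard case. Nothing to correct here, since there is no proof to compare against.
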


\subsection{Compact quotients should imply local geometric fibrations}

The Geometric fibration conjecture would imply in particular (i)~$\Rightarrow$~(ii) in Conjecture \ref{conj:CompactQuotients<=>Fibrations}, namely:
 
\begin{conj} \label{conj:NoLocalFibrationObstruction}
If a reductive homogeneous space $G/H$ admits compact quotients, then it admits local geometric fibrations. 
\end{conj}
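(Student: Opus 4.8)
The plan is to reduce the statement, via Proposition~\ref{prop:EquivalentConditionsLocalFibrations}, to its equivalent algebraic form: the tangential homogeneous space $(K \ltimes \p)/(K_H \ltimes \p_H)$ admits compact quotients, or equivalently that there is a linear subspace $V$ of $\p$ with $(\p \cap \h) \oplus \Ad_k(V) = \p$ for every $k \in K$. Trivializing $K \times_{K_H} \p \cong X \times \p$ by Lemma~\ref{lem:HomogeneousBundleTrivial}, the subbundle $N^a = K \times_{K_H}(\p \cap \h)$ becomes the family of subspaces $N^a_x \subset \p$, and the condition asks for a single subspace $V$ complementary to all of them. Such a $V$ would in particular force the quotient bundle $N \cong (X \times \p)/N^a$ to be genuinely trivial (not merely stably, nor fiber-homotopically, trivial), which is strictly stronger than the conclusion of Theorem~\ref{thm:MainTheorem}; so the heart of the matter is to upgrade the soft homotopical output of that theorem to this rigid linear-algebraic statement.

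One natural line of attack is a degeneration argument through the contraction of $G$ onto the Cartan motion group $K \ltimes \p$, and of $H$ onto $K_H \ltimes \p_H$, which sends $G/H$ to the tangential homogeneous space. Starting from a torsion-free $\Gamma$ acting properly discontinuously and cocompactly on $G/H$, together with the $\Gamma$-equivariant section $s \colon G/H \to G/K_H$ from the proof of Theorem~\ref{thm:MainTheorem}, one would try to show that the resulting family of translates $s(x)\,X$ straightens, in the contraction limit, to a genuine foliation of $(K \ltimes \p)/(K_H \ltimes \p_H)$ by copies of $X$, whose leaf space yields the desired compact quotient. The delicate point is that properness of a discontinuous group is not a closed condition under such a contraction; one would need a uniform Cartan-projection-type (``sharpness'') estimate for the $\Gamma$-action on $G/H$ that survives passing to the limit, in the spirit of the theory of deformations of discontinuous groups.

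A second line is topological bootstrapping from Theorem~\ref{thm:MainTheorem} and its strengthening Theorem~\ref{thm:MainStableProd}: a compact quotient already forces $S(N)$, and the normal sphere bundle of every product $(G \times G')/(H \times H')$, to be fiber-homotopically trivial. One would try to combine this with finer, unstable invariants (the Euler class, secondary cohomology operations) and, decisively, with the very special representation-theoretic shape of $N = K \times_{K_H}(\p \cap \q)$ --- as in the case-by-case $\Ktheory$-theoretic computations of Sections~\ref{s:IndefiniteGrassmannians}--\ref{s:ComputationsKTheory} --- first to force $N$ genuinely trivial, and then to deform the Gauss map of $N^a$ off the Schubert locus of subspaces meeting a fixed $V$. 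In the absence of a general mechanism for either step, the most realistic route is to input the (still conjectural) classification of reductive homogeneous spaces with compact quotients, i.e.\ Conjecture~\ref{conj:ExistenceStandardQuotients}: on each such $G/H$ one verifies the condition on $V$ by hand, so that Conjecture~\ref{conj:NoLocalFibrationObstruction} would become a consequence of (a suitable form of) Kobayashi's conjecture.

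The main obstacle is precisely this gap between the homotopical triviality of $S(N)$, which is all Theorem~\ref{thm:MainTheorem} supplies, and the existence of a constant common complement $V$. For an arbitrary vector bundle, neither does $S(N)$ being fiber-homotopically trivial imply $N$ trivial, nor does $N$ trivial imply that $N^a$ admits a constant complement; hence any proof must genuinely exploit that $G/H$ is homogeneous. A sensible first milestone would be to prove the conjecture under the extra hypothesis that $N$ is already known to be stably trivial, or to settle it for reductive symmetric spaces by matching Theorem~\ref{thm:MainTheorem} against Kobayashi--Yoshino's analysis of which tangential homogeneous spaces admit compact quotients.
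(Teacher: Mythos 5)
The statement you were asked about is labelled as a \emph{conjecture} in the paper (Conjecture~\ref{conj:NoLocalFibrationObstruction}), and the paper offers no proof of it: it only observes that it would follow from the Geometric fibration conjecture, that it is equivalent (via Proposition~\ref{prop:EquivalentConditionsLocalFibrations}) to the long-standing open question of whether compact quotients of $G/H$ force compact quotients of the tangential space $G_\theta/H_\theta$, and that Theorem~\ref{thm:MainTheorem} is only a partial confirmation because fiber-homotopical triviality of $S(N)$ is a common necessary condition for both properties. So there is no proof to compare yours against, and your text --- correctly --- does not claim to be one.

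Your assessment of the situation is accurate and matches the paper's own discussion. You identify the right reduction (existence of a single $V \subset \p$ with $(\p\cap\h)\oplus\Ad_k(V)=\p$ for all $k$), you correctly note that this is strictly stronger than the triviality of $N$ as a vector bundle, which is in turn strictly stronger than the fiber-homotopical triviality of $S(N)$ supplied by Theorem~\ref{thm:MainTheorem} (the paper's Propositions~\ref{prop: Normal Bundle homotopically but not linearly trivial} and~\ref{prop:trivial-bundle-no-fibrations} exhibit exactly the failures of the converse implications in Lemma~\ref{lem:ImplicationsTriviality}), and your observation that Kobayashi's Conjecture~\ref{conj:ExistenceStandardQuotients} together with Remark~\ref{rmk:standard-fibr} would yield the statement is correct. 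The one thing to flag is that neither of your two ``lines of attack'' is a proof strategy the paper endorses or carries out; if you want to present something checkable rather than a research program, the honest formulation is: the implication is open, Theorem~\ref{thm:MainTheorem} verifies the weakest of the three obstructions in Lemma~\ref{lem:ImplicationsTriviality}, and the gap between conditions \ref{item:fibr-3} and \ref{item:fibr-1} there is genuine and not bridged by any argument currently known.
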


In other words, using Proposition~\ref{prop:EquivalentConditionsLocalFibrations}, if $G/H$ admits compact quotients, then so does the corresponding tangential homogeneous space $G_{\theta}/H_{\theta}$.
We note that understanding the relation between the existence of compact quotients for $G/H$ and $G_\theta/H_\theta$ has been a long-standing open question: see \eg \cite[Rem.\,1.9]{Toj21}.

\begin{rmk} \label{rmk:standard-fibr}
If $G/H$ admits \emph{standard} compact quotients, then it always admits a geometric fibration.
Indeed, let $L$ be a reductive Lie subgroup of~$G$ acting transitively on $G/H$ with compact stabilizer.
Then the natural projection from $G/H \simeq L/(L\cap H)$ to the Riemannian symmetric space of~$L$ is an $L$-equivariant geometric fibration of $G/H$, and any uniform lattice $\Gamma$ of~$L$ satisfies the conclusion of the Geometric fibration conjecture and of the more precise Conjecture~\ref{conj:precise-fibr-conj} (with $\widetilde{M}$ equal to the Riemannian symmetric space of~$L$, seen as a smooth submanifold of $G/K$).
\end{rmk}

The following easy implications hold:

\begin{lem} \label{lem:ImplicationsTriviality}
Consider the following three conditions: 
\begin{enumerate}[label = {\upshape (\roman*)}]
  \item\label{item:fibr-1} $G/H$ admits local geometric fibrations,
  \item\label{item:fibr-2} the normal bundle $N$ to $X = K/K_H$ in $G/H$ is trivial (as a vector bundle),
  \item\label{item:fibr-3} the normal sphere bundle $S(N)$ is fiber-homotopically trivial.
\end{enumerate}
Then we have \upshape{(i)} $\Rightarrow$ \upshape{(ii)} $\Rightarrow$ \upshape{(iii)}. 
\end{lem}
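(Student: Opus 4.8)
\textbf{Proof plan for Lemma~\ref{lem:ImplicationsTriviality}.}
The implication \ref{item:fibr-2}~$\Rightarrow$~\ref{item:fibr-3} is immediate: if $N$ is trivial as a vector bundle, i.e.\ $N \simeq X \times (\p\cap\q)$, then the associated sphere bundle $S(N)$ is literally the trivial sphere bundle $X \times S(\p\cap\q)$, hence a fortiori fiber-homotopically trivial. So the content is in \ref{item:fibr-1}~$\Rightarrow$~\ref{item:fibr-2}.

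For \ref{item:fibr-1}~$\Rightarrow$~\ref{item:fibr-2}, I would use the characterization of local geometric fibrations given by condition~(ii) of Proposition~\ref{prop:EquivalentConditionsLocalFibrations}: there exists a linear subspace $V$ of $\p$ such that $(\p\cap\h)\oplus\Ad_k(V) = \p$ for every $k\in K$. By a dimension count $\dim V = \dim\p - \dim(\p\cap\h) = \dim(\p\cap\q)$, so $V$ is a complement to $\p\cap\h$ in $\p$ of the same dimension as the fiber $\p\cap\q$ of $N = K\times_{K_H}(\p\cap\q)$. The plan is to produce from $V$ a global trivialization of $N$. The key point is that for each $k\in K$ the projection $\p \to \p\cap\q$ with kernel $\p\cap\h$ restricts to a linear isomorphism $\Ad_k(V) \xrightarrow{\ \sim\ } \p\cap\q$, by the transversality condition; equivalently, projecting $V$ itself along $\p\cap\h$ and then applying $\Ad_k$ in the appropriate order, one gets a $K_H$-equivariant way to identify the bundle $K\times_{K_H}(\p\cap\q)$ with a trivial bundle.

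More precisely, I would argue as follows. Consider the bundle $K\times_{K_H} V$, where $K_H$ acts on $V\subset\p$ only in the sense that we do \emph{not} assume $V$ is $K_H$-invariant — so instead work directly with the map
\[
\Theta \colon X \times V \longrightarrow N = K\times_{K_H}(\p\cap\q), \qquad (kK_H,\,w) \longmapsto [\,k,\ \pr_{\p\cap\q}(\Ad_k(w))\,],
\]
where $\pr_{\p\cap\q}\colon\p\to\p\cap\q$ is the projection with kernel $\p\cap\h$. One checks this is well defined (replacing $k$ by $k\ell$ with $\ell\in K_H$ changes $\Ad_k(w)$ to $\Ad_{k\ell}(w)$, and since $\pr_{\p\cap\q}$ is $K_H$-equivariant — because $K_H$ preserves the decomposition $\p = (\p\cap\h)\oplus(\p\cap\q)$ — the class in $N$ is unchanged), that it is a bundle map over $X$, and that on the fiber over $kK_H$ it is the composite $V \xrightarrow{\Ad_k} \Ad_k(V) \xrightarrow{\pr_{\p\cap\q}} \p\cap\q$, which is an isomorphism exactly by the hypothesis $(\p\cap\h)\oplus\Ad_k(V)=\p$. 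Hence $\Theta$ is a vector bundle isomorphism from the trivial bundle $X\times V$ onto $N$, so $N$ is trivial.

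\textbf{Main obstacle.} The only subtlety is the bookkeeping around equivariance: $V$ is a subspace of $\p$ but a priori not $K_H$-stable, so one cannot form $K\times_{K_H}V$ naively, and one must instead define the trivialization by the explicit formula above and verify that the $K_H$-ambiguity is absorbed by the $K_H$-equivariance of the projection $\pr_{\p\cap\q}$ (which in turn uses that $K_H$ preserves the four-term orthogonal decomposition of $\g$, as recorded in Section~\ref{ss:Reductive}). Once this is set up correctly, the verification that $\Theta$ is a fiberwise isomorphism is exactly the hypothesis of Proposition~\ref{prop:EquivalentConditionsLocalFibrations}~(ii), and everything else is routine.
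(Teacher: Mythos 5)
Your implication (ii)~$\Rightarrow$~(iii) is fine, and your overall strategy for (i)~$\Rightarrow$~(ii) --- routing through condition~(ii) of Proposition~\ref{prop:EquivalentConditionsLocalFibrations} --- is a legitimate alternative to what the paper does. However, the explicit formula you propose for $\Theta$ is not well defined, and the verification you sketch is exactly where it breaks. With the convention $[k\ell, v] = [k, \Ad_\ell v]$ for $\ell \in K_H$ in $N = K\times_{K_H}(\p\cap\q)$ (the one forced by Fact~\ref{fact:Kob89}), replacing $k$ by $k\ell$ in your formula gives
\[
[\,k\ell,\ \pr_{\p\cap\q}(\Ad_{k\ell}(w))\,]
= [\,k,\ \Ad_\ell\,\pr_{\p\cap\q}(\Ad_{k\ell}(w))\,]
= [\,k,\ \pr_{\p\cap\q}(\Ad_{\ell k\ell}(w))\,],
\]
where the last step uses precisely the $K_H$-equivariance of $\pr_{\p\cap\q}$ you invoke; this is not $[\,k,\ \pr_{\p\cap\q}(\Ad_{k}(w))\,]$ in general, so the $K_H$-ambiguity is \emph{not} absorbed. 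The repair is to put the inverse in: the map $\Theta(kK_H, w) = [\,k,\ \pr_{\p\cap\q}(\Ad_{k^{-1}}(w))\,]$ is well defined (the same computation now produces $[\,k,\ \Ad_\ell\Ad_{\ell^{-1}}\pr_{\p\cap\q}(\Ad_{k^{-1}}(w))\,] = [\,k,\ \pr_{\p\cap\q}(\Ad_{k^{-1}}(w))\,]$), and it is still a fiberwise isomorphism because the hypothesis $(\p\cap\h)\oplus\Ad_k(V)=\p$ is quantified over all of $K$ and hence applies to $k^{-1}$. With that correction your argument goes through.

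For comparison, the paper's proof of (i)~$\Rightarrow$~(ii) is softer and bypasses Proposition~\ref{prop:EquivalentConditionsLocalFibrations} entirely: given a local geometric fibration $\pi\colon\Omega\to U$ with $K/K_H=\pi^{-1}(u)$, the normal bundle of a fiber of a smooth fiber bundle is canonically identified with $X\times T_uU$, hence trivial. Your route produces an explicit Lie-theoretic trivialization, which is more concrete, but it is also where the bookkeeping error crept in.
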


\begin{proof}
The implication (ii)~$\Rightarrow$~(iii) is obvious.
For (i)~$\Rightarrow$~(ii), let $\pi \colon \Omega \to U$ be a local geometric fibration of $G/H$. Without loss of generality, we can assume that $K/K_H = \pi^{-1}(u)$ for some $u \in U$. Then the normal bundle $N$ is canonically isomorphic to the trivial vector bundle $X \times T_uU$. 
\end{proof}

As mentioned in Section~\ref{ss:MotivationGeometricFibration}, Conjecture \ref{conj:NoLocalFibrationObstruction} predicts that condition (i) is an obstruction to the existence of compact quotients of $G/H$ and our main Theorem~\ref{thm:MainTheorem}, which states that condition (iii) is an obstruction, can thus be seen as a partial confirmation of this conjecture.

Note however that the reverse implications do not hold. The following Proposition \ref{prop: Normal Bundle homotopically but not linearly trivial} (\resp \ref{prop:trivial-bundle-no-fibrations}) gives counterexamples to \ref{item:fibr-3}~$\Rightarrow$~\ref{item:fibr-2} (\resp \ref{item:fibr-2}~$\Rightarrow$~\ref{item:fibr-1}).

\begin{prop} \label{prop: Normal Bundle homotopically but not linearly trivial}
Let 
\[
G/H = (\U(p,q+1) \times \OO(k,1)) / (\U(p,q) \times \U(1) \times \OO(k) \times \OO(1))
\]
be the product of the complex pseudo-Riemannian hyperbolic space $\Hyp^{p,q}_\C$ with the real hyperbolic space $\Hyp^k_\R$. If $q\geq 2$ and $j_{\CP^q}$ divides $p$, then for $k$ large enough the normal bundle $N$ to $K/K_H = \CP^q$ is nontrivial but its sphere bundle is fiber-homotopically trivial.
\end{prop}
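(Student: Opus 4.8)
The plan is to make the normal bundle $N$ fully explicit, deduce $\widetilde J(N)=0$ from the divisibility $j_{\CP^q}\mid p$, and then certify that $N$ is not a trivial vector bundle by computing its first rational Pontryagin class.

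First I would pin down $X$ and $N$. For the factor $\Hyp^k_\R=\OO(k,1)/(\OO(k)\times\OO(1))$, the subgroup $\OO(k)\times\OO(1)$ is already maximal compact, so its compact subspace $K'/K'_{H'}$ is a point and the associated normal bundle is the trivial bundle of rank~$k$. Combining this with Section~\ref{ss:IndefGrassm} applied to $\Hyp^{p,q}_\C$ in the case $q'=1$ (so that $K/K_H=\CP^q$ and the corresponding normal bundle is $p$ copies of the tautological line bundle), one obtains $X=\CP^q$ and, as a real vector bundle over $\CP^q$,
\[
N \;\cong\; \Taut_{\CP^q}^{\oplus p}\oplus\underline{\R}^{\oplus k}.
\]

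Next I would establish fiber-homotopy triviality of $S(N)$. By definition $j_{\CP^q}$ is the order of $\widetilde J(\Taut_{\CP^q})$ in $\widetilde J(\CP^q)$, so the hypothesis $j_{\CP^q}\mid p$ gives $\widetilde J(N)=p\cdot\widetilde J(\Taut_{\CP^q})=0$, \ie $S(N)$ is stably fiber-homotopically trivial. By the definition of stable fiber-homotopy triviality (Section~\ref{ss:J}), there is an integer $k_0\ge 0$ such that $S(\Taut_{\CP^q}^{\oplus p}\oplus\underline{\R}^{\oplus k_0})$ is fiber-homotopically trivial; and for every $k\ge k_0$ the bundle $S(N)$ is the fiberwise join of this one with the trivial sphere bundle $\CP^q\times\Sph^{k-k_0-1}$, hence again fiber-homotopically trivial. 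This accounts for the ``$k$ large enough'' in the statement.

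Finally I would show that $N$ is not trivial as a vector bundle by a Pontryagin class computation, in the spirit of the remark in Section~\ref{s:IndefiniteGrassmannians}. Writing $H^\ast(\CP^q;\Z)=\Z[a]/(a^{q+1})$ with $\deg a=2$ and using $p_1(\rr L)=c_1(L)^2$ for a complex line bundle~$L$, one gets $p(N)=(1+a^2)^p$, so $p_1(N)=p\,a^2$, which is nonzero in $H^4(\CP^q;\Z)$ since $q\ge 2$ and $p\ge 1$; a trivial real vector bundle has vanishing Pontryagin classes, so $N$ cannot be trivial. I do not expect a genuine obstacle here: the only two points requiring care are the exact identification of $N$ (in particular that the second factor contributes only a trivial summand) and the passage from stable to genuine fiber-homotopy triviality, which is precisely why the conclusion is stated only for $k$ large.
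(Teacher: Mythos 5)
Your proposal is correct and follows essentially the same route as the paper: identify $N\cong\Taut_{\CP^q}^{\oplus p}\oplus\underline{\R}^{\oplus k}$, use the definition of $j_{\CP^q}$ to get fiber-homotopy triviality of $S(N)$ for $k$ large (you spell out the stable-to-unstable step via fiberwise joins a bit more explicitly than the paper does), and detect nontriviality of $N$ via $p_1(N)=\pm p\,c_1(\Taut_{\CP^q})^2\neq 0$ in $H^4(\CP^q;\Z)$ for $q\geq 2$. The sign discrepancy with the paper's $-pc_1^2$ is just a convention issue and does not affect the argument.
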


\begin{proof}
Recall that the normal bundle to $\CP^q$ in $\Hyp^{p,q}_\C$ is $\Taut_{\CP^q}^{\oplus p}$, where $\Taut_{\CP^q}$ is the complex tautological line bundle over $\CP^q$. Hence, for $G/H$ as above, we have $N=\Taut_{\CP^q}^{\oplus p} \oplus \R^k$. By definition of $j_{\CP^q}$, the sphere bundle $S(N)$ is fiber-homotopically trivial for $k$ large enough.\footnote{The factor $\Hyp^k_\R$ is here because the $\widetilde J$-homomorphism detects \emph{stable} spherical homotopical triviality. We do not know whether it detects spherical homotopical triviality for multiples of $\Taut_{\CP^q}$. If it is the case, Proposition \ref{prop: Normal Bundle homotopically but not linearly trivial} holds without taking a product with $\Hyp^k_\R$.}

On the other hand, the first Pontryagin class of $N$ is equal to $-p c_1^2$, where $c_1$ is the first Chern class of $\Taut_{\CP^q}$. This class is nonzero as soon as $q\geq 2$, proving that $N$ is nontrivial.
\end{proof}

\begin{prop} \label{prop:trivial-bundle-no-fibrations}
Let $G/H$ be one of the reductive symmetric spaces in the following table, where $n \geq 1$ is an integer satisfying the specified conditions. Then the normal bundles to $K/K_H$ in $G/H$ and its associated symmetric space $G/H^a$ are trivial, yet $G/H$ and $G/H^a$ do not admit local geometric fibrations.
\begin{center}
\begin{longtable}{c|c|c|c|c}
& $G$ & $H$ & $H^a$ & Conditions \\ \hline
\textup{(1)} & 
$\SO_0(n,n)$ & $\SO(n,\C)$ & $\GL^+(n,\R)$ & $n \geq 2$ \\ 
\textup{(2)} & 
$\SU(n,n)$ & $\GL(n,\C)$ & $\GL(n,\C)$ & --- \\ 
\textup{(3)} & 
$\SU(n,n)$ & $\OO^\ast(2n)$ & $\Sp(2n, \R)$ & $n \geq 2$ \\ 
\textup{(4)}
& $\Sp(n,n)$ & $\Sp(2n, \C)$ & $\GL(n,\Ha)$ & --- 
\end{longtable}
\end{center}
\end{prop}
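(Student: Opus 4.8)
The plan is to treat the two assertions separately, since they come from entirely different mechanisms.

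\emph{Triviality of $N$ and $N^a$.} For each of the four families I would first spell out the embedding $K_H\hookrightarrow K$ of maximal compact subgroups, using the explicit descriptions in Appendix~\ref{s:AppendixSymmetricSpaces}. In every case $K_H$ equals $\SO(n)$, $\U(n)$, $\U(n)$, or $\Sp(n)$ (cases (1)--(4)), and $K$ is $K_H\times K_H$ (cases (1), (4)) or $\operatorname{S}(\U(n)\times\U(n))$ (cases (2), (3)), with $K_H$ embedded as a --- possibly conjugation-twisted --- diagonal. Reading off the block decomposition $\g=\kk\oplus\p$, the isotropy representation $\p$ is then identified $K$-equivariantly with $\M(n,\K)$ for $\K=\R,\C,\C,\Ha$, the two factors of $K$ acting by left and by (conjugate-)right multiplication; restricting to $K_H$ this becomes $\operatorname{End}_\K(V)=\Herm(V)\oplus\SkewHerm(V)$ (or $\Sym^2 V\oplus\Lambda^2 V$ over $\R$) with the conjugation action, and a short bookkeeping --- consistent with the normal-bundle computations already performed in Sections~\ref{ss:GrassC} and~\ref{ss:GrassH} --- identifies the two summands with $\p\cap\q$ and $\p\cap\h$ (equivalently $\p\cap\h^a$). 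The decisive observation is that each of $\Herm(V)$, $\SkewHerm(V)$, $\Sym^2 V$, $\Lambda^2 V$, a priori only a $K_H$-representation, extends to $K$: let one factor of $K_H\times K_H$ act by $A\mapsto kAk^\ast$ (which preserves each of these subspaces) and the other factor act trivially; on the diagonal $K_H$ this restricts to the conjugation action. Hence $N=K\times_{K_H}(\p\cap\q)$ and $N^a=K\times_{K_H}(\p\cap\h)$ are each associated to an honest $K$-representation, and Lemma~\ref{lem:HomogeneousBundleTrivial} gives that both are trivial vector bundles. (The weaker statement that $N\oplus N^a$ is trivial --- Proposition~\ref{prop:AssociatedJTilde} --- would not suffice, as it only yields stable triviality.)

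\emph{Absence of local geometric fibrations.} Here I would simply record real ranks. In each of the four cases at least one of $H$, $H^a$ has the same real rank as $G$: indeed $\rk_\R\SO_0(n,n)=n=\rk_\R\GL^+(n,\R)$, $\rk_\R\SU(n,n)=n=\rk_\R\GL(n,\C)=\rk_\R\Sp(2n,\R)$, and $\rk_\R\Sp(n,n)=n=\rk_\R\Sp(2n,\C)=\rk_\R\GL(n,\Ha)$. By Remark~\ref{rmk:CalabiMarkusAndH^a}~(1), the member of $\{G/H,G/H^a\}$ realizing this equality admits no local geometric fibration; by Remark~\ref{rmk:CalabiMarkusAndH^a}~(2), the two being associated symmetric spaces, neither does the other. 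Equivalently, through Proposition~\ref{prop:EquivalentConditionsLocalFibrations}, there is no linear subspace $V\subseteq\p$ with $(\p\cap\h)\oplus\Ad_k(V)=\p$ for all $k\in K$, and likewise with $\h$ replaced by $\h^a$.

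The genuine work is all in the first part: the family-by-family identification of $K_H\hookrightarrow K$ and of the $K_H$-module structure on $\p\cap\q$ and $\p\cap\h$. The mild subtlety is the conjugation twist in the embedding $K_H\hookrightarrow\operatorname{S}(\U(n)\times\U(n))$ in cases (2) and (3), which governs whether one sees $\Sym^2/\Lambda^2$ or $\Herm/\SkewHerm$ of the standard representation; this does not affect the conclusion, since the ``extend using a single factor, act trivially on the other'' device applies verbatim in every case. The second part is immediate once the ranks are written down.
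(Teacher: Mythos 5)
Your proof is correct, and the second half (real ranks plus Remark~\ref{rmk:CalabiMarkusAndH^a}) is exactly the paper's argument, but your route to the triviality of $N$ and $N^a$ is genuinely different from the paper's. The paper observes that $K$ contains a closed subgroup $L$ ($\SO(n)\times\{1\}$, $\SU(n)\times\{1\}$, or $\Sp(n)\times\{1\}$) acting simply transitively on $K/K_H$; transporting a frame by $L$ then trivializes \emph{every} bundle associated to $K\to K/K_H$ in one stroke, with no need to identify the isotropy representation at all. You instead extend the $K_H$-representations $\p\cap\q$ and $\p\cap\h$ to $K$ and invoke Lemma~\ref{lem:HomogeneousBundleTrivial}. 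This works, but note that your ``decisive observation'' is stronger than necessary: since $K_H$ is a (possibly twisted) diagonal in $K$, the first-factor projection $K\to\U(n)$ (resp.\ $\SO(n)$, $\Sp(n)$) restricts to an isomorphism on $K_H$, so \emph{every} $K_H$-representation extends to $K$ by pullback --- you never need to know that $\p\cap\q$ is $\Herm$, $\SkewHerm$, $\Sym^2$ or $\Lambda^2$ of the standard representation, which removes most of the ``genuine work'' you flag. (The kernel of that projection is precisely the paper's $L$, so the two arguments are dual to one another.) Two small inaccuracies in your write-up: the normal-bundle computations you cite from Sections~\ref{ss:GrassC} and~\ref{ss:GrassH} concern Grassmannians, whereas here $K/K_H$ is a group manifold, so they are only loosely analogous; and in cases (2)--(3) it is worth saying explicitly, as you do for the determinant-one issue, that the first-factor projection of $\operatorname{S}(\U(n)\times\U(n))$ is still surjective onto $\U(n)$ and restricts to an isomorphism on the twisted diagonal, so the extension device survives the passage from $\U(n)\times\U(n)$ to its determinant-one subgroup.
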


\begin{proof}
In all these cases, the maximal compact subgroup $K$ contains a closed subgroup $L$ acting simply transitively on $K/K_H$, namely, 
\begin{itemize}
\item $L = \SO(n) \times \{ 1 \} \subset \SO(n) \times \SO(n)$ in (1); 
\item $L = \SU(n) \times \{ 1 \} \subset \operatorname{S}(\U(n) \times \U(n))$ in (2) and (3); 
\item $L = \Sp(n)\times \{1\} \subset \Sp(n) \times \Sp(n)$ in (4). 
\end{itemize}

The action of $L$ on $K/K_H$ lifts to any vector bundle $E$ associated to the principal $K_H$-bundle $K\to K/K_H$ and, since it is simply transitive, transporting a frame of $E$ at an arbitrary base point by the action of $L$ defines a trivialization of $E$. In particular, the normal bundle to $K/K_H$ in $G/H$ and $G/H^a$ are both trivial.

On the other hand, in all cases, we have $\rk_\R(H^a)= \rk_\R(G)$, hence neither $G/H$ nor $G/H^a$ admit local geometric fibrations by Remark \ref{rmk:CalabiMarkusAndH^a}.
\end{proof}

\begin{rmk}
In all cases of Proposition~\ref{prop:trivial-bundle-no-fibrations}, the space $G/H^a$ does not admit compact quotients by the Calabi--Markus phenomenon \cite{Kob89}. The same is true of $G/H$ in cases (2) and~(4). 

In case~(1), the second author previously proved that $G/H$ does not admit compact quotients \cite[Cor.\,1.4]{Mor15}. 

In case~(3), Kobayashi proved that $G/H$ does not admit compact quotients for even~$n$ \cite[Table\,4.4]{Kob92coh}. As far as we know, it is an open question whether $G/H$ admits compact quotients for odd~$n$. Conjecture \ref{conj:NoLocalFibrationObstruction} predicts that the answer should be negative in this case also. 
\end{rmk}

\subsection{A bolder conjectural implication}

The implication (ii)~$\Rightarrow$~(i) of Conjecture \ref{conj:CompactQuotients<=>Fibrations} seems more audacious:

\begin{conj} \label{conj:GeometricFibrations=>CompactQuotients}
If a reductive homogeneous space admits local geometric fibrations, then it admits compact quotients.
\end{conj}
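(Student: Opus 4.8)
By Proposition~\ref{prop:LocalFibrTangential}, the hypothesis of Conjecture~\ref{conj:GeometricFibrations=>CompactQuotients} — that $G/H$ admits local geometric fibrations — is equivalent to the tangential homogeneous space $G_\theta/H_\theta = (K\ltimes\p)/(K_H\ltimes\p_H)$ admitting compact quotients. Thus it suffices to prove: if $G_\theta/H_\theta$ admits a compact quotient, then so does $G/H$. The plan is to realize $G_\theta/H_\theta$ as the degenerate member of a one-parameter family of homogeneous spaces whose generic member is $G/H$, and to deform a compact quotient of the degenerate space into one of~$G/H$.

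Concretely, mimicking the Cartan motion group construction, for $s\in[0,1]$ let $\g_s$ be the Lie algebra on the vector space $\kk\oplus\p$ whose bracket agrees with that of~$\g$ on $\kk\times\kk$ and $\kk\times\p$ but satisfies $[v,w]_s = s\,[v,w]$ for $v,w\in\p$. Then $\h_s := (\kk\cap\h)\oplus\p_H$ is a Lie subalgebra, the rescaling $\mathrm{id}_\kk\oplus s^{-1/2}\mathrm{id}_\p$ is a Lie algebra isomorphism $(\g_s,\h_s)\xrightarrow{\sim}(\g,\h)$ for every $s>0$, and $(\g_0,\h_0)$ is the tangential pair. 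Passing to the corresponding Lie groups $H_s\subset G_s$ (with $G_0 = K\ltimes\p$ and $H_0 = K_H\ltimes\p_H$) produces an analytic family of homogeneous geometries $(G_s, G_s/H_s)$ in which $G_s/H_s$ is $G_s$-equivariantly identified with $G/H$ for $s>0$, while $G_0/H_0 = G_\theta/H_\theta$. Suppose now that $\Gamma_0\subset G_0$ acts properly discontinuously and cocompactly on $G_0/H_0$; by Selberg's lemma we may take $\Gamma_0$ torsion-free, so that $M_0 := \Gamma_0\backslash G_0/H_0$ is a closed manifold carrying a complete $(G_0,G_0/H_0)$-structure whose holonomy is the inclusion $\rho_0\colon\Gamma_0\hookrightarrow G_0$.

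Two steps then remain. (a)~Deform the $(G_0,G_0/H_0)$-structure on $M_0$ to a $(G_s,G_s/H_s)$-structure for all small $s>0$, with holonomy a representation $\rho_s\colon\Gamma_0\to G_s$, where $G_s\cong G$: this calls for a relative Ehresmann--Thurston deformation principle adapted to the analytic family $\{(G_s,G_s/H_s)\}_s$ rather than to a fixed geometry, the obstruction to a first-order deformation living in $H^2(\Gamma_0;\g)$; one would try to kill it using structural features of the proper cocompact $\Gamma_0$-action on the affine bundle $G_\theta/H_\theta\to K/K_H$, together with the constraint $\dim(G/H) = \dim_+(G/H)+\dim(K/K_H)$ from~\eqref{eqn:dim_+}. (b)~Show that the deformed structure is \emph{complete}, i.e.\ that $\rho_s(\Gamma_0)$ acts properly discontinuously (it acts cocompactly, and freely, automatically for $\rho_s$ close to $\rho_0$); here one would appeal to a stability theorem in the spirit of Kobayashi's local rigidity and Kassel's results on stability of properness under deformation \cite{Kas12}, extended to the setting where the ambient group $G_s$ itself varies analytically with the parameter. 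Together these would yield $\rho_s(\Gamma_0)\backslash G_s/H_s\cong M_0$, a compact quotient of~$G/H$.

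The main obstacle will be step~(b): for a noncompact homogeneous space a small deformation of a complete $(G,G/H)$-structure need not remain complete, and the usual stability results do not apply verbatim since the group $G_s$ is also moving, so controlling properness uniformly across the family seems to require genuinely new input. This is no accident: combined with Kobayashi's Conjecture~\ref{conj:ExistenceStandardQuotients}, Conjecture~\ref{conj:GeometricFibrations=>CompactQuotients} predicts ``exotic'' compact quotients for spaces such as $\Hyp_\R^{4,2}$ (see Question~\ref{question:H42}), so a complete proof along these lines would in particular have to construct new compact Clifford--Klein forms — which is why both steps~(a) and~(b), and indeed the conjecture itself, remain wide open.
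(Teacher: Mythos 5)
This statement is a \emph{conjecture}: the paper offers no proof of Conjecture~\ref{conj:GeometricFibrations=>CompactQuotients}, only circumstantial evidence (the fact that all known obstructions to compact quotients are also obstructions to local geometric fibrations), and it explicitly notes the tension with Conjecture~\ref{conj:ExistenceStandardQuotients}. Your proposal is likewise not a proof, and you candidly say so; to that extent there is nothing to ``verify.'' Your opening reduction via Proposition~\ref{prop:LocalFibrTangential} is correct but is only a restatement of the hypothesis, and the Cartan motion group contraction $\g_s$ you set up is indeed the standard degeneration realizing $G_\theta/H_\theta$ as the limit of $G/H$. The substantive content would be entirely in your steps~(a) and~(b), neither of which is carried out.

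Beyond incompleteness, the strategy has a structural flaw worth naming: it cannot work even in cases where the conjecture is \emph{known to be true}. Any discrete subgroup $\Gamma_0$ of $G_0 = K \ltimes \p$ acting properly discontinuously and cocompactly on $G_0/H_0$ is virtually abelian (a compact-by-abelian group has only virtually abelian discrete subgroups, by Bieberbach-type arguments), of cohomological dimension $\dim(\p) - \dim(\p_H) = \dim_+(G/H)$. Your deformation keeps the abstract group fixed, so the putative compact quotient of $G/H$ would have virtually abelian fundamental group of that rank. But for $\Gamma \simeq \Z^d$ to act properly on $G/H$, its Cartan projection must escape to infinity away from $\mu(H)$ while remaining within bounded distance of a cone of dimension at most $\rk_\R(G)$, forcing $d \leq \rk_\R(G)$. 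Already for $\Hyp^{4,1}_\R = \OO(4,2)/(\OO(4,1)\times\OO(1))$ one has $\dim_+ = 4 > 2 = \rk_\R(\OO(4,2))$, so no virtually abelian group acts properly cocompactly there --- yet $\Hyp^{4,1}_\R$ satisfies both the hypothesis and the conclusion of the conjecture (standard quotients via $\U(2,1)$). Hence no deformation of a tangential compact quotient, however clever about steps~(a) and~(b), can reach the known compact quotients of $G/H$: the fundamental groups are not even abstractly isomorphic. Any proof of the conjecture must produce the group $\Gamma$ acting on $G/H$ by some mechanism other than deforming $\Gamma_0$.
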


As mentioned in Section~\ref{ss:MotivationGeometricFibration}, this conjecture is incompatible with Kobayashi's Conjecture~\ref{conj:ExistenceStandardQuotients}, the case of $\Hyp_\R^{4,2}$ being an interesting test (Question~\ref{question:H42}).
Evidence in favor of Conjecture~\ref{conj:GeometricFibrations=>CompactQuotients} is provided by the fact that many obstructions to the existence of compact quotients of $G/H$ are also obstructions to the existence of local geometric fibrations, summarized as follows:

\begin{prop} 
Let $G/H$ be a reductive homogeneous space. Assume $H$ is neither compact nor cocompact in~$G$, and does not contain an indiscrete closed normal subgroup of~$G$. Assume that one of the following conditions holds:
\begin{enumerate}[label = {\upshape (\arabic*)}]
  \item there exists a reductive subgroup $H' \subset G$ satisfying $\mu(H') \subset \mu(H)$ and $\dim_+(H') > \dim_+(H)$,
  \item the fixed point set of the opposition involution $\iota \colon \ab^+ \to \ab^+$ of $G$ is contained in $\mu(H)$;,
  \item the $G$-invariant differential form $({\pi_K}_* \circ \pi_H^\ast)(\vol_{G/H})$ on $G/K$ vanishes,
  \item the normal sphere bundle to $K/K_H$ in $G/H$ is fiber-homotopically trivial.
\end{enumerate}
Then the following conclusions both hold:
\begin{enumerate}[label = {\upshape (\alph*)}]
  \item the space $G/H$ does not admit compact quotients,
  \item the space $G/H$ does not admit local fibrations.
\end{enumerate}
Here, $\mu \colon G \to \ab^+$ denotes the Cartan projection associated to the decomposition $G = K \exp(\ab^+) K$, the maps $\pi_K \colon G/K_H \to G/K$ and $\pi_H \colon G/K_H \to G/H$ are the projections, ${\pi_K}_*$ denotes the integration along fibers, and $\vol_{G/H}$ is the $G$-invariant volume form on $G/H$ (which exists and is unique up to a nonzero multiplicative scalar). 
\end{prop}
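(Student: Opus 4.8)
The plan is to treat the two conclusions separately. For conclusion~(b), I would invoke the equivalence (i)~$\Leftrightarrow$~(iv) of Proposition~\ref{prop:EquivalentConditionsLocalFibrations}: $G/H$ admits local geometric fibrations if and only if the tangential homogeneous space $G_\theta/H_\theta = (K\ltimes\p)/(K_H\ltimes\p_H)$ admits compact quotients, so it suffices to rule out compact quotients of $G_\theta/H_\theta$ under each hypothesis. The unifying point is that every hypothesis in the statement is \emph{tangential in nature}. Writing $\g_\theta = \kk\oplus\p$ for the Cartan motion Lie algebra (with $\p$ abelian), one has $\p_\theta = \p$, $\q_\theta = \g_\theta/\h_\theta = (\kk\cap\q)\oplus(\p\cap\q) = \q$, and $\p_\theta\cap\q_\theta = \p\cap\q$, so the normal bundle $K\times_{K_H}(\p\cap\q)$ of $K/K_H$ in $G_\theta/H_\theta$ \emph{equals} $N$; moreover $G_\theta = K\exp(\ab^+)K$ has the same $\ab^+$ and the same opposition involution $\iota$, the groups $G_\theta$ and $H_\theta = K_H\ltimes\p_H$ are unimodular so $G_\theta/H_\theta$ carries an invariant volume form, and by Kobayashi--Yoshino's analysis of tangential homogeneous spaces \cite[\S\,5]{KY05} the tangential Cartan projection of a reductive-type subgroup has the same relevant comparison behavior with respect to $\mu(H)$ as $\mu$ itself. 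Consequently each of (1)--(4), together with the standing assumptions on~$H$, translates into the corresponding statement for $H_\theta\subset G_\theta$: for the subgroup $H'$ in~(1) one has $\dim_+(H'_\theta) = \dim_+(H') > \dim_+(H) = \dim_+(H_\theta)$ and $\mu_\theta(H'_\theta)\subset\mu_\theta(H_\theta)$, and the pushforward $({\pi_K}_*\circ\pi_H^\ast)(\vol_{G/H})$ is, up to normalization, the same $G$-invariant form on $G/K$ whether computed from $G/H$ or from $G_\theta/H_\theta$ (it is determined by a linear-algebraic recipe on~$\p$ involving only $\p\cap\q$, $\p\cap\h$, $\kk\cap\q$, $\kk\cap\h$), so in particular it vanishes for one if and only if it vanishes for the other.

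For conclusion~(a) I would simply cite the four known obstructions for $G/H$ itself: (1) is a form of Kobayashi's properness-and-dimension obstruction \cite{Kob89,Kob92coh}, (2) is Benoist's criterion \cite{Ben96} precluding a properly discontinuous action of any infinite discrete subgroup (hence, since $H$ is noncompact, precluding compact quotients), (3) is the cohomological volume-class obstruction of Tholozan and Morita \cite{Tho15+,Mor17}, and (4), namely that $S(N)$ is \emph{not} fiber-homotopically trivial, is exactly the contrapositive of our main Theorem~\ref{thm:MainTheorem}.

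For conclusion~(b) under hypotheses (1), (2), or (3), I would re-run the \emph{same} arguments of Kobayashi, Benoist, and Tholozan--Morita for the Cartan motion pair $G_\theta/H_\theta$ in place of $G/H$: the inputs those arguments require---the decomposition $G_\theta = K\exp(\ab^+)K$, the tangential Cartan projection together with Kobayashi's properness criterion, the opposition involution, and the invariant volume form with integration along the fibers of $G_\theta/K_H \to G_\theta/K$ and $G_\theta/K_H \to G_\theta/H_\theta$---are all available and behave as in the reductive case, and by the first paragraph the relevant quantities coincide with those for $G/H$, so the hypothesis holds for $G_\theta/H_\theta$ and excludes its compact quotients. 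Under hypothesis~(4) the argument needs no tangential detour: Lemma~\ref{lem:ImplicationsTriviality} gives the implications (local geometric fibration)~$\Rightarrow$~($N$ trivial)~$\Rightarrow$~($S(N)$ fiber-homotopically trivial), so the contrapositive of its first arrow immediately shows that $G/H$ admits no local geometric fibration.

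The main obstacle is not the logical skeleton above but making rigorous the transfer of the obstruction-theoretic machinery to the Cartan motion setting in cases (2) and~(3) of conclusion~(b): one must check that Benoist's continuous-cohomology argument and Kobayashi's Cartan-projection properness theory genuinely apply to $K\ltimes\p$ (and not merely to a reductive $G$), and that the pushforward volume form of $G_\theta/H_\theta$ coincides with that of $G/H$ up to a nonzero scalar. Much of this is implicit in \cite{KY05}, but it should be spelled out explicitly; the remaining verifications---that hypothesis~(1) and the standing assumptions pass between $H$ and $H_\theta$ under the dictionary $\g\leftrightarrow\g_\theta$---are routine.
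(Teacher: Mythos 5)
Your treatment of conclusion~(a) coincides with the paper's: (1) is Kobayashi's criterion \cite{Kob92coh}, (2) is Benoist's \cite{Ben96}, (3) is the volume obstruction of \cite{Tho15+,Mor17}, and (4) is the contrapositive of Theorem~\ref{thm:MainTheorem} (you correctly read the intended hypothesis as ``$S(N)$ is \emph{not} fiber-homotopically trivial''). Likewise (4)~$\Rightarrow$~(b) via Lemma~\ref{lem:ImplicationsTriviality} is exactly what the paper does, and your route to (1)~$\Rightarrow$~(b) through Proposition~\ref{prop:EquivalentConditionsLocalFibrations} and the tangential space $G_\theta/H_\theta$ is consistent with the reference the paper gives (Tojo~\cite{Toj21}, who works precisely in the tangential setting). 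For (3)~$\Rightarrow$~(b) the paper simply cites the direct proof in \cite{Tho15+} rather than transferring the volume form to the Cartan motion group; your alternative would require the identification of the two pushforward forms that you assert, which is plausible but not free.

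The genuine gap is (2)~$\Rightarrow$~(b). The paper does \emph{not} prove this implication: it states explicitly that it ``will be proved in further work.'' Your proposal claims to obtain it by re-running Benoist's argument for $K\ltimes\p$ and dismisses the verification as ``implicit in \cite{KY05}'' and otherwise routine, but this is precisely the nontrivial content. Benoist's obstruction is not merely a properness criterion that one can read off from the tangential Cartan projection of \cite{KY05}; it rests on an asymptotic analysis of the Cartan projection of a group acting properly and \emph{cocompactly}, combined with a non-properness/volume-type argument specific to the reductive structure of~$G$. Whether that machinery survives the degeneration to the non-reductive group $K\ltimes\p$ is exactly what the authors defer. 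As written, your argument for (2)~$\Rightarrow$~(b) is an unproven claim, not a proof, and it cannot be patched by the cited sources.
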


The implication (1) $\Rightarrow$ (a) was proved by Kobayashi \cite[Th.\,1.5]{Kob92coh}, and (1) $\Rightarrow$ (b) can be proved in the same way, as remarked by Tojo~\cite[\S 5]{Toj21}. Note that the Calabi--Markus condition $\rk_\R(H)= \rk_\R(G)$ is a special case of condition~(1), with $H'=G$.

The implications (3) $\Rightarrow$ (a) and (3) $\Rightarrow$ (b) were proved by the third author in \cite{Tho15+}. This obstruction is closely related to the one developed by the second author in \cite{Mor15,Mor17} and contains Kobayashi--Ono's obstruction \cite[Cor.\,5]{KO90}, \cite[Prop.\,4.10]{Kob89} as well as Benoist--Labourie's \cite{BL92}.

The implications (4) $\Rightarrow$ (a) and (4) $\Rightarrow$ (b) are respectively Theorem~\ref{thm:MainTheorem} and Lemma~\ref{lem:ImplicationsTriviality}.

The implication (2) $\Rightarrow$ (a) was proved by Benoist \cite[\S 1.2, Cor.\,1]{Ben96}.
The remaining implication (4) $\Rightarrow$ (b) will be proved in further work.

There are other obstructions to the existence of compact quotients that we do not know how to relate to local geometric fibrations, namely, the dynamical obstructions developed by Labourie--Mozes--Zimmer \cite{Zim94,LMZ95,LZ95}, the representation-theoretic obstructions of Margulis \cite[\S 3]{Mar97} and Shalom \cite[Th.\,1.7]{Sha00}, or the geometrico-dynamical obstruction of the first and third authors \cite[\S 1.6]{KT24+}. However, we do not know any counterexample to Conjecture \ref{conj:GeometricFibrations=>CompactQuotients} produced by these obstructions. For instance, the main achievement of the approach of Labourie--Mozes--Zimmer is the fact that $\SL(p+q,\R)/\SL(p,\R)$ does not admit compact quotients for $p\geq 2$ and $q\geq 3$, a result which is also covered by our Theorem \ref{thm:SLn/SLm}.

\appendix

\section{More on topological \texorpdfstring{$\Ktheory$}{K}-theory and the \texorpdfstring{$J$}{J}-group} \label{s:AppendixKTheory}

In Section \ref{s:RemindersKTheory}, we introduced the basics of topological $\Ktheory$-theory and the $J$-group that were sufficient for the proofs of Theorem~\ref{thm:OtherSymmetric}~(1), Theorem~\ref{thm:NonSymmetric}~(1)--(3) and (6), and Theorem~\ref{thm:IndefiniteGrassmannianRCH}  in Sections \ref{s:IndefiniteGrassmannians}, \ref{ss:GrassR}, \ref{sss:GLC/GLC}, and \ref{sss:SpC/SpC}.

In this appendix, we collect further results that are used in Section \ref{s:ComputationsKTheory}. Results in Section~\ref{ss:AdamsOperations} 
are used in the remaining parts of Sections~\ref{ss:GrassC} and \ref{ss:GrassH}, and  Section~\ref{ss:OtherSymmetric}, whereas results in the other sections are needed only in Section~\ref{ss:OtherSymmetric}. We refer for instance to \cite{Ati67,Bot69,Kar78} for more details.

\subsection{Adams operations and the Adams conjecture} \label{ss:AdamsOperations}

In this Section~\ref{ss:AdamsOperations}, we briefly recall the definition and basic properties of \emph{Adams operations}, introduced in Adams \cite{Ada62}, and then state the \emph{Adams conjecture} which enables us to compute $J$-groups by using these operations. 

For $n \geq 1$, let $\sigma_{n,1}, \dots, \sigma_{n,n} \in \Z[x_1, \dots, x_n]$ be the elementary symmetric polynomials in $n$ variables, namely, 
\[
\sigma_{n,1} = \sum_{1 \leq i \leq n} x_i, \qquad \sigma_{n,2} = \sum_{1 \leq i<j \leq n} x_ix_j, \qquad \dots, \qquad \sigma_{n,n} = \prod_{1 \leq i \leq n} x_i.
\]
For each $k \geq 1$, let $Q_k \in \Z[s_1, \dots, s_k]$ be the $k$-variable polynomial characterized by the property 
\[
\sum_{1 \leq i \leq n} x_i^k = Q_k(\sigma_{n,1}, \dots, \sigma_{n,k}) \qquad (n \geq k).
\]
For example, 
\[
Q_1 = s_1, \qquad Q_2 = s_1^2 - 2s_2, \qquad Q_3 = s_1^3 - 3s_1s_2 + 3s_3, \qquad \dots. 
\]

\begin{defn}
Let $X$ be a compact Hausdorff space and $k \geq 1$. We define the \emph{$k$-th Adams operation} 
\[
\psi^k \colon \KO^0(X) \to \KO^0(X)
\]
to be the group homomorphism characterized by 
\[
\psi^k(\br{E}) = Q_k(\br{E}, \br{\Lambda^2 E}, \dots, \br{\Lambda^k E}) \qquad (E \in \Vect_\R(X)).  
\]
We similarly define $\psi^k \colon \KU^0(X) \to \KU^0(X)$. 
\end{defn}

Note that the first Adams operation $\psi^1$ is the identity map. 

\begin{rmk}
One can define the Adams operations on $\KO^0(X) \oplus \KSp^0(X)$, extending those on $\KO^0(X)$ (see Allard \cite[Ch.\,1]{All73}). We do not need them in this paper. 
\end{rmk}

The Adams operations are functorial, 
\ie for any continuous map $f \colon X \to Y$ of compact Hausdorff spaces and any $k \geq 1$, the following diagram commutes: 
\[
\begin{tikzcd}
\KU^0(Y) \ar[r, "\psi^k"] \ar[d, "f^\ast"'] & \KU^0(Y) \ar[d, "f^\ast"] \\ 
\KU^0(X) \ar[r, "\psi^k"] & \KU^0(X),
\end{tikzcd}
\qquad
\begin{tikzcd}
\KO^0(Y) \ar[r, "\psi^k"] \ar[d, "f^\ast"'] & \KO^0(Y) \ar[d, "f^\ast"] \\ 
\KO^0(X) \ar[r, "\psi^k"] & \KO^0(X).
\end{tikzcd}
\]
It follows that, if $X$ is a based compact Hausdorff space, the Adams operations restricts to: 
\[
\psi^k \colon \widetilde{\KU}^0(X) \to \widetilde{\KU}^0(X), \qquad 
\psi^k \colon \widetilde{\KO}^0(X) \to \widetilde{\KO}^0(X).  
\]

Here are some basic properties of Adams operations: 

\begin{fact}[{\cite[Th.\,5.1 and Cor.\,5.2]{Ada62}, \cite[Lem.\,A.2]{AW65}}]\label{fact:AdamsProperties}
Let $X$ be a compact Hausdorff space and $k \geq 1$. 
\begin{enumerate}[label = {\upshape (\arabic*)}]
\item $\psi^k \colon \KO^0(X) \to \KO^0(X)$ and $\psi^k \colon \KU^0(X) \to \KU^0(X)$ 
are ring endomorphisms. 
\item The following two diagrams commute: 
\[
\begin{tikzcd}
\KO^0(X) \ar[d, "\cc "'] \ar[r, "\psi^k"] & \KO^0(X) \ar[d, "\cc "] \\
\KU^0(X) \ar[r, "\psi^k"] & \KU^0(X),
\end{tikzcd}
\qquad
\begin{tikzcd}
\KU^0(X) \ar[d, "\rr "'] \ar[r, "\psi^k"]& \KU^0(X) \ar[d, "\rr "] \\
\KO^0(X) \ar[r, "\psi^k"] & \KO^0(X),
\end{tikzcd}
\]
\item If $L$ is a real (\resp complex) line bundle over $X$, then $\psi^k([L]) = [L]^k$ in $\KO^0(X)$ (\resp in $\KU^0(X)$).  
\end{enumerate}
\end{fact}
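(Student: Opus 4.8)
The plan is to derive all three parts from the defining formula $\psi^k([E]) = Q_k([E],[\Lambda^2E],\dots,[\Lambda^kE])$, using only two standard inputs: that the exterior powers of a line bundle above the first vanish, and the splitting principle for complex vector bundles in the form that produces a pullback which is injective on $\KO^0$ as well as on $\KU^0$.

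Part~(3) I would dispatch first, as it is immediate: if $L$ is a line bundle then $\Lambda^jL=0$ for $j\geq 2$, so $\psi^k([L])=Q_k([L],0,\dots,0)$, and specializing the defining relation of $Q_k$ to $n=1$ yields the polynomial identity $Q_k(s_1,0,\dots,0)=s_1^k$. Additivity of $\psi^k$ is built into the definition, so in part~(1) only multiplicativity remains. For this I would first observe that for a sum of line bundles $E=\bigoplus_iL_i$ one has $[\Lambda^jE]=\sigma_j([L_1],\dots,[L_n])$, hence $\psi^k([E])=Q_k(\sigma_1,\dots,\sigma_k)=\sum_i[L_i]^k$ directly from the definition of $Q_k$. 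Then, given bundles $E,F$ over $X$, I would pass to $p\colon Y\to X$ with $p^\ast$ injective and $p^\ast E=\bigoplus_iL_i$, $p^\ast F=\bigoplus_jM_j$; naturality of $\psi^k$ gives
\[
p^\ast\psi^k([E][F])=\psi^k\Big(\sum_{i,j}[L_i\otimes M_j]\Big)=\sum_{i,j}[L_i]^k[M_j]^k=p^\ast\big(\psi^k([E])\,\psi^k([F])\big),
\]
so $\psi^k([E][F])=\psi^k([E])\psi^k([F])$ by injectivity, and the general case follows by bilinearity since every class is a difference of classes of bundles; together with $\psi^k(1)=1^k=1$ from part~(3) this proves part~(1). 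For the complexification square in part~(2) I would simply use that $\cc$ is a ring homomorphism commuting with exterior powers, so $\psi^k(\cc[E])=\cc\,Q_k([E],[\Lambda^2E],\dots)=\cc\,\psi^k([E])$, and extend additively.

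The substantive point is the realification square in part~(2), since $\rr$ is not multiplicative. Both $\psi^k\circ\rr$ and $\rr\circ\psi^k$ are additive and natural, so by the complex splitting principle it suffices to check $\psi^k(\rr L)=\rr([L]^k)=[\rr(L^k)]$ for a single complex line bundle $L$. The bundle $\rr L$ has real rank $2$ and is canonically oriented by its complex structure, so $\Lambda^2(\rr L)\cong\underline{\R}$ and $\Lambda^j(\rr L)=0$ for $j\geq 3$; the Newton relations underlying $Q_k$ then collapse to the two-term recursion $\psi^k(\rr L)=[\rr L]\,\psi^{k-1}(\rr L)-\psi^{k-2}(\rr L)$ for $k\geq 2$, with $\psi^0(\rr L)=2$ and $\psi^1(\rr L)=[\rr L]$. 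Independently, reducing the structure group of $L$ to $\U(1)$ via a Hermitian metric, the elementary fact that a representation of a torus is determined by its complexification yields the vector bundle isomorphism
\[
\rr L\otimes_\R\rr(L^{m})\;\cong\;\rr(L^{m+1})\oplus\rr(L^{m-1})\qquad(m\geq 1),
\]
where one also uses that $\overline{L^{j}}$ and $L^{j}$ have the same underlying real bundle. A straightforward induction on $k$ combining this isomorphism with the recursion then gives $\psi^k(\rr L)=[\rr(L^k)]$, and additivity plus the splitting principle propagate the identity to all of $\KU^0(X)$.

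I expect the realification square to be the main obstacle. The work there is threefold: correctly identifying the rank-$2$ Newton recursion for $\rr L$, establishing the splitting isomorphism above, and — underlying the whole reduction — ensuring that the complex splitting principle can be applied with pullback injective on $\KO^0$ and not merely on $\KU^0$, which rests on the projective bundle formula in $\KO$-theory. The remaining ingredients (part~(3), additivity, and the complexification square) are routine.
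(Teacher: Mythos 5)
First, note that the paper offers no proof of this statement: it is recorded as a \emph{Fact} with citations to \cite[Th.\,5.1, Cor.\,5.2]{Ada62} and \cite[Lem.\,A.2]{AW65}, so you are reconstructing arguments from the literature rather than competing with an argument in the text. Your overall route --- reduce to line bundles by the splitting principle and compute there --- is indeed the route of the cited sources, and your line-bundle computations are correct: part~(3), the two-variable Newton recursion $\psi^k(\rr L) = [\rr L]\,\psi^{k-1}(\rr L) - \psi^{k-2}(\rr L)$ (with $\psi^0 = 2$), and the isomorphism $\rr L \otimes_\R \rr(L^m) \cong \rr(L^{m+1}) \oplus \rr(L^{m-1})$ all check out, and the induction giving $\psi^k(\rr L) = [\rr(L^k)]$ is sound.

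There are, however, two gaps, both concerning the injectivity half of the splitting principle, and only one of them is acknowledged. (a) For multiplicativity of $\psi^k$ on $\KO^0(X)$ you pull back real bundles $E,F$ to sums of \emph{real} line bundles along a $\KO^0$-injective map. That form of the real splitting principle is not available: a real vector bundle does not in general split into real line bundles after a pullback that is injective on $\KO^0$ (the real flag bundle $\OO(n)/\OO(1)^n$ has vanishing Euler characteristic already for $n=2$, so even the transfer argument fails). The splitting principle actually used in \cite[\S 5]{Ada62} splits a real bundle into real line bundles \emph{and} realifications of complex line bundles; consequently the multiplicativity argument on $\KO^0$ must also treat products involving the $2$-plane pieces $\rr L$, which requires the identity $\psi^k(\rr L) = \rr(L^k)$ that you only establish later --- the logical order of (1) and (2) has to be rearranged, and one cannot get around this via the complexification square, since $\cc$ has $2$-torsion kernel. (b) For the realification square you correctly identify that the complex splitting map must be injective on $\KO^0$ of the total space, but you attribute this to ``the projective bundle formula in $\KO$-theory.'' No such clean formula exists: $\KO^*$ of a complex projective bundle is not a free module over $\KO^*$ of the base (already $\widetilde{\KO}{}^*(\CP^1)$ is not free over $\KO^*(\pt)$). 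The injectivity statement is true, but it is a separate nontrivial input --- it is precisely the technical content of \cite[\S 5]{Ada62} and the appendix of \cite{AW65} --- and as written your proposal leaves the step on which the entire reduction rests unsupported.
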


We use the following computations in Section \ref{s:ComputationsKTheory}:

\begin{lem}\label{lem:SecondAdams}
Let $X$ be a compact Hausdorff space. 
\begin{enumerate}[label = {\upshape (\arabic*)}]
\item Let $L$ be a complex line bundle over $X$, and let $u = [L]-1 \in \widetilde{\KU}^0(X)$. Then 
\[
u^2 = \psi^2(u) - 2u. 
\]
\item Let $L$ be a quaternionic line bundle over $X$, and let $v = [L]-1 \in \widetilde{\KSp}^0(X)$. Then 
\[
2 v^2 = \psi^2 (\rr \cc' v) - 4\rr \cc' v. 
\]
\end{enumerate}
\end{lem}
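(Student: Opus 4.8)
The plan is to prove both identities by reducing to universal formulas in the representation ring of a point (equivalently, by direct manipulation with the defining property of $\psi^2$), using Fact \ref{fact:AdamsProperties}~(3) as the key input. For part (1), write $[L] = 1 + u$. Since $L$ is a complex line bundle, Fact \ref{fact:AdamsProperties}~(3) gives $\psi^2([L]) = [L]^2 = (1+u)^2 = 1 + 2u + u^2$. On the other hand $\psi^2$ is additive, so $\psi^2([L]) = \psi^2(1) + \psi^2(u) = 1 + \psi^2(u)$, because $\psi^2$ fixes the class of a trivial bundle (it is a ring endomorphism by Fact \ref{fact:AdamsProperties}~(1), hence fixes $1$, and is additive). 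Comparing the two expressions yields $1 + \psi^2(u) = 1 + 2u + u^2$, i.e. $u^2 = \psi^2(u) - 2u$, as desired.

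For part (2), the strategy is to pass from the quaternionic line bundle $L$ to the rank-$4$ real bundle $\rr\cc'(L)$ and exploit that $\rr\cc'([L]) = [L] + \overline{[L]}$ after complexifying; more directly, I would use the underlying complex rank-$2$ bundle $E = \cc'(L)$, which as a $\KU^0$-class satisfies $[E] = [E']$ for its conjugate $\overline{E}$ (quaternionic structures are self-conjugate), and $\Lambda^2 E$ is trivial since $E$ carries a symplectic form, so $[E]^2 = \psi^2([E]) + 2[\Lambda^2 E]$ — wait, more carefully: for a rank-$2$ complex bundle one has $\psi^2([E]) = [E]^2 - 2[\Lambda^2 E]$ from $Q_2 = s_1^2 - 2 s_2$, and $[\Lambda^2 E] = 1$ here. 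Thus in $\KU^0(X)$, writing $w = [E] - 2 = \cc v$ (note $\rk E = 2$), one gets $\psi^2(w) = \psi^2([E]) - \psi^2(2) = ([E]^2 - 2) - 2 = (w+2)^2 - 4 = w^2 + 4w$. Now apply $\rr$: using that $\rr$ commutes with $\psi^2$ (Fact \ref{fact:AdamsProperties}~(2)) and $\rr\cc v = 2v$... hmm, but $w^2$ lives in $\KU^0$ and I want $\rr(w^2)$ in terms of $v^2$ in $\KSp^0$; here I would invoke Remark \ref{rmk:KSpProduct}, which says $(\cc' x)(\cc' y) = \cc(xy)$, so $\cc(v^2) = w^2$, hence $\rr(w^2) = \rr\cc(v^2) = 2 v^2$. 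Therefore $\rr\psi^2(w) = \psi^2(\rr w) = \psi^2(\rr\cc' v)$ and $\rr(w^2 + 4w) = 2v^2 + 4\rr\cc v = 2v^2 + 4\rr\cc' v$...

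I need to be careful about the distinction between $\cc$ (complexification of a \emph{real} bundle) and $\cc'$ (forgetting the quaternionic structure to get a \emph{complex} bundle), and the composite $\rr\cc'$ sending $\KSp^0 \to \KO^0$. The cleanest route: set $z = \rr\cc' v \in \widetilde{\KO}^0(X)$, the real rank-$4$ reduction of $L$; then $\cc(z) = \cc\rr(\cc' v) = \cc' v + \overline{\cc' v} = 2\cc' v$ since $\cc' v$ is self-conjugate. So $\cc(z) = 2\cc' v$, whence $\cc(z)^2 = 4(\cc' v)^2 = 4\cc(v^2)$ by Remark \ref{rmk:KSpProduct}, i.e. $\cc(z^2) = 4\cc(v^2)$. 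The complexification map is not injective in general, so this is the delicate point — the main obstacle is justifying that one can cancel $\cc$ or, better, avoiding this cancellation entirely. I expect the right move is instead to compute $\psi^2(z)$ directly: $z = \rr\cc' v$ and $\cc' L$ is a rank-$2$ complex bundle with trivial determinant, so $\psi^2(\cc' L) = (\cc' L)^2 - 2$; applying $\rr$ and using $\rr\psi^2 = \psi^2\rr$, together with $\rr((\cc' L)^2) = \rr\cc(( \cc' L \text{ as Sp-class})^2)$ interpreted via Remark \ref{rmk:KSpProduct} — giving $\rr\cc(v + 1)^2\cdot(\text{const})$ — one extracts $2v^2 + 4 z = \psi^2(z) $, i.e. $2v^2 = \psi^2(\rr\cc' v) - 4\rr\cc' v$. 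I would organize the final write-up around this $\psi^2(z)$ computation, spelling out each $\rr$/$\cc$/$\cc'$ identity from Section \ref{s:RemindersKTheory}; the book-keeping with the real/complex/quaternionic functors is the only genuine subtlety, everything else being the formal identity $\psi^2([L]) = [L]^k$ for line bundles.
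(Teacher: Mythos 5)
Your argument is correct and is essentially the paper's own proof: part~(1) is identical, and part~(2) rests on the same three ingredients --- $\Lambda^2(\cc' L) \simeq \underline{\C}$ (hence $\psi^2(\cc'[L]) = (\cc'[L])^2 - 2$), Remark~\ref{rmk:KSpProduct} to identify $(\cc' v)^2$ with $\cc(v^2)$, and then applying $\rr$ using $\rr\cc = 2$ and $\rr\psi^2 = \psi^2\rr$. In the final write-up, fix the two places where you write $\cc v$ for $\cc' v$ (and note that $v^2$ lies in $\KO^0(X)$, not $\KSp^0(X)$), and delete the closing digression: your display $\psi^2(\rr\cc' v) = \rr(w^2 + 4w) = 2v^2 + 4\rr\cc' v$ already completes the proof, whereas the alternative of cancelling $\cc$ from $\cc(z^2) = 4\cc(v^2)$ would, as you yourself suspect, founder on the non-injectivity of $\cc$.
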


\begin{proof}
(1) By Fact~\ref{fact:AdamsProperties}~(3), we have 
\[
u^2 = ([L]-1)^2 = \psi^2([L]) - 2[L] + 1 = \psi^2(u) - 2u. 
\]

(2) Since $\cc' L$ is a complex $2$-plane bundle with trivial determinant, we have $\Lambda^2 (\cc' L) = \underline{\C}$. This and Remark~\ref{rmk:KSpProduct} together yield 
\[
\psi^2(\cc' [L]) = (\cc' [L])^2 - 2[\Lambda^2 (\cc' L)] = \cc ([L]^2) - 2.
\]
We thus have 
\[
\cc (v^2) = \cc ([L]^2 - 2\rr \cc' [L] + 4)
= ( \psi^2(\cc' [L]) + 2 ) - 4\cc' [L] + 4
= \psi^2(\cc' v) - 4\cc' v
\]
and, by Fact~\ref{fact:AdamsProperties}~(2),
\[
2v^2 = \rr \cc(v^2) = \psi^2(\rr \cc' v) - 4\rr \cc' v. \qedhere
\]
\end{proof}

Let us now state the \emph{Adams conjecture} \cite[Conj.\,1.2]{Ada63}, which keeps its name even though it was solved by Quillen \cite[Th.\,1.1]{Qui71} more than 50 years ago:

\begin{fact}[Quillen]\label{fact:AdamsConjecture1}
Let $X$ be a finite CW complex and $k \geq 1$. Then for any $x \in \KO^0(X)$, there exists $e \in \N$ such that
$k^e J(\psi^k(x) - x) = 0$ in $J(X)$. 
\end{fact}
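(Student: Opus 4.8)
The plan is to follow Quillen's proof of the Adams conjecture, whose two pillars are Brauer lifting from finite groups of Lie type and the triviality of the geometric Frobenius on profinitely completed homotopy types. I would start with the standard elementary reductions. Since $\psi^{ab} = \psi^a \circ \psi^b$ and $\psi^1 = \mathrm{id}$, a telescoping identity reduces the case of a general $k$ to that of a prime, so assume $k$ is prime. The factor $k^e$ in the statement annihilates the $k$-primary part of $J(\psi^k x - x)$ once $e$ is large (the reduced $J$-group $\widetilde{J}(X)$ is finite since $X$ is a finite complex), so it remains to kill, for each prime $m \neq k$, the $m$-primary part. Localizing at $m$, using Dirichlet's theorem to approximate $k$ $m$-adically by primes $p \neq m$, and using that $J$ sees $\psi^u$ only through a bounded congruence class of $u$ (again because $\widetilde{J}(X)$ is finite), I reduce to the following universal claim: for every prime $p$ and every prime $m \neq p$, the element $J(\psi^p x - x)$ vanishes in $J(X) \otimes \Z_{(m)}$.

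Next I would pass to a classifying space. As $X$ is a finite CW complex, $\widetilde{\KO}^0(X) = [X, \mathrm{BO}]$ and the class of $x$ factors through a finite skeleton; complexifying, or working directly over $\mathrm{BO}$, the problem becomes one about the tautological bundle over large skeleta of $\mathrm{BU}$. The decisive input is Quillen's theorem that, after completion away from $p$, a range of $\mathrm{BU}$ is realized by the classifying space of the discrete group $\GL(\overline{\mathbb F}_p)$: the Brauer character construction gives a map $\mathrm B\GL(\overline{\mathbb F}_p) \to \mathrm{BU}$ that is a $\Z[1/p]$-homology isomorphism in a range and that intertwines the Adams operation $\psi^p$ on $\KU$ with the self-map $\mathrm B\Phi$ induced by the Frobenius automorphism $\Phi \colon g \mapsto g^{(p)}$ of $\GL(\overline{\mathbb F}_p)$.

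It then remains to show that $\mathrm B\Phi$, composed with the $J$-homomorphism and completed away from $p$, is homotopic to the identity. This is the geometric heart: one realizes the $m$-completion of $\mathrm B\GL(\overline{\mathbb F}_p)$ as (a portion of) the \'etale homotopy type of a classifying space in algebraic geometry, observes that the Frobenius is there an endomorphism of \emph{schemes}, and invokes \'etale homotopy comparison theorems (Artin--Mazur, Friedlander) together with Quillen's computation of the $K$-theory of finite fields to conclude that the Frobenius twist is invisible to spherical fibrations --- intuitively, the $J$-homomorphism is blind to the ``arithmetic'' data the Frobenius permutes. Post-composing the Brauer lift with $J \colon \mathrm{BO} \to \mathrm{BSG}$, the classifying space for stable spherical fibrations, and using compatibility of $J$ with $m$-completion, one obtains $J(\psi^p x - x) = 0$ in $J(X) \otimes \Z_{(m)}$; running over all $m \neq p$ and reassembling via the first paragraph yields the desired exponent $e$.

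I expect the main obstacle to be precisely this last step --- the bridge between the classical homotopy type of $\mathrm B\GL(\overline{\mathbb F}_p)$ and an \'etale homotopy type carrying a Frobenius action, and the proof that this action is killed by the $J$-homomorphism. This is genuinely deep: it is the substance of Quillen's paper and rests on his determination of the $K$-theory of finite fields, and there is no shortcut at this point. An alternative that avoids \'etale homotopy altogether is the Becker--Gottlieb--Quillen transfer argument, which derives the conjecture from the multiplicative behaviour of the stable transfer of a sphere bundle; it too, however, ultimately relies on Quillen's finite-field computation.
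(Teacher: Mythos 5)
The paper does not prove this statement: it is deliberately packaged as a \emph{Fact}, attributed to Quillen and cited as \cite[Th.\,1.1]{Qui71}, and then used as a black box in Section~\ref{s:ComputationsKTheory}. So there is no internal argument to compare yours against; the only question is whether your sketch is a faithful account of the external proof, and on the whole it is. The elementary reductions are right: $\widetilde{J}(X)$ is finite for a finite complex (Atiyah, \cite[Prop.\,1.5]{Ati61}), so $k^e$ kills the $k$-primary part, and $\psi^{ab}=\psi^a\psi^b$ reduces to $k$ prime; the substance is then the vanishing of $J(\psi^p x - x)$ away from $p$, for which Brauer lifting on $\mathrm{B}\GL(\overline{\mathbb{F}}_p)$ and the homotopy-theoretic triviality of the Frobenius are exactly the right ingredients, and you correctly locate the depth there.

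Two caveats. First, your Dirichlet step (``$J$ sees $\psi^u$ only through a bounded congruence class of $u$'') is the one elementary-looking reduction that is not obviously non-circular: statements of that shape are close to consequences of the Adams conjecture itself (they live in Adams's $J'(X)=J''(X)$ circle of ideas), and the standard proofs avoid it by treating each prime $p$ at all residue primes $\ell\neq p$ directly. Second, the historical framing conflates the available proofs: Quillen's published 1971 argument does not pass through \'etale homotopy or the $K$-theory of finite fields (which came later), but through Brauer induction and modular representations of elementary groups; the \'etale route is the earlier Quillen program completed by Friedlander \cite{Fri73}; and the Becker--Gottlieb transfer proof \cite{BG75} is likewise independent of the finite-field computation. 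None of this affects the paper, which only invokes the statement, but a write-up should either cite it as the paper does or commit to one of the three proofs rather than interleaving them.
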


\begin{rmk}
Many different proofs of the Adams conjecture are now known (\eg Quillen--Friedlander \cite{Qui68,Fri73}, Sullivan \cite{Sul74}, Becker--Gottlieb \cite{BG75}). 
\end{rmk}

Adams conjectured Fact~\ref{fact:AdamsConjecture1} since it would give a description of $\widetilde{J}(X)$ in terms of $\widetilde{\KO}^0(X)$ and the Adams operations on it: 

\begin{fact}[{Adams \cite[Prop.\,3.1 and Th.\,6.1]{Ada65a}, \cite[Th.\,1.1]{Ada65b} + Quillen \cite[Th.\,1.1]{Qui71}}]\label{fact:AdamsConjecture2}
Let $X$ be a based finite CW complex. Then 
the following two conditions on $x \in \widetilde{\KO}^0(X)$ are equivalent: 
\begin{enumerate}[label = {\upshape (\roman*)}]
\item $\widetilde{J}(x) = 0$ in $\widetilde{J}(X)$. 
\item For any map $e \colon \N_{\geq 1} \to \N$, there exist $n \in \N$, $k_1, \dots, k_n \geq 1$, and $y_1, \dots, y_n \in \widetilde{\KO}^0(X)$ such that 
\[
x = \sum_{i=1}^n k_i^{e(k_i)}(\psi^{k_i}(y_i) - y_i).
\]
\end{enumerate}
\end{fact}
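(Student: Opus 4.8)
The plan is to deduce this from the Adams conjecture (Fact~\ref{fact:AdamsConjecture1}, \ie Quillen's theorem) together with Adams's unconditional analysis of the groups $J(X)$. Following Adams, I would introduce for a based finite CW complex $X$ the \emph{upper bound group}
\[
\widetilde{J}''(X) = \widetilde{\KO}^0(X) \big/ W(X), \qquad W(X) = \bigcap_{e} W_e(X),
\]
where, for each function $e \colon \N_{\geq 1} \to \N$, the subgroup $W_e(X) \subseteq \widetilde{\KO}^0(X)$ is generated by all elements $k^{e(k)}(\psi^k(y) - y)$ with $k \geq 1$ and $y \in \widetilde{\KO}^0(X)$. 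Condition~(ii) of the statement says exactly that $x \in W(X)$, so the assertion to be proved is the equality $W(X) = \ker\bigl(\widetilde{J} \colon \widetilde{\KO}^0(X) \to \widetilde{J}(X)\bigr)$; equivalently, the evident surjection $\widetilde{J}''(X) \to \widetilde{J}(X)$ is an isomorphism.

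The implication (ii)~$\Rightarrow$~(i), \ie the inclusion $W(X) \subseteq \ker\widetilde{J}$, is the easy half and follows formally from Fact~\ref{fact:AdamsConjecture1}. Since $X$ is a finite CW complex, $\widetilde{\KO}^0(X)$ is finitely generated and $\widetilde{J}(X)$ is finite; for each $k \geq 1$ the homomorphism $y \mapsto \widetilde{J}(\psi^k(y)-y)$ has finite image whose exponent $m_k$ is, by Fact~\ref{fact:AdamsConjecture1}, divisible only by primes dividing $k$, so $m_k \mid k^{e_0(k)}$ for some $e_0(k) \in \N$. Then every generator of $W_{e_0}(X)$ lies in $\ker\widetilde{J}$, hence $W_{e_0}(X) \subseteq \ker\widetilde{J}$, and since $W(X) \subseteq W_{e_0}(X)$ we conclude $W(X) \subseteq \ker\widetilde{J}$.

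The implication (i)~$\Rightarrow$~(ii), \ie $\ker\widetilde{J} \subseteq W(X)$, is the substantive content, and I would prove it by a sandwich with a third group, the \emph{lower bound} $\widetilde{J}'(X)$. One defines $\widetilde{J}'(X)$ as the quotient of $\widetilde{\KO}^0(X)$ by the joint vanishing locus of a $d$-invariant (the rational, Chern-character part) and a finer $e$-invariant taking values in $\Q/\Z$-groups assembled from the $\psi^k$. One then checks that $\ker\widetilde{J}$ lies in this defining subgroup, producing a surjection $\widetilde{J}(X) \to \widetilde{J}'(X)$, and that $W(X)$ does too, producing a compatible surjection $\widetilde{J}''(X) \to \widetilde{J}'(X)$. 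The heart of the argument is then the number-theoretic statement, due to Adams \cite{Ada65a,Ada65b}, that this last surjection $\widetilde{J}''(X) \to \widetilde{J}'(X)$ is an isomorphism: it is proved by computing the exact denominators appearing in the $e$-invariant (via von Staudt--Clausen and the divisibility properties of Bernoulli numbers) and matching them against the orders forced by the Adams-operation relations, the universal model being $\widetilde{\KO}^0$ of spheres and their smash products, equivalently the image of the classical $J$-homomorphism. Granting $\widetilde{J}''(X) \cong \widetilde{J}'(X)$, the chain of surjections $\widetilde{J}''(X) \to \widetilde{J}(X) \to \widetilde{J}'(X)$ has isomorphic ends, so every map is an isomorphism and $\ker\widetilde{J} = W(X)$.

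The hardest step is exactly this identification $\widetilde{J}'(X) \cong \widetilde{J}''(X)$: it is where Adams's original argument is genuinely delicate, requiring the sharp arithmetic of Bernoulli denominators and precise control of the Adams operations on $\widetilde{\KO}$ of spheres. By contrast, once Fact~\ref{fact:AdamsConjecture1} is taken as known, both the well-definedness of $\widetilde{J}''(X) \to \widetilde{J}(X)$ and the sandwich bookkeeping are routine.
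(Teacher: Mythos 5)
The paper does not prove this statement: it is quoted as a known Fact, with the proof outsourced to Adams's \emph{On the groups $J(X)$} II--III together with Quillen's proof of the Adams conjecture. Your outline faithfully reconstructs exactly that argument --- the identification of condition~(ii) with membership in Adams's subgroup $W(X)$ defining the upper bound $\widetilde{J}''(X)$, the easy inclusion $W(X) \subseteq \ker\widetilde{J}$ via Fact~\ref{fact:AdamsConjecture1} (your uniformization of the exponent over a finite generating set of $\widetilde{\KO}^0(X)$ is the right fix), and the sandwich $\widetilde{J}''(X) \twoheadrightarrow \widetilde{J}(X) \twoheadrightarrow \widetilde{J}'(X)$ closed up by Adams's theorem $\widetilde{J}''(X) \cong \widetilde{J}'(X)$ --- so it is correct and takes the same route as the cited references, with the genuinely hard arithmetic (the $e$-invariant and Bernoulli denominators) properly attributed rather than reproved.
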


We use in Section \ref{ss:OtherSymmetric} the following consequence of the Adams conjecture.

\begin{lem}\label{lem:ConclusionOfAdams1}
Let $f \colon X \to Y$ be a based continuous map of based finite CW complexes. If $f^\ast \colon \widetilde{\KO}^0(Y) \to \widetilde{\KO}^0(X)$ is an isomorphism, then $f^\ast \colon \widetilde{J}(Y) \to \widetilde{J}(X)$ is also an isomorphism. 
\end{lem}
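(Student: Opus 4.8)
The plan is to use the functoriality of the $J$-homomorphism together with the surjectivity built into the definition of the reduced $J$-group. First I would record the naturality square: for a based map $f \colon X \to Y$, the diagram
\[
\begin{tikzcd}
\widetilde{\KO}^0(Y) \ar[r, "J"] \ar[d, "f^\ast"'] & \widetilde{J}(Y) \ar[d, "f^\ast"] \\
\widetilde{\KO}^0(X) \ar[r, "J"] & \widetilde{J}(X)
\end{tikzcd}
\]
commutes, where the vertical map on the right is the restriction of $f^\ast$ to the $J$-groups (well defined because $f^\ast$ commutes with the $J$-homomorphism and preserves the splitting off of trivial bundles). Surjectivity of $f^\ast \colon \widetilde{J}(Y) \to \widetilde{J}(X)$ is then immediate: every element of $\widetilde{J}(X)$ is $\widetilde{J}(x)$ for some $x \in \widetilde{\KO}^0(X)$, and writing $x = f^\ast(y)$ using that $f^\ast$ is onto on $\widetilde{\KO}^0$ gives $\widetilde{J}(x) = f^\ast(\widetilde{J}(y))$.

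The substance of the lemma is injectivity, and this is where the Adams conjecture (Fact~\ref{fact:AdamsConjecture2}, i.e.\ Quillen's theorem) enters. Suppose $y \in \widetilde{\KO}^0(Y)$ satisfies $f^\ast(\widetilde{J}(y)) = 0$ in $\widetilde{J}(X)$; by the commuting square this says $\widetilde{J}(f^\ast y) = 0$ in $\widetilde{J}(X)$. I would apply the ``(i) $\Rightarrow$ (ii)'' direction of Fact~\ref{fact:AdamsConjecture2} to the element $f^\ast y \in \widetilde{\KO}^0(X)$: for every function $e \colon \N_{\geq 1} \to \N$ there exist $k_1,\dots,k_n \geq 1$ and $z_1,\dots,z_n \in \widetilde{\KO}^0(X)$ with
\[
f^\ast y = \sum_{i=1}^n k_i^{e(k_i)}\bigl(\psi^{k_i}(z_i) - z_i\bigr).
\]
Now invert $f^\ast$ on $\widetilde{\KO}^0$: set $w_i = (f^\ast)^{-1}(z_i)$. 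Since $\psi^k$ is natural (it commutes with $f^\ast$ as recalled before Fact~\ref{fact:AdamsProperties}) and $f^\ast$ is a group isomorphism, applying $(f^\ast)^{-1}$ to the displayed identity yields
\[
y = \sum_{i=1}^n k_i^{e(k_i)}\bigl(\psi^{k_i}(w_i) - w_i\bigr)
\]
in $\widetilde{\KO}^0(Y)$, and this holds for the same arbitrary $e$. By the ``(ii) $\Rightarrow$ (i)'' direction of Fact~\ref{fact:AdamsConjecture2} applied over $Y$, we conclude $\widetilde{J}(y) = 0$. Hence $f^\ast$ is injective on $\widetilde{J}(Y)$, and combined with surjectivity it is an isomorphism.

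The only genuinely delicate point, and the one I would take care to state cleanly, is the compatibility of $f^\ast$ with the Adams-operation description on both $X$ and $Y$ simultaneously: one needs that the \emph{same} data $(k_i, e)$ produced by the characterization over $X$ transports verbatim to a witnessing decomposition over $Y$. This is exactly where naturality of $\psi^k$ and the hypothesis that $f^\ast$ is an \emph{iso} (not merely surjective or injective) on $\widetilde{\KO}^0$ are both used. Everything else — the naturality square for $J$, the splitting $\widetilde J(X) \oplus \Z$, and the surjectivity argument — is formal. I expect no real obstacle; the proof is a short diagram chase plus one invocation of Quillen's theorem in each direction.
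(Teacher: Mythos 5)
Your argument is correct and is exactly the paper's (one-line) proof spelled out in full: the paper simply observes that condition~(ii) of Fact~\ref{fact:AdamsConjecture2} depends only on the group $\widetilde{\KO}^0$ together with its Adams operations, both of which are transported by the isomorphism $f^\ast$. Your explicit naturality square, the surjectivity chase, and the transport of the witnessing decomposition $(k_i, e)$ in both directions are precisely the details the paper leaves implicit.
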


\begin{proof}
This is because condition~(ii) in Fact~\ref{fact:AdamsConjecture2} refers only to $\widetilde{\KO}^0(X)$ and the Adams operations on it, not $X$ itself. 
\end{proof}

\subsection{Extension to negative degrees} \label{ss:NegativeDegrees}

\begin{defn} \label{def:smash-prod}
Let $X = (X, x_0)$ and $Y = (Y, y_0)$ be two based compact Hausdorff spaces. We set 
\[
X \vee Y = X \times \{ y_0 \} \cup \{ x_0 \} \times Y, \qquad 
X \wedge Y = (X \times Y) / (X \vee Y), 
\]
and call $X \wedge Y$ the \emph{smash product} of $X$ and $Y$. 
\end{defn}

\begin{rmk}\label{rmk:SmashProduct}
The one-point compactification and the removal of base point gives an equivalence of the following two categories: 
\begin{enumerate}[label = (\roman*)]
  \item the category of based compact Hausdorff spaces and based continuous maps,
  \item the category whose objects are (unbased) locally compact Hausdorff spaces and 
whose morphisms from $X$ to $Y$ are proper maps from some open subset of $X$ to $Y$. 
\end{enumerate}
One may view (ii) as a category for ``compactly supported'' version of topology and homotopy theory. 
Under this equivalence, the smash product in (i) corresponds to the Cartesian product in (ii). 
\end{rmk}

\begin{defn}
For a based compact Hausdorff space $X = (X, x_0)$ and $n \in \N$, we define the \emph{$(-n)$-th reduced $\KO$-theory}, denoted as $\widetilde{\KO}^{-n}(X)$, by 
\[
\widetilde{\KO}^{-n}(X) = \widetilde{\KO}^0(X \wedge \Sph^n), 
\]
We similarly define $\widetilde{\KU}^{-n}(X)$ and $\widetilde{\KSp}^{-n}(X)$ for $n \in \N$. 
\end{defn}

\begin{rmk}
In view of Remark~\ref{rmk:SmashProduct}, the above definition of $\widetilde{\KO}^{-n}(X)$ amounts to defining the ``$(-n)$-th compactly supported $\KO$-theory'' of a locally compact Hausdorff space $X$ 
as the ``$0$-th compactly supported $\KO$-theory'' of $X \times \R^n$. 
\end{rmk}

\begin{defn}
Let $X$ be a compact Hausdorff space and $A$ a closed subspace of $X$. For $n \in \N$, 
we define the \emph{$(-n)$-th relative $\KO$-theory} of the pair $(X, A)$, denoted as $\KO^{-n}(X, A)$, by 
\[
\KO^{-n}(X, A) = \widetilde{\KO}^{-n}(X/A). 
\]
We similarly define $\widetilde{\KU}^{-n}(X, A)$ and $\widetilde{\KSp}^{-n}(X, A)$. 
\end{defn}

If $A = \varnothing$, we will write $\KO^{-n}(X)$ instead of $\KO^{-n}(X, \varnothing)$, and similarly for $\KU$ and $\KSp$.
This notation is compatible with the previous definition for $n = 0$ since
$X / \varnothing$ is, by convention, a disjoint union of $X$ and the one-point space. 

There is an alternative description of $\KO^0(X, A)$ convenient for explicit computations: 

\begin{fact}[{see \eg \cite[Th.\,2.6.2]{Ati67}}]\label{fact:RelativeKO^0}
Let $X$ be a compact Hausdorff space and $A$ a closed subspace of $X$. 
Let $\Vect_\R(X, A)$ be the set of isomorphism classes of triples $(E^0, E^1, \varphi)$, where $E^0$ and $E^1$ are $\R$-vector bundles over $X$ and $\varphi \colon E^0_{|A} \simeq E^1_{|A}$ is an isomorphism over $A$. 
Then we have a canonical isomorphism 
\[
\Ktheory(\Vect_\R(X, A)) \simeq \KO^0(X, A). 
\]
The same holds for $\KU$ and $\KSp$. 
\end{fact}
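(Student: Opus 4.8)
The statement is the classical \emph{difference bundle} description of relative $\KO$-theory, and the plan is to reprove it by exhibiting a pair of mutually inverse homomorphisms between $\Ktheory(\Vect_\R(X,A))$ and $\widetilde{\KO}^0(X/A) = \KO^0(X,A)$. The argument for $\KU$ and $\KSp$ will be word-for-word the same after replacing $\R$ by $\C$ or~$\Ha$ (for $\Ha$ one works with right $\Ha$-module bundles throughout), so I would only treat the real case. Write $q \colon X \to X/A$ for the collapsing map and $\ast$ for the image of~$A$. The one standard input used repeatedly is: \emph{a vector bundle over a compact Hausdorff space that is trivial over a closed subset is already trivial over some closed neighbourhood of that subset}; this follows by extending a trivialisation to a bundle morphism over a neighbourhood (sections of the relevant $\mathrm{Hom}$-bundle extend by normality, since that bundle is a summand of a trivial one) and then shrinking, because being a fibrewise isomorphism is an open condition — and the same reasoning shows any two such extensions become homotopic, through isomorphisms, over a possibly smaller neighbourhood.

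First I would build the difference-bundle map $\delta \colon \Ktheory(\Vect_\R(X,A)) \to \widetilde{\KO}^0(X/A)$. Given a triple $(E^0, E^1, \varphi)$, choose $F$ with $E^1 \oplus F$ trivial and add the elementary triple $(F, F, \id)$ (which represents the zero class), so one may assume $E^1 = \underline{\R}^{\oplus N}$ and view $\varphi$ as a trivialisation of $E^0$ over~$A$. By the input above, $\varphi$ extends to a trivialisation $\widetilde\varphi$ of $E^0|_U$ for a closed neighbourhood $U \supseteq A$; gluing $E^0|_{X \smallsetminus A}$ to the trivial bundle $(U/A) \times \R^N$ along $\widetilde\varphi$ over $U \smallsetminus A$ produces a rank-$N$ bundle $E^0/\varphi$ on $X/A$, and one sets $\delta(E^0, E^1, \varphi) = [E^0/\varphi] - N$. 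Homotopy invariance of vector bundles together with the homotopy uniqueness of the extension shows this is independent of the choices of $F$, $U$, and $\widetilde\varphi$, and that elementary triples are sent to~$0$; additivity is clear, so $\delta$ is a well-defined homomorphism.

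Conversely I would define $\varepsilon \colon \widetilde{\KO}^0(X/A) \to \Ktheory(\Vect_\R(X,A))$ by $[E] - \rk(E) \mapsto \bigl[(q^\ast E,\ \underline{\R}^{\oplus \rk(E)},\ \mathrm{can})\bigr]$, where $\mathrm{can}$ is the canonical trivialisation of $q^\ast E$ over~$A$ coming from the fibre $E_\ast$; additivity and well-definedness are immediate. It then remains to check $\delta \circ \varepsilon = \id$ and $\varepsilon \circ \delta = \id$. The first is short: $E$ is trivial near~$\ast$, so collapsing recovers $E$ from $(q^\ast E, \underline{\R}^{\oplus \rk E}, \mathrm{can})$. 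For the second, starting from a reduced triple $(E^0, \underline{\R}^{\oplus N}, \varphi)$ one checks that $q^\ast(E^0/\varphi) \simeq E^0$ canonically — pullback along $q$ and the collapsing construction are mutually inverse over $X \smallsetminus A$ and over~$U$, and agree on the overlap $U \smallsetminus A$ — and that under this isomorphism the canonical trivialisation over~$A$ corresponds to $\varphi$; hence $\varepsilon(\delta(E^0, \underline{\R}^{\oplus N}, \varphi))$ equals the class of $(E^0, \underline{\R}^{\oplus N}, \varphi)$ itself.

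I expect the main obstacle to be exactly the neighbourhood bookkeeping: verifying that $E^0/\varphi$ is insensitive to shrinking $U$ and to the choice of extension $\widetilde\varphi$, and that the ``pullback versus collapse'' identification used in $\varepsilon \circ \delta = \id$ is genuinely natural rather than a fibrewise coincidence. All of this reduces to homotopy invariance of vector bundles over compact Hausdorff spaces plus the trivial-near-a-closed-set lemma recalled above, and nothing in the argument is special to~$\R$, which is why the $\C$ and $\Ha$ statements follow identically.
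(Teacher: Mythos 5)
The paper records this statement as a Fact with a pointer to Atiyah's book and gives no proof, so there is no internal argument to compare against. Your forward construction is the standard one and is essentially correct: the difference-bundle map $\delta$ (stabilise so that $E^1$ is trivial, extend the trivialisation of $E^0$ over a closed neighbourhood of $A$, clutch over $X/A$), the independence of the choices via the trivial-near-a-closed-set lemma and homotopy invariance, and the verification $\delta \circ \varepsilon = \id$ are all fine, including the neighbourhood bookkeeping you single out as the delicate point.

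The gap is in the other half: $\varepsilon$ is \emph{not} well-defined, and no inverse to $\delta$ can exist, because the statement as literally written is false. Elementary triples $(F,F,\id_F)$ with $F \neq 0$ are \emph{not} the zero class in the group completion $\Ktheory(\Vect_\R(X,A))$ of the monoid of isomorphism classes: the forgetful assignment $(E^0,E^1,\varphi) \mapsto [E^0] \in \KO^0(X)$ is a monoid homomorphism, hence factors through the group completion, and it sends $(F,F,\id_F)$ to $[F] \neq 0$. Concretely, for $X = A = \pt$ the monoid of triples is $\N$ and its group completion is $\Z$, whereas $\KO^0(\pt,\pt) = \widetilde{\KO}^0(\pt) = 0$. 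So your parenthetical ``which represents the zero class'' is exactly the false step; $[(\underline{\R},\underline{\R},\id)]$ lies in $\ker\delta$, so $\delta$ is never injective; $\varepsilon$ fails to respect the relation $[E] - \rk E = [E \oplus \underline{\R}] - (\rk E + 1)$ in $\widetilde{\KO}^0(X/A)$ (the two proposed images differ by the nonzero class $[(\underline{\R},\underline{\R},\id)]$); and for a non-reduced triple one gets $\varepsilon(\delta(E^0,E^1,\varphi)) = [(E^0,E^1,\varphi)] + [(F,F,\id_F)]$, not the identity. The correct statement --- and what Atiyah actually proves --- identifies $\KO^0(X,A)$ with the quotient of the triples by the submonoid of \emph{elementary} triples (those whose $\varphi$ extends to an isomorphism over all of $X$), equivalently the quotient of $\Ktheory(\Vect_\R(X,A))$ by the subgroup they generate; after imposing that quotient both of your maps are well-defined and your argument goes through verbatim, in all three cases $\R$, $\C$, $\Ha$. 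Note that the paper only ever uses the induced surjection from triples onto $\KO^0(X,A)$ (to define the Atiyah--Bott--Shapiro map), for which your $\delta$ alone suffices.
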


Let $X = (X, x_0)$ be a compact Hausdorff space and $A = (A, x_0)$ be a closed subspace of $X$ containing~$x_0$. Then 
there is a canonical long exact sequence 
\begin{equation}\label{eqn:LongExactSeq}
\cdots \to \widetilde{\KO}^{-n-1}(A) \to \KO^{-n}(X, A) \to \widetilde{\KO}^{-n}(X) \to \widetilde{\KO}^{-n}(A) \to \cdots \to \widetilde{\KO}^0(A),
\end{equation}
and similarly for unbased pairs (see \eg \cite[Prop.\,2.4.4]{Ati67}). The same holds for $\KU$ and $\KSp$. 

The following facts are formal consequences of the long exact sequence (\ref{eqn:LongExactSeq}): 

\begin{fact}[{see \eg \cite[Cor.\,2.4.7]{Ati67}}] \label{fact:Unbased}
Let $X = (X, x_0)$ be a based compact Hausdorff space. For any $n \in \N$, there is a canonical direct sum decomposition 
\[
\KO^{-n}(X) \simeq \widetilde{\KO}^{-n}(X) \oplus \KO^{-n}(\pt).
\]
The same holds for $\KU$ and $\KSp$.
\end{fact}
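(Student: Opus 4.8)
The plan is to deduce this decomposition from the relative long exact sequence in $\KO$-theory, applied to the pair $(X,\{x_0\})$, by the same splitting argument that produces the decomposition $\KO^0(X) = \widetilde{\KO}^0(X) \oplus \Z$ stated earlier.

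First I would record two elementary observations. Since $X$ is Hausdorff, $x_0$ is a closed point, so $(X,\{x_0\})$ is a genuine pair of compact Hausdorff spaces; moreover collapsing the single point $x_0$ changes nothing, so $X/\{x_0\}$ is canonically identified with $X$ as a based space. Hence, by the definition of relative $\KO$-theory,
\[
\KO^{-n}(X,\{x_0\}) = \widetilde{\KO}^{-n}(X/\{x_0\}) = \widetilde{\KO}^{-n}(X) \qquad (n \in \N).
\]
Next I would write down the long exact sequence \eqref{eqn:LongExactSeq} for the unbased pair $(X,\{x_0\})$, which after the substitution above reads
\[
\cdots \to \KO^{-n-1}(\{x_0\}) \to \widetilde{\KO}^{-n}(X) \to \KO^{-n}(X) \xrightarrow{i^\ast} \KO^{-n}(\{x_0\}) \to \cdots,
\]
where $i \colon \{x_0\} \hookrightarrow X$ denotes the inclusion. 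The constant map $c \colon X \to \{x_0\}$ satisfies $c \circ i = \id_{\{x_0\}}$, so $c^\ast \colon \KO^{-n}(\{x_0\}) \to \KO^{-n}(X)$ is a section of $i^\ast$ in every degree. Consequently $i^\ast$ is surjective in all degrees, every connecting homomorphism of the long exact sequence vanishes, and the sequence breaks into short exact sequences
\[
0 \to \widetilde{\KO}^{-n}(X) \to \KO^{-n}(X) \xrightarrow{i^\ast} \KO^{-n}(\{x_0\}) \to 0
\]
split by $c^\ast$. This yields the asserted canonical decomposition $\KO^{-n}(X) \simeq \widetilde{\KO}^{-n}(X) \oplus \KO^{-n}(\pt)$, upon identifying $\{x_0\}$ with $\pt$; for $n=0$ it recovers the known splitting $\KO^0(X) = \widetilde{\KO}^0(X) \oplus \Z$.

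The same argument applies verbatim to $\KU$ and $\KSp$, whose relative long exact sequences and functoriality hold by the same references. There is no genuine obstacle here: the only step requiring a word of care is the identification $\KO^{-n}(X,\{x_0\}) = \widetilde{\KO}^{-n}(X)$, i.e. the homeomorphism $X/\{x_0\} \cong X$, and the canonicity of the whole decomposition is immediate since both $i$ and the section $c^\ast$ are canonical.
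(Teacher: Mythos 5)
Your argument is correct and is exactly the "formal consequence of the long exact sequence \eqref{eqn:LongExactSeq}" that the paper invokes (it gives no proof itself, only the citation to Atiyah): identify $\KO^{-n}(X,\{x_0\})$ with $\widetilde{\KO}^{-n}(X)$ via $X/\{x_0\}\cong X$, and split the resulting exact sequence using the retraction onto the base point. Nothing further is needed.
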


\begin{fact}[{see \eg \cite[Cor.\,2.4.8]{Ati67}}] \label{fact:KTheoryProduct}
Let $X = (X, x_0)$ and $Y = (Y, y_0)$ be two based compact Hausdorff spaces. 
Define $i_1 \colon X \to X \times Y$ and $i_2 \colon Y \to X \times Y$ by $i_1(x) = (x, y_0)$ and $i_2(y) = (x_0, y)$, respectively, and let $q \colon X \times Y \to X \wedge Y$ be the quotient map. Then 
\[
0 \to \widetilde{\KO}^{-n}(X \wedge Y) \xrightarrow{q^\ast} \widetilde{\KO}^{-n}(X \times Y) \xrightarrow{(i_1^\ast, i_2^\ast)}
\widetilde{\KO}^{-n}(X) \oplus \widetilde{\KO}^{-n}(Y) \to 0 
\]
is a split exact sequence. The splitting is given by 
\[
(\pr_1^\ast, \pr_2^\ast) \colon \widetilde{\KO}^{-n}(X) \oplus \widetilde{\KO}^{-n}(Y) \to \widetilde{\KO}^{-n}(X \times Y),
\]
where $\pr_1 \colon X \times Y \to X$ and $\pr_2 \colon X \times Y \to Y$ are the projection maps. The same holds for $\KU$ and $\KSp$. 
\end{fact}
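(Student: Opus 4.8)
The plan is to derive the claimed split short exact sequence from the long exact sequence~\eqref{eqn:LongExactSeq} of the pair $(X\times Y,\,X\vee Y)$, exploiting that the projections $\pr_1,\pr_2$ and the wedge-collapse maps furnish enough retractions to split everything in sight. I will carry out the argument for $\KO$; the cases of $\KU$ and $\KSp$ are word for word the same, since only formal properties are used: the long exact sequence of a pair, the identification $\KO^{-n}(Z,A)=\widetilde{\KO}^{-n}(Z/A)$ for closed $A$, and functoriality.

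First I would set up the inputs. Since $X\vee Y$ is closed in $X\times Y$ and $(X\times Y)/(X\vee Y)=X\wedge Y$ by Definition~\ref{def:smash-prod}, the relative group $\KO^{-n}(X\times Y,X\vee Y)$ is $\widetilde{\KO}^{-n}(X\wedge Y)$, so~\eqref{eqn:LongExactSeq} reads
\[
\cdots\to\widetilde{\KO}^{-n-1}(X\vee Y)\to\widetilde{\KO}^{-n}(X\wedge Y)\xrightarrow{q^\ast}\widetilde{\KO}^{-n}(X\times Y)\xrightarrow{r^\ast}\widetilde{\KO}^{-n}(X\vee Y)\to\cdots,
\]
where $r\colon X\vee Y\hookrightarrow X\times Y$ is the inclusion. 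Next I would identify $\widetilde{\KO}^{-n}(X\vee Y)$ with $\widetilde{\KO}^{-n}(X)\oplus\widetilde{\KO}^{-n}(Y)$: the two collapse maps $X\vee Y\to X$ and $X\vee Y\to Y$ are retractions onto the inclusions of the wedge summands, and $(X\vee Y)/X\cong Y$; since the retraction $X\vee Y\to X$ splits the restriction $\widetilde{\KO}^{-n}(X\vee Y)\to\widetilde{\KO}^{-n}(X)$ in every degree, the connecting maps in~\eqref{eqn:LongExactSeq} for the pair $(X\vee Y,X)$ vanish, giving the asserted direct-sum decomposition. Because $i_1$ and $i_2$ factor through $r$, under this decomposition the map $r^\ast$ is exactly $(i_1^\ast,i_2^\ast)$.

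The heart of the argument is then the identity $(i_1^\ast,i_2^\ast)\circ(\pr_1^\ast,\pr_2^\ast)=\id$, which holds because $\pr_1\circ i_1=\id_X$, $\pr_2\circ i_2=\id_Y$, while $\pr_2\circ i_1$ and $\pr_1\circ i_2$ are constant maps and hence induce the zero map on reduced theory. Thus $r^\ast\cong(i_1^\ast,i_2^\ast)$ is split surjective with section $(\pr_1^\ast,\pr_2^\ast)$, in every degree $-n$. Surjectivity of $r^\ast$ in degree $-n-1$ kills the connecting homomorphism $\widetilde{\KO}^{-n-1}(X\vee Y)\to\widetilde{\KO}^{-n}(X\wedge Y)$, so $q^\ast$ is injective, and~\eqref{eqn:LongExactSeq} collapses to
\[
0\to\widetilde{\KO}^{-n}(X\wedge Y)\xrightarrow{q^\ast}\widetilde{\KO}^{-n}(X\times Y)\xrightarrow{(i_1^\ast,i_2^\ast)}\widetilde{\KO}^{-n}(X)\oplus\widetilde{\KO}^{-n}(Y)\to0,
\]
split via $(\pr_1^\ast,\pr_2^\ast)$. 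I do not anticipate a real obstacle: once one grants the point-set facts that collapsing the closed subspace $X\subset X\vee Y$ computes the relative $\KO$-group and that the collapse maps are continuous in the based compact Hausdorff category, the rest is bookkeeping with the relations $\pr_j\circ i_k=\id$ or constant inside the long exact sequence.
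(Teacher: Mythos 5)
Your proof is correct and is precisely the standard derivation that the paper points to: it states this as a Fact citing Atiyah's Corollary~2.4.8 and remarks that it is a ``formal consequence of the long exact sequence \eqref{eqn:LongExactSeq}'', which is exactly the route you take (long exact sequence of the pair $(X\times Y, X\vee Y)$, the wedge decomposition via retractions, and the section $(\pr_1^\ast,\pr_2^\ast)$ killing the connecting maps). No gaps; the same formal argument applies verbatim to $\KU$ and $\KSp$ as you note.
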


We use in Section \ref{ss:OtherSymmetric} the following consequence:
\begin{lem}\label{lem:ConclusionOfAdams2}
Let $X = (X, x_0)$ and $Y = (Y, y_0)$ be two based finite CW complexes. Let $\pr_1, \pr_2$, and $q$ be as in Fact~\ref{fact:KTheoryProduct}~(2). Then 
\[
(\pr_1^\ast, \pr_2^\ast, q^\ast) \colon \widetilde{J}(X) \oplus \widetilde{J}(Y) \oplus \widetilde{J}(X \wedge Y) \to \widetilde{J}(X \times Y)
\]
is an isomorphism. 
\end{lem}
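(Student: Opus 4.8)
The plan is to reduce everything to the analogous $\KO$-theoretic statement, Fact~\ref{fact:KTheoryProduct}, and then transport it to $\widetilde J$ by means of Adams's description of the reduced $J$-group in Fact~\ref{fact:AdamsConjecture2}. First I would set up the $\KO$-theoretic decomposition cleanly: by Fact~\ref{fact:KTheoryProduct} the triple $(\pr_1^\ast,\pr_2^\ast,q^\ast)$ is an isomorphism
\[
\widetilde{\KO}^0(X)\oplus\widetilde{\KO}^0(Y)\oplus\widetilde{\KO}^0(X\wedge Y)\ \xrightarrow{\ \sim\ }\ \widetilde{\KO}^0(X\times Y),
\]
whose inverse has first two components $i_1^\ast$ and $i_2^\ast$ and whose third component is the projection $p$ determined by $q^\ast p(x)=x-\pr_1^\ast i_1^\ast(x)-\pr_2^\ast i_2^\ast(x)$. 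The point I would stress here is that $i_1,i_2,q$, and hence $i_1^\ast,i_2^\ast,q^\ast,p$, are natural, so each of these maps commutes with every Adams operation $\psi^k$; in particular this direct-sum decomposition is stable under all $\psi^k$. Since $X,Y$ are finite CW complexes, so are $X\times Y$ and $X\wedge Y$, and Fact~\ref{fact:AdamsConjecture2} is available for all of them.

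Surjectivity on $\widetilde J$ is then formal: given $x\in\widetilde{\KO}^0(X\times Y)$, write $x=\pr_1^\ast(a)+\pr_2^\ast(b)+q^\ast(c)$; additivity and naturality of $\widetilde J$ give $\widetilde J(x)=\pr_1^\ast\widetilde J(a)+\pr_2^\ast\widetilde J(b)+q^\ast\widetilde J(c)$, and every element of $\widetilde J(X\times Y)$ arises as some $\widetilde J(x)$.

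For injectivity I would argue as follows. Suppose $\pr_1^\ast(\alpha)+\pr_2^\ast(\beta)+q^\ast(\gamma)=0$ in $\widetilde J(X\times Y)$, and choose lifts $a,b,c$ of $\alpha,\beta,\gamma$ in the respective reduced $\KO^0$-groups, so that $x:=\pr_1^\ast(a)+\pr_2^\ast(b)+q^\ast(c)$ satisfies $\widetilde J(x)=0$. By criterion (ii) of Fact~\ref{fact:AdamsConjecture2}, for every $e\colon\N_{\geq1}\to\N$ there are $k_1,\dots,k_n\geq1$ and $y_1,\dots,y_n\in\widetilde{\KO}^0(X\times Y)$ with $x=\sum_i k_i^{e(k_i)}\bigl(\psi^{k_i}(y_i)-y_i\bigr)$. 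Applying $i_1^\ast$, which is natural and commutes with the $\psi^{k_i}$, yields $a=\sum_i k_i^{e(k_i)}\bigl(\psi^{k_i}(i_1^\ast y_i)-i_1^\ast y_i\bigr)$; as $e$ was arbitrary, criterion (ii) is verified for $a$, so $\widetilde J(a)=0$ and hence $\alpha=0$. Applying $i_2^\ast$ and $p$ in exactly the same way gives $\beta=0$ and $\gamma=0$, which proves injectivity.

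I do not expect a genuine obstacle here; the only points requiring care are routine ones: checking that $i_1^\ast,i_2^\ast,p$ really are the coordinate projections for the decomposition of Fact~\ref{fact:KTheoryProduct} (which follows from $\pr_j\circ i_j=\id$ together with $\pr_1\circ i_2$, $\pr_2\circ i_1$ and $q\circ i_j$ being constant maps), and that the naturality statements quietly built into the notation $\pr_j^\ast\colon\widetilde J(X)\to\widetilde J(X\times Y)$, etc., are consistent with the $J$-homomorphism.
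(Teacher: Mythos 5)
Your proposal is correct and follows essentially the same route as the paper: the paper's proof likewise invokes Fact~\ref{fact:KTheoryProduct}, observes that the resulting $\widetilde{\KO}^0$-decomposition respects Adams operations, and then applies the criterion of Fact~\ref{fact:AdamsConjecture2} (as in Lemma~\ref{lem:ConclusionOfAdams1}) to descend to reduced $J$-groups. You merely spell out the surjectivity and injectivity steps that the paper leaves implicit.
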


\begin{proof}
By Fact~\ref{fact:KTheoryProduct}~(2), we have an isomorphism
\[
(\pr_1^\ast, \pr_2^\ast, q^\ast) \colon \widetilde{\KO}^0(X) \oplus \widetilde{\KO}^0(Y) \oplus \widetilde{\KO}^0(X \wedge Y) \xrightarrow{\simeq} \widetilde{\KO}^0(X \times Y).
\]
This isomorphism respects Adams operations. Arguing as in the proof of Lemma~\ref{lem:ConclusionOfAdams1}, one sees that the induced homomorphism between reduced $J$-groups is an isomorphism. 
\end{proof}

\subsection{Product structure} \label{ss:Product}

In this section, we extend the product structure in topological $\Ktheory$-theory to based spaces and also to negative degrees, which is needed for the Bott periodicity theorem (Fact~\ref{fact:BottPeriodicity}) in the next section.

Let $X = (X, x_0)$ and $Y = (Y, y_0)$ be two based compact Hausdorff spaces. It follows from Fact~\ref{fact:KTheoryProduct}~(2) that the ``outer'' multiplication 
\[
\KO^0(X) \times \KO^0(Y) \to \KO^0(X \times Y), \qquad (a, b) \mapsto \pr_1^\ast(a) \cdot \pr_2^\ast(b)
\]
induces a $\Z$-bilinear map 
\[
\wedge \colon \widetilde{\KO}^0(X) \times \widetilde{\KO}^0(Y) \to \widetilde{\KO}^0(X \wedge Y). 
\]
This extends, by definition, to: 
\[
\wedge \colon \widetilde{\KO}^{-m}(X) \times \widetilde{\KO}^{-n}(Y) \to \widetilde{\KO}^{-m-n}(X \wedge Y) \qquad (n,m \in \N).
\]
We similarly obtain:  
\begin{align*}
\wedge \colon \widetilde{\KU}^{-m}(X) \times \widetilde{\KU}^{-n}(Y) &\to \widetilde{\KU}^{-m-n}(X \wedge Y), \\
\wedge \colon \widetilde{\KO}^{-m}(X) \times \widetilde{\KSp}^{-n}(Y) &\to \widetilde{\KSp}^{-m-n}(X \wedge Y), \\
\wedge \colon \widetilde{\KSp}^{-m}(X) \times \widetilde{\KSp}^{-n}(Y) &\to \widetilde{\KO}^{-m-n}(X \wedge Y).
\end{align*}

We now define the ``inner'' multiplication $\widetilde{\KO}^{-m}(X) \times \widetilde{\KO}^{-n}(X) \to \widetilde{\KO}^{-m-n}(X)$ by 
\[
\widetilde{\KO}^{-m}(X) \times \widetilde{\KO}^{-n}(X) \xrightarrow{\wedge} \widetilde{\KO}^{-m-n}(X \wedge X) \xrightarrow{\Delta^\ast} \widetilde{\KO}^{-m-n}(X), 
\]
where $\Delta \colon X \to X \wedge X$ is the diagonal embedding. Replacing $X$ with its disjoint union with the one-point space, we obtain the unbased version
\[
\KO^{-m}(X) \times \KO^{-n}(X) \to \KO^{-m-n}(X),
\]
which is an extension of the multiplication $\KO^0(X) \times \KO^0(X) \to \KO^0(X)$ introduced before. We similarly define 
\begin{align*}
\widetilde{\KU}^{-m}(X) \times \widetilde{\KU}^{-n}(X) &\to \widetilde{\KU}^{-m-n}(X), \\
\widetilde{\KO}^{-m}(X) \times \widetilde{\KSp}^{-n}(X) &\to \widetilde{\KSp}^{-m-n}(X), \\
\widetilde{\KSp}^{-m}(X) \times \widetilde{\KSp}^{-n}(X) &\to \widetilde{\KO}^{-m-n}(X),
\end{align*}
as well as their unbased versions.

\subsection{Extension to positive degrees} \label{ss:PositiveDegrees}

For $n \in \N$ and $\K = \R, \C$, or $\Ha$, we write $\Taut_{\KP^n}$ for the \emph{tautological line bundle} over the projective space $\KP^n$, namely, the $\K$-line bundle whose the fiber over $x \in \KP^n$ is the line in $\K^{n+1}$ defining $x$: 
\[
\Taut_{\KP^n} = \{ (x, v) \in \KP^n \times \K^{n+1} \mid v \in x \}.
\]

\begin{defn}\label{defn:BottElement}
We set 
\begin{align*}
\beta &= 1 - [ \Taut_{\CP^1} ] \in \widetilde{\KU}^0(\CP^1) = \widetilde{\KU}^0(\Sph^2), \\
\alpha &= 1 - [ \Taut_{\HP^1} ] \in \widetilde{\KSp}^0(\HP^1) = \widetilde{\KSp}^0(\Sph^4),
\end{align*}
and
$\beta_\R = \alpha \wedge \alpha \in \widetilde{\KO}^0(\Sph^8)$. 
We call $\beta$ and $\beta_\R$ the \emph{complex} and the \emph{real Bott elements}, respectively. 
\end{defn}

Then the following \emph{Bott periodicity theorem} holds: 

\begin{fact}[{\cite[Cor.\ to Th.\,2]{Bot59a}, \cite[Th.\,1]{Bot59b}}] \label{fact:BottPeriodicity}
For any based compact Hausdorff space $X = (X, x_0)$, the following three multiplication maps are all isomorphisms: 
\begin{align*}
\widetilde{\KU}^0(X) \to \widetilde{\KU}^0(X \wedge \Sph^2) = \widetilde{\KU}^{-2}(X), &\qquad x \mapsto x \wedge \beta, \\
\widetilde{\KO}^0(X) \to \widetilde{\KSp}^0(X \wedge \Sph^4) = \widetilde{\KSp}^{-4}(X), &\qquad x \mapsto x \wedge \alpha, \\
\widetilde{\KSp}^0(X) \to \widetilde{\KO}^0(X \wedge \Sph^4) = \widetilde{\KO}^{-4}(X), &\qquad x \mapsto x \wedge \alpha.
\end{align*}
\end{fact}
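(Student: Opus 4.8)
The plan is to establish the complex periodicity statement first, by the elementary clutching-function argument, and then to deduce the two real--quaternionic statements from it together with the periodicity of Clifford algebras. In all three cases the map in question is external multiplication by a Bott element (Definition~\ref{defn:BottElement}), so it is enough to identify $\widetilde{\KU}^0(X \wedge \Sph^2)$ with $\widetilde{\KU}^0(X)$ (and similarly $\widetilde{\KSp}^0(X \wedge \Sph^4)$ with $\widetilde{\KO}^0(X)$, and $\widetilde{\KO}^0(X \wedge \Sph^4)$ with $\widetilde{\KSp}^0(X)$), compatibly with the product.

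For the complex case I would reduce to computing $\KU^0(X \times \Sph^2)$ and showing that it is the free $\KU^0(X)$-module on $1$ and $H := [\Taut_{\CP^1}]$ modulo the relation $(H-1)^2 = 0$; passing to reduced groups then gives precisely that $x \mapsto x \wedge \beta$ is an isomorphism. Writing $\Sph^2 = \CP^1$ as two closed disks glued along the equatorial $\Sph^1$, every complex vector bundle on $X \times \Sph^2$ arises (up to isomorphism, after stabilization) from a \emph{clutching function} $X \times \Sph^1 \to \GL_n(\C)$, with isomorphism classes corresponding to stable homotopy classes of clutching functions. The crux is a two-step normalization: first, uniformly approximate any clutching function by a \emph{Laurent polynomial} one $\sum_{|k| \le N} a_k(x) z^k$ (Fourier/Weierstrass approximation, continuous in $x$); second, after adding trivial summands, reduce to a \emph{linear} clutching function $a(x) + b(x)z$. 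For a linear clutching function invertible on $|z| = 1$, the zeros in $z$ of $\det(a(x) + b(x)z)$ avoid the unit circle and split into an "inside" and an "outside" family; contour-integral spectral projections, continuous in $x$, then split the ambient trivial bundle into two summands, one of which accounts for a $\Taut_{\CP^1}$-twist. Keeping track of these summands yields the stated module structure of $\KU^0(X \times \Sph^2)$, hence the complex periodicity $\widetilde{\KU}^0(X) \xrightarrow{\sim} \widetilde{\KU}^{-2}(X)$.

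For the real and quaternionic statements I would avoid redoing the clutching analysis over $\R$ and $\Ha$ and instead combine the complex case with the structure theory of Clifford modules in the style of Atiyah--Bott--Shapiro. Using the maps $\cc, \rr, \qq, \cc'$ relating $\KO$, $\KU$, $\KSp$, together with the periodicity $\Cl_{k+8} \cong \Cl_k \otimes_\R \R(16)$ of Clifford algebras and the exact sequences comparing $\Cl_{k+1}$-modules with $\Cl_k$-modules, one identifies the quaternionic Bott element $\alpha \in \widetilde{\KSp}^0(\Sph^4)$ with the class coming from the graded spinor module of $\Cl_4$, and identifies the two maps $x \mapsto x \wedge \alpha$ with the shifts induced by tensoring Clifford modules. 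Clifford periodicity then forces both $\widetilde{\KO}^0(X) \xrightarrow{\sim} \widetilde{\KSp}^{-4}(X)$ and $\widetilde{\KSp}^0(X) \xrightarrow{\sim} \widetilde{\KO}^{-4}(X)$; composing the two recovers the $8$-fold $\KO$-periodicity implemented by $\beta_\R = \alpha \wedge \alpha$. Alternatively one may invoke Bott's original Morse-theoretic computation of $\pi_*(\OO)$, $\pi_*(\U)$, $\pi_*(\Sp)$ via loop spaces of the relevant symmetric spaces, which is the route of the cited \cite{Bot59a,Bot59b}.

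The main obstacle is the real--quaternionic half. Once the clutching formalism is in place, the complex statement is close to a finite computation; but over $\R$ and $\Ha$ the groups $\GL_n(\R)$ and $\GL_n(\Ha)$ have the "wrong" low-dimensional homotopy, so one cannot imitate the argument verbatim, and the periodicity is genuinely $4$-fold and entangles the three flavours of $\Ktheory$-theory. The delicate point in the Clifford-algebra route is to check that the algebraic periodicity isomorphism of Clifford algebras, and the associated reorganization of spinor modules, really does correspond under the associated-bundle construction to external multiplication by $\alpha$; it is this compatibility, rather than the bare algebra, where the care is needed.
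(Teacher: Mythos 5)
The paper does not prove this statement at all: it is stated as a \emph{Fact} and attributed to Bott's 1959 papers, exactly as the bracketed citations indicate. So any comparison is between your sketch and the classical proofs in the literature, not with an argument in the paper.

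Your complex half is the standard Atiyah--Bott clutching-function proof (approximation by Laurent clutching functions, reduction to linear ones, spectral projections via contour integrals, yielding $\KU^0(X\times\Sph^2)\simeq \KU^0(X)[H]/((H-1)^2)$), and it is correct for compact Hausdorff $X$; note also that the identification $\widetilde{\KU}^0(X\wedge\Sph^2)=\widetilde{\KU}^{-2}(X)$ is a definition in this paper, not something to establish.

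The real--quaternionic half has a genuine gap. The algebraic periodicity $\Cl_{k+8}\simeq\Cl_k\otimes_\R\M(16,\R)$ and the restriction sequences comparing $\Cl_{k+1}$-modules with $\Cl_k$-modules only compute the groups $\RO_\gr(\Cl_k)/\operatorname{im}\RO_\gr(\Cl_{k+1})$; they say nothing, by themselves, about $\widetilde{\KO}^0(X\wedge\Sph^4)$ or $\widetilde{\KSp}^0(X\wedge\Sph^4)$ for a general compact $X$. The Atiyah--Bott--Shapiro exact sequences (Fact~\ref{fact:ABS-RC} and Theorem~\ref{thm:ABS-H} in this paper) that identify those quotients with $\widetilde{\KO}^0(\Sph^k)$ are themselves \emph{consequences} of Bott periodicity, so invoking them here is circular; and even granting them, they treat only $X=\pt$, whereas the statement concerns arbitrary $X$. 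Atiyah--Bott--Shapiro were explicit that their Clifford-module formalism does not furnish an independent proof of periodicity. To close the gap you must supply a genuine proof of the real/symplectic periodicity for all compact $X$ --- Bott's Morse theory plus a comparison argument, Wood's Banach-algebra proof, Karoubi's Clifford-bundle proof, or Atiyah's $\mathrm{KR}$-theoretic argument. Your closing remark (``alternatively one may invoke Bott's original computation'') is in effect the paper's own treatment, namely a citation; as a proof strategy the Clifford route as you describe it does not go through.
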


Note that the latter two isomorphisms together imply that 
the following maps are isomorphisms: 
\begin{align*}
\widetilde{\KO}^0(X) \to \widetilde{\KO}^{-8}(X), &\qquad x \mapsto x \wedge \beta_\R, \\
\widetilde{\KSp}^0(X) \to \widetilde{\KSp}^{-8}(X), &\qquad x \mapsto x \wedge \beta_\R. 
\end{align*}

We thus define $\widetilde{\KO}^n(X)$, $\widetilde{\KU}^n(X)$, and $\widetilde{\KSp}^n(X)$ for general $n \in \Z$ as follows: 

\begin{defn}
For a based compact Hausdorff space $X = (X, x_0)$ and $n \in \Z$, we set 
\begin{align*}
\widetilde{\KO}^n(X) = \widetilde{\KO}^{n - 8k}(X) &\qquad (n - 8k \leq 0), \\
\widetilde{\KU}^n(X) = \widetilde{\KU}^{n - 2k}(X) &\qquad (n - 2k \leq 0), \\
\widetilde{\KSp}^n(X) = \widetilde{\KSp}^{n - 8k}(X) &\qquad (n - 8k \leq 0).
\end{align*}
\end{defn}

We can define the unbased and the relative versions of $\KU^n$, $\KO^n$, and $\KSp^n$ for general $n \in \Z$ by the same procedure as before. 
The long exact sequence (\ref{eqn:LongExactSeq}) and its unbased version extend to positive degrees, and one can see that $\KO = (\KO^n)_{n \in \Z}$ satisfies the axioms of generalized cohomology theory.
The same is true for $\KU$ and $\KSp$.
All results and definitions in Section~\ref{ss:Product} extend to positive degrees.

\subsection{Clifford algebras, spinor representations, and the spin group}\label{ss:CliffordAlgebras}

By a quadratic space over a field $\K$, we mean a finite-dimensional $\K$-vector space equipped with a nondegenerate quadratic form. 

\begin{defn}
Let $(V, Q)$ be a quadratic space over a field $\K$. We define the \emph{Clifford algebra} $\Cl(V, Q)$ by 
$\Cl(V, Q) = TV / I_Q$, 
where $TV = (TV, \otimes, 1)$ is the tensor algebra over $V$, and $I_Q$ is the two-sided ideal of $TV$ generated by 
$\{ v \otimes v + Q(v) \mid v \in V \}$.
We write $c \colon V \to \Cl(V, Q)$ for the obvious $\K$-linear map. 
\end{defn}

Our main interest is the case where $\K = \R$ and $(V, Q) = (\R^n, Q_\std)$, where $Q_\std(x_1, \dots, x_n) = x_1^2 + \dots x_n^2$ is the standard quadratic form. We often write $\Cl_n$ instead of $\Cl(\R^n)$. 
We set $\Cl_{n,\C} = \Cl_n \otimes_\R \C \ (\simeq \Cl(\C^n))$ and $\Cl_{n,\Ha} = \Cl_n \otimes_\R \Ha$.

\begin{rmk}
There are two sign conventions on Clifford algebras: 
some authors call $\Cl(V, Q)$ what we call $\Cl(V, -Q)$. 
\end{rmk}

The $\Z$-grading on tensor algebras induces a $\Z/2$-grading on Clifford algebras: 
\[
\Cl(V, Q) = \Cl^0(V, Q) \oplus \Cl^1(V, Q). 
\]

\begin{rmk}\label{rmk:CliffordZ/2Grading}
Here are some basic properties of $\Z/2$-grading on Clifford algebras: 

\begin{enumerate}
\item Let $(V, Q)$ and $(V', Q')$ be two quadratic spaces over $\K$. Then the following is an isomorphism of $\Z/2$-graded $\K$-algebras: 
\[
\Cl(V, Q) \otimes_\K \Cl(V', Q') \to \Cl(V \oplus V', Q \oplus Q'), \qquad c(v) \otimes c(v') \mapsto c(v,v'),
\]
where $\otimes_K$ is the \emph{$\Z/2$-graded} tensor product. 

\item For $\lambda \in \K^\times$, the following is an isomorphism of $\K$-algebras: 
\[
\Cl(V, \lambda Q) \to \Cl^0(V \oplus \K, Q \oplus \lambda), \qquad c(v) \mapsto c(v,0)c(0,1).
\]
In particular, we have 
$\Cl^0_{n+1} \simeq \Cl_n$. 

\item If $M = M^0 \oplus M^1$ is a $\Z/2$-graded $\Cl(V, Q)$-module, then $M^0$ is a $\Cl^0(V, Q)$-module. Conversely, 
if $M^0$ is a $\Cl^0(V, Q)$-module, then the tensor product $\Cl(V, Q) \otimes_{\Cl^0(V, Q)} M^0$ is a $\Z/2$-graded $\Cl(V, Q)$-module. Thus, the category of $\Z/2$-graded $\Cl(V, Q)$-modules is equivalent to that of $\Cl^0(V, Q)$-modules. 
\end{enumerate}
\end{rmk}

\begin{fact}\label{fact:CliffordComputation}
For $n \geq 0$, we have 
\[
\Cl_n \simeq 
\begin{cases}
\M(2^{n/2}, \R) & (n \equiv 0,6 \pmod 8), \\
\M(2^{(n-1)/2}, \R)^2 & (n \equiv 7 \pmod 8), \\
\M(2^{(n-1)/2}, \C) & (n \equiv 1,5 \pmod 8), \\
\M(2^{(n-2)/2}, \Ha) & (n \equiv 2,4 \pmod 8), \\
\M(2^{(n-3)/2}, \Ha)^2 & (n \equiv 3 \pmod 8).
\end{cases}
\]
\end{fact}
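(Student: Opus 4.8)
The plan is to reduce everything to a few explicit low-dimensional computations by means of the classical eightfold periodicity of real Clifford algebras. Throughout, write $\Cl_n = \Cl(\R^n,Q_\std)$ as in the statement, so that the standard generators satisfy $c(e_i)^2 = -1$, and let $\Cl^{+}_n = \Cl(\R^n,-Q_\std)$ be the variant in which $c(e_i)^2 = +1$. First I would record the base cases, read off directly from the presentation: $\Cl_0 = \Cl^{+}_0 \simeq \R$; $\Cl_1 \simeq \C$ and $\Cl^{+}_1 \simeq \R \oplus \R$; and $\Cl_2 \simeq \Ha$, while $\Cl^{+}_2 \simeq \M(2,\R)$ via a suitable pair of anticommuting matrices squaring to the identity.

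The core step is the periodicity. I would prove the two dimension-shifting isomorphisms
\[
\Cl_{n+2} \simeq \Cl^{+}_n \otimes_\R \Cl_2, \qquad \Cl^{+}_{n+2} \simeq \Cl_n \otimes_\R \Cl^{+}_2,
\]
by exhibiting generators explicitly. For the first one, send the first $n$ standard generators of $\Cl_{n+2}$ to $c(e_i) \otimes c(f_1)c(f_2)$ and the remaining two to $1 \otimes c(f_1)$ and $1 \otimes c(f_2)$, where $e_i$ and $f_j$ denote standard generators of $\Cl^{+}_n$ and $\Cl_2$; using that $c(f_1)c(f_2)$ squares to $-1$ and anticommutes with each $c(f_j)$, one checks that the defining relations of $\Cl_{n+2}$ hold, so this induces an algebra homomorphism, which is surjective and then an isomorphism by comparing $\R$-dimensions (both $2^{n+2}$). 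The second isomorphism is proved in the same way with the roles of the signs interchanged. Iterating these four times and invoking the elementary identities $\M(a,A) \otimes_\R \M(b,B) \simeq \M(ab, A \otimes_\R B)$, $\Ha \otimes_\R \Ha \simeq \M(4,\R)$ and $\C \otimes_\R \Ha \simeq \M(2,\C)$ yields
\[
\Cl_{n+8} \simeq \Cl_n \otimes_\R \M(16,\R) = \Cl_n \otimes_\R \M(2^4,\R).
\]

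It then remains to assemble the table. Applying the two shift isomorphisms repeatedly to the base cases gives $\Cl_n$ for $0 \leq n \leq 7$ --- for instance $\Cl_3 \simeq \Cl^{+}_1 \otimes_\R \Cl_2 \simeq (\R \oplus \R)\otimes_\R \Ha \simeq \Ha^2$, $\Cl_4 \simeq \Cl^{+}_2 \otimes_\R \Cl_2 \simeq \M(2,\R)\otimes_\R \Ha \simeq \M(2,\Ha)$, $\Cl_5 \simeq \M(4,\C)$, $\Cl_6 \simeq \M(8,\R)$, $\Cl_7 \simeq \M(8,\R)^2$ --- and one checks directly that each value matches the stated formula. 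Since the exponent of $2$ in the formula grows by exactly $4$ when $n$ grows by $8$, the eightfold periodicity propagates these eight verified cases to all $n \geq 0$.

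I expect the only real care to be needed in the bookkeeping: getting the signs right when verifying the defining relations in the two shift isomorphisms, and matching the powers of $2$ across one period. Nothing here is conceptually difficult; this is the classical computation going back to Atiyah--Bott--Shapiro.
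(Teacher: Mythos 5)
Your proposal is correct, and it is the standard classical argument (as in Atiyah--Bott--Shapiro and Lawson--Michelsohn): base cases $\Cl_0,\Cl_1,\Cl_2$ and their opposite-signature counterparts, the two ungraded shift isomorphisms $\Cl_{n+2}\simeq \Cl^{+}_n\otimes_\R\Cl_2$ and $\Cl^{+}_{n+2}\simeq \Cl_n\otimes_\R\Cl^{+}_2$, eightfold periodicity via $\Ha\otimes_\R\Ha\simeq\M(4,\R)$, and a check of the eight residues; you have also correctly matched the paper's sign convention $c(v)^2=-Q(v)$, which is the one for which $\Cl_1\simeq\C$ and $\Cl_2\simeq\Ha$. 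The paper states this as a Fact without proof, citing it as classical, so there is no in-paper argument to compare against; your sketch fills that gap accurately, and the spot-checks ($\Cl_3\simeq\Ha^2$, $\Cl_4\simeq\M(2,\Ha)$, $\Cl_7\simeq\M(8,\R)^2$, exponent of $2$ increasing by $4$ per period) all agree with the stated table.
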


Using Fact~\ref{fact:CliffordComputation} and Remark~\ref{rmk:CliffordZ/2Grading}~(2)--(3), 
we define representations of $\Cl_n$ as follows:

\begin{defn} \label{defn:SpinorRepresentation1}
Let $n \geq 1$. 
\begin{enumerate}
\item If $n \equiv 1,7 \pmod 8$, 
we write $\Delta_n$ for the $\Z/2$-graded real representation of $\Cl_n$ whose even part is the standard representation $\R^{2^{(n-1)/2}}_\std$ of $\Cl_n^0 \simeq \M(2^{(n-1)/2}, \R)$. 
\item If $n \equiv 0 \pmod 8$, 
we write $\Delta_{n,+}$ and $\Delta_{n,-}$ for the $\Z/2$-graded real representations of $\Cl_n$ whose even parts are the standard representation $\R^{2^{(n-2)/2}}_\std$ of the first and the second copies of $\Cl_n^0 \simeq \M(2^{(n-2)/2}, \R)^2$, respectively. 
\item If $n \equiv 2,6 \pmod 8$, 
we write $\Delta_{n,+}$ and $\Delta_{n,-}$ for the $\Z/2$-graded complex representations of $\Cl_n$ whose even parts are the standard representation $\C^{2^{(n-2)/2}}_\std$ and its complex conjugate $\overline{\C^{2^{(n-2)/2}}_\std}$ of $\Cl_n^0 \simeq \M(2^{(n-2)/2}, \C)$, respectively.
\item If $n \equiv 3,5 \pmod 8$, 
we write $\Delta_n$ for the $\Z/2$-graded quaternionic representation of $\Cl_n$ whose even part is the standard representation $\Ha^{2^{(n-3)/2}}_\std$ of $\Cl_n^0 \simeq \M(2^{(n-3)/2}, \Ha)$. 
\item If $n \equiv 4 \pmod 8$, 
we write $\Delta_{n,+}$ and $\Delta_{n,-}$ for the $\Z/2$-graded quaternionic representations of $\Cl_n$ whose even parts are the standard representation $\Ha^{2^{(n-4)/2}}_\std$ of the first and the second copy of $\Cl_n^0 \simeq \M(2^{(n-4)/2}, \Ha)$, respectively. 
\end{enumerate}
\end{defn}

In (1) and (4), we call $\Delta_n$ the \emph{$\Z/2$-graded spinor representation} of $\Cl_n$. 
In (2), (3), and (5), we call $\Delta_{n,+}$ and $\Delta_{n,-}$ the \emph{positive} and the \emph{negative $\Z/2$-graded spinor representations} of $\Cl_n$, respectively. 

\begin{rmk} \label{rmk:SpinorRepresentationSign}
The above definition is slightly inprecise when $n$ is even: the outer automorphism of $\Cl_n$ exchanges $\Delta_{n,+}$ and $\Delta_{n,-}$ in (2), (3), and (5). If we fix an orientation of $\R^n$, one can distinguish them canonically. 
Let us explain one way to do so. 

Let us set $\gamma = c(e_1)c(e_2) \dots c(e_n) \in \Cl^0_n$, where $(e_1, \dots, e_n)$ is an oriented orthonormal basis of $\R^n$. It does not depend on the choice of an oriented orthonormal basis. Then we choose $\Delta_{n,+}$ (\resp $\Delta_{n,-}$) to be the one whose even part is acted by $\gamma$ as the scalar multiplication by $i^{n/2}$ (\resp $-i^{n/2}$).
\end{rmk}

\begin{rmk}
When $n=0$, the Clifford algebra $\Cl_0 = \R$ admits two irreducible $\Z/2$-graded real representations, namely, $\R_\std \oplus 0$ and $0 \oplus \R_\std$. We may call them $\Delta_{0,+}$ and $\Delta_{0,-}$, respectively. 
\end{rmk}

\begin{defn}
Let $n \geq 1$. 
\begin{enumerate}
\item The \emph{pin group} $\Pin(n)$ is the subgroup of the multiplicative group $\Cl_n^\times$ generated by $\{ c(v) \mid\linebreak v \in \R^n, \| v \| = 1 \}$. 
\item The \emph{spin group} $\Spin(n)$ is the intersection $\Pin(n) \cap \Cl^0_n$. 
\end{enumerate}
\end{defn}

\begin{fact}[{see \eg \cite[\S\,3]{ABS64}}]
For $n \geq 1$, the pin group $\Pin(n)$ is a closed subgroup of $\Cl_n^\times$, and 
\[
\rho \colon \Pin(n) \to \OO(n), \qquad c(v) \mapsto r_v
\]
is a double cover of Lie groups, where $r_v \in \OO(n)$ is the orthogonal reflection with repsect to $v$. 
\end{fact}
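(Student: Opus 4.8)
The statement to prove is the \texttt{Fact} that $\Pin(n)$ is a closed subgroup of $\Cl_n^\times$ and that $\rho\colon \Pin(n)\to\OO(n)$, $c(v)\mapsto r_v$, is a double cover of Lie groups.

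\medskip

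The plan is to produce the standard ABS-style argument in three movements. First I would establish the conjugation action: for a unit vector $v\in\R^n$, the defining relation $c(v)^2 = -Q_{\std}(v) = -1$ shows $c(v)$ is invertible with $c(v)^{-1} = -c(v)$, and a direct computation in $\Cl_n$ gives, for any $w\in\R^n$,
\[
c(v)\, c(w)\, c(v)^{-1} = -c(v)c(w)c(v) = c\bigl(r_v(w)\bigr),
\]
where $r_v$ is the reflection $w\mapsto w - 2\langle v,w\rangle v$; this uses only $c(v)c(w)+c(w)c(v) = -2\langle v,w\rangle$. Hence the twisted-conjugation action of each generator $c(v)$ preserves the subspace $c(\R^n)\simeq\R^n$ and acts there by an orthogonal transformation, so $\rho$ is well-defined on all of $\Pin(n)$ as a group homomorphism into $\OO(n)$. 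Surjectivity is the Cartan--Dieudonné theorem: every element of $\OO(n)$ is a product of at most $n$ reflections $r_{v_1}\cdots r_{v_k}$, and this is hit by $c(v_1)\cdots c(v_k)$.

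\medskip

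Second, I would identify the kernel. If $x\in\Pin(n)$ satisfies $\rho(x)=\id$, then $x$ centralizes $c(\R^n)$, hence the whole algebra $\Cl_n$ (which is generated by $c(\R^n)$), so $x$ lies in the center. Writing $x=x^0+x^1$ in the $\Z/2$-grading and using that $x$ is a product of generators (so in particular $x$ or $cx$ for a chosen generator lies in $\Cl_n^0$), a short computation with the center of $\Cl_n$ and of $\Cl_n^0$ forces $x=\pm 1$. Conversely $\pm 1 = c(v)(\mp c(v))\in\Pin(n)$ (using $c(v)^2=-1$) and clearly maps to $\id$. Thus $\ker\rho = \{\pm 1\}\cong\Z/2$, and since $\OO(n)$ is connected in neither component but $\Pin(n)\to\OO(n)$ is a $2$-to-$1$ group homomorphism, it is a double cover.

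\medskip

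Third, I would supply the topological/Lie-theoretic packaging: $\Cl_n$ is a finite-dimensional real algebra, $\Cl_n^\times$ is an open subset hence a Lie group, and $\Pin(n)$ is the image of the compact set $\{\pm c(v_1)\cdots c(v_k) : k\le n,\ \|v_i\|=1\}$ under continuous multiplication, hence compact, in particular closed in $\Cl_n^\times$. A compact subgroup of a Lie group is an embedded Lie subgroup, so $\Pin(n)$ is a Lie group; a surjective homomorphism of Lie groups with finite kernel is a covering map, and here the fiber has order two. I do not expect a genuine obstacle — the entire content is the conjugation identity plus Cartan--Dieudonné plus a center computation, all of which are routine. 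The one point requiring a little care is the kernel computation for small $n$ and for the even/odd parity of the word length (whether $x$ itself or $c(v)x$ sits in $\Cl_n^0$), but this is handled uniformly by the observation that $\Spin(n)=\Pin(n)\cap\Cl_n^0$ and that elements of $\Pin(n)$ are homogeneous in the $\Z/2$-grading.
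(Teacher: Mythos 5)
The paper does not prove this Fact itself (it cites \cite[\S\,3]{ABS64}), and your overall route --- conjugation identity, Cartan--Dieudonn\'e, kernel computation, compactness --- is exactly the standard one from that reference. However, there is a genuine sign error at the heart of your first step, and it is not cosmetic under the paper's conventions. With the paper's definition $\Cl(V,Q)=TV/(v\otimes v+Q(v))$ one has $c(v)^2=-1$ for unit $v$, hence $c(v)^{-1}=-c(v)$ and $c(v)c(w)+c(w)c(v)=-2\langle v,w\rangle$, as you say. But then
\[
c(v)\,c(w)\,c(v)=c(w)-2\langle v,w\rangle c(v)=c\bigl(r_v(w)\bigr),
\qquad\text{so}\qquad
c(v)\,c(w)\,c(v)^{-1}=-c(v)c(w)c(v)=-c\bigl(r_v(w)\bigr),
\]
not $+c(r_v(w))$ as in your display. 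Thus plain conjugation by $c(v)$ induces $-r_v$ on $\R^n$, not $r_v$. This matters: $\det(-r_v)=(-1)^{n+1}$, so for odd $n$ every generator (and hence every element of $\Pin(n)$) acts through $\SO(n)$ under plain conjugation, and the resulting homomorphism cannot be the claimed double cover of $\OO(n)$; surjectivity via Cartan--Dieudonn\'e fails.

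The standard fix --- which you name (``twisted-conjugation'') but do not actually use in the computation --- is to define $\rho$ by the twisted adjoint representation $\rho(x)(w)=\alpha(x)\,c(w)\,x^{-1}$, where $\alpha$ is the grading involution. Since $\alpha(c(v))=-c(v)$, this gives $\rho(c(v))(w)=c(v)c(w)c(v)=c(r_v(w))$, i.e.\ genuinely $r_v$, and $\rho$ is a homomorphism because $\alpha$ is an algebra automorphism. The kernel argument must then also be run for the twisted action: an element $x$ of $\Pin(n)$ is homogeneous in the $\Z/2$-grading, and the condition $\alpha(x)c(w)=c(w)x$ for all $w$ forces a homogeneous $x$ to be a scalar (the even part must be central and the odd part must anticommute with all $c(w)$, which kills it), whence $\ker\rho=\{\pm1\}$. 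With this correction your remaining steps --- surjectivity from Cartan--Dieudonn\'e, compactness of $\Pin(n)$ as a continuous image of finitely many copies of $(\Sph^{n-1})^{k}$, and the fact that a surjective Lie group homomorphism with kernel of order two is a double cover --- are fine.
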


\begin{rmk}
When $n \geq 2$, the induced homomorphism $\rho \colon \Spin(n) \to \SO(n)$ is the (unique) connected double cover of $\SO(n)$. 
\end{rmk}

\begin{defn} \label{defn:SpinorRepresentation2}
Let $n \geq 1$. 
\begin{enumerate}
\item If $n$ is odd, the even part $\Delta_n^0$ of $\Delta_n$, viewed as a representation of $\Spin(n) \subset \Cl^0_n$, is called the \emph{spinor representation} of $\Spin(n)$. 

\item If $n$ is even, the even part $\Delta_{n,+}^0$ (\resp $\Delta_{n,-}^0$) of $\Delta_{n,+}$ (\resp $\Delta_{n,-}$), viewed as a representation of $\Spin(n) \subset \Cl^0_n$, is called the \emph{positive} (\resp \emph{negative}) \emph{spinor representation} of $\Spin(n)$. 
\end{enumerate}
\end{defn}

\subsection{The Atiyah--Bott--Shapiro map} \label{ss:ABS}

For $\K = \R, \C$, or $\Ha$, let $\Rep_{\gr,\K}(\Cl_n)$ be the set of isomorphism classes of $\Z/2$-graded representations of $\Cl_n$-modules over $\K$. We denote their group completions by  
\begin{align*}
\RO_\gr(\Cl_{n,\R}) &= \Ktheory(\Rep_{\gr,\R}(\Cl_n)), \\
\RU_\gr(\Cl_{n,\R}) &= \Ktheory(\Rep_{\gr,\C}(\Cl_n)), \\
\RSp_\gr(\Cl_{n,\R}) &= \Ktheory(\Rep_{\gr,\Ha}(\Cl_n)).
\end{align*}

Let $n \geq 0$ and let 
\[
B^n = \{ v \in \R^n \mid \| v \| \leq 1 \}, \qquad 
\Sph^{n-1} = \{ v \in \R^n \mid \| v \| = 1 \}.
\]
Let $M = M^0 \oplus M^1$ be a $\Z/2$-graded real representation of $\Cl_n$, and let $\underline{M} = \underline{M}^0 \oplus \underline{M}^1$ be the corresponding $\Z/2$-graded trivial vector bundle over $B^n$. 
For each $v \in \Sph^{n-1}$, the multiplication by $c(v)$ is an $\R$-linear isomorphism from $M^0$ to $M^1$. This gives an isomorphism of vector bundles
\[
\varphi_M \colon \underline{M}^0_{|\Sph^{n-1}} \to \underline{M}^1_{|\Sph^{n-1}}.
\]
We have thus obtained a monoid homomorphism 
\[
\chi \colon \Rep_{\gr,\R}(\Cl_n) \to \Vect_\R(B^n, \Sph^{n-1}), \qquad 
M \mapsto (\underline{M}^0, \underline{M}^1, \varphi_M),
\]
which induces by Fact~\ref{fact:RelativeKO^0} a group homomorphism
\[
\chi \colon \RO_\gr(\Cl_n) \to \KO^0(B^n, \Sph^{n-1}) \ (\simeq \widetilde{\KO}^0(\Sph^n)).
\]

\begin{defn}
We call $\chi \colon \RO_\gr(\Cl_n) \to \widetilde{\KO}^0(\Sph^n)$ the \emph{Atiyah--Bott-Shapiro map}. 
\end{defn}

One can similarly define the complex and the quaternionic versions of the Atiyah--Bott--Shapiro map: 
\[
\chi \colon \RU_\gr(\Cl_n) \to \widetilde{\KU}^0(\Sph^n), \qquad 
\chi \colon \RSp_\gr(\Cl_n) \to \widetilde{\KSp}^0(\Sph^n).
\]

Using the presentation $\Sph^n \simeq \Spin(n+1)/\Spin(n)$ of the sphere as a homogeneous space, we can give an alternative description of the Atiyah--Bott--Shapiro map. 

\begin{fact}[{Atiyah--Bott--Shapiro \cite[Th.\,14.3]{ABS64}}]\label{fact:ABSAlternative}
Let $\K = \R, \C$, or $\Ha$. 
Let $M = M^0 \oplus M^1$ be a $\Z/2$-graded representation of $\Cl_n$ over $\K$.
Then
\[
\chi(M) = [\Spin(n+1) \times_{\Spin(n)} M^0] - \dim_\K(M^0). 
\]
\end{fact}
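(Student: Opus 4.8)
The plan is to reduce Fact~\ref{fact:ABSAlternative} to a clutching-function computation and then to identify the clutching function with Clifford multiplication. I would first invoke the standard \emph{difference construction}. Since $B^n$ is contractible, the triple $(\underline{M}^0, \underline{M}^1, \varphi_M)$ of Section~\ref{ss:ABS} represents, under the isomorphism $\KO^0(B^n, \Sph^{n-1}) \simeq \widetilde{\KO}^0(\Sph^n)$ of Fact~\ref{fact:RelativeKO^0}, the class $[\mathcal{F}] - \dim_\R(M^0)$, where $\mathcal{F}$ is the real vector bundle over $\Sph^n = B^n_+ \cup_{\Sph^{n-1}} B^n_-$ obtained by gluing the trivial bundle $\underline{M}^0$ over the closed hemisphere $B^n_+$ to $\underline{M}^1$ over $B^n_-$ by the clutching map $v \mapsto \varphi_M(v) = c(v)$ over the equatorial $\Sph^{n-1}$. (Since $c(v) \colon M^0 \to M^1$ is an isomorphism for every $v \in \Sph^{n-1}$, the bundle $\mathcal{F}$ has constant rank $\dim_\R(M^0)$, so $[\mathcal{F}] - \dim_\R(M^0)$ lies in $\widetilde{\KO}^0(\Sph^n)$.) It therefore suffices to prove that $\mathcal{F}$ is isomorphic, as a real vector bundle over $\Sph^n$, to the homogeneous bundle $\mathcal{E} = \Spin(n+1) \times_{\Spin(n)} M^0$; the cases $\K = \C$ and $\K = \Ha$ run verbatim in $\widetilde{\KU}^0$ and $\widetilde{\KSp}^0$.

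To build the isomorphism $\mathcal{E} \simeq \mathcal{F}$, realize $\Sph^n$ as the unit sphere of $\R^{n+1}$ with base point $p_+ = e_{n+1}$ and stabilizer $\Spin(n) \subset \Spin(n+1)$ acting on $\R^n = e_{n+1}^\perp$; put $p_- = -e_{n+1}$ and $U_\pm = \Sph^n \setminus \{p_\mp\}$. Over each contractible hemisphere $U_\pm$ the bundle $\mathcal{E}$ is trivialized by a section $s_\pm \colon U_\pm \to \Spin(n+1)$ of $g \mapsto g \cdot e_{n+1}$, chosen to be the spin lift of the geodesic rotation: a rotation through angle $\theta$ in the plane spanned by $e_{n+1}$ and a unit vector $u \in \R^n$ lifts to $\cos(\theta/2) + \sin(\theta/2)\, c(u) c(e_{n+1}) \in \Spin(n+1) \subset \Cl^0_{n+1}$, a product of two Clifford reflections. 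Restricting to the equator, where $\theta = \pi/2$, I would compute the transition function $g(v) = s_+(v,0)^{-1} s_-(v,0) \in \Spin(n)$ and find, after simplifying in $\Cl_{n+1}$, that up to composition with fixed elements of $\Spin(n)$ one has $g(v) = c(v_0)^{-1} c(v)$, where $v_0 \in \Sph^{n-1}$ is a base vector fixed by the normalization of $s_-$ at $p_-$. Hence, in suitable trivializations, $\mathcal{E}$ is the clutched bundle over $\Sph^n$ with clutching map $v \mapsto c(v_0)^{-1} c(v)$ acting on $M^0$, which is exactly the clutching datum of $\mathcal{F}$ after the fixed identification $c(v_0) \colon M^0 \xrightarrow{\sim} M^1$. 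As clutching functions related by a fixed change of trivialization over the two hemispheres define isomorphic bundles, we conclude $\mathcal{E} \simeq \mathcal{F}$, and therefore $\chi(M) = [\mathcal{E}] - \dim_\R(M^0)$, as asserted.

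The main obstacle is the middle step: lifting the geodesic rotations of $\Sph^n$ to $\Spin(n+1)$ and verifying that the equatorial transition function is, up to a fixed invertible constant, Clifford multiplication $v \mapsto c(v)$. This is the bookkeeping computation with half-angle formulas and signs in the spin group — the same delicacy that forces the orientation-dependent normalization distinguishing $\Delta_{n,+}$ from $\Delta_{n,-}$ (Remark~\ref{rmk:SpinorRepresentationSign}). Granted that computation, the rest is formal: the difference construction is compatible with clutching over $\Sph^n$ (via Fact~\ref{fact:RelativeKO^0}), and the term $-\dim_\R(M^0)$ is exactly what places $\chi(M)$ in reduced $\Ktheory$-theory. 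For the purposes of this paper one may of course simply cite \cite[Th.\,14.3]{ABS64}; the above indicates how the statement is reconstructed from the definition of $\chi$ in Section~\ref{ss:ABS}.
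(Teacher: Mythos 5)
The paper does not actually prove this statement: it is recorded as a Fact with a citation to \cite[Th.\,14.3]{ABS64}, together with Remark~\ref{rmk:ABSAlternative} noting that the complex and quaternionic cases follow by the same argument. So there is no internal proof to compare against; what you have written is a reconstruction of the cited result, and it is the classical argument, correct in outline. Both ingredients are right: the difference class of $(\underline{M}^0,\underline{M}^1,\varphi_M)$ over the contractible base $B^n$ is $[\mathcal F]-\dim_\K(M^0)$ with $\mathcal F$ the bundle clutched along the equator by $\varphi_M$, and the homogeneous bundle $\Spin(n+1)\times_{\Spin(n)}M^0$ carries the same clutching datum. The computation you defer does go through: writing $x=\cos\theta\,e_{n+1}+\sin\theta\,u$ and taking $s_+(x)=\cos(\theta/2)-\sin(\theta/2)\,c(u)c(e_{n+1})$, and $s_-$ equal to the analogous lift based at $-e_{n+1}$ composed with the fixed element $r_0=-c(v_0)c(e_{n+1})$, the identity $\bigl(1+c(v)c(e_{n+1})\bigr)^2=2\,c(v)c(e_{n+1})$ together with $c(e_{n+1})^2=-1$ yields $s_+(v)^{-1}s_-(v)=\pm c(v)c(v_0)$ on the equator. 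This is conjugate by the fixed element $c(v_0)$ to the form $c(v_0)^{-1}c(v)$ you predict, and either version differs from $\varphi_M(v)=c(v)\colon M^0\to M^1$ only by post-composition with the constant isomorphism $\pm c(v_0)\colon M^1\to M^0$, so the two clutched bundles are isomorphic, as you claim. Two points deserve explicit care if this is written out in full: first, the compatibility of the difference-bundle description of $\KO^0(B^n,\Sph^{n-1})$ (Fact~\ref{fact:RelativeKO^0}) with the collapse identification $B^n/\Sph^{n-1}\simeq\Sph^n$ used to define $\chi$ in Section~\ref{ss:ABS} is itself a standard but nontrivial lemma that you invoke implicitly; second, the overall sign --- which hemisphere carries $\underline{M}^0$, and which of the two spin lifts of each rotation one takes --- must be fixed consistently with the orientation convention of Remark~\ref{rmk:SpinorRepresentationSign}, since this is exactly what distinguishes $\Delta_{n,+}$ from $\Delta_{n,-}$ and hence the generators recorded in Table~\ref{table:ABS}.
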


\begin{rmk}\label{rmk:ABSAlternative}
The complex and the quaternionic cases are not discussed in \cite{ABS64}, but the same argument as the real case works. 
\end{rmk}

\begin{fact}[{Atiyah--Bott--Shapiro \cite[Th.\,11.5]{ABS64}}]\label{fact:ABS-RC}
For any $n \geq 0$, the following sequences are exact: 
\begin{align*}
&0 \to \RO_\gr(\Cl_{n+1}) \xrightarrow{\textup{rest.}} \RO_\gr(\Cl_n) \xrightarrow{\chi} \widetilde{\KO}^0(\Sph^n) \to 0, \\
&0 \to \RU_\gr(\Cl_{n+1}) \xrightarrow{\textup{rest.}} \RU_\gr(\Cl_n) \xrightarrow{\chi} \widetilde{\KU}^0(\Sph^n) \to 0.
\end{align*}
\end{fact}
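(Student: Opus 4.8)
The plan is to reproduce the original argument of Atiyah--Bott--Shapiro, which proceeds through two logically independent ingredients: a purely algebraic computation of the graded representation rings $\RO_\gr(\Cl_\bullet)$ (resp.\ $\RU_\gr(\Cl_\bullet)$) together with the restriction homomorphisms between them, and a topological identification of the Atiyah--Bott--Shapiro map $\chi$ with the clutching-function construction, which via Bott periodicity pins down its image and its kernel. I describe the real case; the complex case is entirely parallel, with the $8$-fold periodicity of real Clifford algebras replaced by the $2$-fold periodicity of $\Cl_{n,\C}$ and the Bott element $\beta_\R$ replaced by $\beta$.

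\emph{Algebraic side.} Using Remark~\ref{rmk:CliffordZ/2Grading}~(3), a $\Z/2$-graded $\Cl_n$-module is the same datum as an ungraded module over $\Cl_n^0 \cong \Cl_{n-1}$, so $\RO_\gr(\Cl_n)$ is a free abelian group with explicit basis the spinor modules of Definition~\ref{defn:SpinorRepresentation1}, of rank $1$ or $2$ according to whether the Wedderburn decomposition of $\Cl_{n-1}$ in Fact~\ref{fact:CliffordComputation} has one or two matrix factors. One then evaluates the restriction map $\RO_\gr(\Cl_{n+1}) \to \RO_\gr(\Cl_n)$ on these bases: since the relevant subalgebra inclusion is $\Cl_n^0 \hookrightarrow \Cl_{n+1}^0$, i.e.\ $\Cl_{n-1} \hookrightarrow \Cl_n$, a spinor module of $\Cl_{n+1}$ restricts either to a spinor module of $\Cl_n$ or to the sum of the two, in a way dictated only by $n \bmod 8$. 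Propagating with the periodicity $\Cl_{n+8} \cong \Cl_n \otimes \M(16,\R)$ turns the determination of the cokernel of the restriction map into a finite check; it comes out equal to $\Z,\ \Z/2,\ \Z/2,\ 0,\ \Z,\ 0,\ 0,\ 0$ for $n \equiv 0,1,2,3,4,5,6,7 \pmod 8$.

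\emph{Topological side.} One checks that $\chi$ (built in Section~\ref{ss:ABS} from the clutching functions $v \mapsto c(v)$, equivalently from the homogeneous-bundle formula of Fact~\ref{fact:ABSAlternative}) is multiplicative for the graded tensor product $\Cl_n \otimes \Cl_m \cong \Cl_{n+m}$ and the external product $\widetilde{\KO}^0(\Sph^n) \times \widetilde{\KO}^0(\Sph^m) \to \widetilde{\KO}^0(\Sph^{n+m})$ of Section~\ref{ss:Product}; that it vanishes on the image of the restriction map, because if a graded $\Cl_n$-module extends to $\Cl_{n+1}$ then the odd invertible element $c(e_{n+1})$ furnishes an explicit fibrewise trivialization of the corresponding clutching function; and that $\chi$ carries suitable spinor modules to generators of $\widetilde{\KO}^0(\Sph^1)$, $\widetilde{\KO}^0(\Sph^2)$, and to the real Bott element $\beta_\R \in \widetilde{\KO}^0(\Sph^8)$ of Definition~\ref{defn:BottElement}. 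Combining multiplicativity with the periodicity isomorphisms of Fact~\ref{fact:BottPeriodicity}, $\chi$ is therefore surjective in every degree. Hence $\chi$ factors through the cokernel $A_n$ of the restriction map as a surjection $A_n \twoheadrightarrow \widetilde{\KO}^0(\Sph^n) = \KO^{-n}(\pt)$ (using Fact~\ref{fact:Unbased}); since by the previous paragraph $A_n$ is finitely generated and abstractly isomorphic to $\KO^{-n}(\pt)$, and finitely generated abelian groups are Hopfian, this surjection is an isomorphism, giving exactness at $\RO_\gr(\Cl_n)$ and at $\widetilde{\KO}^0(\Sph^n)$; exactness at $\RO_\gr(\Cl_{n+1})$ comes out of the same rank bookkeeping, after passing, where needed, to the canonical generators (spinor modules) on which the restriction map is computed explicitly.

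The hard part will be the precise identification of $\ker\chi$: surjectivity and the inclusion $\mathrm{im}(\mathrm{rest.}) \subseteq \ker\chi$ are formal once the multiplicative structure and the Bott generators are in place, but showing the reverse inclusion forces one to know the cokernel of the restriction map \emph{on the nose}, hence to carry out the delicate $\bmod 8$ analysis of Clifford modules above and to track carefully how $\alpha$ and $\beta_\R$ interact with the operations $\rr$, $\cc$, $\qq$. This case-by-case matching of the algebraic cokernel against the $\KO$-groups of spheres is the genuine content of the theorem and is where any proof, ours or the original, must do real work.
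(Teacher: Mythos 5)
The paper offers no proof of this statement: it is quoted as a ``Fact'' directly from \cite[Th.\,11.5]{ABS64}, so the only meaningful comparison is with the original Atiyah--Bott--Shapiro argument, which your sketch reproduces correctly in its two main components: the mod-$8$ (resp.\ mod-$2$) computation of the graded representation groups and of the restriction maps on the spinor-module bases, and the topological input (multiplicativity of $\chi$ under graded tensor product and smash product, vanishing of $\chi$ on modules extending to $\Cl_{n+1}$ via the homotopy $t \mapsto c(\cos t\, v + \sin t\, e_{n+1})$, and the identification of $\chi$ of low-degree spinor modules with Hopf and Bott classes). Your Hopfian shortcut for upgrading the surjection $A_n \twoheadrightarrow \widetilde{\KO}^0(\Sph^n)$ to an isomorphism is valid and is a reasonable streamlining of ABS's ring-theoretic comparison; note only that it consumes, as an external input, Bott's actual computation of $\KO^{-n}(\pt)$ for $0 \leq n \leq 7$ (which is more than the periodicity isomorphisms of Fact~\ref{fact:BottPeriodicity}), and that the verification that $\chi(\rr \cc' \Delta_{4,+})$ generates $\widetilde{\KO}^0(\Sph^4)$ is, as you say, where the real work lies.

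There is, however, one step that fails outright: exactness at $\RO_\gr(\Cl_{n+1})$, which you assert ``comes out of the same rank bookkeeping.'' The bookkeeping in fact refutes it. For $n \equiv 3, 7 \pmod 8$ the algebra $\Cl_n^0 \simeq \Cl_{n-1}$ is simple while $\Cl_{n+1}^0 \simeq \Cl_n$ is a product of two matrix algebras, so $\RO_\gr(\Cl_{n+1}) \simeq \Z^2$ and $\RO_\gr(\Cl_n) \simeq \Z$, and the restriction map sends both $[\Delta_{n+1,+}]$ and $[\Delta_{n+1,-}]$ to $[\Delta_n]$; its kernel is the infinite cyclic group generated by $[\Delta_{n+1,+}] - [\Delta_{n+1,-}]$. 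The same happens for $\RU_\gr(\Cl_{n+1}) \to \RU_\gr(\Cl_n)$ for every odd $n$. What \cite[Th.\,11.5]{ABS64} actually asserts is that $\chi$ induces an isomorphism from the cokernel $\RO_\gr(\Cl_n)/\mathrm{rest}(\RO_\gr(\Cl_{n+1}))$ onto $\widetilde{\KO}^0(\Sph^n)$, i.e.\ exactness at the middle and right-hand terms only; the leading ``$0 \to$'' in the statement is an over-claim (harmless here, since the paper only ever uses surjectivity of $\chi$ and the description of its kernel as the image of restriction, e.g.\ in Table~\ref{table:ABS} and in the proof of Theorem~\ref{thm:ABS-H}). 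So you cannot prove left-exactness, because it is false in those degrees; the rest of your outline establishes exactly what is true and what is needed.
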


We use in Section \ref{ss:OtherSymmetric} the following quaternionic analogue of Fact~\ref{fact:ABS-RC}: 

\begin{thm}\label{thm:ABS-H}
For any $n \geq 0$, the following sequence is exact:
\[
0 \to \RSp_\gr(\Cl_{n+1}) \xrightarrow{\textup{rest.}} \RSp_\gr(\Cl_n) \xrightarrow{\chi} \widetilde{\KSp}^0(\Sph^n) \to 0.
\]
\end{thm}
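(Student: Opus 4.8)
The plan is to imitate the proofs of the real and complex cases (Fact~\ref{fact:ABS-RC}), which were established by Atiyah--Bott--Shapiro, but one must be careful because the quaternionic $\Ktheory$ $\KSp$ is \emph{not} a ring (only a module over $\KO$), so the module-theoretic arguments of \cite{ABS64} need to be adapted. First I would verify exactness at $\RSp_\gr(\Cl_{n+1})$, i.e.\ injectivity of the restriction map. This should follow from the general structure of graded Clifford modules: by Remark~\ref{rmk:CliffordZ/2Grading}~(3), $\Z/2$-graded $\Cl_{n+1}$-modules over $\Ha$ correspond to $\Cl_{n+1}^0 \simeq \Cl_n$-modules over $\Ha$, and the restriction functor to $\Cl_n$-modules is faithful and reflects isomorphisms at the level of the representation rings (one checks directly on the list of simple modules via Fact~\ref{fact:CliffordComputation}), so the restriction homomorphism is split injective after tensoring with $\Z$.

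The heart of the matter is exactness at $\RSp_\gr(\Cl_n)$ and surjectivity of $\chi$. For surjectivity I would use the Bott periodicity isomorphisms of Fact~\ref{fact:BottPeriodicity} together with Fact~\ref{fact:ABSAlternative} to identify $\chi$ concretely: $\widetilde{\KSp}^0(\Sph^n)$ is nonzero only for $n \equiv 0,4 \pmod 8$ (up to the $8$-fold periodicity shifting between $\KO$ and $\KSp$), where it is $\Z$ or $\Z/2$, and in each such degree a spinor representation $\Delta_{n,+}$ or $\Delta_n$ from Definition~\ref{defn:SpinorRepresentation1} maps under $\chi$ to a generator — this is exactly the computation \cite[Table~2]{ABS64} in the real case, transported via the $\cc'$ and forgetful maps relating $\Cl_{n,\Ha}$-modules to $\Cl_{n+8,\R}$-modules. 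For the kernel, I would show $\ker\chi$ is generated by modules extending to $\Cl_{n+1}$: the classical argument is that a graded $\Cl_n$-module $M$ with $\chi(M)=0$ means the clutching isomorphism $\varphi_M$ on $\Sph^{n-1}$ extends over $B^n$, i.e.\ there is a homotopy through isomorphisms, and the homotopy can be made $\Cl_n$-linear, producing a $\Cl_n$-module structure that is the restriction of a $\Cl_{n+1}$-module structure (the extra generator $c(e_{n+1})$ being supplied by the homotopy parameter). This part transfers essentially verbatim since it is about clutching functions, not about ring structure.

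The cleanest route is probably to reduce the quaternionic statement to the already-known real statement by a dimension shift. Concretely, $\Cl_{n,\Ha} = \Cl_n \otimes_\R \Ha$, and there is a Morita-type equivalence relating $\Z/2$-graded $\Cl_{n,\Ha}$-modules to $\Z/2$-graded $\Cl_{n+4,\R}$-modules (since $\Cl_4 \simeq \M(2,\Ha)$ wraps one $\Ha$-factor into a degree shift of $4$, up to the graded tensor product of Remark~\ref{rmk:CliffordZ/2Grading}~(1)); combined with $\widetilde{\KSp}^0(\Sph^n) \simeq \widetilde{\KO}^0(\Sph^{n+4})$ from the Bott isomorphism $x \mapsto x \wedge \alpha$ (Fact~\ref{fact:BottPeriodicity}), and naturality of $\chi$ under these identifications (which follows from Fact~\ref{fact:ABSAlternative} since the spin homogeneous-space description is compatible with smashing with $\HP^1$), the exact sequence we want becomes a shift of the real exact sequence in Fact~\ref{fact:ABS-RC}. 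I expect the main obstacle to be checking this naturality/compatibility carefully — verifying that the Morita equivalence on module categories intertwines the Atiyah--Bott--Shapiro maps, in particular that it matches the chosen generators (the signs in Remark~\ref{rmk:SpinorRepresentationSign} must be tracked), and confirming the edge cases $n \equiv 3,4,5 \pmod 8$ where $\RSp_\gr(\Cl_n)$ jumps between $\Z$, $\Z^2$, and $0$. Once that bookkeeping is done, exactness follows formally from the real case.
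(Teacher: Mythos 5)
Your ``cleanest route'' in the final paragraph is exactly the paper's proof: the authors define the isomorphism $\varphi \colon \RSp_\gr(\Cl_n) \to \RO_\gr(\Cl_{n+4})$, $[M] \mapsto [M \boxtimes_\Ha \overline{\Delta_4}]$ via the graded tensor product of Remark~\ref{rmk:CliffordZ/2Grading}~(1), check via Fact~\ref{fact:ABSAlternative} that $\chi(\Delta_4) = \alpha$ so that the Atiyah--Bott--Shapiro maps fit into a commutative ladder with the Bott isomorphism $({-}) \wedge \alpha \colon \widetilde{\KSp}^0(\Sph^n) \to \widetilde{\KO}^0(\Sph^{n+4})$, and deduce exactness formally from the real case of Fact~\ref{fact:ABS-RC}. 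The direct ABS-style arguments you sketch in your first two paragraphs are not needed; the reduction you settle on is correct and is the intended argument.
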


\begin{proof}
One can easily see from Fact~\ref{fact:ABSAlternative} that $\chi(\Delta_4) = \alpha$ in $\widetilde{\KSp}^0(\Sph^4)$. 
Also, for any $n \in \N$, the following map, defined via Remark~\ref{rmk:CliffordZ/2Grading}~(1), is an isomorphism of abelian groups: 
\[
\varphi \colon 
\RSp_\gr(\Cl_n) \to \RO_\gr(\Cl_{n+4}), \qquad [M] \mapsto [M \boxtimes_\Ha \overline{\Delta_4}].
\]
Hence, the following diagram commutes: 
\[
\begin{tikzcd}
0 \ar[r] & \RSp_\gr(\Cl_{n+1}) \ar[d, "\varphi"', "\simeq"] \ar[r, "\textup{rest.}"] & \RSp_\gr(\Cl_n) \ar[d, "\varphi"', "\simeq"] \ar[r, "\chi"] & \widetilde{\KSp}^0(\Sph^n) \ar[r] \ar[d, "{(-) \wedge \alpha}"', "\simeq"] & 0 \\
0 \ar[r] & \RO_\gr(\Cl_{n+5}) \ar[r, "\textup{rest.}"'] & \RO_\gr(\Cl_{n+4}) \ar[r, "\chi"'] & \widetilde{\KO}^0(\Sph^{n+4}) \ar[r] & 0
\end{tikzcd}
\]
The second row is an exact sequence, hence so is the first. 
\end{proof}

For the reader's convenience, 
we give a table of reduced $\Ktheory$-theories of spheres and their generators, obtained from Fact~\ref{fact:ABS-RC} and Theorem~\ref{thm:ABS-H}. This table is used in Section \ref{ss:OtherSymmetric}.

\begin{center}
\begin{longtable}{|c||c|c|c|c|c|c|c|c|} \hline
$n \bmod 8$ & $0$ & $1$ & $2$ & $3$ & $4$ & $5$ & $6$ & $7$ \\ \hline
$\widetilde{\KO}^0(\Sph^n)$ & $\Z$ & $\Z/2$ & $\Z/2$ & $0$ & $\Z$ & $0$ & $0$ & $0$ \\ 
Generator & $\chi(\Delta_{n,+})$ & $\chi(\Delta_n)$ & $\chi(\rr \Delta_{n,+})$ & --- & $\chi(\rr \cc' \Delta_{n,+})$ & --- & --- & --- \\ \hline
$\widetilde{\KU}^0(\Sph^n)$ & $\Z$ & $0$ & $\Z$ & $0$ & $\Z$ & $0$ & $\Z$ & $0$ \\ 
Generator & $\chi(\cc \Delta_{n,+})$ & --- & $\chi(\Delta_{n,+})$ & --- & $\chi(\cc' \Delta_{n,+})$ & --- & $\chi(\Delta_{n,+})$ & --- \\ \hline
$\widetilde{\KSp}^0(\Sph^n)$ & $\Z$ & $0$ & $0$ & $0$ & $\Z$ & $\Z/2$ & $\Z/2$ & $0$ \\ 
Generator & $\chi(\qq \cc \Delta_{n,+})$ & --- & --- & --- & $\chi(\Delta_{n,+})$ & $\chi(\Delta_n)$ & $\chi(\qq \Delta_{n,+})$ & ---\\ \hline
\caption{Reduced $\Ktheory$-theories of spheres and their generators}
\label{table:ABS}
\end{longtable}
\end{center}

\subsection{The reduced $J$-group of spheres}\label{ss:JTildeOfSphere}

Combining the Adams conjecture (Fact~\ref{fact:AdamsConjecture2}) and the above computation of $\widetilde{\KO}^0(\Sph^n)$, one can deduce the following: 

\begin{fact}[{Adams \cite[Ex.\,6.4--6.5]{Ada65a} + Quillen \cite[Th.\,1.1]{Qui71}}]
\label{fact:JTildeSphere}
Let $n \geq 1$. 
\begin{enumerate}[label = {\upshape (\arabic*)}]
  \item For $n \equiv 3,5,6,7 \pmod 8$, we have $\widetilde{J}(\Sph^n) \simeq 0$. 
  \item For $n \equiv 1,2 \pmod 8$, we have $\widetilde{J}(\Sph^n) \simeq \Z/2$. 
  \item For $n \equiv 0,4 \pmod 8$, define $s_n \in \N$ by 
\[
s_n = \prod_{\substack{p \textrm{ prime}\\2(p-1) \textrm{ divides }n}} p^{1+v_p(n)}
\]
where $v_p(n) \in \N$ is the $p$-adic valuation of $n$. Then we have $\widetilde{J}(\Sph^n) \simeq \Z/s_n$.
\end{enumerate}
\end{fact}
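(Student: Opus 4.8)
The plan is to combine the two inputs already at our disposal for $X = \Sph^n$ --- the group $\widetilde{\KO}^0(\Sph^n)$ (Table~\ref{table:ABS}, via Bott periodicity and the Atiyah--Bott--Shapiro isomorphisms) and the action of the Adams operations $\psi^k$ on it --- and then apply Fact~\ref{fact:AdamsConjecture2}, the consequence of the Adams conjecture due to Adams and Quillen, which describes the kernel of $\widetilde{J} \colon \widetilde{\KO}^0(X) \to \widetilde{J}(X)$ purely in terms of the operations $\psi^k$. Since $\widetilde{J}$ is by definition surjective onto $\widetilde{J}(\Sph^n)$, it will suffice in each residue class modulo~$8$ to compute this kernel.

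First I would handle $n \equiv 3,5,6,7 \pmod 8$: here $\widetilde{\KO}^0(\Sph^n) = 0$ by Table~\ref{table:ABS}, so $\widetilde{J}(\Sph^n) = 0$ as well. Next, for $n \equiv 1,2 \pmod 8$ one has $\widetilde{\KO}^0(\Sph^n) \simeq \Z/2$, on which every $\psi^k$ acts trivially; then for any function $e \colon \N_{\geq 1} \to \N$ the only element of the form $\sum_i k_i^{e(k_i)}(\psi^{k_i}(y_i) - y_i)$ is $0$, so Fact~\ref{fact:AdamsConjecture2} yields that $\widetilde{J}$ is injective and $\widetilde{J}(\Sph^n) \simeq \Z/2$.

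The substantive case is $n \equiv 0,4 \pmod 8$, where $\widetilde{\KO}^0(\Sph^n) \simeq \Z$. I would first record that $\psi^k$ acts as multiplication by $k^{n/2}$ (for $n \equiv 4 \pmod 8$ this needs a little care, since the complexification $\cc \colon \widetilde{\KO}^0(\Sph^n) \to \widetilde{\KU}^0(\Sph^n)$ is not an isomorphism). Granting this, Fact~\ref{fact:AdamsConjecture2} identifies, for each $e$, the subgroup of realizable elements as the one generated by $\{\, k^{e(k)}(k^{n/2}-1) : k \geq 1 \,\}$, and $\ker \widetilde{J}$ is the intersection of these over all $e$. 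A prime-by-prime analysis then finishes: for a prime $p$ one uses that $k^{n/2} \not\equiv 1 \pmod p$ for a primitive root $k$ modulo~$p$ unless $p-1$ divides $n/2$, the lifting-the-exponent identity $v_p(k^{n/2}-1) = 1 + v_p(n/2)$ for a primitive root $k$ modulo $p^2$ when $p-1 \mid n/2$, and the fact that the terms with $p \mid k$ can be made arbitrarily $p$-divisible by taking $e$ large; this pins the intersection down as $s_n \Z$, whence $\widetilde{J}(\Sph^n) \simeq \Z/s_n$.

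The hard part will be the prime~$2$ in this last case: the naive argument just sketched produces the correct odd part of $s_n$ but the wrong power of~$2$ (it would give $\Z/120$ instead of $\Z/240$ for $n = 8$). Getting the $2$-primary part right is exactly the content of Adams's \cite{Ada65a,Ada65b}: one must use the finer \emph{real} $e$-invariant in place of its complex shadow and understand the $\psi^k$-module structure of $\widetilde{\KO}^0(\Sph^n)$ at~$2$ precisely, and these then combine with Quillen's \cite{Qui71} theorem in the form of Fact~\ref{fact:AdamsConjecture2}. The remaining ingredients are routine Bott periodicity bookkeeping and elementary number theory (von Staudt--Clausen, lifting the exponent).
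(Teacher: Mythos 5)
The paper does not actually prove this statement: it is imported as a Fact from Adams and Quillen, with the one-line indication that it follows by combining Fact~\ref{fact:AdamsConjecture2} with the computation of $\widetilde{\KO}^0(\Sph^n)$ from Table~\ref{table:ABS}. Your proposal is precisely an expansion of that indication, so your overall strategy is the intended one. However, two points in your write-up need correction.

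The more serious one is your final paragraph, which manufactures an obstacle at the prime $2$ that does not exist and then leaves it unresolved. Given Fact~\ref{fact:AdamsConjecture2} as a black box, for $n \equiv 0,4 \pmod 8$ the kernel of $\widetilde{J}$ on $\widetilde{\KO}^0(\Sph^n) \simeq \Z$ is exactly $\bigcap_e \langle k^{e(k)}(k^{n/2}-1) \rangle$, and the elementary computation already produces the correct power of $2$: the terms with $k$ even are disposed of by letting $e(k)$ grow, and for $k$ odd, lifting the exponent gives $v_2(k^{n/2}-1) = v_2(k-1)+v_2(k+1)+v_2(n/2)-1 \geq 2 + v_2(n/2) = 1 + v_2(n)$, with equality for $k \equiv \pm 3 \pmod 8$. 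For $n=8$ this yields $2^4\cdot 3\cdot 5 = 240$, not $120$. The real-versus-complex $e$-invariant subtlety you invoke is genuine, but it lives entirely inside the implication (i)~$\Rightarrow$~(ii) of Fact~\ref{fact:AdamsConjecture2}, which you are already assuming; re-importing it here would be circular, and nothing beyond the number theory is needed. (Your remark that $\psi^k$ acts by $k^{n/2}$ requires care for $n\equiv 4\pmod 8$ is fine: $\cc$ maps the real generator to twice the complex one, but it is injective, which suffices.)

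The minor point is case (2): on $\widetilde{\KO}^0(\Sph^n)\simeq\Z/2$ for $n\equiv 1,2\pmod 8$, the operation $\psi^k$ does \emph{not} act trivially for all $k$; it is the identity for $k$ odd but zero for $k$ even (already on $\Sph^1$ one has $\psi^2([\Taut_{\RP^1}]-1) = [\Taut_{\RP^1}^{\otimes 2}]-1 = 0$). The conclusion nevertheless holds, because for even $k$ the coefficient $k^{e(k)}$ is even once $e(k)\geq 1$, so $k^{e(k)}(\psi^k(y)-y)=0$ in $\Z/2$ and $\widetilde{J}$ is injective as claimed.
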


For small $n$, the values of $s_n$ are as follows: 

\begin{center}
\begin{longtable}{|c||c|c|c|c|c|c|c|c|c|c|} \hline
$n$ & $4$ & $8$ & $12$ & $16$ & $20$ & $24$ & $28$ & $32$ & $36$ & \dots \\ \hline
$s_n$ & $24$ & $240$ & $504$ & $480$ & $264$ & $65520$ & $24$ & $16320$ & $28728$ & \dots \\ \hline
\end{longtable}
\end{center}

\subsection{Thom isomorphism theorems and Thom--Gysin exact sequences}\label{ss:Thom}

Let $E$ be a real vector bundle of rank $d$ on a topological space $B$. 
A \emph{spin structure} on $X$ is a principal $\Spin(d)$-bundle $P$ on $B$ together with an isomorphism of vector bundles
\[
\varphi \colon P \times_{\Spin(d)} \R^d \to E. 
\]
A real vector bundle equipped with a spin structure is called a \emph{spin vector bundle}. 
We have the following Thom isomorphism theorem in $\KO$-theory, which we slightly reformulated from a more common version for convenience: 

\begin{thm}\label{fact:ThomIsom}
Let $X$ be a compact Hausdorff space, and 
let $\pi \colon E \to X$ be a spin vector bundle over $X$ of rank $d$. 
Then there exists a canonical element $\tau_E \in \KO^d(B(E), S(E))$, called the $\KO$-theoretic Thom class, such that the multiplication map  
\[
\KO^n(X) \to \KO^{n+d}(B(E), S(E)), \qquad x \mapsto \pi^\ast(x) \cdot \tau_E. 
\]
is an isomorphism for any $n \in \Z$. 
\end{thm}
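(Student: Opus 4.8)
The statement is the $\KO$-theoretic Thom isomorphism for spin vector bundles, which is a standard result; what remains is to assemble it from the structure already built in the appendix. The plan is to reduce the global statement to the fiberwise one via a Mayer--Vietoris/clutching argument, using the Atiyah--Bott--Shapiro construction (Section~\ref{ss:ABS}) to identify the Thom class on each fiber, and then invoke a five-lemma argument over a good cover. First I would construct the Thom class $\tau_E$. A spin structure provides a principal $\Spin(d)$-bundle $P$ with $P\times_{\Spin(d)}\R^d\simeq E$; choosing the appropriate $\Z/2$-graded spinor representation $\Delta_d$ (or $\Delta_{d,+}$ when $d$ is even) of $\Cl_d$, the associated bundle $P\times_{\Spin(d)}\Delta_d^0\to X$ together with Clifford multiplication by unit vectors yields, fiberwise, exactly the clutching data $(\underline{M}^0,\underline{M}^1,\varphi_M)$ of the Atiyah--Bott--Shapiro construction. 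Globalizing this over $X$ gives a triple of vector bundles over $B(E)$ with an isomorphism over $S(E)$, hence by Fact~\ref{fact:RelativeKO^0} an element $\tau_E\in\KO^d(B(E),S(E))$; this is the canonical $\KO$-theoretic Thom class.

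\textbf{Reduction to the local case.} Define the candidate isomorphism $\phi_E\colon\KO^n(X)\to\KO^{n+d}(B(E),S(E))$ by $x\mapsto\pi^\ast(x)\cdot\tau_E$, where the product uses the module structure of $\KO^\ast(B(E))$ on $\KO^\ast(B(E),S(E))$ extended to all degrees as in Sections~\ref{ss:Product} and~\ref{ss:PositiveDegrees}. Naturality of $\tau_E$ under pullback, which follows from naturality of the ABS construction, shows that for an open set $U\subset X$ over which $E$ is trivial, $\phi_{E|_U}$ is an isomorphism: indeed $B(E|_U)\simeq U\times B^d$ and $S(E|_U)\simeq U\times S^{d-1}$, and one reduces to $\widetilde{\KO}^{\ast}(B^d/S^{d-1})=\widetilde{\KO}^\ast(\Sph^d)$, where $\chi(\Delta_d)$ (resp.\ $\chi(\Delta_{d,+})$) is a generator by Table~\ref{table:ABS} and Bott periodicity (Fact~\ref{fact:BottPeriodicity}); so multiplication by it is an isomorphism. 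For intersections $U\cap V$ of trivializing opens the same argument applies. Then a Mayer--Vietoris argument in the generalized cohomology theory $\KO^\ast$ (whose long exact sequences for pairs were extended to all degrees at the end of Section~\ref{ss:PositiveDegrees}), comparing the sequences for $(B(E|_U),S(E|_U))$, $(B(E|_V),S(E|_V))$ and their intersection with those for $X$, and applying the five lemma, shows $\phi_E$ is an isomorphism whenever $X$ admits a finite trivializing cover. Since $X$ is compact Hausdorff, such a finite cover exists, so an induction on its cardinality completes the argument.

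\textbf{Main obstacle.} The genuinely delicate point is establishing the base case carefully, i.e.\ that $\chi(\Delta_d)\in\widetilde{\KO}^0(\Sph^d)$ really behaves as a Bott-periodicity generator so that multiplication by it induces isomorphisms in all degrees simultaneously. This amounts to the multiplicative structure of the ABS map: one needs $\chi(\Delta_d\boxtimes\Delta_{d'})$ to correspond to $\chi(\Delta_d)\wedge\chi(\Delta_{d'})$ under the product of Section~\ref{ss:Product}, together with the identification of $\chi(\Delta_4)$, $\chi(\Delta_{8,+})$, etc., with the Bott elements $\alpha$, $\beta_\R$ of Definition~\ref{defn:BottElement}. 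These facts are part of the Atiyah--Bott--Shapiro package; I would cite \cite{ABS64} (Theorems~11.5 and~14.3, already quoted as Facts~\ref{fact:ABS-RC} and~\ref{fact:ABSAlternative}) and \cite{Ati67} for the compatibility of the product structures, rather than reprove them. The Mayer--Vietoris patching itself is routine once the local case and naturality are in hand. I would also note that the choice of $\tau_E$ depends only on the spin structure, not on auxiliary trivializations, precisely because the ABS construction is natural and the spinor representation is canonically attached to $\Spin(d)$ (up to the sign ambiguity resolved by the orientation, see Remark~\ref{rmk:SpinorRepresentationSign}).
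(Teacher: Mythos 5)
Your overall strategy (construct the Thom class by the family Atiyah--Bott--Shapiro construction, then prove the isomorphism by induction over a finite trivializing cover and the five lemma) is a legitimate route: it amounts to reproving the result \cite[Th.\,12.3]{ABS64} that the paper simply cites. But as written there is a genuine gap in your base case, and it is exactly the point that the paper's proof is designed to circumvent. Fact~\ref{fact:RelativeKO^0} produces from the clutching data $(\underline{M}^0,\underline{M}^1,\varphi_M)$ an element of $\KO^{0}(B(E),S(E))$, not of $\KO^{d}(B(E),S(E))$; these groups differ unless $d\equiv 0\pmod 8$. Correspondingly, on a fiber your candidate class is $\chi(\Delta_d)\in\widetilde{\KO}^{0}(\Sph^d)$, whereas the Thom class must restrict on each fiber to a generator of $\widetilde{\KO}^{d}(\Sph^d)\simeq\Z$. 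For $d\not\equiv 0\pmod 8$ the group $\widetilde{\KO}^{0}(\Sph^d)$ is $0$ or $\Z/2$ or, for $d\equiv 4$, a copy of $\Z$ on which external multiplication is not invertible (the square of the generator of $\widetilde{\KO}^0(\Sph^4)$ is a proper multiple of the generator of $\widetilde{\KO}^0(\Sph^8)$). So the assertion that ``$\chi(\Delta_d)$ is a generator by Table~\ref{table:ABS} and Bott periodicity, so multiplication by it is an isomorphism'' fails already for $X=\pt$ and $d=1$: there $\KO^{0}(B^1,S^0)=\widetilde{\KO}^0(\Sph^1)=\Z/2\neq\Z=\KO^0(\pt)$.

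The missing step is the paper's opening move: replace $E$ by $E\oplus\underline{\R}^{\oplus k}$ with $d+k$ divisible by $8$, using the suspension identification
\[
\KO^{n+d+k}\bigl(B(E\oplus\underline{\R}^{\oplus k}),\,S(E\oplus\underline{\R}^{\oplus k})\bigr)\;\simeq\;\KO^{n+d}(B(E),S(E)),
\]
so that the degree-$d$ statement for $E$ becomes a degree-$0$ statement for a bundle of rank divisible by $8$. Only after this reduction is the fiberwise ABS class $\chi(\Delta_{d,+})$ a generator of $\widetilde{\KO}^0(\Sph^{d})\simeq\Z$ acting as a power of the Bott element $\beta_\R$, and only then does your local-to-global Mayer--Vietoris argument (which, for the non-compact open sets you invoke, should be run with a finite closed trivializing cover, since the paper's $\KO$ is only defined on compact Hausdorff spaces) go through. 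With that reduction inserted, your argument is a self-contained substitute for the paper's citation of \cite[Th.\,12.3]{ABS64}; without it, the class you construct is not the Thom class and the base case of your induction is false.
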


Here, $B(E)$ and $S(E)$ denote respectively the unit disc bundle and unit sphere bundle of $E$.

\begin{proof}
For any $k \geq 0$, we have
\begin{align*}
\KO^{n+d+k}(B(E \oplus \underline{\R}^{\oplus k}), S(E \oplus \underline{\R}^{\oplus k}))
&= \widetilde{\KO}^{n+d+k}(B(E \oplus \underline{\R}^{\oplus k}) / S(E \oplus \underline{\R}^{\oplus k})) \\
&\simeq \widetilde{\KO}^{n+d+k}((B(E) / S(E)) \wedge \Sph^k) \\
&= \KO^{n+d}(B(E), S(E)). 
\end{align*}
Hence, by replacing $E$ with $E \oplus \underline{\R}^{\oplus k}$ whenever necessary, we may assume that $d$ is divisible by $8$. By the Bott periodicity, it suffices to show that 
there exists a canonical element $\tau_E \in \KO^0(B(E), S(E))$ such that the multiplication map  
\[
\KO^n(X) \to \KO^n(B(E), S(E)), \qquad x \mapsto \pi^\ast(x) \cdot \tau_E. 
\]
is an isomorphism for any $n \in \Z$. 
This is proved in, for instance, \cite[Th.\,12.3]{ABS64}.
Roughly speaking, $\tau_E \in \KO^0(D(E), S(E))$ is defined as the image of $\Delta_{d,+}$ under the family version of Atiyah--Bott--Shapiro map. 
\end{proof}

Let $X$ be a compact Hausdorff space, and let $\pi \colon E \to X$ be a spin vector bundle of rank $d$. 
Consider the long exact sequence for the pair $(B(E), S(E))$: 
\[
\cdots \to \KO^n(B(E), S(E)) \to \KO^n(B(E)) \to \KO^n(S(E)) 
\to \KO^{n+1}(B(E), S(E)) \to \cdots.
\]
Observing that 
$\pi \colon B(E) \to X$ induces an isomorphism $\KO^n(X) \simeq \KO^n(B(E))$ and applying Fact~\ref{fact:ThomIsom}, we can rewrite the above sequence as: 
\[
\cdots \to \KO^{n-d}(X) \to \KO^n(X) \xrightarrow{\pi^\ast} \KO^n(S(E)) \to \KO^{n-d+1}(X) \to \cdots.
\]
We call it the \emph{Thom--Gysin exact sequence in $\KO$-theory}.

\section{Explicit description of some reductive symmetric spaces} \label{s:AppendixSymmetricSpaces}

In this appendix we give a synthetic construction of various symmetric spaces that are studied in Section \ref{s:ComputationsKTheory}. Throughout this appendix, we set 
\[
I_{p,q} = \begin{pmatrix} I_p & 0 \\ 0 & -I_q \end{pmatrix} \quad (p,q \in \N), \qquad J_{2n} = \begin{pmatrix} 0 & -I_n \\ I_n & 0 \end{pmatrix} \quad (n \in \N). 
\]

\subsection{Complex and split quaternions} \label{ss:complex-split-quat}

In this Section~\ref{ss:complex-split-quat}, we recall the definitions of complex and split quaternions.
They will be used to give a clean description of involutions in later sections. 

Let us consider the complexification $\Ha_\C = \Ha \otimes_\R \C$ of the ring $\Ha$ of quaternions, which we call the \emph{complex quaternion algebra}. We write $\sqrt{-1} \in \C$ for the unit pure-imaginary number in the coefficient field $\C$ to avoid confusion with $i \in \Ha$. We define the conjugation on $\Ha_\C$ by 
\[
\overline{q \otimes z} = \overline{q} \otimes \overline{z} \qquad (q \in \Ha,\ z \in \C). 
\]
It is an anti-multiplicative and $\C$-anti-linear involution on $\Ha_\C$. 
Note that 
\[
\Ha_\C \to \M(2, \C), \qquad a+bi+cj+dk \mapsto \begin{pmatrix} a + \sqrt{-1}d & -b - \sqrt{-1}c \\ b - \sqrt{-1}c & a - \sqrt{-1}d \end{pmatrix}
\]
is a $\C$-algebra isomorphism, hence we have $\M(n, \Ha_\C) \simeq \M(2n, \C)$. This identification is compatible with conjugate-transpose. We regard $\Ha$ as an $\R$-subalgebra of $\Ha_\C$ in an obvious way. 

Now, let us set $j' = \sqrt{-1}j$ and $k' = \sqrt{-1}k$. Then the $\R$-linear subspace 
\[
\Ha' = \R 1 \oplus \R i \oplus \R j' \oplus \R k',
\]
is a subalgebra of $\Ha_\C$, which we call the \emph{split quaternion algebra}. 
It is closed under conjugation. Note that 
\[
\Ha' \to \M(2, \R), \qquad a+bi+cj'+dk' \mapsto \begin{pmatrix} a-d & -b+c \\ b+c & a+d \end{pmatrix}
\]
is an $\R$-algebra isomorphism, hence we have $\M(n, \Ha') \simeq \M(2n, \R)$. Under this identification, conjugate-transposition in $\M(n, \Ha')$ corresponds to transposition in $\M(2n, \R)$. We regard $\C$ as an $\R$-subalgebra of $\Ha'$ in an obvious way.

\subsection{Description of \texorpdfstring{$\OO^\ast(2p+2q) / (\OO^\ast(2p) \times \OO^\ast(2q))$}{O*(2p+2q)/(O*(2p) x O*(2q))}} \label{ss:O*/O*O*} 

Let $p,q \in \N$ and $\widetilde{G} = \GL(p+q, \Ha)$. 
Consider three involutions $\theta, \tau, \sigma \colon \widetilde{G} \to \widetilde{G}$ defined by 
\[
\theta(g) = ({^t}\overline{g})^{-1}, \qquad \tau(g) = -i \cdot ({^t}\overline{g})^{-1} \cdot i, \qquad \sigma(g) = I_{p,q} \cdot g \cdot I_{p,q}. 
\]
These three involutions all commute, and $\theta$ is a Cartan involution. 
The Lie group $G = \OO^\ast(2p+2q)$ is the symmetric subgroup of $\widetilde{G}$ defined by the involution $\tau$: 
\begin{align*}
G &= \{ g \in \widetilde{G} \mid \tau(g) = g \} \\
&= \{ g \in \GL(p+q, \Ha) \mid {^t}\overline{g} \cdot i \cdot g = iI_{p+q} \}. 
\end{align*}
The involutions $\theta, \sigma \colon \widetilde{G} \to \widetilde{G}$ restrict to $G$. Let $H$ be the symmetric subgroup of $G$ defined by $\sigma$: 
\[
H = \{ g \in G \mid \sigma(g) = g \}.
\]
Since the symmetric subgroup of $\widetilde{G}$ defined by $\sigma$ is $\GL(p, \Ha) \times \GL(q, \Ha)$, we have 
\[
H = \OO^\ast(2p+2q) \cap (\GL(p, \Ha) \times \GL(q, \Ha)) = \OO^\ast(2p) \times \OO^\ast(2q). 
\]

Let us now compute the associated symmetric subgroup $H^a$ of $G$: 
\begin{align*}
H^a &= \{ g \in G \mid \sigma(\theta(g)) = g \} \\
&= \{ g \in G \mid \tau(\sigma(\theta(g)) = g \}. 
\end{align*}

We have 
\[
\tau \sigma \theta \colon \widetilde{G} \to \widetilde{G}, \qquad g \mapsto -iI_{p,q} \cdot g \cdot iI_{p,q},
\]
hence the symmetric subgroup of $\widetilde{G} = \GL(p+q, \Ha)$ defined by $\tau \sigma \theta$
consists of all invertible matrices of the form 
\[
\begin{pmatrix} A & -Bj \\ \overline{C}j & \overline{D} \end{pmatrix} \qquad (A \in \M(p,\C),\ B \in \M(p,q; \C),\, C \in \M(q,p; \C),\, D \in \M(q, \C)).
\]
It belongs to $H^a$ if and only if 
\[
\begin{pmatrix} {^t}\overline{A} & -{^t}\overline{C}j \\ {^t}Bj & {^t}D \end{pmatrix} \cdot i \cdot \begin{pmatrix} A & -Bj \\ \overline{C}j & \overline{D} \end{pmatrix} = i \cdot I_{p+q},
\]
which is equivalent to 
\[
{^t}\overline{A}A - {^t}\overline{C}C = 1, \qquad {^t}\overline{B}B - {^t}\overline{D}D = -1, \qquad {^t}\overline{A}B - {^t}\overline{C}D = 0.
\]
These conditions, in turn, are equivalent to 
\[
\begin{pmatrix} A & B \\ C & D \end{pmatrix} \in \U(p,q). 
\]
In conclusion, the following is an isomorphism of Lie groups: 
\[
\U(p,q) \to H^a, \qquad \begin{pmatrix} A & B \\ C & D \end{pmatrix} \mapsto \begin{pmatrix} A & -Bj \\ \overline{C}j & \overline{D} \end{pmatrix}.
\]

\subsection{Description of \texorpdfstring{$\Sp(2p+2q, \R) / (\Sp(2p, \R) \times \Sp(2q, \R))$}{Sp(2p+2q,R)/(Sp(2p,R) x Sp(2q,R))}} \label{ss:Sp/SpSp}

Under the identification $\M(n, \Ha') \simeq \M(2n, \R)$, 
the scalar matrix $i I_n \in \M(n, \Ha')$ corresponds to 
\[
J_{2n} = \begin{pmatrix} 0 & -I_n \\ I_n & 0 \end{pmatrix} \in \M(2n, \R). 
\]
We thus obtain the following isomorphism of Lie groups: 
\[
\Sp(2n, \R) \simeq \{ g \in \M(n, \Ha') \mid {^t \overline{g}} \cdot i \cdot g = i I_n \}. 
\]

Let $p,q \in \N$ and $\widetilde{G} = \GL(p+q, \Ha')$. 
Consider three involutions $\theta, \tau, \sigma \colon \widetilde{G} \to \widetilde{G}$ defined by 
\[
\theta(g) = ({^t} \overline{g})^{-1}, \qquad \tau(g) = -i \cdot ({^t}\overline{g})^{-1} \cdot i, \qquad \sigma(g) = I_{p,q} \cdot g \cdot I_{p,q}. 
\]
These three involutions all commute, and $\theta$ is a Cartan involution. 
By the above discussion, $G = \Sp(2p+2q, \R)$ is the symmetric subgroup of $\widetilde{G}$ defined by the involution $\tau$: 
\begin{align*}
G &= \{ g \in \widetilde{G} \mid \tau(g) = g \} \\
&= \{ g \in \GL(p+q, \Ha') \mid {^t}\overline{g} \cdot i \cdot g = i I_{p+q} \}. 
\end{align*}
The involutions $\theta, \sigma \colon \widetilde{G} \to \widetilde{G}$ restrict to $G$. Let $H$ be the symmetric subgroup of $G$ defined by $\sigma$: 
\[
H = \{ g \in G \mid \sigma(g) = g \}.
\]
Since the symmetric subgroup of $\widetilde{G}$ defined by $\sigma$ is $\GL(p, \Ha') \times \GL(q, \Ha')$, we have 
\[
H = \Sp(2p+2q, \R) \cap (\GL(p, \Ha') \times \GL(q, \Ha')) = \Sp(2p, \R) \times \Sp(2q, \R). 
\]

Let us now compute the associated symmetric subgroup $H^a$ of $G$: 
\begin{align*}
H^a &= \{ g \in G \mid \sigma(\theta(g)) = g \} \\
&= \{ g \in G \mid \tau(\sigma(\theta(g)) = g \}. 
\end{align*}
We have 
\[
\tau \sigma \theta \colon \widetilde{G} \to \widetilde{G}, \qquad g \mapsto -iI_{p,q} \cdot g \cdot iI_{p,q},
\]
hence the symmetric subgroup in $\widetilde{G} = \GL(p+q, \Ha')$ 
defined by $\tau \sigma \theta$ consists of all invertible matrices of the form 
\[
\begin{pmatrix} A & Bj' \\ \overline{C}j' & \overline{D} \end{pmatrix} \qquad (A \in \M(p,\C),\ B \in \M(p,q; \C),\, C \in \M(q,p; \C),\, D \in \M(q, \C)).
\]
It belongs to $H^a$ if and only if 
\[
\begin{pmatrix} {^t}\overline{A} & {^t}\overline{C}j' \\ {^t}Bj' & {^t}D \end{pmatrix} \cdot i \cdot \begin{pmatrix} A & Bj' \\ \overline{C}j' & \overline{D} \end{pmatrix} = i \cdot I_{p+q},
\]
which is equivalent to 
\[
{^t}\overline{A}A - {^t}\overline{C}C = 1, \qquad {^t}\overline{B}B - {^t}\overline{D}D = -1, \qquad {^t}\overline{A}B - {^t}\overline{C}D = 0.
\]
As in Section~\ref{ss:O*/O*O*}, we see that the following is an isomorphism of Lie groups: 
\[
\U(p,q) \to H^a, \qquad \begin{pmatrix} A & B \\ C & D \end{pmatrix} \mapsto \begin{pmatrix} A & Bj' \\ \overline{C}j' & \overline{D} \end{pmatrix}.
\]

\subsection{Description of \texorpdfstring{$\OO(2p, 2q) / \U(p,q)$}{O(2p,2q)/U(p,q)}} \label{ss:O/U}

Let $p,q \in \N$. Under the identification $\M(p+q, \Ha') \simeq \M(2p+\nolinebreak 2q, \R)$, the diagonal matrix $I_{p,q} \in \M(p+q, \Ha')$ corresponds to $I_{2p,2q} \in \M(2p+2q, \R)$. We thus obtain the following isomorphism of Lie groups: 
\[
\OO(2p,2q) \simeq \{ g \in \M(p+q, \Ha') \mid {^t \overline{g}} \cdot I_{p,q} \cdot g = I_{p,q} \}. 
\]

Let $\widetilde{G} = \GL(n, \Ha')$, and consider three involutions $\theta, \tau, \sigma \colon \widetilde{G} \to \widetilde{G}$ defined by 
\[
\theta(g) = ({^t}\overline{g})^{-1}, \qquad \tau(g) = I_{p,q} \cdot ({^t}\overline{g})^{-1} \cdot I_{p,q}, \qquad \sigma(g) = -i \cdot g \cdot i. 
\]
These three involutions all commute, and $\theta$ is a Cartan involution. 
By the above discussion, $G = \OO(2p,2q)$ is the symmetric subgroup of $\widetilde{G}$ defined by the involution $\tau$: 
\begin{align*}
G &= \{ g \in \widetilde{G} \mid \tau(g) = g \} \\
&= \{ g \in \GL(p+q, \Ha') \mid {^t} \overline{g} \cdot I_{p,q} \cdot g = I_{p,q} \}. 
\end{align*}
The involutions $\theta, \sigma \colon \widetilde{G} \to \widetilde{G}$ restrict to $G$. Let $H$ be the symmetric subgroup of $G$ defined by $\sigma$: 
\[
H = \{ g \in G \mid \sigma(g) = g \}.
\]
Since the symmetric subgroup of $\widetilde{G}$ defined by $\sigma$ is $\GL(p+q, \C)$, we have 
\[
H = \OO(2p,2q) \cap \GL(p+q, \C) = \U(p,q). 
\]
To compute the associated symmetric subgroup $H^a$ of $G$, set
\[
S_{p,q} = \begin{pmatrix} I_p & 0 \\ 0 & j' I_q \end{pmatrix}
\]
and consider the involution
\[
\varphi \colon \widetilde{G} \to \widetilde{G}, \qquad g \mapsto S_{p,q} \cdot g \cdot S_{p,q}. 
\]
We have
\[
\theta(S_{p,q}) = S_{p,q}, \qquad 
S_{p,q} \cdot I_{p,q} = I_{p,q} \cdot S_{p,q}.
\]
It follows that $\varphi$ commutes with $\theta$ and $\tau$, hence induces an involution on $G$. Furthermore, one can deduce from the identity
\[
S_{p,q} \cdot i \cdot I_{p,q} \cdot S_{p,q} = i I_{p+q}
\]
that $\varphi \sigma \tau \varphi = \sigma \theta$ holds on $\widetilde{G}$. 
In particular, we have $\varphi \sigma \varphi = \sigma \theta$ on $G$. 
We thus see that 
\[
H^a = \{ g \in G \mid \sigma(\theta(g)) = g \} = \{ g \in G \mid \varphi(\sigma(\varphi(g))) = g \} = \varphi(H). 
\]
In conclusion, the reductive symmetric space $G/H$ is self-associated.

\subsection{Description of \texorpdfstring{$\U(2p, 2q) / \Sp(p,q)$}{U(2p,2q)/Sp(p,q)}} \label{ss:U/Sp}

Let $p,q \in \N$. Under the identification $\M(p+q, \Ha_\C) \simeq \M(2p+\nolinebreak 2q, \C)$, the diagonal matrix $I_{p,q} \in \M(p+q, \Ha_\C)$ corresponds to $I_{2p,2q} \in \M(2p+2q, \C)$. We thus obtain the following isomorphism of Lie groups: 
\[
\U(2p,2q) \simeq \{ g \in \M(p+q, \Ha_\C) \mid {^t \overline{g}} \cdot I_{p,q} \cdot g = I_{p,q} \}. 
\]

Let $\widetilde{G} = \GL(p+q, \Ha_\C)$, and consider three involutions $\theta, \tau, \sigma \colon \widetilde{G} \to \widetilde{G}$ defined by 
\[
\theta(g) = ({^t}\overline{g})^{-1}, \qquad \tau(g) = I_{p,q} \cdot ({^t}\overline{g})^{-1} \cdot I_{p,q}, \qquad \sigma(g) = \overline{g}^\C,
\]
where $\overline{(-)}^\C \colon \M(n, \Ha_\C) \to \M(n, \Ha_\C)$ is defined by  
\[
\overline{A \otimes z}^\C = A \otimes \overline{z} \qquad (A \in \M(n, \Ha), z \in \C). 
\]
These three involutions all commute, and $\theta$ is a Cartan involution. 
By the above discussion, $G = \U(2p,2q)$ is the symmetric subgroup of $\widetilde{G}$ defined by the involution $\tau$: 
\begin{align*}
G &= \{ g \in \widetilde{G} \mid \tau(g) = g \} \\
&= \{ g \in \GL(p+q, \Ha_\C) \mid {^t}\overline{g} \cdot I_{p,q} \cdot g = I_{p,q} \}. 
\end{align*}
The involutions $\theta, \sigma \colon \widetilde{G} \to \widetilde{G}$ restrict to $G$. Let $H$ be the symmetric subgroup of $G$ defined by $\sigma$: 
\[
H = \{ g \in G \mid \sigma(g) = g \}. 
\]
Since the symmetric subgroup of $\widetilde{G} = \GL(p+q, \Ha_\C)$ defined by the involution $\sigma$ is $\GL(p+q, \Ha)$, we have 
\[
H = \U(2p,2q) \cap \GL(p+q, \Ha) = \Sp(p,q). 
\]

As in Section~\ref{ss:O/U}, 
to compute the associated symmetric subgroup $H^a$ of $G$, consider the involution 
\[
\varphi \colon \widetilde{G} \to \widetilde{G}, \qquad g \mapsto S_{p,q} \cdot g \cdot S_{p,q}.
\]
It commutes with $\theta$ and $\tau$, hence induces an involution on $G$. 
Furthermore, one can deduce from the identity 
\[
S_{p,q} \cdot I_{p,q} \cdot \overline{S_{p,q}}^\C = I_{p+q}
\]
that $\varphi \sigma \tau \varphi = \sigma \theta$ holds on $\widetilde{G}$. 
In particular, we have $\varphi \sigma \varphi = \sigma \theta$ on $G$. 
We thus see that $H^a = \varphi(H)$. The reductive symmetric space $G/H$ is self-associated.



\begin{thebibliography}{MST23+}

\bibitem[Ada62]{Ada62}
J. F. Adams, 
\emph{Vector fields on spheres.} 
Ann.\ of Math.\ (2) \textbf{75} (1962), 603--632.

\bibitem[Ada63]{Ada63}
\bysame, 
\emph{On the groups $J(X)$. I.} 
Topology \textbf{2} (1963), 181--195.

\bibitem[Ada65a]{Ada65a}
\bysame, 
\emph{On the groups $J(X)$. II.} 
Topology \textbf{3} (1965), 137--171.

\bibitem[Ada65b]{Ada65b}
\bysame, 
\emph{On the groups $J(X)$. III.}
Topology \textbf{3} (1965), 193--222.

\bibitem[AW65]{AW65}
J. F. Adams and G. Walker, 
\emph{On complex Stiefel manifolds.} 
Proc.\ Cambridge Philos.\ Soc.\ \textbf{61} (1965), 81--103.

\bibitem[All73]{All73}
J. Allard,
\emph{Adams operations on $\KO(X) \oplus \KSp(X)$.}
M.Sc.\ thesis, Universit\'e de Montr\'eal, 1973. 
Available at: \url{https://doi.org/10.14288/1.0080120}.

\bibitem[Ati61]{Ati61}
M. Atiyah, 
\emph{Thom complexes.} 
Proc.\ London Math.\ Soc.\ (3) \textbf{11} (1961), 291--310.

\bibitem[Ati67]{Ati67}
\bysame,
\emph{$K$-theory.}
W. A. Benjamin, Inc., New York--Amsterdam, 1967. 

\bibitem[ABS64]{ABS64}
M. Atiyah, R. Bott, and A. Shapiro, 
\emph{Clifford modules.}
Topology \textbf{3} (1964), 3--38.

\bibitem[BM12]{BM12}
T. Barbot and Q. M\'erigot,
\emph{Anosov AdS representations are quasi-Fuchsian.}
Groups Geom.\ Dyn.\ \textbf{6} (2012), 441--483.

\bibitem[BG75]{BG75}
J. C. Becker and D. H. Gottlieb, 
\emph{The transfer map and fiber bundles.}
Topology \textbf{14} (1975), 1--12.

\bibitem[Ben96]{Ben96}
Y. Benoist, 
\emph{Actions propres sur les espaces homog\`enes r\'eductifs.}
Ann.\ of Math.\ (2) \textbf{144} (1996), 315--347.

\bibitem[BFL92]{BFL92}
Y. Benoist, P. Foulon, and F. Labourie, 
\emph{Flots d'Anosov \`a distributions stable et instable diff\'erentiables.}
J. Amer. Math. Soc. \textbf{5} (1992), 33--74.

\bibitem[BL92]{BL92}
Y. Benoist and F. Labourie, 
\emph{Sur les espaces homog\`enes mod\`eles de vari\'et\'es compactes.}
Publ. Math. Inst. Hautes \'Etudes Sci. \textbf{76} (1992), 99--109.

\bibitem[BT24+]{BT24+}
M. Boche\'nski and A. Tralle,
\emph{A solution of the problem of standard compact Clifford--Klein forms.}
Preprint, arXiv:2403.10539v3.

\bibitem[Bor63]{Bor63}
A. Borel,
\emph{Compact Clifford--Klein forms of symmetric spaces.}
Topology \textbf{2} (1963), 111--122.

\bibitem[Bot59a]{Bot59a}
R. Bott, 
\emph{The stable homotopy of the classical groups.}
Ann.\ of Math.\ (2) \textbf{70} (1959), 313--337.

\bibitem[Bot59b]{Bot59b}
\bysame, 
\emph{Quelques remarques sur les th\'eor\`emes de p\'eriodicit\'e.} 
Bull.\ Soc.\ Math.\ France \textbf{87} (1959), 293--310.

\bibitem[Bot69]{Bot69}
\bysame, 
\emph{Lectures on $K(X)$.}
W. A. Benjamin, Inc., New York--Amsterdam, 1969. 

\bibitem[CM62]{CM62}
E. Calabi and L. Markus,
\emph{Relativistic space forms.}
Ann.\ of Math.\ (2) \textbf{75} (1962), 63--76.

\bibitem[Dol63]{Dol63}
A. Dold, 
\emph{Partitions of unity in the theory of fibrations.} 
Ann.\ of Math.\ (2) \textbf{78} (1963), 223--255. 

\bibitem[Fil24]{Fil24}
S. Filip, 
\emph{Translation surfaces: dynamics and Hodge theory.}
EMS Surv.\ Math.\ Sci.\ \textbf{11} (2024), 63--151.

\bibitem[Fri73]{Fri73}
E. M. Friedlander, 
\emph{Fibrations in etale homotopy theory.}
Publ.\ Math.\ Inst.\ Hautes \'Etudes Sci.\ \textbf{42} (1973), 5--46.

\bibitem[Ghy95]{Ghy95}
\'E. Ghys,
\emph{D\'eformations des structures complexes sur les espaces homog\`enes de~$\SL_2(\C)$.}
J.\ Reine Angew.\ Math.\ \textbf{468} (1995), 113--138.

\bibitem[Gol85]{Gol85}
W. M. Goldman,
\emph{Nonstandard Lorentz space forms.}
J. Differential Geom.\ \textbf{21} (1985), 301--308.

\bibitem[GK17]{GK17}
F. Gu\'eritaud and F. Kassel, 
\emph{Maximally stretched laminations on geometrically finite hyperbolic manifolds.}
Geom.\ Topol.\ \textbf{21} (2017), 693--840.

\bibitem[Kar78]{Kar78}
M. Karoubi, 
\emph{$K$-theory.}
Grundlehren der Mathematischen Wissenschaften, vol.~226,
Springer--Verlag, Berlin--New York, 1978.

\bibitem[Kas12]{Kas12}
F. Kassel, 
\emph{Deformation of proper actions on reductive homogeneous spaces.}
Math.\ Ann.\ \textbf{353} (2012), 599--632.

\bibitem[KMT]{KMT}
F. Kassel, Y. Morita, and N. Tholozan, 
\emph{Compact quotients of semisimple symmetric spaces of split rank one.} 
In preparation.

\bibitem[KT24+]{KT24+}
F. Kassel and N. Tholozan, 
\emph{Sharpness of proper and cocompact actions on reductive homogeneous spaces.} 
Preprint, arXiv:2410.08179.

\bibitem[Kob89]{Kob89}
T. Kobayashi,
\emph{Proper action on a homogeneous space of reductive type.}
Math.\ Ann.\ \textbf{285} (1989), 249--263.

\bibitem[Kob92a]{Kob92proc}
\bysame,
\emph{Discontinuous groups acting on homogeneous spaces of reductive type.}
Proceedings of the conference \emph{Representation theory of Lie groups and Lie algebras}
at Fuji-Kawaguchiko (1990), pp.\ 59--75, World Scientific, 1992.

\bibitem[Kob92b]{Kob92coh}
\bysame,
\emph{A necessary condition for the existence of
compact Clifford--Klein forms of homogeneous spaces of reductive type.}
Duke Math.\ J.\ \textbf{67} (1992), 653--664.

\bibitem[Kob97]{Kob97}
\bysame,
\emph{Discontinuous groups and Clifford-Klein forms
of pseudo-Riemannian homogeneous manifolds.}
Algebraic and analytic methods in representation theory (S{\o}nderborg, 1994),
Perspectives in Mathematics, vol.\ 17,
Academic Press, San Diego, CA, 1997, 99--165.

\bibitem[Kob98]{Kob98}
\bysame,
\emph{Deformation of compact Clifford--Klein forms of indefinite-Riemannian homogeneous manifolds.}
Math.\ Ann.\ \textbf{310} (1998), 394--408.

\bibitem[Kob01]{Kob01}
\bysame, 
\emph{Discontinuous groups for non-Riemannian homogeneous spaces.}
Mathematics unlimited --- 2001 and beyond, 
Springer, Berlin (2001), 723--747.

\bibitem[KO90]{KO90}
T. Kobayashi and K. Ono,
\emph{Note on Hirzebruch's proportionality principle.}
J. Fac.\ Sci.\ Univ.\ Tokyo Sect.\ IA Math.\ \textbf{37} (1990), 71--87.

\bibitem[KY05]{KY05}
T. Kobayashi and T. Yoshino,
\emph{Compact Clifford--Klein forms of symmetric spaces --- revisited.}
Pure Appl.\ Math.\ Q. \textbf{1} (2005), 591--663.

\bibitem[KY]{KY}
\bysame, 
\emph{Existence problem of compact Clifford--Klein forms for tangential homogeneous spaces.} 
Unpublished manuscript, written around 2007. 

\bibitem[Kul81]{Kul81}
R. S. Kulkarni,
\emph{Proper actions and pseudo-Riemannian space forms.}
Adv.\ Math.\ \textbf{40} (1981), 10--51.

\bibitem[LMZ95]{LMZ95}
F. Labourie, S. Mozes, and R. J. Zimmer,
\emph{On manifolds locally modelled on non-Riemannian homogeneous spaces.}
Geom.\ Funct.\ Anal.\ \textbf{5} (1995), 955--965.

\bibitem[LZ95]{LZ95}
F. Labourie and R. J. Zimmer,
\emph{On the non-existence of cocompact lattices for $SL(n)/SL(m)$.}
Math.\ Res.\ Lett.\ \textbf{2} (1995), 75--77.

\bibitem[LL17]{LL17}
G. S. Lakeland and C. J. Leininger,
\emph{Strict contractions and exotic $\SO_0(d,1)$ quotients.}
J.\ Lond.\ Math.\ Soc.\ \textbf{96} (2017), 642--662.

\bibitem[Mar97]{Mar97}
G. Margulis,
\emph{Existence of compact quotients of homogeneous spaces, measurably
proper actions, and decay of matrix coefficients.}
Bull. Soc. Math. France \textbf{125} (1995), 447--456.

\bibitem[May99]{May99}
J. P. May, 
\emph{A concise course in algebraic topology.} 
Chicago Lectures in Mathematics. University of Chicago Press, Chicago, IL, 1999.

\bibitem[MS60]{MS60}
J. Milnor and E. Spanier, 
\emph{Two remarks on fiber homotopy type.}
Pacific J. Math.\ \textbf{10} (1960), 585--590.

\bibitem[MST23+]{MST23+}
D. Monclair, J.-M.\ Schlenker, and N. Tholozan, 
\emph{Gromov--Thurston manifolds and anti-de Sitter geometry.}
To appear in Geom.\ Topol.,\ arXiv:2310.12003.

\bibitem[Mor15]{Mor15}
Y. Morita, 
\emph{A topological necessary condition for the existence of compact Clifford--Klein forms.}
J. Differential Geom.\ \textbf{100} (2015), 533--545.

\bibitem[Mor17]{Mor17}
\bysame, 
\emph{A cohomological obstruction to the existence of compact Clifford--Klein forms.} 
Selecta Math.\ (N.S.)\ \textbf{23} (2017), 1931--1953.

\bibitem[Mor19]{Mor19}
\bysame, 
\emph{Proof of Kobayashi's rank conjecture on Clifford--Klein forms.}
J. Math.\ Soc.\ Japan \textbf{71} (2019), 1153--1171.

\bibitem[Mor24]{Mor24}
\bysame, 
\emph{Cartan projections of some non-reductive subgroups and proper actions on homogeneous spaces.}
Transform.\ Groups \textbf{29} (2024), 823--845.

\bibitem[Qui68]{Qui68}
D. G. Quillen, 
\emph{Some remarks on \'etale homotopy theory and a conjecture of Adams.}
Topology \textbf{7} (1968), 111--116.

\bibitem[Qui71]{Qui71}
\bysame,
\emph{The Adams conjecture.} 
Topology \textbf{10} (1971), 67--80.

\bibitem[Sal00]{Sal00}
F. Salein,
\emph{Vari\'et\'es anti-de Sitter de dimension 3 exotiques.}
Ann.\ Inst.\ Fourier \textbf{50} (2000), 257--284.

\bibitem[Sel60]{Sel60}
A. Selberg,
\emph{On discontinuous groups in higher-dimensional symmetric spaces} (1960),
in \emph{Contributions to function theory
(Internat.\ Colloq.\ Function Theory, Bombay, 1960)},
pp.\ 147--164, Tata Institute of Fundamental Research, Bombay, 1960; 
see also \emph{Collected papers}, vol.~1, pp.~475--492,
Springer-Verlag, Berlin, 1989.

\bibitem[Sha00]{Sha00}
Y. Shalom,
\emph{Rigidity, unitary representations of semisimple groups, and fundamental groups of manifolds with rank one transformation group.}
Ann.\ of Math.\ (2) \textbf{152} (2000), 113--182.

\bibitem[SS73]{SS73}
F. Sigrist and U. Suter, 
\emph{Cross-sections of symplectic Stiefel manifolds.} 
Trans.\ Amer.\ Math.\ Soc.\ \textbf{184} (1973), 247--259.

\bibitem[Sul74]{Sul74}
D. Sullivan, 
\emph{Genetics of homotopy theory and the Adams conjecture.} 
Ann.\ of Math.\ (2) \textbf{100} (1974), 1--79.

\bibitem[Tho15+]{Tho15+}
N. Tholozan,
\emph{Volume and non-existence of compact Clifford--Klein forms.}
Preprint, arXiv:1511.09448v2.

\bibitem[Toj19]{Toj19}
K. Tojo, 
\emph{Classification of irreducible symmetric spaces which admit standard compact Clifford--Klein forms.}
Proc.\ Japan Acad.\ \textbf{95}, Ser.\ A (2019), 11--15.

\bibitem[Toj21]{Toj21}
\bysame, 
\emph{Obstructions to the existence of compact Clifford--Klein forms for tangential symmetric spaces.}
Internat.\ J. Math.\ \textbf{32} (2021), Paper No.~2150074, 19 pp.

\bibitem[Wol62]{Wol62}
J. A. Wolf,
\emph{The Clifford--Klein space forms of indefinite metric.}
Ann.\ of Math.\ (2) \textbf{75} (1962), 77--80.

\bibitem[Zim94]{Zim94}
R. J. Zimmer,
\emph{Discrete groups and non-Riemannian homogeneous spaces.}
J. Amer.\ Math.\ Soc.\ \textbf{7} (1994), 159--168.

\end{thebibliography}
\end{document}